\documentclass[11pt,letterpaper]{amsart}

\usepackage{amsmath,amssymb,amsthm,mathtools,mathrsfs}
\usepackage{bm}
\allowdisplaybreaks
\usepackage[all]{xy}
\usepackage{tikz-cd}
\usepackage{hyperref}

\usepackage{here}
\usepackage{time}

\usepackage{xcolor}
\usepackage{aliascnt}
\usepackage[capitalize,nameinlink,noabbrev,nosort]{cleveref}
\hypersetup{
	colorlinks=true,       
	linkcolor=blue,          
	citecolor=blue,        
	filecolor=blue,      
	urlcolor=blue,           
}

\makeatletter
\@namedef{subjclassname@2020}{\textup{2020} Mathematics Subject Classification}
\makeatother

\theoremstyle{remark}
\newtheorem{remark}{Remark}

\theoremstyle{definition}
\newtheorem{theorem}{Theorem}[section]

\newaliascnt{definition}{theorem}
\newtheorem{definition}[definition]{Definition}
\aliascntresetthe{definition}

\newaliascnt{lemma}{theorem}
\newtheorem{lemma}[lemma]{Lemma}
\aliascntresetthe{lemma}

\newaliascnt{fact}{theorem}
\newtheorem{fact}[fact]{Fact}
\aliascntresetthe{fact}

\newaliascnt{proposition}{theorem}
\newtheorem{proposition}[proposition]{Proposition}
\aliascntresetthe{proposition}

\newaliascnt{corollary}{theorem}
\newtheorem{corollary}[corollary]{Corollary}
\aliascntresetthe{corollary}

\newaliascnt{conjecture}{theorem}

\aliascntresetthe{conjecture}

\newaliascnt{question}{theorem}

\aliascntresetthe{question}

\newaliascnt{exercise}{theorem}

\aliascntresetthe{exercise}

  \newtheoremstyle{mystyle}
    {}
    {}
    {\normalfont}
    {}
    {\bfseries}
    {}
    { }
    {}
\theoremstyle{mystyle}
\newtheorem*{proposition*}{Proposition}
\newtheorem*{corollary*}{Corollary}
\newtheorem*{conjecture*}{Conjecture}

\newtheorem*{theorem*}{Theorem}

\numberwithin{equation}{section}

\newcommand{\Id}{\mathrm{Id}}

\newcommand{\ad}{\mathrm{Ad}}

\newcommand{\Z}{\mathbb{Z}}
\newcommand{\Q}{\mathbb{Q}}
\newcommand{\R}{\mathbb{R}}
\newcommand{\C}{\mathbb{C}}

\DeclareMathOperator{\ReNew}{Re}
\renewcommand{\Re}{\ReNew}

\DeclareMathOperator{\GL}{GL}

\def\cInd{\mathrm{c}\text{-}\mathrm{Ind}}%

\def\bs{\backslash}

\DeclareMathOperator{\vol}{vol}
\DeclareMathOperator{\Hom}{Hom}

\newcommand{\fa}{\mathfrak{a}}

\newcommand{\bA}{\mathbb{A}}

\newcommand{\bD}{\mathbb{D}}

\newcommand{\bF}{\mathbb{F}}
\newcommand{\bG}{\mathbb{G}}

\newcommand{\bL}{\mathbb{L}}
\newcommand{\bM}{\mathbb{M}}

\newcommand{\bT}{\mathbb{T}}

\newcommand{\bX}{\mathbb{X}}

\newcommand{\cO}{\mathcal{O}}

\newcommand{\cV}{\mathcal{V}}
\newcommand{\cW}{\mathcal{W}}
\newcommand{\cX}{\mathcal{X}}

\newcommand{\Ext}{\mathrm{Ext}}

\begin{document}

\title[]{The algebraic theory of non-archimedean intertwining periods}

\author[F. Chen]{Fei Chen}
\address{Yau Mathematical Sciences Center,
	Tsinghua University, Beijing, China}
\email{fchen@tsinghua.edu.cn}

\author[N. Matringe]{Nadir Matringe}
\address{Shanghai Institute for Mathematics
	and Interdisciplinary Sciences,
	Block A, International Innovation Plaza,
	No. 657 Songhu Road, Yangpu District,
	Shanghai, China}
\email{nadirmatringe@outlook.fr}
\date{\today}


\subjclass[2020]{}

\begin{abstract}
We develop an algebraic theory of local intertwining periods for symmetric
pairs $(G,H,\theta)$ over non-archimedean local fields $F$ of characteristic
different from $2$, and finite-length representations of Levi subgroups
of $G$ with coefficients in an algebraically closed field $R$ of
characteristic different from the residual characteristic of $F$.
For open orbits this algebraic version is essentially contained in the
work of Blanc and Delorme. The theory relies on a crucial new uniqueness result, extending a result of Blanc and Delorme for open periods, which asserts that an admissible intertwining period is the only $H$-equivariant linear form extending the corresponding integral on the space of well-supported sections. In the group case it recovers at once the results of Dat on algebraic intertwining operators. Our algebraic construction does
not require the unimodularity hypothesis imposed in the work of
Matringe--Offen--Yang. When $R=\C$, we also remove this assumption in the classical non-archimedean analytic setting, and prove convergence of intertwining periods for all symmetric spaces. 
At the same time we extend the context of the whole theory in the following
manner: we allow $F$ to be of positive odd characteristic, we allow twisting characters in the periods, and we allow $H$
to lie between $(\bG^\theta)^\circ(F)$ and $G^\theta$.
Finally, in the algebraic setting, we obtain explicit polynomial
bounds for the singularities of intertwining periods, and compute them in a concrete case. 
\end{abstract}


\maketitle
 \setlength{\leftmargini}{20pt}
\section{Introduction}

Local intertwining periods are equivariant linear forms on parabolically
induced representations, constructed from corresponding forms on the inducing data. Their
dependence on unramified characters, their singularities and their functional
equations provide tools for studying distinction, both locally and globally. We refer to \cite{FLO}, \cite{MatJFA}, \cite{SX24}, \cite{MSSS}, \cite{MOY} for a far from being exhaustive list.

The purpose of this paper is to give a purely algebraic construction of these
periods for all non-archimedean symmetric spaces, to obtain explicit bounds for
their denominators, and to establish their analytic properties without the 
unimodularity assumption which was previously indispensable (see \cite{MO} and \cite{MOYjfa}).

The theory associated with open parabolic orbits in the symmetric space has its origins in the work
of Brylinski--Delorme \cite{BrD} and Carmona--Delorme \cite{CrD} on invariant
distribution vectors and Eisenstein integrals for real reductive symmetric
spaces. In the non-archimedean setting, and for $\theta$-split parabolic subgroups,
Blanc--Delorme \cite{BD} constructed rational families of invariant forms by
smooth group homology, working over
the algebra of regular functions on a torus of unramified characters. Their
argument identifies these forms through their restriction to sections
supported on open orbits and supplies polynomials controlling their
singularities. Thus the algebraic construction and the denominator problem
already occur together in the open case.

Intertwining periods attached to more general admissible parabolic orbits
also arise in the global theory of periods of Eisenstein series; see
Lapid--Rogawski \cite{LR}. Their relations with standard intertwining
operators are governed by the geometry of parabolic orbits and a directed
graph of involutions. The second named author and Offen \cite{MO} developed the corresponding
local theory for $p$-adic Galois symmetric pairs, initially under a boundedness
assumption on the inducing representation. The same authors together with Yang
\cite{MOYjfa} removed this boundedness assumption and established convergence,
functional equations and meromorphic continuation for arbitrary finite-length
inducing representations under the aforementioned unimodularity assumption, to be explained below. They also obtained
general sufficient conditions for distinction of induced representations without any assumption by using iterated periods.
The question of when regularized intertwining periods generate all invariant
forms on an induced representation was subsequently studied by Lu--Matringe
\cite{LuMcomp}.

Our main contributions are an intrinsic characterization of each admissible
intertwining period by a support formula, explicit bounding polynomials
which yield useful pole-order estimates, and the removal of the modulus
assumption in both the algebraic and the non-archimedean analytic theory.
We first describe the setting and the algebraic construction.

Let $F$ be a non-archimedean local field of characteristic different from $2$
and residual characteristic $p$. Let $\bG$ be a connected reductive group
defined over $F$, let $\theta$ be an $F$-rational involution, and put
$G=\bG(F)$. We allow any closed subgroup $H$ satisfying
\[
 (\bG^\theta)^\circ(F)\subseteq H\subseteq G^\theta.
\]
Let $R$ be an algebraically closed field of characteristic different from
$p$, and let $\eta:H\to R^\times$ be a smooth character trivial on unipotent
elements. The latter condition is automatic when $F$ has characteristic
zero. In particular, our algebraic results allow coefficients of positive
characteristic and base fields of positive odd characteristic.

Fix a parabolic subgroup $P=MV$ of $G$ and a finite-length smooth
$R$-representation $\sigma$ of $M$. For $u\in G$, put
\[
 x=x_u=u\theta(u)^{-1},\qquad
 \theta_x=\operatorname{ad}(x)\circ\theta,
\]
and assume that $x$ is $M$-admissible, that is, $\theta_x(M)=M$.
Write $M_x=M\cap uHu^{-1}$ and $\eta^u(m)=\eta(u^{-1}mu)$, and set
\[
 \delta_{P,x}
 =\left.\delta_{P^{\theta_x}}\delta_P^{-1/2}\right|_{M_x}.
\]
The unimodularity, or modulus, assumption referred to above is the condition
$\delta_{P,x}=1$. This unimodularity condition, although satisfied for Galois models (\cite[Lemma 2.5.1]{LR}), is not satisfied for many popular models, such as the symplectic and linear models of $\GL_n$. We impose no such condition.

Let $\cX_x$ be the maximal $\theta_x$-anti-invariant subtorus of the torus
of unramified characters of $M$. Denote its coordinate ring and fraction
field by
\[
 A_x=\mathcal O_R(\cX_x)=R[\Lambda_x],\qquad
 F_x=\operatorname{Frac}(A_x),
\]
where $\Lambda_x$ is its character lattice. The universal character
$\Psi_x:M\to A_x^\times$ defines the universal induced representation
$I_{P,x}(\sigma)=I_P^G(\Psi_x\otimes_R\sigma)$, with normalized induction.
Given an $A_x$-linear $M_x$-equivariant form
\[
 \ell:\Psi_x\otimes_R\sigma
 \longrightarrow A_x\otimes\delta_{P,x}\eta^u,
\]
consider the subspace $\cW(u)$ of $I_{P,x}(\sigma)\otimes_{A_x}F_x$
consisting of the sections $\phi$ for which $h\mapsto\phi(uh)$ has compact
support modulo $H\cap u^{-1}Pu$. Our main algebraic result,
\Cref{thm LIP}, asserts that there is a unique $F_x$-linear
$(H,\eta)$-equivariant form $J_P^G(u,\eta,\ell,\sigma)$ on the whole
induced representation such that
\begin{equation}\label{eq intro support formula}
 J_P^G(u,\eta,\ell,\sigma)(\phi)
 =\int_{(H\cap u^{-1}Pu)\backslash H}
       \eta(h)^{-1}\ell\bigl(\phi(uh)\bigr)\,dh,
 \qquad \phi\in\cW(u).
\end{equation}
On this subspace the integral is algebraic integration of a compactly
supported smooth section. It makes sense over $F_x$ without any convergence
or continuation argument. Moreover, $J_P^G(u,\eta,\ell,\sigma)$ is nonzero
whenever $\ell$ is nonzero. Formula \eqref{eq intro support formula}
therefore gives an algebraic definition of the period, with rationality
built into the construction. These local intertwining periods further satisfy basic properties such as the so-called simple functional equations which are proved in \Cref{ssec FE}. Specializing to the group case, we recover at once the results of Dat on algebraic intertwining operators (\cite{Dat}). 

The proof extends the central-character argument of
Blanc--Delorme from $\theta$-split to all admissible parabolic orbits, and to our more general setting as well. We carry out this argument
using elementary homological algebra in categories of smooth
representations, without the machinery of smooth group homology developed in \cite{BD}. The same argument gives explicit control of denominators before passage
to $F_x$. Namely, we construct nonzero polynomials $d\in A_x$ such that
\[
 d\,J_P^G(u,\eta,\ell,\sigma)\bigl(I_{P,x}(\sigma)\bigr)
 \subseteq A_x.
\] The key in this algebraic construction of intertwining periods is \Cref{lm root separator} below, extending the key lemma of Blanc and Delorme for open periods. The polynomial $d$ bounding the denominator of the periods is given explicitly in the statement of \Cref{thm LIP}. A slightly more optimal presentation, together with other possible choices for $d$ are also given in \Cref{prop explicit bounds} of  \Cref{ssec denominators}.

In \Cref{ssec galois speh} we compute an explicit bounding polynomial $d$ related to the Galois model for inner forms of $\GL_n$. 

As another application, we obtain a sufficient
condition for distinction of an induced representation, and a necessary
and sufficient condition when the inducing representation is cuspidal
(\Cref{thm distinction,cor cuspidal distinction}), extending the corresponding results of \cite{MO} and \cite{MOY} to our more general setting. 

One more interesting consequence of the support characterization of intertwining periods is that such periods are compatible with
reduction modulo a prime $p'$ different from $p$ under the integral hypotheses
of \Cref{ssec reduction}, as we prove in \Cref{prop reduction LIP}. In particular, under suitable hypotheses,  their functional equation with respect to standard intertwining operators reduce mod $p'$, and we expect that this will have applications to the study of modular distinction.

Finally, in \Cref{sec analytic}, when $R=\C$, we prove that the original quotient integral in
\eqref{eq intro support formula}, applied to arbitrary flat sections,
converges absolutely on a nonempty open convex domain in the space of
$\theta_x$-anti-invariant complex parameters; this is the content of \Cref{thm:convergence in general}. It is holomorphic there and extends
meromorphically to the whole parameter space, with continuation given by
the algebraic period, see \Cref{cor final}. This removes the modulus assumption from the
analytic theory of \cite{MOYjfa}, while also allowing the character $\eta$,
the above intermediate subgroups $H$, and base fields of positive odd
characteristic. The main analytic ingredients are as follows. First we obtain an extension of the growth
estimates of Lagier~\cite{Lagierthesis} to equivariant relative matrix coefficients in \Cref{sec Lagier}. Then we prove the existence of algebraic
bounds for the twisting character in \Cref{sec estimate}. Finally we obtain estimates near the boundary of the
open orbit of a generalized version of a function introduced by Brylinski--Delorme, and used in \cite{BD}. Our proof of this estimate simplifies that of \cite{BD}, and is valid in our more general setting, where in particular $F$ is allowed to have odd positive characteristic. These ingredients  give convergence in the $\theta$-split case and then at
maximal vertices; the functional equations yield the general case as in \cite{MOYjfa}.

\subsection*{Acknowledgements} The first-named author is supported by NSFC, Grant No. 12571027, and the second by his start-up grant in the SIMIS, which was also used to invite the first-named author to finalize the paper. The first-named author takes this opportunity to thank the SIMIS for its hospitality on this occasion.

\subsection*{Use of AI}  Successive versions of ChatGPT (GPT-5.5 to GPT-6.1), as
well as Claude Fable~5, were used at several levels in the
preparation of this paper. Besides editorial
assistance, their main contributions are as follows. The first is the extension of the
setting to possibly disconnected $H$ and to $F$ of positive
characteristic (started 
with Claude Fable~5 during a free trial, then continued with
ChatGPT 5.6). The second, and the most
important one, concerns the construction
itself. Our first construction of algebraic
intertwining periods relied on minimal
vertices, hence on open periods, and Dat's algebraic intertwining operators. Their characterization on well-supported
sections was only proved afterwards, using invertibility of intertwining operators and maximal vertices. The
missing ingredient for the present, more natural, although in the end same construction, namely \cref{lm root separator}, was provided
by GPT-6; its present simplified and neater  proof is due
to the authors. Finally, the earlier proof of
\cref{lm open cell estimates} was a lengthy verification that the
arguments of Blanc--Delorme and
Brylinski--Delorme can be adapted to our setting;
GPT-6 came up with the present sharper
argument, based on the determinant function there.

\clearpage
\setcounter{tocdepth}{2}
\tableofcontents
\clearpage

\section{Linear algebraic results}\label{sec linear}

We denote by $R$ a commutative unital ring, by $A$ a commutative unital $R$-algebra, and by $\Gamma$ a locally profinite group. We assume that $\Gamma$ has a compact open subgroup of pro-order invertible in $A$. Fix an $A$-valued Haar measure giving such a subgroup volume $1$, and write $\mathcal H_A[\Gamma]:=C_c^\infty(\Gamma,A)$ for the corresponding Hecke algebra.

We identify smooth $A$-representations of $\Gamma$ with nondegenerate $\mathcal H_A[\Gamma]$-modules, as in \cite[I.4.4]{V}, and compute $\Hom$ and $\Ext$ in this category. This category is abelian and has enough projectives and injectives by
\cite[I.5.10(iv) and I.5.9(c)]{V}, respectively.

\begin{lemma}\label{lm classique}
Let $V$ and $W$ be smooth $A$-representations of $\Gamma$. Assume that $V$ has a $\Gamma$-stable $A$-module filtration $\{0\}=V_0\subseteq V_1\subseteq \dots \subseteq V_n=V$ and $W$ has a $\Gamma$-stable $A$-module filtration $\{0\}=W_0\subseteq W_1\subseteq \dots \subseteq W_m=W$. Assume moreover that there exist $z_{i,j} \in Z(\Gamma)$ for $i=1,\dots,n$ and $j=1,\dots,m$, and $\lambda_1,\dots,\lambda_n,\mu_1,\dots,\mu_m$ in $A$, such that:
\begin{enumerate}
	\item $z_{i,j}\cdot \bar{v}=\lambda_i\cdot \bar{v}$ for any $i=1,\dots,n$, $\bar{v}\in \frac{V_i}{V_{i-1}}$,
	\item $z_{i,j}\cdot \bar{w}=\mu_j\cdot \bar{w}$ for any $j=1,\dots,m$, $\bar{w}\in \frac{W_j}{W_{j-1}}$.
\end{enumerate}
Then \[\prod_{i=1}^n\prod_{j=1}^m(\mu_j -\lambda_i)\Ext_{\mathcal H_A[\Gamma]}^k(V,W)=\{0\}\] for any $k\geq 0$ (where $\Ext_{\mathcal H_A[\Gamma]}^0(V,W):=\Hom_{\mathcal H_A[\Gamma]}(V,W)$). In particular if each $\mu_j-\lambda_i$ belongs to $A^\times$, then $\Ext_{\mathcal H_A[\Gamma]}^k(V,W)=\{0\}$ for any $k\geq 0$.
\end{lemma}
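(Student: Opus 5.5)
Note first that the hypotheses use the same central element $z_{i,j}$ on both sides: it acts by $\lambda_i$ on $V_i/V_{i-1}$ and by $\mu_j$ on $W_j/W_{j-1}$. So the natural plan is to reduce to the case $n=m=1$ by dévissage, using long exact sequences of $\Ext$. In the base case I use that a central element acts functorially, in two ways, on $\Ext$ groups.

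\emph{Base case $n=m=1$.} Let $z\in Z(\Gamma)$ act on $V$ by $\lambda$ and on $W$ by $\mu$. Since $z$ is central, for any smooth representations $X,Y$ the map $X\to X$, $x\mapsto z\cdot x$, is an endomorphism of smooth representations, hence of nondegenerate $\mathcal H_A[\Gamma]$-modules. It is natural in $X$: every morphism commutes with it. This gives two endomorphisms of $\Ext^k_{\mathcal H_A[\Gamma]}(X,Y)$, one induced by $z$ on $X$ and one induced by $z$ on $Y$. I claim they coincide. To see this, compute $\Ext$ with an injective resolution $Y\to I^\bullet$ in the category of smooth representations, which exists by the cited results of \cite{V}. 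Then $z$ acts on each $I^p$ compatibly with the differentials. For $f\in\Hom(X,I^p)$ we have $f\circ z_X=z_{I^p}\circ f$ because $f$ is $\Gamma$-equivariant. At the level of cohomology, post-composition by $z_{I^\bullet}$ is the map induced by $z_Y$. Hence the two actions agree. In our case $z_V=\lambda\,\Id$ and $z_W=\mu\,\Id$, and both actions are $A$-linear, so $\lambda\cdot c=\mu\cdot c$ for every class $c$. Therefore $(\mu-\lambda)\Ext^k(V,W)=0$.

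\emph{Dévissage.} Next I induct on $n+m$. Suppose $n\geq 2$. Put $d_i=\prod_{j}(\mu_j-\lambda_i)$ and $D_W(\lambda)=\prod_j(\mu_j-\lambda)$. First I show the intermediate claim $\prod_j(\mu_j-\lambda)\,\Ext^k(V',W)=0$ whenever $V'$ carries a single $z_j$-eigenvalue $\lambda$. For this, I do induction on $m$ using the short exact sequence $0\to W_{m-1}\to W\to W/W_{m-1}\to0$ and the exact segment
\[\Ext^k(V',W_{m-1})\to\Ext^k(V',W)\to\Ext^k(V',W/W_{m-1}).\]
If $a$ kills the right-hand term and $b$ kills the left-hand term, then $ab$ kills the middle: $a$ sends a class $c$ into the image of the left term, and $b$ then kills it. The base case gives $a=\mu_m-\lambda$. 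Here it matters that the element $z_{1,m}$, say, has eigenvalue $\lambda$ on $V'$, because the hypothesis is indexed by pairs $(i,j)$. I then run the same multiplicativity argument in the first variable, using $0\to V_{n-1}\to V\to V/V_{n-1}\to0$ and the contravariant long exact sequence. This gives $\prod_i d_i=\prod_{i,j}(\mu_j-\lambda_i)$ as an annihilator, as claimed. The final assertion follows immediately, since a unit annihilating a module forces the module to be zero.

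\emph{Main obstacle.} The delicate point is bookkeeping. In the iterated dévissage each subquotient pair $(V_i/V_{i-1},W_j/W_{j-1})$ must be handled with its own element $z_{i,j}$, and the hypotheses must be stated for all $j$ on $V_i/V_{i-1}$, which they are. The more foundational check is that the two actions of $z$ on $\Ext$ agree. This needs $\Ext$ to be computed in the abelian category of nondegenerate modules with enough injectives, and the action of $z$ to be a natural transformation of the identity functor. Both hold by the cited facts from \cite{V}.
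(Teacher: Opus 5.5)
Your proposal is correct and follows the paper's proof. The base case $n=m=1$ rests on the two endomorphisms of $\Ext^k_{\mathcal H_A[\Gamma]}(V,W)$ induced by the central element through $V$ and through $W$ being equal, and the general case is a d\'evissage along the long exact $\Ext$ sequences in each variable, multiplying annihilators. The only differences are cosmetic: you run the induction on the filtration of $W$ first (for a single-eigenvalue first argument) and then on $V$, whereas the paper uses the opposite order, and you justify the base case explicitly with an injective resolution, which the paper compresses into the remark that $z$ acts through the center of the category.
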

\begin{proof}
Assume first that $n=m=1$. The action of $z=z_{1,1}$ belongs to the center of the category of smooth $A$-representations of $\Gamma$. Therefore the endomorphisms of $\Ext_{\mathcal H_A[\Gamma]}^i(V,W)$ induced by $z$ through $V$ and $W$ coincide. The equality $(\mu_1 -\lambda_1)\Ext_{\mathcal H_A[\Gamma]}^k(V,W)=\{0\}$ follows from this observation. The general case is by induction. We keep on assuming that $m=1$. In this case it is enough to treat the $n=2$ case, which is a consequence of the long exact sequence 
\[0\to \Hom_{\mathcal H_A[\Gamma]}(\frac{V}{V_1},W) \to \Hom_{\mathcal H_A[\Gamma]}(V,W) \to \Hom_{\mathcal H_A[\Gamma]}(V_1,W)\to \dots \]
\[ \to \Ext_{\mathcal H_A[\Gamma]}^k(\frac{V}{V_1},W)\to \Ext_{\mathcal H_A[\Gamma]}^k(V,W) \to \Ext_{\mathcal H_A[\Gamma]}^k(V_1,W)\to \dots.\] To pass from $m=1$ to arbitrary $m$, we argue similarly by induction on
$m$, using the long exact sequence associated with
$0\to W_{m-1}\to W\to W/W_{m-1}\to 0$.
\end{proof}

\begin{remark}
For our later applications, the case where $m=1$ of the above lemma is enough.
\end{remark}

The following elementary, but perhaps less familiar, statement is crucial to our rationality argument.

\begin{proposition}\label{prop habile}
Let $V$, $W$ and $W'$ be smooth $A$-representations of $\Gamma$.  Assume that $W'\subseteq W$ and that
$V$ is equipped with a $\Gamma$-stable $A$-module filtration
\[
 \{0\}\subseteq V_1\subseteq\dots\subseteq V_n=V.
\]
Assume that, for each $i=2,\dots,n$, there exists $x_i\in A$ such that
\[
 x_i\Hom_{\mathcal H_A[\Gamma]}\left(V_i/V_{i-1},W/W'\right)=\{0\}.
\]
If $J\in\Hom_{\mathcal H_A[\Gamma]}(V,W)$ satisfies $J(V_1)\subseteq W'$, then
\[
 \left(\prod_{i=2}^n x_i\right)J\in\Hom_{\mathcal H_A[\Gamma]}(V,W').
\]
Equivalently, $\left(\prod_{i=2}^n x_i\right)J(V)\subseteq W'$.
\end{proposition}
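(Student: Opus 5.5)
The plan is an induction on the length of the filtration, proving the stronger statement that $\left(\prod_{i=2}^k x_i\right)J(V_k)\subseteq W'$ for each $k=1,\dots,n$. The case $k=1$ is exactly the hypothesis $J(V_1)\subseteq W'$, with the empty product equal to $1$. Only $\Hom$ is involved, so no higher $\Ext$ and no use of \Cref{lm classique} is needed. Throughout we use that $V_k$, $W'$ and $W/W'$ are smooth $A$-representations, that $J$ is $A$-linear and $\Gamma$-equivariant, and that multiplication by an element of $A$ commutes with the $\Gamma$-action.

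For the inductive step, assume $y_{k-1}J(V_{k-1})\subseteq W'$, where $y_{k-1}:=\prod_{i=2}^{k-1}x_i$. Let $p:W\to W/W'$ be the projection and consider the map $f:=p\circ (y_{k-1}J)|_{V_k}:V_k\to W/W'$. It is a morphism of smooth $A$-representations of $\Gamma$, and by the induction hypothesis it vanishes on $V_{k-1}$. Hence it factors through an element $\bar f\in\Hom_{\mathcal H_A[\Gamma]}(V_k/V_{k-1},W/W')$. By assumption $x_k\bar f=0$, so $x_k f=0$, that is, $p(x_k y_{k-1}J(v))=0$ for all $v\in V_k$. This gives $y_kJ(V_k)\subseteq W'$, which completes the induction.

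Taking $k=n$ gives $\left(\prod_{i=2}^n x_i\right)J(V)\subseteq W'$. The element $\left(\prod_{i=2}^n x_i\right)J$ is a morphism $V\to W$ with image in the subrepresentation $W'$, so it corestricts to an element of $\Hom_{\mathcal H_A[\Gamma]}(V,W')$. This gives the stated equivalence.

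There is no real obstacle. The only point requiring care is to apply the annihilation hypothesis to the map $y_{k-1}J$ rather than to $J$ itself, since $J$ need not vanish modulo $W'$ on $V_{k-1}$. This is why the multipliers accumulate as a product.
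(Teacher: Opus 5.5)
Your proof is correct and follows the paper's argument essentially verbatim. The paper also composes $J$ with the projection $W\to W/W'$ and shows by induction that $\bigl(\prod_{i=2}^k x_i\bigr)\overline{J}$ vanishes on $V_k$, using that its restriction to $V_k$ factors through $V_k/V_{k-1}$, which is then killed by $x_k$.
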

\begin{proof}
	If $N$ is any smooth $A$-representation of $\Gamma$ and $I\in \Hom_{\mathcal H_A[\Gamma]}(N,W)$, we denote by $\overline{I}\in \Hom_{\mathcal H_A[\Gamma]}(N,\frac{W}{W'})$ the composition of $I$ with the natural projection $W\to \frac{W}{W'}$. We now prove recursively that $(\prod_{i=2}^kx_i)J(V_k)\subseteq W'$, i.e. that $(\prod_{i=2}^kx_i)\overline{J}$ vanishes on $V_k$. 
	By assumption $\overline{J}_{|V_1}=0$. Now since $x_2 \Hom_{\mathcal H_A[\Gamma]}(\frac{V_{2}}{V_{1}},\frac{W}{W'})=\{0\}$, we infer that $x_2\overline{J}_{|V_2}=0$. More generally, if $a_{k-1}\overline J$ vanishes on $V_{k-1}$, where $a_{k-1}=\prod_{i=2}^{k-1}x_i$, then its restriction to $V_k$ factors through $V_k/V_{k-1}$; hence $x_ka_{k-1}\overline J$ vanishes on $V_k$. This proves the claim by induction.
\end{proof}

We now further assume that $R$ is an integral domain. We record the following consequence of \Cref{prop habile}.

\begin{corollary}\label{cor habile}
Assume that $A$ is an integral domain, with fraction field $F_A$, and set $Q_A:=\frac{F_A}{A}$. Let $\eta:\Gamma\to A^\times$ be a smooth character. Let $V$ be a smooth $A$-representation of $\Gamma$ equipped with a $\Gamma$-stable $A$-module filtration \[\{0\} \subseteq V_1\subseteq \dots \subseteq V_n=V.\] Assume that for each $i=2,\dots,n$, there exists an element $x_i\in A$ such that \[x_i \Hom_{\mathcal H_A[\Gamma]}(\frac{V_{i}}{V_{i-1}},Q_A\otimes\eta)=\{0\} .\] Then if $J \in \Hom_{\mathcal H_A[\Gamma]}(V,F_A\otimes\eta)$ is such that $J(V_1)\subseteq A$, then \[(\prod_{i=2}^nx_i) J \in \Hom_{\mathcal H_A[\Gamma]}(V,A\otimes\eta),\] i.e. $(\prod_{i=2}^nx_i)J(V)\subseteq A$.
\end{corollary}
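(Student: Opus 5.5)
The plan is to apply \Cref{prop habile} directly, with $W:=F_A\otimes\eta$ and $W':=A\otimes\eta$. Since $\eta$ takes values in $A^\times$, the submodule $A\subseteq F_A$ is stable under the twisted $\Gamma$-action. Hence $W'$ is a $\Gamma$-stable $A$-submodule of $W$, and it is a smooth $A$-representation. We should check that $W$ is itself smooth, i.e.\ that each vector is fixed by an open subgroup. This holds because the action is scalar multiplication by $\eta(g)$, and $\eta$ is smooth, so it is trivial on some open subgroup. The same remark shows that $W'$ is smooth.

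Next, we identify the quotient. Multiplication by scalars commutes with the twist, so the projection $F_A\to F_A/A=Q_A$ induces an isomorphism of smooth $A$-representations of $\Gamma$
\[
 W/W'\simeq Q_A\otimes\eta .
\]
Under this identification, the hypothesis $x_i\Hom_{\mathcal H_A[\Gamma]}(V_i/V_{i-1},Q_A\otimes\eta)=\{0\}$ becomes exactly the hypothesis of \Cref{prop habile}. The assumption $J(V_1)\subseteq A$ means $J(V_1)\subseteq W'$.

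\Cref{prop habile} then gives $\left(\prod_{i=2}^n x_i\right)J(V)\subseteq W'=A\otimes\eta$. The multiple $\left(\prod_{i=2}^n x_i\right)J$ is still $\mathcal H_A[\Gamma]$-linear. So it lies in $\Hom_{\mathcal H_A[\Gamma]}(V,A\otimes\eta)$, as claimed.

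There is no real obstacle. The only point needing care is the check that $A\otimes\eta$ and $Q_A\otimes\eta$ are genuine smooth subquotient representations, which uses that $\eta$ is $A^\times$-valued. The integrality of $A$ is used only so that $F_A$ exists and contains $A$.
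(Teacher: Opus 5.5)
Your proposal is correct and is exactly the paper's argument: apply \Cref{prop habile} with $W=F_A\otimes\eta$ and $W'=A\otimes\eta$, so that $W/W'\simeq Q_A\otimes\eta$. Your checks that these are smooth representations and that the quotient is identified correctly are details the paper leaves implicit.
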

\begin{proof}
Apply \Cref{prop habile} with $W=F_A\otimes\eta$ and $W'=A\otimes\eta$.
\end{proof}

\section{Symmetric pairs, parabolic orbits and tori of unramified characters}\label{sec pairs}

Let $F$ be a non-archimedean local field of characteristic different from $2$ and residual characteristic $p$, with normalized absolute value $|\ |_F$ and residual field of cardinality $q$. Let $(G,H,\theta)$ be a symmetric pair defined over $F$. This means that $G=\bG(F)$ consists of the $F$-points of a connected reductive group $\bG$ defined over $F$, that $\theta$ is an $F$-rational involution of $\bG$, and that $H$ is a closed subgroup of $G$ satisfying 
\[(\bG^{\theta})^{\circ}(F)\subseteq H \subseteq \bG^{\theta}(F)=G^\theta,\] where $(\ )^\circ$ stands for the neutral component; we use similar boldface conventions for the algebraic groups underlying the closed (algebraic) subgroups of $G$ that we shall consider. For any algebraic subgroup $\bX$ of $\bG$, we denote by $X^*(\bX)$ the group of algebraic characters of $\bX_{\overline F}$. The following fact is a consequence of classical results of Steinberg in \cite{StAMS80}. It is proved in  \cite[Proposition~7]{Vust74} when $F$ has characteristic zero, and Vust's argument remains valid in our setting. We provide a different proof. 

\begin{fact}\label{fact pi0}
The component group \[\pi_0(\bG^\theta):=\frac{\bG^\theta}{(\bG^{\theta})^{\circ}}\] is a finite elementary abelian $2$-group.
Consequently:
\begin{enumerate}
\item $H$ is an open normal subgroup of finite index of $G^\theta$;
\item $H$ contains every unipotent element of $G^\theta$.
\end{enumerate}
\end{fact}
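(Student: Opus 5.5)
The plan is to prove first that $\pi_0(\bG^\theta)$ is an elementary abelian $2$-group, working over $\overline F$, and then to deduce (1) and (2) by elementary arguments. Since $\operatorname{char} F\neq 2$, $\theta$ is a semisimple automorphism of $\bG$ and Steinberg's results \cite{StAMS80} apply. Choose a $\theta$-stable maximal torus $\bT$ of $\bG_{\overline F}$ contained in a $\theta$-stable Borel subgroup $\bB$. Such a pair exists by Steinberg's theorem for semisimple automorphisms. Every connected component of $\bG^\theta$ then meets $\bT^\theta$, so the natural map $\bT^\theta\to\pi_0(\bG^\theta)$ is surjective.

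For this surjectivity I would argue as follows. If $g\in\bG^\theta$, then $g\bB g^{-1}$ and $g\bT g^{-1}$ are again $\theta$-stable. By Steinberg, any two $\theta$-stable pairs (Borel, maximal torus) are conjugate under $(\bG^\theta)^\circ$. Hence after multiplying $g$ by an element of $(\bG^\theta)^\circ$ we may assume $g$ normalizes both $\bB$ and $\bT$, so $g\in N(\bB)\cap N(\bT)=\bT$. Thus $g\in\bT^\theta$.

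Next I would show that $\bT^\theta$ modulo its neutral component is killed by $2$. Write $\bT^\theta\supseteq\bT^+:=((\bT^\theta)^\circ)$ and let $\bT^-$ be the anti-invariant part. For $t\in\bT^\theta$, one has $t^2=t\theta(t)$, and the map $t\mapsto t\theta(t)$ sends the connected torus $\bT$ onto a connected subgroup of $\bT^\theta$. That subgroup therefore lies in $(\bT^\theta)^\circ\subseteq(\bG^\theta)^\circ$, so $t^2\in(\bG^\theta)^\circ$. Since $\pi_0(\bG^\theta)$ is the image of the abelian group $\bT^\theta$, it is abelian of exponent $2$. It is finite because $\bG^\theta$ is an algebraic group. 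Passing from $\overline F$-points to the algebraic group causes no issue. The step I expect to be the main obstacle is the conjugacy of $\theta$-stable pairs under $(\bG^\theta)^\circ$ in positive characteristic $\neq 2$. There I would rely on Steinberg's Theorem 7.5 in \cite{StAMS80}, which holds for any semisimple automorphism in any characteristic.

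For (1), $(\bG^\theta)^\circ(F)$ is of finite index in $\bG^\theta(F)$. The map $\bG^\theta(F)\to\pi_0(\bG^\theta)(\overline F)$ has kernel $(\bG^\theta)^\circ(F)$, and the target is a finite abelian group. The kernel is open because $(\bG^\theta)^\circ$ is open in $\bG^\theta$ in the Zariski topology, hence also in the analytic topology. Any subgroup $H$ containing this open, finite-index kernel is open and of finite index. It is also normal, since it is the preimage of a subgroup of the abelian group $\pi_0$. For (2), let $u\in G^\theta$ be unipotent. If $\operatorname{char}F=0$, then $u=\exp(N)$ lies in the connected unipotent group $\{\exp(sN)\}$, which is contained in $\bG^\theta$ and hence in $(\bG^\theta)^\circ$. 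If $\operatorname{char}F=p'>2$, then $u$ has order a power of $p'$, hence odd order. Its image in the $2$-group $\pi_0$ is therefore trivial, so $u\in(\bG^\theta)^\circ(F)\subseteq H$. This last argument in fact works in every characteristic only when unipotents have finite order, which is why the characteristic-zero case needs the separate argument above.
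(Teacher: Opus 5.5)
\textbf{The gap: conjugacy of $\theta$-stable pairs.} Your key step fails. Steinberg's Theorem~7.5 only gives the \emph{existence} of a $\theta$-stable pair $(\mathbb B,\bT)$. The claim that any two such pairs are conjugate under $(\bG^\theta)^\circ$ is false, already in characteristic zero.

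Take $\bG=\mathrm{PGL}_2$ and $\theta=\ad(s)$, with $s$ the image of $\mathrm{diag}(1,-1)$. Then $\bG^\theta=N_{\bG}(\bT)$ for the diagonal torus $\bT$, whereas $(\bG^\theta)^\circ=\bT=\bT^\theta$. The upper and lower triangular Borel subgroups give two $\theta$-stable pairs with the same torus. They are exchanged by the antidiagonal element $w\in\bG^\theta$, but by no element of $\bT$. Hence $\bT^\theta\to\pi_0(\bG^\theta)\simeq\Z/2\Z$ is not surjective: the component $w\bT$ does not meet $\bT^\theta$. Such components are represented by $\theta$-fixed elements of $N_{\bG}(\bT)\smallsetminus\bT$, and your squaring argument on the fixed torus $\bT$ says nothing about them.

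Two later steps rely on this one. The normality in (1) uses commutativity of $\pi_0$, and the positive-characteristic case of (2) uses that $\pi_0$ is a $2$-group. So neither is established either.

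\textbf{How to repair it.} Your squaring observation, $t^2=t\theta(t)\in(\bT^\theta)^\circ$ for $t\in\bT^\theta$, is sound. It has to be applied element by element, with a torus depending on the element, as the paper does in step (IV). Write $g=su\in\bG^\theta$ for the Jordan decomposition; both factors are $\theta$-fixed. Apply Steinberg's theorem to $\theta$ restricted to $Z_{\bG}(s)^\circ$. This gives a $\theta$-stable maximal torus of that group, which contains the central element $s$, so $s^2\in(\bG^\theta)^\circ$. In the example, $w$ lies in the $\theta$-stable torus $Z_{\bG}(w)^\circ$, on which $\theta$ acts by inversion.

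This still leaves the unipotent part $u$ in characteristic $p>2$. There the Zariski closure of $\langle u\rangle$ is finite, so you need to know beforehand that $\pi_0(\bG^\theta)$ has no $p$-torsion. The paper first proves that $|\pi_0(\bG^\theta)|$ is a power of $2$, using:
\begin{itemize}
\item the central isogeny $\bG_{sc}\times\bA\to\bG$;
\item connectedness of fixed points in simply connected groups \cite[Theorem~8.1]{StAMS80};
\item the torus case;
\item the fact that $H^1(\langle\theta\rangle,\bF(\overline F))$ is finite and killed by $2$.
\end{itemize}
Only then does the Jordan decomposition argument give exponent $2$.
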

\begin{proof}
We denote by $\ell$ the characteristic exponent of $F$; by assumption $\ell\neq 2$. We also observe that since the characteristic of $F$ is not $2$, any involution of a linear algebraic group over $\overline{F}$ is a semisimple automorphism of it. In (I) to (IV) below we prove the main claim, and (1) and (2) are justified in (V). 

(I) We first assume that $\bG=\bA$ is a torus. We write $M=X^*(\bA)$, it is a free finitely generated $\mathbb{Z}$-module. The functor $\bD\mapsto X^*(\bD)$ defines an anti-equivalence of categories between the category of diagonalizable groups over $\overline{F}$ and that of finitely generated $\Z$-modules with no $\ell$-torsion (see \cite{SatAG}). Under this anti-equivalence, the fixed point group $\bA^{\theta}=\ker(a\mapsto a\theta(a)^{-1})$ corresponds to the quotient of \[\frac{M}{(\theta-\Id)M}\] by its $\ell$-primary torsion subgroup, whereas for any diagonalizable group $\bD$, the group $X^*(\pi_0(\bD))$ is the torsion part of $X^*(\bD)$. Since $\theta^2=\Id$, it is elementary to check that the torsion part of $\frac{M}{(\theta-\Id)M}$ is isomorphic to a finite product $(\Z/2\Z)^r$ of copies of $\Z/2\Z$, which has trivial $\ell$-primary part because $\ell\neq 2$. We deduce that \[X^*(\pi_0(\bA^\theta))\simeq (\Z/2\Z)^r,\] and using again that $\ell\neq 2$, that $\pi_0(\bA^\theta)$ is itself isomorphic to $(\Z/2\Z)^r$. Note that this step applies to any torus equipped with an involution, in particular to any $\theta$-stable torus of $\bG$.

(II) Now if we assume that $\bG$ is semisimple and simply connected, then $\bG^\theta$ is connected according to \cite[Theorem 8.1]{StAMS80}, the involution $\theta$ being a semisimple automorphism of $\bG$ since we assumed that $\ell\neq 2$. The result in this case follows at once. 

(III) Finally when $\bG$ is as in the statement, we denote by $\bA$ the neutral component of the center of $\bG$ and let $\bG_{sc}$ be the simply connected cover of the derived subgroup $\bG_{der}$ of $\bG$. The involution $\theta$ stabilizes $\bG_{der}$ and $\bA$, and its restriction to $\bG_{der}$ lifts uniquely to an automorphism of $\bG_{sc}$, which is involutive by uniqueness. We thus obtain an involution $\widetilde{\theta}$ of $\bG_{sc}\times \bA$ lifting $\theta$ along the central isogeny \[1\to \bF\to \bG_{sc}\times\bA \stackrel{\pi}{\to} \bG\to 1,\] which defines the finite central (hence abelian and $\widetilde{\theta}$-stable) subgroup $\bF$ of $\bG_{sc}\times \bA$. Taking fixed points on $\overline F$-points yields the exact sequence \[1\to \bF^{\widetilde{\theta}}(\overline F)\to (\bG_{sc}^{\widetilde{\theta}}\times \bA^{\theta})(\overline F)\to \bG^\theta(\overline F)\to H^1(\langle\theta\rangle,\bF(\overline F)).\] The last group is finite and killed by multiplication by $2$. Since $\bF$ is central, the connecting map to $H^1(\langle\theta\rangle,\bF(\overline F))$ is a group homomorphism. The image of $\bG_{sc}^{\widetilde{\theta}}\times\bA^\theta$ under $\pi$ is thus a closed normal finite index subgroup of $\bG^\theta$, and its component group is a quotient of $\pi_0(\bG_{sc}^{\widetilde{\theta}}\times \bA^\theta)=\pi_0(\bA^\theta)$ thanks to (II). Hence $\pi_0(\bG^\theta)$ is an extension of a subgroup of $H^1(\langle\theta\rangle,\bF(\overline F))$ by a quotient of $\pi_0(\bA^\theta)$, and (I) implies that its cardinality is a power of $2$.

(IV) The final step is to prove that $\pi_0(\bG^\theta)$ is in fact an elementary abelian  $2$-group. Let $g\in\bG^\theta$ with Jordan decomposition $g=su$; by uniqueness of the latter, both $s$ and $u$ lie in $\bG^\theta$. We first claim that $u\in(\bG^\theta)^\circ$: if $F$ has characteristic zero, then the Zariski closure of the group generated by $u$ is a connected subgroup of $\bG^\theta$ containing $u$, whereas if $F$ has characteristic $p>2$, then the image of $u$ in $\pi_0(\bG^\theta)$ has order both a power of $p$ and a power of $2$ by (III), hence is trivial. In any case, $g$ and $s$ have the same image in $\pi_0(\bG^\theta)$. Now $\bL:=Z_{\bG}(s)^\circ$ is a $\theta$-stable connected reductive subgroup of $\bG$, and $s$ is a central semisimple element of $\bL$ (it lies in a maximal torus of $\bG$, which is connected and centralizes $s$, hence is contained in $\bL$), so that $s$ belongs to every maximal torus of $\bL$. The semisimple automorphism $\theta|_{\bL}$ of $\bL$ stabilizes a maximal torus $\bT$ of $\bL$ by \cite[Theorem 7.5]{StAMS80}, so that $s\in \bT^{\theta}$ for the $\theta$-stable torus $\bT$. By (I), the group $\pi_0(\bT^\theta)$ is an elementary abelian $2$-group, so that \[s^2\in(\bT^\theta)^\circ\subseteq (\bG^\theta)^\circ.\] Hence every element of $\pi_0(\bG^\theta)$ has order at most $2$, and a group of exponent $2$ is an elementary abelian $2$-group, which proves the main claim.

(V) The group $(\bG^\theta)^\circ(F)$ is then the kernel of the natural continuous map from $G^\theta$ to the finite group $\pi_0(\bG^\theta)(\overline{F})$, hence is a closed normal subgroup of finite index of $G^\theta$, hence is open; as the quotient of $G^\theta$ by it is a finite elementary abelian $2$-group, any intermediate closed subgroup $H$ is open, normal and of finite index in $G^\theta$, and (1) follows. For (2), a unipotent element $u$ of $G^\theta$ lies in $(\bG^\theta)^\circ$ as shown in (IV), hence $u\in (\bG^\theta)^\circ(F)\subseteq H$.
\end{proof}

\begin{fact}\label{fact open}
Let $Q$ be a parabolic subgroup of $G$ and let $g\in G$. Then $QgH$ is open (resp. closed) in $G$ if and only if $QgG^\theta$ is. Moreover the double coset space $Q\bs G /H$ is finite.
\end{fact}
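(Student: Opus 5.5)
The plan is to reduce both assertions to the case $H=G^\theta$ using \Cref{fact pi0}(1): $H$ is an open normal subgroup of finite index in $G^\theta$. Choose representatives $t_1,\dots,t_r$ of $G^\theta/H$. Then
\[
QgG^\theta=\bigcup_{k=1}^r QgHt_k=\bigcup_{k=1}^r Qg t_k H ,
\]
where the second equality uses normality of $H$. Each piece $QgHt_k$ is the image of $QgH$ under the homeomorphism of $G$ given by right multiplication by $t_k$.

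For openness, one direction is immediate. If $QgH$ is open, then every translate $QgHt_k$ is open, hence so is their union $QgG^\theta$. Conversely, suppose $QgG^\theta$ is open. The double cosets $Qgt_kH$ occurring in the union are either equal or disjoint. So $QgG^\theta$ is a finite disjoint union of distinct $(Q,H)$-double cosets, each a right translate of $QgH$.

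To conclude I will use a Baire category argument. $G$ is locally compact Hausdorff, hence a Baire space, and so is its open subset $QgG^\theta$. Each double coset $QgH$ is the orbit of the $\sigma$-compact group $Q\times H$, so it is an $F_\sigma$ set. A finite union of them fills the open set $QgG^\theta$, so some $Qgt_kH=QgHt_k$ has nonempty interior. Since it is a homogeneous space for $Q\times H$, having nonempty interior makes it open, and translating back shows $QgH$ is open.

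For closedness, if $QgH$ is closed, then $QgG^\theta$ is a finite union of closed translates, hence closed. Conversely, suppose $QgG^\theta$ is closed. I would argue that each double coset $Qgt_kH$ is closed in $QgG^\theta$. Its complement in $QgG^\theta$ is a finite union of the other double cosets, all homeomorphic via right translation, so the question is whether they are mutually separated. Here the natural approach is algebraic: $(\bG^\theta)^\circ(F)\subseteq H$, and orbits of the group $\bQ\times(\bG^\theta)^\circ$ on $\bG$ relate to those of $\bQ\times\bG^\theta$ by finite unions of the same dimension. Then a $(\bQ\times H)$-orbit in the closed set is open in it of the same dimension, and its closure in $QgG^\theta$ contains no other orbit because the boundary would have to be lower-dimensional. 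A softer alternative is that $QgHt_k$ are the orbits of the finite-index open subgroup $Q\times H$ of $Q\times G^\theta$ acting on the single $Q\times G^\theta$-orbit $QgG^\theta$. The stabilizer of $g$ in $Q\times G^\theta$ meets $Q\times H$ in a subgroup of finite index, which yields a continuous bijection from the finite discrete set of $H$-orbits onto $(Q\times G^\theta)/(\mathrm{Stab}\cdot(Q\times H))$. If the orbit map $Q\times G^\theta\to QgG^\theta$ is open, then each $(Q\times H)$-orbit is open in $QgG^\theta$, so its complement is open as well, making each orbit closed in $QgG^\theta$ and hence closed in $G$.

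This openness of the orbit map is the main obstacle. I would obtain it again from Baire, using $\sigma$-compactness of $Q\times G^\theta$ and local compactness of the closed orbit $QgG^\theta$: the standard open mapping theorem for homogeneous spaces applies.

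Finiteness of $Q\backslash G/H$ follows from the finiteness of $Q\backslash G/G^\theta$, a classical result of Helminck--Wang valid for $\mathrm{char}\,F\neq2$. Each $(Q,G^\theta)$-double coset splits into at most $r$ double cosets $Qgt_kH$.
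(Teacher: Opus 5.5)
Your argument is correct, but you obtain the key step by a different route from the paper. The paper handles both cases at once. If $S=QgG^\theta$ is open or closed, it is locally compact. Each $(Q,H)$-double coset in $S$ is locally closed by \cite[Section 5.1, condition (3)]{BZ}, so one of these finitely many pieces has nonempty interior in $S$. Transitivity of $Q\times H$ on each piece, and of $G^\theta$ on the set of pieces, then makes every piece open, hence also closed, in $S$.

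You never use local closedness of the orbits. In the open case you get a piece with interior from $\sigma$-compactness (each double coset is $F_\sigma$) and Baire. In the closed case you use the open mapping theorem for the transitive action of the $\sigma$-compact group $Q\times G^\theta$ on the locally compact space $S$. Together with openness of $Q\times H$ in $Q\times G^\theta$, this shows each $(Q\times H)$-orbit is open, hence closed, in $S$.

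This route is purely topological and more self-contained, since it needs no algebraic input about the orbits. The paper's route is shorter given the citation. Note that your second argument only needs $S$ to be locally compact, so it also settles the open case and you could use it uniformly.

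Drop the dimension-theoretic sketch you give first. As it stands it is not a proof: distinct $F$-rational orbits inside one geometric orbit have the same dimension, so ``the boundary is lower-dimensional'' does not separate them. The remark about a bijection onto $(Q\times G^\theta)/(\mathrm{Stab}\cdot(Q\times H))$ is likewise unnecessary. Finiteness of $Q\backslash G/H$ is deduced exactly as in the paper.
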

\begin{proof}
Because $H$ is normal and of finite index in $G^\theta$ (Fact \ref{fact pi0}), the set $S:=QgG^\theta$ is a finite disjoint union of distinct double cosets $Q(gh_k)H$, transitively permuted by right translation by $G^\theta$. If $QgH$ is open (resp. closed) in $G$, so are its translates, hence so is $S$. Conversely, suppose that $S$ is open or closed in $G$. It is then locally compact, and each of the above double cosets is locally closed in $S$ by \cite[Section 5.1, condition (3)]{BZ}. If they all had empty interior, they would all be nowhere dense, contradicting the Baire category theorem. Thus one of them has nonempty interior in $S$; transitivity of the actions of $Q\times H$ on each piece and of $G^\theta$ on the set of pieces then shows that every piece is open, hence also closed, in $S$. This proves the equivalence. Finally, the finiteness of $Q\bs G/H$ follows from that of $Q\bs G/G^\theta$ \cite[Proposition 6.15]{HW} together with Fact \ref{fact pi0} (1).
\end{proof}

We say that a parabolic subgroup $Q$ of $G$ is $\theta$-split if $Q\cap \theta(Q)$ is a Levi subgroup of $Q$, in which case we may say that $Q$ is $\theta$-split with respect to  $Q\cap \theta(Q)$. From now on we fix a parabolic subgroup $P$ of $G$ and a Levi decomposition $P=MV$ of $P$. By Fact \ref{fact open} the double coset space $P\backslash G/H$ is finite and we denote by $R(P\backslash G/H)$ a set of representatives.

We set \[X=\{x\in G,\ \theta(x)=x^{-1}\} \] to be the symmetric space attached to $G$ and $\theta$, on which the group $G$ acts by $\theta$-twisted conjugation:
\[
g\cdot x:=gx\theta(g)^{-1}
\qquad(g\in G,\ x\in X).
\] For $x\in X$, we set \[\theta_x:g\mapsto x\theta(g) x^{-1}.\] It is again an $F$-rational involution of $\bG$ since $x\theta(x)=e$. 
For $g\in G$, we define \[x_g:=g\cdot e \in G\cdot e\subseteq X,\] and abbreviate $\theta_g:=\theta_{x_g}$. One immediately checks the relations 
\begin{equation}\label{eq twists}
\theta_g=\ad(g)\circ \theta \circ \ad(g)^{-1},\qquad \bG^{\theta_g}=g\,\bG^{\theta}g^{-1},
\end{equation}
and \[ \theta_{g'g}=(\theta_{g})_{g'},\qquad x_{gh}=x_g \ \text{for}\ h\in G^\theta\]
for $g,g'\in G$.

\begin{remark} We warn the reader that the two twisting constructions differ on their common domain: for $x\in X$ regarded as an element of $G$, the involutions $\theta_x$ and $\theta_{x_x}$ do not agree in general (as $x_x=x^2$). We shall only ever twist by elements of $X$ through the first construction, and to prevent any ambiguity we reserve the letters $x$, $y$ for elements of $X$ and $u$, $g$, $h$ for elements of $G$ which are not considered as elements of $X$.
\end{remark}

We say that $x\in X$ is \textit{$(M,\theta)$-admissible}, or simply
$M$-\textit{admissible}, if $\theta_x(M)=M$.  We say that $x$ is
\textit{$(M,\theta)$-split} if $\theta_x(P)\cap P=M$, that is, if
$\theta_x(P)$ is opposite to $P$ with respect to $M$.  If $x_g$ is
$(M,\theta)$-split, then $PgH$ is open in $G$ by \cite[Proposition 13.4]{HW} and Fact~\ref{fact open}. 

\begin{remark}
	The converse of the above property asserted in \cite[Lemma~1.2]{LuMcomp} is false
	without an additional hypothesis. For example, take
	$G=\mathrm{GL}_2(F)$, $\theta(g)=\det(g)^{-1}g$,
	$H=G^\theta=\mathrm{SL}_2(F)$, and let $P$ be the upper triangular
	subgroup with diagonal Levi subgroup $M$. Then $PH=G$ and
	$\theta(M)=M$, whereas $\theta(P)=P$.
	
	To correct that lemma, condition~(b) should also require that
	$(M,x_g)$ be unimodular with respect to $P$, in the sense of
	\cite[Definition~1.4]{LuMcomp}, namely
	\begin{equation}\label{eq unim sl2}
	\delta_{P^{\theta_g}}
	=\delta_P^{1/2}|_{P^{\theta_g}},
	\end{equation}
	where the square root is the positive one. By twisting we may assume that $g=e$. Write
	$\mathfrak g,\mathfrak p,\mathfrak h$ for the corresponding
	Lie algebras. Since $PH$ is open, we have
	$\mathfrak g=\mathfrak p+\mathfrak h$, hence an
	$\mathrm{Ad}(P^\theta)$-equivariant isomorphism
	\[
	\mathfrak g/\mathfrak p
	\simeq \mathfrak h/\mathfrak p^\theta.
	\]
	Taking absolute values of determinants and using the
	unimodularity of $G$ and $H$ gives
	\[
	\delta_{P^\theta}=\delta_P|_{P^\theta}.
	\]
	Together with \eqref{eq unim sl2} above this implies
	$\delta_P|_{P^\theta}=1$.
	
	Finally let $A_M$ be the split component of $M$, and put
	$N=V\cap\theta(V)$, so that
	$P\cap\theta(P)=M\ltimes N$. Since $\theta(M)=M$, for every
	$a\in A_M$ we have $a\theta(a)\in M^\theta$. Consequently we obtain 
	\[
		1= \delta_P(a\theta(a))\\
		=\delta_P(a)\delta_{\theta(P)}(a)\\
		=\left|
		\det\bigl(\operatorname{Ad}(a)|_{\operatorname{Lie}(N)}\bigr)
		\right|_F^2.
	\]
	The last equality follows by comparing the roots of $A_M$ in
	$\operatorname{Lie}(V)$ and $\operatorname{Lie}(\theta(V))$:
	their common weights occur twice, and the remaining weights
	cancel in opposite pairs. Choose $a\in A_M$ such that
	$|\alpha(a)|_F<1$ for every root $\alpha$ of $A_M$ in
	$\operatorname{Lie}(V)$. The above displayed equality forces $N=\{e\}$,
	and therefore $P\cap\theta(P)=M$.
	
	This argument also applies over $\mathbb R$. Since unimodularity
	is assumed in \cite{LuMcomp} from Section~2 onwards, this
	correction does not affect the applications of Lemma~1.2,
	including the results on unitary periods.
\end{remark}

The class of symmetric pairs introduced above is stable under the operations that we shall perform, namely twisting by elements of $G$ and restriction to stable Levi subgroups.

\begin{lemma}\label{lm induced pair}
Let $g\in G$.
\begin{enumerate}
\item The triple $(G,gHg^{-1},\theta_{g})$ is a symmetric pair, with \[(\bG^{\theta_g})^\circ(F)=g(\bG^\theta)^\circ(F)g^{-1}.\]
\item If $\bL$ is a $\theta_g$-stable Levi $F$-subgroup of $\bG$ and $L:=\bL(F)$, then $(L,L\cap gHg^{-1},\theta_g|_L)$ is a symmetric pair.
\end{enumerate}
\end{lemma}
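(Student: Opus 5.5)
Part (1) follows from \eqref{eq twists}, and part (2) reduces to (1) once we compare the fixed-point groups of $\theta_g$ on $\bL$ and on $\bG$.

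For (1), we check the definition of a symmetric pair directly. The map $\theta_g=\ad(g)\circ\theta\circ\ad(g)^{-1}$ is an $F$-rational involution of $\bG$ since $g\in G=\bG(F)$. By \eqref{eq twists}, $\bG^{\theta_g}=g\bG^\theta g^{-1}$. Conjugation by $g$ is an automorphism of algebraic groups defined over $F$, so it carries neutral components to neutral components and $F$-points to $F$-points. Hence
\[(\bG^{\theta_g})^\circ=g(\bG^\theta)^\circ g^{-1},\]
and therefore $(\bG^{\theta_g})^\circ(F)=g(\bG^\theta)^\circ(F)g^{-1}$. Conjugating the chain $(\bG^\theta)^\circ(F)\subseteq H\subseteq G^\theta$ by $g$ gives
\[(\bG^{\theta_g})^\circ(F)\subseteq gHg^{-1}\subseteq G^{\theta_g}.\]
Finally, $gHg^{-1}$ is closed because $H$ is closed. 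This proves (1).

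For (2), by (1) we may replace $(G,H,\theta)$ by $(G,gHg^{-1},\theta_g)$, so we may assume $g=e$ and that $\bL$ is $\theta$-stable. The group $\bL$ is a Levi $F$-subgroup, hence connected reductive over $F$, and $\theta|_{\bL}$ is an $F$-rational involution of it. We have $\bL^\theta=\bL\cap\bG^\theta$, so $L^\theta=L\cap G^\theta$, and $L\cap H$ is closed in $L$. The upper inclusion $L\cap H\subseteq L^\theta$ is immediate. The remaining point, and the main one, is the lower inclusion $(\bL^\theta)^\circ(F)\subseteq L\cap H$.

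To prove it, note first that $(\bL^\theta)^\circ$ is a connected closed subgroup of $\bG^\theta$. It therefore lies in $(\bG^\theta)^\circ$, since the neutral component contains every connected subgroup through the identity. Taking $F$-points, we get
\[(\bL^\theta)^\circ(F)\subseteq L\cap(\bG^\theta)^\circ(F)\subseteq L\cap H,\]
which completes the proof. The only point needing care is that neutral components are taken scheme-theoretically over $F$, and that $(\bL^\theta)^\circ\subseteq(\bG^\theta)^\circ$ holds as algebraic groups, which is what makes the passage to $F$-points valid. Smoothness of $\bG^\theta$ in characteristic $\neq 2$ is standard, so neutral components behave as usual.
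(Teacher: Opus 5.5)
Your proof is correct and follows the paper's argument. For (1), both use \eqref{eq twists} and the fact that $\ad(g)$ is an $F$-rational isomorphism $\bG^\theta\to\bG^{\theta_g}$ preserving neutral components; for (2), both use that the connected group $(\bL^{\theta_g})^\circ$ lies in $(\bG^{\theta_g})^\circ$ and then pass to $F$-points. Your preliminary reduction to $g=e$ via (1) is only cosmetic, since the paper simply works with $\theta_g$ directly.
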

\begin{proof}
Point (1) follows from \eqref{eq twists}, since $\ad(g)$ is an $F$-rational isomorphism from $\bG^{\theta}$ onto $\bG^{\theta_g}$ and therefore maps neutral component onto neutral component. For (2), $(\bL^{\theta_g})^\circ$ is a connected subgroup of $\bG^{\theta_g}$, hence
\[(\bL^{\theta_g})^\circ(F)\subseteq L\cap (\bG^{\theta_g})^\circ(F)\subseteq L\cap gHg^{-1}\subseteq L^{\theta_g}.\]
\end{proof}

We set
\[M^1:=\bigcap_\chi\ker|\chi|_F,\]
where $\chi$ runs over the $F$-rational algebraic characters of $\bM$. It is well-known that \[\Lambda:=\frac{M}{M^1}\] is a free $\Z$-module of rank $r$ equal to that of the split component $A_M$ of $\bM$.

Let $R$ be a commutative ring. The group algebra $R[\Lambda]$ of the discrete group $\Lambda$ represents unramified characters:
\[\cX(R):=\Hom(\Lambda,R^\times)\simeq \Hom_{R\text{-}\mathrm{alg}}(R[\Lambda],R).\]
Thus $\cX(R)$ is the set of $R$-points of the torus $\cX$ of unramified characters of $M$. Writing $\mathcal O_R(\cX):=\Gamma(\cX,\mathcal O_{\cX})$ for its ring of regular functions, we have canonically \[\mathcal O_R(\cX)\simeq R[\Lambda],\] and we shall identify both rings with an equality.

If $x\in X$ is $M$-admissible, then 
$\theta_x$ acts on $\Lambda$, and one checks that the quotient \[\frac{\Lambda}{(\theta_x+1)\Lambda}\] has torsion of the form \[(\frac{\Lambda}{(\theta_x+1)\Lambda})_{tor}\simeq (\frac{\Z}{2\Z})^{a_x}\] for $0\leq a_x \leq r$. We set 
\[\Lambda_x:=(\frac{\Lambda}{(\theta_x+1)\Lambda})/(\frac{\Lambda}{(\theta_x+1)\Lambda})_{tor},\] and \[\cX_x(R):=\Hom(\Lambda_x,R^\times)\simeq \Hom_{R\text{-}\mathrm{alg}}(R[\Lambda_x],R).\] This is the set of $R$-points of the subtorus $\cX_x$ of $\cX$ with character lattice $\Lambda_x$. Over an algebraically closed field, it is the reduced neutral component of the diagonalizable group of $\theta_x$-anti-invariant unramified characters of $M$; see \cite[Remark~12.5]{Milne}. In particular it makes sense, for $\chi\in \cX_x(R)$ and $a\in \mathcal O_R(\cX_x)=R[\Lambda_x]$, to consider $a(\chi)=\mathrm{ev}_\chi(a)\in R.$

\section{Induced representations and the geometric lemma}\label{sec induced}

Here $R$ is a commutative $\Z[1/p]$-algebra, and we assume the existence of a square root 
$q^{1/2}\in R^\times$ of $q$, which is now fixed.

\subsection{Universal induced representations}

The notation $G$, $\theta$, $H$,
$X$ and $P=MV$ is as before.  We consider smooth $R$-representations of $G$
and of its closed subgroups, as in \cite{V}. Every such subgroup $J$ has a
compact open pro-$p$ subgroup. Since $p$ is invertible in $R$, the conventions
of \Cref{sec linear} apply to $J$ with coefficients in any commutative
$R$-algebra $B$. Fix an $M$-admissible
$x\in X$ and put
\[
 A_x:=\mathcal O_R(\cX_x)=R[\Lambda_x].
\]

We define the universal $\theta_x$-anti-invariant unramified character 
\[\Psi_{x}:M\to A_x^\times\] by \[\Psi_{x}(m)=[m]\in \Lambda_x\subseteq A_x^\times,\] where $[m]$ denotes the image of $m$ under $M\to\Lambda\to\Lambda_x$.

 The chosen square root of $q$ defines a square root $\delta_P^{1/2}:M\to R^\times$ of the modulus character $\delta_P$ of $P$. We let $\sigma$ be a smooth $R$-representation of $M$. We denote by $\Psi_x\otimes \sigma$ the $\mathcal H_{A_x}[M]$-module on which $M$ acts by $m\cdot (a\otimes v)=\Psi_x(m)a\otimes \sigma(m)v$. We consider the normalized parabolically induced representation
 \[I_{P,x}(\sigma):=I_P^G(\Psi_x\otimes \sigma),\] it is an $\mathcal H_{A_x}[G]$-module, where $G$ acts by right translation on the space of functions $f$ from $G$ to the $\mathcal H_{A_x}[M]$-module $\Psi_x \otimes V_\sigma$ which satisfy the transformation rule
 \[f(mvg)=\delta_P^{1/2}(m)(\Psi_x(m)\otimes \sigma(m))f(g),\] with $m\in M, v\in V, g\in G$, and are smooth.
 
\subsection{Frobenius reciprocity in all degrees}

We provide a proof of Frobenius reciprocity in all degrees for representations compactly induced from a closed subgroup. Recall that $H$ is the subgroup fixed in \Cref{sec pairs}; in particular, it is unimodular.

\begin{lemma}\label{lm ext frobenius}
Let $K$ be a closed subgroup of $H$, let $A$ be a commutative $R$-algebra, let $\eta:H\to R^\times$ be a smooth character, and let $W$ be an $A$-module, regarded as an $\mathcal H_A[H]$-module with trivial $H$-action. For every nondegenerate $\mathcal H_A[K]$-module $\tau$ and every $k\geq0$ there is an isomorphism
\[\Ext^k_{\mathcal H_A[H]}\big(\cInd_K^H\tau,\,W\otimes\eta\big)\simeq \Ext^k_{\mathcal H_A[K]}\big(\tau,\,W\otimes\delta_K\eta\big),\]
natural in $\tau$ and $W$, where the $\Ext$ groups are computed in the corresponding categories of nondegenerate modules.
\end{lemma}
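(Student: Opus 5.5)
The plan is to derive the higher-degree statement from the degree-zero one by a standard derived-functor argument. Both sides are $\delta$-functors in $W$, and the left side is also a contravariant $\delta$-functor in $\tau$. I would use the variable $\tau$ together with projective resolutions.

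\textbf{Degree zero.} First establish the isomorphism for $k=0$, naturally in $\tau$ and $W$:
\[
\Hom_{\mathcal H_A[H]}\big(\cInd_K^H\tau,W\otimes\eta\big)\simeq\Hom_{\mathcal H_A[K]}\big(\tau,W\otimes\delta_K\eta\big).
\]
This is the usual duality for compact induction, with the modulus character appearing because $H$ is unimodular. Concretely, realise $\cInd_K^H\tau$ as smooth functions $f:H\to\tau$ with $f(kh)=\tau(k)f(h)$ and compact support modulo $K$. A $K$-map $\lambda:\tau\to W\otimes\delta_K\eta$ is sent to
\[
f\mapsto\int_{K\backslash H}\eta(h)^{-1}\lambda(f(h))\,dh,
\]
where the integrand is a smooth compactly supported section of the line bundle attached to $\delta_K$. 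That bundle is exactly the one on which an $A$-valued right-invariant functional on $K\backslash H$ exists, as in \cite[I.2.8]{V}, since $p$ is invertible. Conversely, an $H$-map is composed with the natural $K$-embedding $\tau\otimes\delta_K^{-1}\to(\cInd_K^H\tau)|_K$ onto functions supported in $KU$ for small compact open $U$. I would cite Vignéras for this duality, with the $\eta$-twist absorbed by $\cInd_K^H(\tau)\otimes\eta^{-1}\simeq\cInd_K^H(\tau\otimes\eta^{-1}|_K)$.

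\textbf{Exactness and projectives.} Next I need two facts.
\begin{enumerate}
\item $\cInd_K^H$ is exact from nondegenerate $\mathcal H_A[K]$-modules to nondegenerate $\mathcal H_A[H]$-modules. This is standard: it is exact on $U$-invariants for compact open pro-$p$ subgroups $U$, using finitely many double cosets locally.
\item $\cInd_K^H$ sends projectives to projectives.
\end{enumerate}
For (2), the restriction functor $\mathrm{Res}_K^H$ from nondegenerate $H$-modules to nondegenerate $K$-modules is exact. Twisting by $\delta_K$ gives the right adjoint $M\mapsto(M|_K)\otimes\delta_K$, which is exact. A left adjoint of an exact functor preserves projectives. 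This requires the degree-zero adjunction for arbitrary nondegenerate $H$-modules $M$, not only $W\otimes\eta$, and I would state it in that generality; the integration argument above works verbatim.

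\textbf{Conclusion.} Take a projective resolution $P_\bullet\to\tau$ in nondegenerate $\mathcal H_A[K]$-modules; these exist by \cite[I.5.10]{V}. Then $\cInd_K^H P_\bullet\to\cInd_K^H\tau$ is a projective resolution by (1) and (2). Applying $\Hom(-,W\otimes\eta)$ and the degree-zero adjunction termwise gives an isomorphism of complexes
\[
\Hom_{\mathcal H_A[H]}\big(\cInd_K^HP_\bullet,W\otimes\eta\big)\simeq\Hom_{\mathcal H_A[K]}\big(P_\bullet,W\otimes\delta_K\eta\big).
\]
Taking cohomology yields the claim. Naturality in $\tau$ follows by lifting maps to resolutions, and naturality in $W$ is immediate.

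\textbf{Main obstacle.} I expect the hardest step to be the precise normalisation of the modulus character: checking that the integration functional has equivariance $\delta_K$ and not $\delta_K^{-1}$ under the paper's conventions, and that $A$-valued quotient measures make sense over a general $\Z[1/p]$-algebra. I would settle this by citing Vignéras' construction of the $K\backslash H$ integral, with the convention $\delta_K(k)=\vol(kUk^{-1})/\vol(U)$.
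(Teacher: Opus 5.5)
There is a genuine gap in step (2). When $K$ is not open in $H$, which is the relevant case here (e.g.\ $K=H\cap u^{-1}Pu$), $\cInd_K^H$ is \emph{not} left adjoint to $M\mapsto (M|_K)\otimes\delta_K$ on all nondegenerate $\mathcal H_A[H]$-modules. So your claim that the degree-zero argument ``works verbatim'' for arbitrary $M$ is false.

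Here is a counterexample. Take $K=\{1\}$ and $\tau=A$ in a non-discrete $H$, and $M=\cInd_{\{1\}}^H A=\mathcal H_A[H]$. The integration map sends $m\in M$ to convolution by $m$. The identity endomorphism of $\mathcal H_A[H]$ is not of this form, because $\mathcal H_A[H]$ has no unit.

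The problem is already visible in the inverse map you sketch. When $K$ is not open, there is no natural $K$-embedding $\tau\otimes\delta_K^{-1}\to(\cInd_K^H\tau)|_K$. There are only the sections $F_{v,U}$ supported on $KU$, which depend on $U$ and satisfy $F_{\tau(k)v,U}=k\cdot F_{v,k^{-1}Uk}$. The normalized value $\frac{\vol_K(K\cap U)}{\vol_H(U)}\Lambda(F_{v,U})$ is independent of $U$, and hence $K$-equivariant, precisely because small compact open subgroups act trivially on the target $W\otimes\eta$. For a general $M$ it only gives a compatible family indexed by $U$, not a map into $M$. So the degree-zero isomorphism holds only for targets like $W\otimes\eta$, and you cannot deduce preservation of projectives from it.

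The conclusion of (2) is nevertheless true, and the paper obtains what is needed directly. The functor $\cInd_K^H$ is exact and commutes with direct sums. The map $F\mapsto\bigl(h\mapsto F(h)(1)\bigr)$ identifies $\cInd_K^H(\mathcal H_A[K])$ with $\mathcal H_A[H]$, both taken with the right regular action, and these are projective generators. Hence a resolution of $\tau$ by direct sums of copies of $\mathcal H_A[K]$ is carried to a projective resolution of $\cInd_K^H\tau$. Since every projective is a direct summand of such a direct sum, this also proves (2) in general. Applying the degree-zero isomorphism termwise, only with targets $W\otimes\eta$, then finishes the proof exactly as in your conclusion.
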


\begin{proof}
We first treat the case $k=0$, by constructing mutually inverse maps between the two sides. For $v\in\tau$ and a compact open pro-$p$ subgroup $U$ of $H$ such that $v$ is fixed by $\tau(K\cap U)$ and $\eta_{|U}=1$, let $F_{v,U}\in\cInd_K^H\tau$ be the section supported on $KU$ defined by $F_{v,U}(ku):=\tau(k)v$ for $k\in K$ and $u\in U$; it is well defined because $v$ is fixed by $\tau(K\cap U)$. All volume ratios appearing below are powers of $p$, hence invertible in $R$.

Given $\Lambda\in\Hom_{\mathcal H_A[H]}(\cInd_K^H\tau,W\otimes\eta)$, set
\[\ell(v):=\frac{\vol_K(K\cap U)}{\vol_H(U)}\,\Lambda(F_{v,U}).\]
This does not depend on the choice of $U$: for $U'$ normal in $U$ one has $F_{v,U}=\sum_{u}u^{-1}\cdot F_{v,U'}$, the sum being over $u\in(K\cap U)\backslash U/U'$, and the volume ratios match. Moreover $F_{\tau(k_0)v,U}=k_0\cdot F_{v,k_0^{-1}Uk_0}$ for $k_0\in K$, so that
\[\ell(\tau(k_0)v)=\eta(k_0)\,\frac{\vol_K(K\cap U)}{\vol_K\big(k_0^{-1}(K\cap U)k_0\big)}\,\ell(v)=\delta_K(k_0)\,\eta(k_0)\,\ell(v),\]
since $H$ is unimodular. Hence $\ell\in\Hom_{\mathcal H_A[K]}(\tau,W\otimes\delta_K\eta)$.

Conversely, if $F\in\cInd_K^H\tau$ is right-invariant under $U$ as above and supported on $\bigsqcup_{j}Kh_jU$, then
\[F=\sum\nolimits_j h_j^{-1}\cdot F_{F(h_j),\,h_jUh_j^{-1}},\]
so that the sections $F_{v,V}$ and their translates span $\cInd_K^H\tau$. Given $\ell\in\Hom_{\mathcal H_A[K]}(\tau,W\otimes\delta_K\eta)$, the assignment
\[
 \Lambda(F):=\sum\nolimits_j
 \eta(h_j)^{-1}\,
 \frac{\vol_H(U)}{\vol_K\big(K\cap h_jUh_j^{-1}\big)}\,
 \ell\big(F(h_j)\big)
\]
is independent of the choices of $U$ and of the representatives $h_j$.
Indeed, replacing $h_j$ by $kh_j$, with $k\in K$, produces the three factors
$\eta(k)^{-1}$, $\delta_K(k)\eta(k)$ and $\delta_K(k)^{-1}$, which cancel.
The assignment therefore defines an element of
\[
 \Hom_{\mathcal H_A[H]}(\cInd_K^H\tau,W\otimes\eta).
\]
The two assignments are mutually inverse, as one checks on the sections
$F_{v,U}$, and natural in $\tau$ and $W$.  This settles the case $k=0$.
By Yoneda's lemma it also establishes, for every nondegenerate $\mathcal H_A[K]$-module $\sigma$, the
coinvariant identity
\[
 (\cInd_K^H\sigma)_H\simeq(\sigma\otimes\delta_K^{-1})_K.
\]

This proof applies to arbitrary $W$.  For finite-dimensional $W$ over
$\C$, the degree-zero identity also follows from the contragredient formula
and Frobenius reciprocity in \cite[1.9 (b), (d)]{BZ}:
\[
 \Hom_H(\cInd_K^H\sigma,\widetilde\tau)
 \simeq\Hom_K\big(\sigma,\widetilde{(\tau|_K)}\otimes\delta_K\big).
\]
Here the contragredient is taken after restriction to $K$.

We now pass to higher degrees. For $J=K$ or $J=H$, consider
$\mathcal H_A[J]$ with the right regular action
$(j\cdot f)(x)=f(xj)$. This is a projective generator of the
category of smooth $A$-representations of $J$, by
\cite[proof of Proposition~2.1]{Thorne}.
The functor $\cInd_K^H$ is exact and commutes with direct sums.
We have a natural isomorphism
\[
\cInd_K^H\big(\mathcal H_A[K]\big)\simeq\mathcal H_A[H],
\]
given by $F\mapsto(h\mapsto F(h)(1))$, with inverse
$\phi\mapsto\big(h\mapsto(k\mapsto\phi(kh))\big)$. Consequently, if $P_\bullet\to\tau$ is a resolution by direct sums of copies of $\mathcal H_A[K]$, then $\cInd_K^HP_\bullet\to\cInd_K^H\tau$ is a projective resolution. Applying the degree-zero isomorphism termwise identifies the complexes
\[\Hom_{\mathcal H_A[H]}(\cInd_K^HP_\bullet,W\otimes\eta)\simeq\Hom_{\mathcal H_A[K]}(P_\bullet,W\otimes\delta_K\eta),\]
and taking cohomology gives the asserted isomorphisms.
\end{proof}

\subsection{The Bernstein--Zelevinsky geometric lemma à la Offen}\label{ssec geometric}

From now on, fix a smooth character $\eta:H\to R^\times$
trivial on unipotent elements of $H$; this condition is automatic
when $F$ has characteristic zero.

In this section we describe the space 
$\Hom_{\mathcal H_{A_x}[H]}(I_{P,x}(\sigma),W)$ for suitable $\mathcal H_{A_x}[H]$-modules $W$, following \cite{OJNT}, which adapts to symmetric pairs the method of \cite{BZ}. For $S\subseteq G$ and $x\in X$ of the form $x=x_u$ with $u\in G$, we write
\[S_x:=S\cap uHu^{-1}.\] This does not depend on the choice of $u$ such that $x=x_u$: if $x_u=x_{u'}$ then $h:=u^{-1}u'$ satisfies $\theta(h)=h$, so that $u'Hu'^{-1}=u(hHh^{-1})u^{-1}=uHu^{-1}$, the subgroup $H$ being normal in $G^\theta$ (\Cref{fact pi0} (1)). By \eqref{eq twists} we have $S_x\subseteq S^{\theta_x}$, with equality when $H=G^\theta$; for $\theta_x$-fixed points we always use the superscript notation $S^{\theta_x}$, so that no ambiguity can arise. For $Q=LU$ a parabolic subgroup of $G$ and $x\in X$, we set 
\[\delta_{Q,x}:=\delta_{Q^{\theta_x}}\delta_Q^{-1/2},\] a character of $Q^{\theta_x}$.

We write 
$G=\coprod_{i=1}^N Pu_i H$, which is legitimate by Fact \ref{fact open}. By the usual ordering lemma \cite[Section 5.1, condition (3)]{BZ}, we may order the representatives so that \[Y_k:=\coprod_{i=1}^k Pu_i H\] is open for every $k=1,\dots,N$. Moreover we assume that the $u_i$ are chosen such that whenever $Pu_iH$ contains $u$ such that $x_u$ is $M$-admissible, then $x_i:=x_{u_i}$ itself is already $M$-admissible. We recall from \Cref{sec pairs} that if $x_i$ is $(M,\theta)$-split, then $Pu_iH$ is open and $x_i$ is $M$-admissible.

\begin{remark}\label{rk orbit}
If $u'=pu_ih\in Pu_iH$, then $\theta(h)=h$ yields $x_{u'}=p\cdot x_i$, where $p\cdot x:=px\theta(p)^{-1}$: the elements $x_u$ for $u\in Pu_iH$ thus form the full twisted $P$-orbit of $x_i$, exactly as in the case $H=G^\theta$. By \cite[Corollary 6.2]{OJNT}, the $M$-admissible elements of this orbit form a single twisted $M$-orbit, and since conjugation by $M$ induces the identity of $\Lambda=M/M^1$, the involution of $\Lambda$ induced by $\theta_{x_i}$, hence also $\Lambda_{x_i}$, $\cX_{x_i}$ and $\Psi_{x_i}$, do not depend on the choice of the $M$-admissible representative.
\end{remark}

Choose the representatives $u_i$ as in \cite[Lemmas 3.2 and 3.3]{OJNT}. Then, for each $i=1,\dots,N$, there exist a parabolic subgroup $Q_i=L_iU_i$ of $G$ contained in $P$, with $L_i\subseteq M$ and $V\subseteq U_i$, and a subgroup $N_i$ of $P^{\theta_{x_i}}$ consisting of unipotent elements, such that 
\begin{equation}\label{eq stab}
P^{\theta_{x_i}}=L_i^{\theta_{x_i}}\ltimes N_i,\qquad \mathrm{pr}_M(N_i)=M\cap U_i,
\end{equation}
where $\mathrm{pr}_M:P\to M$ denotes the projection along $V$.

\begin{remark}\label{rk standard}
The results of \cite{OJNT} are stated for $P$ standard with respect to a fixed pair $(A_0,P_0)$, where $A_0$ is a $\theta$-stable maximal $F$-split torus of $G$ \cite[Lemma 2.4]{HW} contained in the minimal parabolic subgroup $P_0$, and for representatives chosen as in \cite[Lemma~3.2]{OJNT}. This entails no loss of generality: write $P=gP_\star g^{-1}$ and $M=gM_\star g^{-1}$ with $P_\star \supseteq P_0$ standard and $M_\star\supseteq A_0$ a standard Levi subgroup of $P_\star$, and set
\[x_\star=g^{-1}x\theta(g),\qquad \sigma_\star=\sigma\circ\ad(g)\quad\text{on }M_\star.\]
Left translation by $g^{-1}$ identifies $P\bs G/H$ with $P_\star \bs G /H$, and the identities 
\[x_{g^{-1}u}=g^{-1}x_u\, \theta(g^{-1})^{-1},\qquad \theta_{g^{-1}u}=\ad(g)^{-1}\circ \theta_u\circ \ad(g)\] 
show that all attached data (stabilizers and their decompositions \eqref{eq stab}, modulus characters, the action on $\Lambda$, hence $\Lambda_x$, $\Psi_x$, $A_x$, and the representations $I_{P,x}(\sigma)$ together with their filtrations introduced below) correspond to the analogous data attached to $(x_\star,\sigma_\star)$ under transport of structure by $\ad(g)^{-1}$ -- the involution $\theta$, the subgroup $H$ and the pair $(A_0,P_0)$ being untouched. We use this reduction without further comment.
\end{remark}

The following lemma is the point where our conventions on $H$ intervene.

\begin{lemma}\label{lm stab}
For each $i$ we have $N_i\subseteq u_iHu_i^{-1}$, so that 
\[P_{x_i}=(L_i)_{x_i}\ltimes N_i.\]
Moreover, $N_i\subseteq U_i$ and consequently
\[P_{x_i}=(Q_i)_{x_i}.\]
The group $P_{x_i}$ (resp. $(L_i)_{x_i}$) is an open normal subgroup of finite index of $P^{\theta_{x_i}}$ (resp. of $L_i^{\theta_{x_i}}$), and 
\[\delta_{P_{x_i}}=(\delta_{P^{\theta_{x_i}}})_{|P_{x_i}},\qquad \delta_{(Q_i)_{x_i}}=(\delta_{(Q_i)^{\theta_{x_i}}})_{|(Q_i)_{x_i}}.\]
\end{lemma}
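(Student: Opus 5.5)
We first show $N_i\subseteq u_iHu_i^{-1}$. Conjugating by $u_i^{-1}$ turns this into $u_i^{-1}N_iu_i\subseteq H$. By \eqref{eq twists}, $P^{\theta_{x_i}}=P\cap u_iG^\theta u_i^{-1}$. Hence $u_i^{-1}N_iu_i$ is a subgroup of $G^\theta$ consisting of unipotent elements, since conjugation preserves unipotence. By \Cref{fact pi0} (2), every unipotent element of $G^\theta$ lies in $H$, so $N_i\subseteq u_iHu_i^{-1}$.

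For the semidirect product, intersect the decomposition \eqref{eq stab} with $u_iHu_i^{-1}$. An element $p\in P_{x_i}$ writes uniquely as $p=ln$ with $l\in L_i^{\theta_{x_i}}$ and $n\in N_i$. Since $n\in u_iHu_i^{-1}$, we get $l=pn^{-1}\in u_iHu_i^{-1}\cap L_i=(L_i)_{x_i}$. Normality of $N_i$ is inherited from \eqref{eq stab}, which gives $P_{x_i}=(L_i)_{x_i}\ltimes N_i$.

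For $N_i\subseteq U_i$, write $n\in N_i$ as $n=\mathrm{pr}_M(n)v$ with $v\in V$. By \eqref{eq stab}, $\mathrm{pr}_M(n)\in M\cap U_i$. Also $V\subseteq U_i$, so $n\in U_i$. Then $P_{x_i}=(L_i)_{x_i}N_i\subseteq Q_i\cap u_iHu_i^{-1}=(Q_i)_{x_i}\subseteq P_{x_i}$, where the last inclusion holds because $Q_i\subseteq P$. This gives $P_{x_i}=(Q_i)_{x_i}$.

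For the finite-index statements, apply \Cref{lm induced pair} (1) to $g=u_i$. This shows that $u_iHu_i^{-1}$ is open, normal and of finite index in $G^{\theta_{x_i}}=u_iG^\theta u_i^{-1}$, using \Cref{fact pi0} (1) for the twisted pair. Intersecting with $P$ (resp. $L_i$) then shows that $P_{x_i}$ (resp. $(L_i)_{x_i}$) is open, normal and of finite index in $P^{\theta_{x_i}}$ (resp. $L_i^{\theta_{x_i}}$).

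The modulus identities follow from the general fact that if $K'\subseteq K$ is an open subgroup, then $\delta_{K'}=\delta_K|_{K'}$. Indeed, a Haar measure on $K$ restricts to a Haar measure on the open subset $K'$, and conjugation by $k\in K'$ scales both by the same factor. Applying this to $P_{x_i}\subseteq P^{\theta_{x_i}}$ gives the first identity. Applying it to $(Q_i)_{x_i}\subseteq (Q_i)^{\theta_{x_i}}$ gives the second; here $(Q_i)_{x_i}$ is open in $(Q_i)^{\theta_{x_i}}$ because it is the intersection of $(Q_i)^{\theta_{x_i}}$ with the open subgroup $u_iHu_i^{-1}$ of $G^{\theta_{x_i}}$.

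The only delicate point is the first step: identifying $N_i$ as consisting of unipotent elements of the full fixed group, so that \Cref{fact pi0} (2) applies. This is where the condition $(\bG^\theta)^\circ(F)\subseteq H$ is used. In positive characteristic it relies on the argument in part (IV) of the proof of \Cref{fact pi0}, and everything else is formal.
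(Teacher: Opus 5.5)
Your proof is correct and follows the paper's argument step by step. You use unipotence together with \Cref{fact pi0}~(2) for $N_i\subseteq u_iHu_i^{-1}$, conjugating back to $G^\theta$ where the paper works directly with $(\bG^{\theta_{x_i}})^\circ(F)=u_i(\bG^\theta)^\circ(F)u_i^{-1}$; you then use \eqref{eq stab} for the semidirect product, the projection along $V$ for $N_i\subseteq U_i$, intersection with the open normal finite-index subgroup $u_iHu_i^{-1}$ of $G^{\theta_{x_i}}$, and restriction of modulus characters to open subgroups, all as the paper does (routing the finite-index claim through \Cref{lm induced pair} is harmless, though conjugating \Cref{fact pi0}~(1) by $u_i$ already suffices).
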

\begin{proof}
The elements of $N_i$ are unipotent and fixed by $\theta_{x_i}$, hence lie in 
\[(\bG^{\theta_{x_i}})^\circ(F)=u_i(\bG^{\theta})^\circ(F)u_i^{-1}\subseteq u_iHu_i^{-1}\] by \Cref{fact pi0} (2) and \eqref{eq twists}. If now $ln\in P_{x_i}$ with $l\in L_i^{\theta_{x_i}}$ and $n\in N_i$, then $l\in L_i\cap u_iHu_i^{-1}=(L_i)_{x_i}$, whence the semidirect decomposition of $P_{x_i}$. Since $\mathrm{pr}_M(N_i)=M\cap U_i$ and $V\subseteq U_i$, the decomposition $P=MV$ also gives $N_i\subseteq U_i$. Thus $P_{x_i}\subseteq (Q_i)_{x_i}$, and the reverse inclusion is immediate from $Q_i\subseteq P$. Since $u_iHu_i^{-1}$ is an open normal subgroup of finite index of $G^{\theta_{x_i}}$ (\Cref{fact pi0} conjugated by $u_i$), intersecting with $P$ and with $L_i$ gives the next assertion, and the equalities of modulus characters hold because the modulus character of an open subgroup of a locally compact group is the restriction of that of the ambient group.
\end{proof}

Set $\cV:=I_{P,x}(\sigma)$ and $\cV_0:=\{0\}$. For $i=1,\dots,N$, we set 
\[\cV_i:=\{f\in \cV,\ \mathrm{supp}(f)\subseteq  Y_i\}.\] Thus \[\cV_0\subseteq \cV_1\subseteq \dots \subseteq \cV_N=\cV\] is an $\mathcal H_{A_x}[H]$-module filtration. Now let $W$ be an $A_x$-module, regarded as an $\mathcal H_{A_x}[H]$-module with trivial $H$-action; when $R$ is an integral domain, the cases $W=A_x$, $W=F_x$ and $W=F_x/A_x$, for $F_x$ the fraction field of $A_x$, are the only ones that we shall need.

For a representation $\tau$ of a subgroup $S$ of $G$ and $g\in G$,
we denote by ${}^g\tau$ the representation of $gSg^{-1}$ on the same
space, defined by
\[
({}^g\tau)(s)=\tau(g^{-1}sg),\qquad s\in gSg^{-1}.
\]

\begin{proposition}\label{prop geometric}
For each $i=1,\dots,N$ and every smooth character $\eta:H\to R^\times$ trivial on unipotent elements, there is an isomorphism
\[\Hom_{\mathcal H_{A_x}[H]}(\cV_i/\cV_{i-1},W\otimes \eta)\simeq \Hom_{\mathcal H_{A_x}[(L_i)_{x_i}]}\big(\Psi_x\otimes r_{L_i,M}(\sigma),W\otimes \delta_{Q_i,x_i}\eta^{u_i}\big).\]
Here $r_{L_i,M}(\sigma)$ is the normalized Jacquet module of $\sigma$, $\delta_{Q_i,x_i}$ is restricted to $(L_i)_{x_i}$, and $\eta^{u_i}(m):=\eta(u_i^{-1}mu_i)$.
\end{proposition}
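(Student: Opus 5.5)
The plan is the standard Bernstein--Zelevinsky/Offen chain of isomorphisms, with the degree-zero case of \Cref{lm ext frobenius} as the key input.

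\emph{Step 1: restriction to one orbit.} Since $Y_{i-1}$ is open in $Y_i$, restricting functions to the locally closed orbit $Pu_iH$ identifies $\cV_i/\cV_{i-1}$ with the space of smooth functions on $Pu_iH$ satisfying the left $P$-transformation rule and compactly supported modulo $P$. Composing with right translation by $u_i$ and restriction to $H$ gives an $H$-equivariant isomorphism
\[
\cV_i/\cV_{i-1}\simeq \cInd_{H\cap u_i^{-1}Pu_i}^{H}\Big({}^{u_i^{-1}}\big((\delta_P^{1/2}\Psi_x\otimes\sigma)_{|P_{x_i}}\big)\Big),
\]
where $\Psi_x\otimes\sigma$ is extended trivially on $V$. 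This is the usual Mackey argument of \cite[Section~5]{BZ}, as adapted in \cite{OJNT}. Conjugating by $u_i$ replaces $H\cap u_i^{-1}Pu_i$ by $P_{x_i}$, and $\eta$ by $\eta^{u_i}$.

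\emph{Step 2: Frobenius reciprocity.} I would apply \Cref{lm ext frobenius} with $k=0$ and $K=H\cap u_i^{-1}Pu_i$. After transport by $u_i$, this gives
\[
\Hom_{\mathcal H_{A_x}[H]}(\cV_i/\cV_{i-1},W\otimes\eta)\simeq\Hom_{\mathcal H_{A_x}[P_{x_i}]}\big(\delta_P^{1/2}\Psi_x\otimes\sigma,\;W\otimes\delta_{P_{x_i}}\eta^{u_i}\big).
\]
By \Cref{lm stab}, $\delta_{P_{x_i}}$ is the restriction of $\delta_{P^{\theta_{x_i}}}$. Moving $\delta_P^{1/2}$ to the right-hand side therefore produces the character $\delta_{P,x_i}\eta^{u_i}$ of $P_{x_i}$.

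\emph{Step 3: passage to the Jacquet module, the main obstacle.} By \Cref{lm stab}, $P_{x_i}=(L_i)_{x_i}\ltimes N_i$, with $N_i$ unipotent, $N_i\subseteq U_i$, and $\mathrm{pr}_M(N_i)=M\cap U_i$. The target character $W\otimes\delta_{P,x_i}\eta^{u_i}$ is trivial on $N_i$: for $\eta^{u_i}$ this is the hypothesis that $\eta$ is trivial on unipotent elements, since $u_i^{-1}N_iu_i$ consists of unipotent elements of $H$, and for the modulus characters it holds because $N_i$ is a unipotent group, i.e.\ a union of compact subgroups. Hence every homomorphism factors through the $N_i$-coinvariants. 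The group $V$ acts trivially, and $N_i$ surjects onto $M\cap U_i$ through $\mathrm{pr}_M$, so these coinvariants are the $(M\cap U_i)$-coinvariants of $\Psi_x\otimes\sigma$, namely $\Psi_x\otimes\sigma_{M\cap U_i}$. The subtle point is that, on $\Psi_x\otimes\sigma$, the action of $n\in N_i$ depends only on $\mathrm{pr}_M(n)$; this is what makes the two coinvariant spaces coincide.

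\emph{Step 4: normalization.} Passing to the normalized Jacquet module $r_{L_i,M}(\sigma)=\delta_{M\cap Q_i}^{-1/2}\sigma_{M\cap U_i}$ and using $\delta_{Q_i}=\delta_P\,\delta_{M\cap Q_i}$ on $L_i$, we get on $(L_i)_{x_i}$
\[
\delta_{P^{\theta_{x_i}}}\delta_P^{-1/2}\delta_{M\cap Q_i}^{-1/2}=\delta_{(Q_i)^{\theta_{x_i}}}\delta_{Q_i}^{-1/2}=\delta_{Q_i,x_i}.
\]
Here $\delta_{P^{\theta_{x_i}}}=\delta_{(Q_i)^{\theta_{x_i}}}$ on $(L_i)_{x_i}$, because $P^{\theta_{x_i}}=(Q_i)^{\theta_{x_i}}$ by the same argument as in \Cref{lm stab}, using \eqref{eq stab} and $N_i\subseteq U_i$. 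This is exactly the claimed isomorphism.

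The delicate bookkeeping is in Steps 3 and 4: the coinvariant identification over $A_x$ and the modulus character identities. Step 1 should be routine.
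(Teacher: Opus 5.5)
Your proposal is correct and follows the paper's proof essentially step for step. The steps agree: you identify $\cV_i/\cV_{i-1}$ with a compact induction from $H\cap u_i^{-1}Pu_i$, apply the degree-zero case of \Cref{lm ext frobenius} (which needs $H$ unimodular), transport by $\ad(u_i)$, pass to $N_i$-coinvariants using \Cref{lm stab}, and finish with the modulus bookkeeping $\delta_{Q_i}|_{L_i}=\delta_P|_{L_i}\,\delta_{M\cap Q_i}$ and $P^{\theta_{x_i}}=(Q_i)^{\theta_{x_i}}$. The only point you leave implicit is that $\Psi_x$ is trivial on the unipotent group $M\cap U_i$, which is what justifies writing the coinvariants as $\Psi_x\otimes\sigma_{M\cap U_i}$; the paper states this explicitly.
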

\begin{proof}
As in the proof of \cite[Proposition 4.1]{OJNT}, restriction of functions to $Pu_iH$ identifies $\cV_i/\cV_{i-1}$ with $\cInd_{H\cap u_i^{-1}Pu_i}^{H}({}^{u_i^{-1}}(\delta_P^{1/2}\Psi_x\otimes \sigma))$. Here $H$ is unimodular because it is open in the group of $F$-points of the possibly disconnected reductive group $\bG^\theta$, so the case $k=0$ of \Cref{lm ext frobenius}, applied to the closed subgroup $H\cap u_i^{-1}Pu_i$ of $H$, computes $\Hom_{\mathcal H_{A_x}[H]}(\cV_i/\cV_{i-1},W\otimes\eta)$. Transporting by $\ad(u_i)$ therefore gives
\[\Hom_{\mathcal H_{A_x}[P_{x_i}]}\big(\Psi_x\otimes\sigma,\ W\otimes\delta_{P_{x_i}}\delta_P^{-1/2}\eta^{u_i}\big).\]
Put $R_i=L_i(M\cap U_i)$, a parabolic subgroup of $M$. By \Cref{lm stab},
\[P_{x_i}=(Q_i)_{x_i}=(L_i)_{x_i}\ltimes N_i,\qquad \mathrm{pr}_M(N_i)=M\cap U_i.\]
The characters $\Psi_x\circ\mathrm{pr}_M$ and $\eta^{u_i}$ are trivial on $N_i$, the latter because $\eta$ is trivial on unipotent elements. Taking $N_i$-coinvariants and using
\[r_{L_i,M}(\sigma)=\delta_{R_i}^{-1/2}\sigma_{M\cap U_i},\qquad
  \delta_{Q_i}|_{L_i}=\delta_P|_{L_i}\,\delta_{R_i},\]
we obtain
\[\Hom_{\mathcal H_{A_x}[(L_i)_{x_i}]}\big(\Psi_x\otimes r_{L_i,M}(\sigma),\ W\otimes\delta_{(Q_i)_{x_i}}\delta_{Q_i}^{-1/2}\eta^{u_i}\big).\]
Finally, \Cref{lm stab} identifies $\delta_{(Q_i)_{x_i}}$ with the restriction of $\delta_{(Q_i)^{\theta_{x_i}}}$, which proves the proposition.
\end{proof}

We finally observe that to say that $x_i$ is $M$-admissible is the same as saying that $L_i=M$ and $U_i=V$. 

\section{Non-archimedean intertwining periods}\label{sec periods}

Throughout this section $R$ is an algebraically closed field of characteristic
different from $p$, $\sigma$ is a finite-length smooth
$R$-representation of $M$, and $\eta$ is a smooth character of $H$ trivial on unipotent elements. We generalize the algebraic part of \cite{BD} to the setting of admissible intertwining periods. 

\subsection{Generic vanishing of orbit contributions}\label{sec gen vanish}

Here we establish the generic vanishing results needed
to define local intertwining periods algebraically by an existence
and uniqueness statement. Generalizing the argument of
\cite[Lemme~2.5(ii)]{BD} thanks to the formalism in \cite{OJNT}, we construct central elements of suitable
Levi subgroups whose action yields nonzero annihilators of the
$\Ext$ groups attached to orbits in the boundary of an admissible orbit.  

By \Cref{rk standard}, we may assume that $P$ and the Levi subgroups
chosen in \Cref{ssec geometric} are standard with respect to a
$\theta$-stable maximal $F$-split torus $A_0$ and a minimal parabolic
subgroup $P_0$ containing it. Write $W=N_G(A_0)/Z_G(A_0)$ and put
\[
\fa_0:=X_*(A_0)\otimes_\Z\R,\qquad
\fa_S:=X_*(A_S)\otimes_\Z\R\subseteq\fa_0
\]
for every Levi subgroup $S$ containing $A_0$, where $A_S$ is its split
component. Fix a rational inner product on $\fa_0$ invariant under $W$
and $\theta$, and denote orthogonal projection onto a subspace $E$ by
$\operatorname{pr}_E$. If $\theta_z(S)=S$, its action on $\fa_S$ is
orthogonal: conjugating $\theta_z(A_0)$ back to $A_0$ inside $S$ realizes
this action as the restriction of $w\theta$ for some $w\in W$.
For an involution $\tau$ of a vector space $U$, write $U^{\pm\tau}$ for
its $(\pm1)$-eigenspaces. All closures below are taken in the topology
of $F$-points.

\begin{lemma}\label{lm root separator}
Let $x=x_u$ be $M$-admissible, put $\cO_x:=PuH$, and let
$\cO_j:=Pu_jH$ be contained in
$\partial\cO_x:=\overline{\cO_x}\smallsetminus\cO_x$.
Choose $u_j$ and $Q_j=L_jU_j$ as in \Cref{ssec geometric}, and put
$x_j:=x_{u_j}$; in particular, $\theta_{x_j}(L_j)=L_j$. Then
\begin{equation}\label{eq root separator}
\operatorname{pr}_{\fa_M^{-\theta_x}}
\bigl(\fa_{L_j}^{\theta_{x_j}}\bigr)\neq\{0\}.
\end{equation}
\end{lemma}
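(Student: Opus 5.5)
We argue by contradiction: assume $\operatorname{pr}_{\fa_M^{-\theta_x}}(\fa_{L_j}^{\theta_{x_j}})=\{0\}$, i.e. $\fa_{L_j}^{\theta_{x_j}}\subseteq(\fa_M^{-\theta_x})^\perp$, and deduce that $\cO_j$ cannot lie in the boundary of $\cO_x$. The structure of Offen's orbit description \cite{OJNT} will be used throughout. Since $x$ is $M$-admissible, we first twist so that $x=x_u$ with $u$ one of the chosen representatives; then $L=M$ and $\cO_x$ corresponds to an $M$-admissible twisted orbit. Because $L_j\subseteq M$ and $A_M\subseteq A_{L_j}$, we have $\fa_M\subseteq\fa_{L_j}$. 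Moreover, $\cO_j\subseteq\overline{\cO_x}\smallsetminus\cO_x$ means $x_j$ lies in the closure of the twisted $P$-orbit $P\cdot x$ but not in it.

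The key step is to compare $\theta_{x_j}$ on $\fa_{L_j}$ with $\theta_x$ on $\fa_M$. By \cite[Lemmas 3.2, 3.3]{OJNT}, we may write $x_j=w_j\theta(\cdot)$-data with $\theta_{x_j}$ acting on $\fa_{L_j}$ as $w_j\theta$ for a Weyl element $w_j$. We plan to use the closure relation of twisted $P$-orbits, in the form of the Richardson--Springer/Helminck--Wang description of the Bruhat-type order on $P\backslash G/H$. If $\cO_j\subset\overline{\cO_x}$ with $\cO_j\neq\cO_x$, one passes from $x_j$ to $x$ along a chain of elementary moves. These moves strictly increase the dimension of the orbit, equivalently strictly increase $\dim\fa^{-}$-type invariants (the ``split rank'' of the orbit relative to $P$).

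Concretely, we aim to show that the hypothesis forces $\fa_M^{-\theta_x}\subseteq\fa_{L_j}^{-\theta_{x_j}}$, together with $\fa_M^{\theta_x}\subseteq\fa_{L_j}^{\theta_{x_j}}$ after suitable conjugation. Then the orthogonality assumption yields that $\theta_{x_j}$ restricts to $\theta_x$ on $\fa_M$, and that $\fa_{L_j}^{\theta_{x_j}}=\fa_M^{\theta_x}\oplus(\text{part orthogonal to }\fa_M)$. From this we want to deduce that $x_j$ is $M$-admissible, i.e. $L_j=M$, via a modulus/dimension computation analogous to the Remark correcting \cite[Lemma 1.2]{LuMcomp}. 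The idea is to choose a regular element $a$ of $A_M^{-\theta_x}$, contracting $V$. One compares $|\det\operatorname{Ad}(a)|$ on the tangent spaces $\mathfrak p/\mathfrak p^{\theta_{x_j}}$ and $\mathfrak p/\mathfrak p^{\theta_x}$. Under the hypothesis, $a$ acts on $\mathfrak p^{\theta_{x_j}}$ through weights that cancel in $\pm$ pairs, whereas $\dim$ considerations give $\dim\cO_j<\dim\cO_x$.

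The main obstacle is exactly this last point: turning the closure relation $\cO_j\subseteq\partial\cO_x$ into an inequality detectable on $\fa_M^{-\theta_x}$. The first thing to try is a limit argument. Take $a_t\in A_M^{-\theta_x}$ with $a_t\to$ a boundary point. The twisted action $a_t\cdot x=a_t x\theta(a_t)^{-1}=a_t^2x$ moves $x$ in $\cO_x$. Then express $x_j$ as a limit of $p_t\cdot x$, and project the $A_0$-components of $p_t$ (via an Iwasawa/Cartan decomposition) onto $\fa_M^{-\theta_x}$. If this projection stays bounded, the limit remains in a single $P$-orbit, namely that of $x$, contradicting $\cO_j\ne\cO_x$. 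Unboundedness of this projection should produce a nonzero vector in $\operatorname{pr}_{\fa_M^{-\theta_x}}(\fa_{L_j}^{\theta_{x_j}})$, namely the asymptotic direction, which is $\theta_{x_j}$-fixed because the limit $x_j$ is.
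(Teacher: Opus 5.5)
\textbf{There is a genuine gap: the proposal never derives a contradiction.} The decisive step is replaced by three candidate strategies. None is carried out, and the two that you sketch do not work as stated.

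\emph{The chain-of-elementary-moves route} is only named. You never use it.

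\emph{The modulus/dimension route} aims at $L_j=M$, but that is not a contradiction, because boundary orbits can be $M$-admissible. For $\mathrm{GL}_2(E)\supset\mathrm{GL}_2(F)$ with $P$ the Borel subgroup, the closed orbit of $x=1$ (i.e.\ $\mathbb P^1(F)\subset\mathbb P^1(E)$) lies in the closure of the open orbit and has $L_j=M$. The intermediate claims $\fa_M^{\theta_x}\subseteq\fa_{L_j}^{\theta_{x_j}}$ and $\theta_{x_j}|_{\fa_M}=\theta_x|_{\fa_M}$ do not follow from the negated hypothesis. The determinant comparison that is supposed to finish the argument is never actually formulated.

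\emph{The limit argument} rests on an unsupported crux: that the asymptotic direction of the $\fa_M^{-\theta_x}$-components of $p_t$ is $\theta_{x_j}$-fixed ``because $x_j$ is''. No mechanism is given for this. The converse claim, that boundedness of this projection keeps the limit inside $P\cdot x$, ignores the $V$-components of $p_t$ and the freedom to modify $p_t$ by the stabilizer.

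\textbf{The missing idea} is to turn the negated hypothesis into a group-theoretic constraint on $x_j$ itself.
\begin{enumerate}
\item Since $\theta_{x_j}$ acts orthogonally on $\fa_{L_j}\supseteq\fa_M$, the negated hypothesis is immediately equivalent to $\fa_M^{-\theta_x}\subseteq\fa_{L_j}^{-\theta_{x_j}}$. Nothing needs to be shown here.
\item Translate $y\mapsto yx^{-1}$ so that $x=1$. Then $\theta_{x_j}\circ\lambda=\lambda^{-1}=\theta\circ\lambda$ for every cocharacter $\lambda$ of $S=(A_M^{-\theta})^\circ$. So $x_j$ centralizes $S$ and lies in the $\theta$-stable Levi subgroup $L=Z_G(S)$, in which $P\cap L$ is $\theta$-stable.
\item The Bruhat closure relations in $L$ give $P\theta(P)\subseteq\overline{Px_j\theta(P)}$. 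Conversely, $x_j\in\overline{P\cdot 1}$ gives $Px_j\theta(P)\subseteq\overline{P\theta(P)}$. The two locally closed double cosets therefore have the same closure and coincide, so $x_j\in Y:=X\cap P\theta(P)$.
\item The identity $\mathfrak g^{-\theta_y}\cap(\mathfrak p+\theta_y(\mathfrak p))=(1-\theta_y)\mathfrak p$ shows that every $P$-orbit in $Y$ is open, hence closed, in $Y$. Thus $x_j\in\overline{P\cdot 1}\cap Y=P\cdot 1$, a contradiction.
\end{enumerate}

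Separately, your passage from $\cO_j\subseteq\partial\cO_x$ to $x_j\in\overline{P\cdot x}\smallsetminus P\cdot x$ is asserted without proof. It needs the finite-covering argument of Fact~\ref{fact open}, because distinct $H$-orbits can lie over the same $G^\theta$-orbit.
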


\begin{proof}
Under the finite covering $G/H\to G/G^\theta\simeq G\cdot1\subseteq X$,
the $P$-orbits above a given $P$-orbit are open and closed in its inverse
image, as in Fact~\ref{fact open}. Hence
\[
x_j\in\overline{P\cdot x}\smallsetminus P\cdot x.
\]
The map $y\mapsto yx^{-1}$ replaces $\theta$ by $\theta_x$, sends $x$ to
$1$, and preserves the involutions attached to the other points.
We may therefore assume that $x=1$ and $\theta(M)=M$.

Suppose that the projection in \eqref{eq root separator} is zero. The orthogonal eigenspace
decomposition of $\fa_{L_j}$ then gives
\[
\fa_M^{-\theta}\subseteq\fa_{L_j}^{-\theta_{x_j}}.
\]
Put
\[
S:=(A_M^{-\theta})^\circ,\qquad L:=Z_G(S),\qquad P_L:=P\cap L.
\]
For every $\lambda\in X_*(S)$, the preceding inclusion gives
\[
\theta_{x_j}\circ\lambda=\theta\circ\lambda=\lambda^{-1}.
\]
Since $\theta_{x_j}=\ad(x_j)\circ\theta$, the element $x_j$ centralizes
the image of every such $\lambda$, hence $S$. Thus $x_j\in L$.
Moreover, $L$ is a $\theta$-stable Levi subgroup containing $M$, and
$P_L$ is a parabolic subgroup of $L$ with Levi subgroup $M$.
Every root $\alpha$ of $A_M$ in $L=Z_G(S)$ vanishes on
$\fa_M^{-\theta}$, hence $\theta$ fixes every root of $A_M$
in $L$, in particular those occurring in $P_L$. Together with
$\theta(M)=M$, this gives $\theta(P_L)=P_L$.

The Bruhat closure formula for $P_L\backslash L/P_L$ gives
$1\in P_L\subseteq\overline{P_Lx_jP_L}$; see
\cite[Theorem~3.13 and Corollary~3.15]{BT72}.
Since $P_L\subseteq P\cap\theta(P)$, it follows that
\[
P\theta(P)\subseteq\overline{Px_j\theta(P)}.
\]
Conversely, $x_j\in\overline{P\cdot1}\subseteq\overline{P\theta(P)}$
gives $Px_j\theta(P)\subseteq\overline{P\theta(P)}$.
The two double classes therefore have the same closure. Being locally closed,
they are both open and dense in this closure, so they intersect and hence
coincide. In particular,
\[
x_j\in Y:=X\cap P\theta(P).
\]

We now show that every $P$-orbit in $Y$ is open and closed in $Y$.
Tangent spaces are computed on the corresponding locally closed
algebraic intersection. Let $y\in Y$ and put
$\mathfrak p:=\operatorname{Lie}(P)$.
The stabilizer $P^{\theta_y}=(P\cap\theta_y(P))^{\theta_y}$ is smooth:
the intersection of the two parabolics is smooth, and $2$ is invertible
in $F$, so \cite[Theorem~13.9]{Milne} applies. The orbit map
\[
P\longrightarrow P\cdot y,\qquad p\longmapsto py\theta(p)^{-1},
\]
has surjective differential at $1$, by
\cite[Propositions~7.12 and~7.15]{Milne}.
Right translation by $y^{-1}$ identifies the relevant tangent spaces
with the following subspaces of $\mathfrak g$:
\begin{align*}
dR_{y^{-1}}(T_yX)&=\mathfrak g^{-\theta_y},\\
dR_{y^{-1}}\bigl(T_y(P\theta(P))\bigr)
&=\mathfrak p+\theta_y(\mathfrak p),\\
dR_{y^{-1}}\bigl(T_y(P\cdot y)\bigr)
&=(1-\theta_y)\mathfrak p.
\end{align*}
For an involution $\tau$ of a vector space and a subspace $U$, with
$2$ invertible, one has
\[
\ker(1+\tau)\cap(U+\tau U)=(1-\tau)U.
\]
Indeed, if $Z=v+\tau(w)$ belongs to the left-hand side, with $v,w\in U$,
then $Z=(1-\tau)((v-w)/2)$.
Consequently,
\[
(1-\theta_y)\mathfrak p
\subseteq dR_{y^{-1}}(T_yY)
\subseteq\mathfrak g^{-\theta_y}\cap
\bigl(\mathfrak p+\theta_y(\mathfrak p)\bigr)
=(1-\theta_y)\mathfrak p,
\]
and therefore
\[
\dim(P\cdot y)\leq\dim_yY\leq\dim T_yY=\dim(P\cdot y).
\]
Thus $Y$ is smooth at $y$ and the orbit map $P\to Y$ is a submersion
at the identity. The non-archimedean inverse function theorem implies
that $P\cdot y$ is open in $Y$. Its complement is a union of open
orbits, so it is also closed. In particular,
\[
\overline{P\cdot1}\cap Y=P\cdot1,
\]
contradicting $x_j\in\overline{P\cdot1}\smallsetminus P\cdot1$ and
$x_j\in Y$. This proves \eqref{eq root separator}.
\end{proof}

\begin{lemma}\label{lm BD}
With the notation of \Cref{lm root separator}, there is an element
\[z_{x,j}\in A_{L_j}^{\theta_{x_j}}\cap u_jHu_j^{-1}
       \subseteq Z((L_j)_{x_j})\]
whose image in $\Lambda_x$ has infinite order. Consequently
$\Psi_x(z_{x,j})-c$ is nonzero in $A_x$ for every $c\in R$.
\end{lemma}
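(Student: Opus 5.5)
We deduce the statement from \Cref{lm root separator} by passing from real vector spaces to lattices, and then from $\theta_{x_j}$-fixed points to points of the finite-index subgroup $u_jHu_j^{-1}$.

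\emph{Step 1: a cocharacter.} The subspace $\fa_{L_j}^{\theta_{x_j}}$ is the real span of the lattice $X_*(A_{L_j})^{\theta_{x_j}}$, since $\theta_{x_j}$ acts rationally on $X_*(A_{L_j})$. Likewise, the orthogonal projection $\operatorname{pr}_{\fa_M^{-\theta_x}}$ is defined over $\Q$, because the inner product is rational. So \Cref{lm root separator} provides a cocharacter $\lambda\in X_*(A_{L_j})$ with $\theta_{x_j}\circ\lambda=\lambda$ and $\operatorname{pr}_{\fa_M^{-\theta_x}}(\lambda)\neq0$. Fix a uniformizer $\varpi$ and put $a:=\lambda(\varpi)\in A_{L_j}^{\theta_{x_j}}$.

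\emph{Step 2: landing in $u_jHu_j^{-1}$.} By \Cref{fact pi0} conjugated by $u_j$ (see \eqref{eq twists}), $u_jHu_j^{-1}$ has finite index, a power of $2$ say $2^e$, in $G^{\theta_{x_j}}$. Hence $z_{x,j}:=a^{2^e}$ lies in $A_{L_j}^{\theta_{x_j}}\cap u_jHu_j^{-1}$. It is central in $L_j$, hence central in $(L_j)_{x_j}$. Alternatively, $\lambda(\varpi)$ lies in the image of the connected torus $\lambda(\mathbb{G}_m)\subseteq(\bG^{\theta_{x_j}})^\circ$, so one can even take $e=0$.

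\emph{Step 3: infinite order in $\Lambda_x$.} Since $L_j\subseteq M$, we have $A_M\subseteq A_{L_j}$, and $\Lambda\otimes\R$ identifies with $\fa_M$ via the Harish-Chandra map $H_M$. The image of $z_{x,j}$ in $\Lambda\otimes\R$ is then $-2^e\log q\cdot\operatorname{pr}_{\fa_M}(\lambda)$, because restricting rational characters of $M$ to $A_{L_j}$ and pairing with $\lambda$ is orthogonal projection onto $\fa_M$ up to the $W$-invariant identification. This is the step to check carefully. The quotient $\Lambda_x\otimes\R=\Lambda\otimes\R/(\theta_x+1)(\Lambda\otimes\R)$ identifies with $\fa_M^{-\theta_x}$ via $\operatorname{pr}_{\fa_M^{-\theta_x}}$, where we use $\theta_x(M)=M$ and the orthogonality of $\theta_x$ on $\fa_M$. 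Since $\fa_M^{-\theta_x}\subseteq\fa_M$, the image of $z_{x,j}$ in $\Lambda_x\otimes\R$ is a nonzero multiple of $\operatorname{pr}_{\fa_M^{-\theta_x}}(\lambda)$, which is nonzero. Therefore its image in the free lattice $\Lambda_x$ is nonzero, hence of infinite order.

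\emph{Step 4: consequence.} The element $\Psi_x(z_{x,j})$ is a group element $\mu\neq1$ of $\Lambda_x$ inside $R[\Lambda_x]$. For $c\in R$, the element $\mu-c\cdot1$ involves two distinct basis elements with coefficient $1$ on $\mu$, so it is nonzero.

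The main obstacle is the compatibility in Step 3: identifying $\Lambda\otimes\R$ with $\fa_M$ so that the image of $\lambda(\varpi)$ is the orthogonal projection of $\lambda$, and so that passage to $\Lambda_x$ corresponds to projection onto $\fa_M^{-\theta_x}$. To settle it, I would dualize, identifying $\Lambda\otimes\R$ with $\Hom(X^*(M)_F,\R)$ and using that $X^*(M)_F\otimes\R\simeq X^*(A_M)\otimes\R$ by restriction.
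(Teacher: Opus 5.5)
Your proposal is correct and follows the paper's proof. You extract a $\theta_{x_j}$-fixed cocharacter $\lambda$ of $A_{L_j}$ with $\operatorname{pr}_{\fa_M^{-\theta_x}}(\lambda)\neq0$, set $z_{x,j}=\lambda(\varpi)$, place it in $(\bG^{\theta_{x_j}})^\circ(F)\subseteq u_jHu_j^{-1}$ by connectedness of $\lambda(\mathbb{G}_m)$, and read off infinite order from $\Lambda_x\otimes_\Z\R\simeq\fa_M^{-\theta_x}$. Your $e=0$ alternative is exactly the paper's argument, so the $2^e$-power detour and the rationality remark in Step 1 are unnecessary, while your Step 3 merely spells out the identification that the paper calls natural.
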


\begin{proof}
Since $\fa_{L_j}^{\theta_{x_j}}$ is spanned over $\R$ by
$X_*(A_{L_j})^{\theta_{x_j}}$, \Cref{lm root separator}
yields a cocharacter
\[
\lambda\in X_*(A_{L_j})^{\theta_{x_j}}
\quad\text{such that}\quad
\operatorname{pr}_{\fa_M^{-\theta_x}}(\lambda)\neq0.
\]
For a uniformizer $\varpi$ of $F$, set
$z_{x,j}:=\lambda(\varpi)$. This element belongs to
$A_{L_j}^{\theta_{x_j}}$ and centralizes $L_j$.
The image of $\lambda$ is a connected algebraic subgroup of
$\bG^{\theta_{x_j}}$. Hence
\[
z_{x,j}\in(\bG^{\theta_{x_j}})^\circ(F)
=u_j(\bG^\theta)^\circ(F)u_j^{-1}
\subseteq u_jHu_j^{-1},
\]
by \eqref{eq twists} and our assumption on $H$.

Finally, under the natural identification
\[
\Lambda_x\otimes_\Z\R\simeq\fa_M^{-\theta_x},
\]
the image of $z_{x,j}=\lambda(\varpi)$ corresponds to
$\operatorname{pr}_{\fa_M^{-\theta_x}}(\lambda)$, which is nonzero.
Thus the image of $z_{x,j}$ in $\Lambda_x$ has infinite order, and
$\Psi_x(z_{x,j})$ and $1$ are distinct elements of the
canonical $R$-basis of $A_x=R[\Lambda_x]$. In particular,
$\Psi_x(z_{x,j})-c\neq0$ for every $c\in R$.
\end{proof}

By \cite[II.5.13 and II.2.8]{V}, the Jacquet module
$r_{L_j,M}(\sigma)$ has finite length, and its irreducible subquotients
have central characters.

\begin{corollary}\label{cor kill}
Keep the notation of \Cref{lm BD}, and let $\omega_1,\dots,\omega_r$ be the central characters of the
irreducible subquotients of $r_{L_j,M}(\sigma)$, counted with multiplicity.  Then
\begin{equation}\label{eq boundary polynomial}
p_{x,j}:=\prod_{i=1}^{r}
 \big(\delta_{Q_j,x_j}(z_{x,j})\eta(u_j^{-1}z_{x,j}u_j)
      -\Psi_x(z_{x,j})\omega_i(z_{x,j})\big)
\end{equation}
is a nonzero element of $A_x$ and, for every $A_x$-module $W$ with trivial
$H$-action and every $k\geq0$,
\[p_{x,j}\Ext^k_{\mathcal H_{A_x}[H]}(\cV_j/\cV_{j-1},W\otimes\eta)=\{0\}.\]
In particular,
\begin{equation}\label{eq boundary Ext vanishing}
\Ext^k_{\mathcal H_{F_x}[H]}\big((\cV_j/\cV_{j-1})\otimes_{A_x}F_x,
 F_x\otimes\eta\big)=\{0\},\qquad k\geq0.
\end{equation}
\end{corollary}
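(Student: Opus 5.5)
The plan is to combine the geometric lemma with the central-character \Cref{lm classique}, the central element being the $z_{x,j}$ of \Cref{lm BD}.

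\emph{Reduction to $(L_j)_{x_j}$.} Extend \Cref{prop geometric} to all degrees. As in its proof, $\cV_j/\cV_{j-1}\simeq \cInd_{H\cap u_j^{-1}Pu_j}^H\big({}^{u_j^{-1}}(\delta_P^{1/2}\Psi_x\otimes\sigma)\big)$. \Cref{lm ext frobenius} in degree $k$, followed by transport through $\ad(u_j)$, identifies $\Ext^k_{\mathcal H_{A_x}[H]}(\cV_j/\cV_{j-1},W\otimes\eta)$ with
\[
\Ext^k_{\mathcal H_{A_x}[P_{x_j}]}\big(\Psi_x\otimes\sigma,\,W\otimes\delta_{P_{x_j}}\delta_P^{-1/2}\eta^{u_j}\big).
\]
To avoid justifying a Hochschild--Serre step for $N_j$-coinvariants, I would not descend all the way to $(L_j)_{x_j}$. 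Instead I would apply \Cref{lm classique} directly to the group $\Gamma=P_{x_j}$, with $m=1$ on the target side. The element $z:=z_{x,j}$ lies in $A_{L_j}^{\theta_{x_j}}\cap u_jHu_j^{-1}\subseteq P_{x_j}$.

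\emph{Centrality of $z$.} The first point to check is that $z$ is central in $P_{x_j}=(L_j)_{x_j}\ltimes N_j$, which is what \Cref{lm classique} requires. Centrality in $(L_j)_{x_j}$ is clear. But $z$ need not commute with $N_j$, since $N_j\subseteq U_j$ is acted on nontrivially by $A_{L_j}$. This is the main obstacle.

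To get around it, I would first pass to $N_j$-coinvariants, exactly as in \Cref{prop geometric}. Both $\Psi_x\circ\mathrm{pr}_M$ and the target character are trivial on $N_j$. The group $N_j$ is a union of compact open pro-$p$ subgroups, so taking $N_j$-coinvariants is exact and sends projectives to projectives (since $\mathcal H_A[P_{x_j}]_{N_j}\simeq \mathcal H_A[(L_j)_{x_j}]$ up to a modulus twist). The inflation/coinvariant adjunction therefore holds in all degrees, and this gives
\[
\Ext^k_{\mathcal H_{A_x}[H]}(\cV_j/\cV_{j-1},W\otimes\eta)\simeq\Ext^k_{\mathcal H_{A_x}[(L_j)_{x_j}]}\big(\Psi_x\otimes r_{L_j,M}(\sigma),\,W\otimes\delta_{Q_j,x_j}\eta^{u_j}\big).
\]
Over $(L_j)_{x_j}$ the element $z$ is central.

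\emph{Applying \Cref{lm classique}.} Take a composition series of $r_{L_j,M}(\sigma)$, which has finite length. Tensoring with $\Psi_x$ gives an $A_x$-filtration whose $i$-th graded piece is $\Psi_x\otimes\rho_i$, with $\rho_i$ irreducible of central character $\omega_i$. On that piece $z$ acts by the scalar $\lambda_i=\Psi_x(z)\omega_i(z)\in A_x$. On the target $W\otimes\delta_{Q_j,x_j}\eta^{u_j}$ it acts by $\mu=\delta_{Q_j,x_j}(z)\eta(u_j^{-1}zu_j)\in R^\times$. \Cref{lm classique} with $m=1$ then shows that $\prod_i(\mu-\lambda_i)=p_{x,j}$ kills every $\Ext^k$.

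\emph{Nonvanishing of $p_{x,j}$.} Each factor equals $-\omega_i(z)\big(\Psi_x(z)-c_i\big)$ with $c_i=\mu\,\omega_i(z)^{-1}\in R$. By \Cref{lm BD} each such factor is nonzero, and $A_x$ is a domain, so the product $p_{x,j}$ is nonzero.

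\emph{The final assertion.} Run the same argument over $F_x$ in place of $A_x$: the coefficient change is harmless since $\cV_j/\cV_{j-1}\otimes_{A_x}F_x$ is obtained by the same induction with $\Psi_x$ viewed in $F_x$. This yields $p_{x,j}$ annihilating $\Ext^k_{\mathcal H_{F_x}[H]}(\ldots,F_x\otimes\eta)$. Since $p_{x,j}$ is a unit of $F_x$, these groups vanish.
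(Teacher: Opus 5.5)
Your proposal is correct and matches the paper's proof. You pass to $N_j$-coinvariants, which is exact with exact right adjoint given by inflation, so the identification of $\Ext^k$ holds in all degrees. This brings you to $(L_j)_{x_j}$, where $z_{x,j}$ is central, and you then apply \Cref{lm classique} to a composition series of $r_{L_j,M}(\sigma)$, with nonvanishing from \Cref{lm BD} and the $F_x$-statement from invertibility of the factors. Your observation that $z_{x,j}$ need not centralize $N_j$, so the descent to $(L_j)_{x_j}$ is genuinely needed, correctly spells out a step the paper states only briefly.
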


	\begin{proof}
	Since $N_j$-coinvariants and inflation are exact, the argument
	of \Cref{prop geometric}, using \Cref{lm ext frobenius},
	gives
	\[\Ext^k_{\mathcal H_{A_x}[H]}
		(\cV_j/\cV_{j-1},W\otimes\eta) \simeq
		\Ext^k_{\mathcal H_{A_x}[(L_j)_{x_j}]}
		(\Psi_x\otimes r_{L_j,M}(\sigma),
		W\otimes\delta_{Q_j,x_j}\eta^{u_j}).	\]
	Put $z:=z_{x,j}$. A composition series of
	$r_{L_j,M}(\sigma)$ induces a filtration of the source
	on whose successive quotients $z$ acts by
	$\Psi_x(z)\omega_i(z)$. On the target, $z$ acts by
	$\delta_{Q_j,x_j}(z)\eta^{u_j}(z)$.
	Thus \Cref{lm classique} shows that $p_{x,j}$ annihilates
	the displayed $\Ext$ group. Now each factor of $p_{x,j}$ is nonzero by \Cref{lm BD},
	so $p_{x,j}\neq0$ since $A_x$ is an integral domain.
	Over $F_x$, these factors are invertible, and the same
	application of \Cref{lm classique} gives
	\eqref{eq boundary Ext vanishing}.
\end{proof}

\subsection{Definition of algebraic intertwining periods}
\label{ssec characterization}

We fix an $M$-admissible $x=x_u$. The following space plays a crucial role in our definition of local intertwining periods. 

\begin{definition}
	Let $x=x_u$ be $M$-admissible. We set
	\[
	\begin{split}
		\cW(u):=\big\{\phi\in I_{P,x}(\sigma)\otimes_{A_x}F_x:\;&
		h\longmapsto\phi(uh)\text{ is compactly supported}\\[-2pt]
		&\text{modulo }H\cap u^{-1}Pu\big\}.
	\end{split}
	\]
\end{definition}

The reason is that it appears as follows in the filtration of $I_{P,x}(\sigma)\otimes_{A_x}F_x$. 

\begin{lemma}\label{lm adapted filtration}
The representatives in \Cref{ssec geometric} may be ordered so that, for
some $a\geq0$,
\[Y_a=G\smallsetminus\overline{\cO_x},\qquad
  Y_{a+1}=G\smallsetminus\partial\cO_x
          =(G\smallsetminus\overline{\cO_x})\sqcup\cO_x.\]
For the corresponding filtration put
$\cV_x^-:=\cV_a$ and $\cV_x^0:=\cV_{a+1}$.  The map $\phi\mapsto(h\mapsto\phi(uh))$ gives an exact sequence 
\begin{equation}\label{eq adapted exact}
0\longrightarrow\cV_x^-\longrightarrow\cV_x^0\longrightarrow
\cInd_{H\cap u^{-1}Pu}^{H}
 ({}^{u^{-1}}(\delta_P^{1/2}\Psi_x\otimes\sigma))\longrightarrow0,
\end{equation}
and, after extension of scalars,
\begin{equation}\label{eq support space equals filtration}
 \cV_x^0\otimes_{A_x}F_x=\cW(u).
\end{equation}
All the successive strata after $\cV_x^0$ are contained in
$\partial\cO_x$.
\end{lemma}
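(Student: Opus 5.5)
We first arrange the ordering of the orbits, then identify the two relevant pieces of the filtration, and finally pass to $F_x$.

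\emph{Ordering.} The orbit space $P\bs G/H$ is finite (Fact~\ref{fact open}), and each orbit is locally closed. The set $G\smallsetminus\overline{\cO_x}$ is open and $P\times H$-stable, hence a finite union of orbits. By the ordering lemma \cite[Section 5.1, condition (3)]{BZ}, applied inside this open set, we list these orbits first, so that every initial union is open; this gives $Y_a=G\smallsetminus\overline{\cO_x}$. Next we place $\cO_x$. Since $\cO_x$ is locally closed, it is open in $\overline{\cO_x}$, so $Y_{a+1}=G\smallsetminus\partial\cO_x$ is open. Finally, the orbits in the closed set $\partial\cO_x$ are ordered, again by the ordering lemma applied to the open complements, so that each $Y_k$ with $k>a+1$ remains open. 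This proves the last assertion. The choice of admissible representative in $\cO_x$ does not matter by \Cref{rk orbit}, so we may take $u_{a+1}=u$, possibly after translating $u$ by elements of $P$ and $H$.

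\emph{Exact sequence.} As in the proof of \Cref{prop geometric}, restriction of functions to the closed subset $\cO_x$ of $Y_{a+1}$ identifies $\cV_{a+1}/\cV_a$ with sections on $\cO_x$ that are compactly supported modulo $P$. Composing with $\phi\mapsto(h\mapsto\phi(uh))$ identifies this quotient with $\cInd_{H\cap u^{-1}Pu}^{H}({}^{u^{-1}}(\delta_P^{1/2}\Psi_x\otimes\sigma))$. The kernel consists of the sections supported in $Y_a$, namely $\cV_a$. Surjectivity is the standard Bernstein--Zelevinsky extension-by-zero argument \cite[1.8, 2.23]{BZ}, carried out over $A_x$.

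\emph{Extension of scalars.} Since $F_x$ is flat over $A_x$, tensoring preserves the exact sequence, and $\cV_x^0\otimes F_x$ sits inside $I_{P,x}(\sigma)\otimes F_x$. Inclusion in $\cW(u)$ is then immediate. Indeed, a section supported in $G\smallsetminus\partial\cO_x$ has $P$-compact support $C\subset G\smallsetminus\partial\cO_x$, and $C\cap\cO_x=C\cap\overline{\cO_x}$ is compact modulo $P$. Hence $h\mapsto\phi(uh)$ has support compact modulo $H\cap u^{-1}Pu$, using that $P\bs PuH\simeq (H\cap u^{-1}Pu)\bs H$ is a homeomorphism (by the Baire argument, orbits being locally closed).

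For the reverse inclusion, let $\phi\in\cW(u)$. After clearing a denominator we may assume $\phi\in I_{P,x}(\sigma)$. Choose a compact set $\Omega\subseteq H$ with $\operatorname{supp}\phi(u\cdot)\subseteq (H\cap u^{-1}Pu)\Omega$. Using surjectivity in \eqref{eq adapted exact}, pick $\phi_0\in\cV_x^0$ agreeing with $\phi$ on $\cO_x$. Then $\phi-\phi_0$ vanishes on $\cO_x$. The remaining issue is that such a section need not be supported away from $\partial\cO_x$, and then $\phi-\phi_0$ would not lie in $\cV_x^0$. This is the main obstacle, and it forces us to interpret \eqref{eq support space equals filtration} correctly. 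We read $\cW(u)$ as the sections whose support meets $\overline{\cO_x}$ in a set that is compact modulo $P$ and contained in $\cO_x$, and we check that this is the intended meaning of ``compactly supported modulo $H\cap u^{-1}Pu$''. Under this reading, smoothness (local constancy) of $\phi$ near the compact set $P u\Omega$ gives an open neighborhood on which $\phi=\phi_0$. Cutting off by a $P$-invariant compact-open neighborhood then produces the decomposition, with the remaining part lying in $\cV_x^-\otimes F_x$. This yields $\cW(u)\subseteq\cV_x^0\otimes F_x$. The delicate point throughout is matching the definition of $\cW(u)$ with the support condition, so the two inclusions must be checked carefully against that definition.
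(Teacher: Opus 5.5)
Your ordering of the orbits, your derivation of the exact sequence \eqref{eq adapted exact}, and your proof of the inclusion $\cV_x^0\otimes_{A_x}F_x\subseteq\cW(u)$ all agree with the paper. The gap is in the reverse inclusion $\cW(u)\subseteq\cV_x^0\otimes_{A_x}F_x$.

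You claim that a section of $\cW(u)$ ``need not be supported away from $\partial\cO_x$''. You then replace the definition of $\cW(u)$ by a stronger ``reading'', namely that the support meets $\overline{\cO_x}$ inside $\cO_x$. You say you will check that this reading is the intended meaning, but you never do. That check is exactly what has to be proved, and the obstacle you describe does not occur.

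Here is the missing argument. Let $\phi\in\cW(u)$, and clear denominators. Division by a nonzero element of $A_x$ does not change where $\phi$ vanishes, because $A_x\otimes_R V_\sigma$ is torsion-free over $A_x$.

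\begin{itemize}
\item Since $\phi$ is right-invariant under some compact open subgroup, it is locally constant. Hence $N:=\{g:\phi(g)\neq0\}$ is open, closed and left $P$-invariant.
\item By hypothesis, and via the homeomorphism $(H\cap u^{-1}Pu)\bs H\simeq P\bs\cO_x$, the image $S$ of $N\cap\cO_x$ in $P\bs G$ is compact. It is therefore closed, since $P\bs G$ is Hausdorff.
\item Suppose $g\in\partial\cO_x\cap N$. Every neighbourhood of $g$ contained in $N$ meets $\cO_x$, because $g\in\overline{\cO_x}$. So $Pg\in\overline{S}=S\subseteq P\bs\cO_x$, which gives $g\in\cO_x$, a contradiction.
\end{itemize}

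Thus $\phi|_{\partial\cO_x}=0$, which says $N\subseteq Y_{a+1}$, so $\phi\in\cV_x^0$ directly. No auxiliary section $\phi_0$ and no cut-off are needed. This is the paper's one-line identity $\cW(u)=\{\phi:\phi|_{\partial\cO_x}=0\}=\cV_x^0\otimes_{A_x}F_x$, whose other half uses compactness of $P\bs G$ exactly as in your forward inclusion.

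Without this fact, your cut-off step cannot close the argument. Local constancy of $\phi$ near $Pu\Omega$ gives no information about the values of $\phi$ at points of $\partial\cO_x$. Note also that $Pu\Omega$ is only compact modulo $P$.
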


\begin{proof}
	Apply the ordering lemma used in \Cref{ssec geometric} first to
	$G\smallsetminus\overline{\cO_x}$ and then to
	$\partial\cO_x$, inserting $\cO_x$ between these two blocks.
	This gives the required order.
	In view of the computations in the proof of \Cref{prop geometric}, restriction from $Y_{a+1}$ to its closed subset $\cO_x$, with
	open complement $Y_a$, gives \eqref{eq adapted exact} by the
	restriction and compact-induction constructions in
	\cite[Sections~5.11 and~5.14--5.15]{BZ}. These arguments apply with coefficients in $A_x$. 
	Finally, since $P\backslash G$ is compact, the same restriction
	and extension-by-zero argument gives, after extension to $F_x$,
	\[
	\cW(u)
	=\bigl\{\phi\in I_{P,x}(\sigma)\otimes_{A_x}F_x:
	\phi|_{\partial\cO_x}=0\bigr\}
	=\cV_x^0\otimes_{A_x}F_x.
	\]
	This proves \eqref{eq support space equals filtration}.
\end{proof}

Now we consider \[\ell\in\Hom_{\mathcal H_{A_x}[M_x]}\big(\Psi_x\otimes\sigma,
A_x\otimes\delta_{P,x}\eta^u\big).\]

There is a canonical functional
\begin{equation}\label{eq pairing}
	j_{\eta,\ell}(\phi):=
	\int_{(H\cap u^{-1}Pu)\backslash H}
	\eta(h)^{-1}\ell\big(\phi(uh)\big)\,dh,\qquad \phi\in\cW(u).
\end{equation}
which defines an element of
$\Hom_{\mathcal H_{A_x}[H]}(\cW(u),F_x\otimes\eta)$ and is $A_x$-valued on
$\cW(u)\cap I_{P,x}(\sigma)$. The following result provides the algebraic definition of non-archimedean intertwining periods. 

\begin{theorem}\label{thm LIP}
There is a unique form
\[J_P^G(u,\eta,\ell,\sigma)\in
\Hom_{\mathcal H_{F_x}[H]}\big(I_{P,x}(\sigma)\otimes_{A_x}F_x,
 F_x\otimes\eta\big)\]
such that
\begin{equation}\label{eq support characterization}
J_P^G(u,\eta,\ell,\sigma)(\phi)=j_{\eta,\ell}(\phi)
\qquad(\phi\in\cW(u)).
\end{equation}
If $\partial\cO_x=\coprod_j\cO_j$ and the polynomials $p_{x,j}$ are chosen
as in \eqref{eq boundary polynomial}, then
\begin{equation}\label{eq direct denominator}
\left(\prod_{\cO_j\subseteq\partial\cO_x}p_{x,j}\right)
J_P^G(u,\eta,\ell,\sigma)\big(I_{P,x}(\sigma)\big)\subseteq A_x.
\end{equation}
In particular the intertwining period is nonzero whenever $\ell$ is nonzero.
\end{theorem}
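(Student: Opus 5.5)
The plan is to use the adapted filtration of \Cref{lm adapted filtration}, so that $\cV_x^0\otimes_{A_x}F_x=\cW(u)$ and every stratum after $\cV_x^0$ lies in $\partial\cO_x$. Write $\cV_F:=I_{P,x}(\sigma)\otimes_{A_x}F_x$ and $\cV_{k,F}:=\cV_k\otimes_{A_x}F_x$. Uniqueness and existence of $J_P^G(u,\eta,\ell,\sigma)$ amount to showing that restriction
\[
\Hom_{\mathcal H_{F_x}[H]}(\cV_F,F_x\otimes\eta)\longrightarrow\Hom_{\mathcal H_{F_x}[H]}(\cW(u),F_x\otimes\eta)
\]
is bijective, since $j_{\eta,\ell}$ lies in the target. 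For each $k>a+1$ use the short exact sequence $0\to\cV_{k-1,F}\to\cV_{k,F}\to(\cV_k/\cV_{k-1})\otimes F_x\to0$. Extension of scalars is exact because $F_x$ is flat over $A_x$. Its long exact $\Ext$ sequence then gives that restriction $\Hom(\cV_{k,F},F_x\otimes\eta)\to\Hom(\cV_{k-1,F},F_x\otimes\eta)$ is injective, by the vanishing of $\Hom$ in \eqref{eq boundary Ext vanishing}. It is surjective by the vanishing of $\Ext^1$ in \eqref{eq boundary Ext vanishing}, applied to the stratum $\cO_k\subseteq\partial\cO_x$. By induction from $k=a+2$ to $N$ we obtain the bijection, hence the unique form $J:=J_P^G(u,\eta,\ell,\sigma)$ extending $j_{\eta,\ell}$.

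Here one must be careful that the $\Ext$ over $\mathcal H_{F_x}[H]$ of the scalar-extended modules is what \eqref{eq boundary Ext vanishing} computes. This holds because \Cref{cor kill} was proven with $W=F_x$, where the annihilating factors become invertible; alternatively, one can apply \Cref{lm classique} directly over $F_x$. Both routes use the Frobenius identification of \Cref{lm ext frobenius} with $A=F_x$.

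For the denominator bound \eqref{eq direct denominator}, view $J$ restricted to $I_{P,x}(\sigma)$ as an element of $\Hom_{\mathcal H_{A_x}[H]}(I_{P,x}(\sigma),F_x\otimes\eta)$. Apply \Cref{cor habile} with $\Gamma=H$, $A=A_x$, and the filtration $V_1:=\cV_x^0$, followed by the $\cV_k$ for $k>a+1$. We have $J(\cV_x^0)=j_{\eta,\ell}(\cV_x^0)\subseteq A_x$ by the remark after \eqref{eq pairing}: on integral sections supported well, the algebraic integral is $A_x$-valued. For each later stratum $\cO_j$, \Cref{cor kill} with $W=F_x/A_x$ and $k=0$ gives that $p_{x,j}$ kills $\Hom(\cV_j/\cV_{j-1},Q_{A_x}\otimes\eta)$. \Cref{cor habile} then yields $\bigl(\prod_j p_{x,j}\bigr)J(I_{P,x}(\sigma))\subseteq A_x$.

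For nonvanishing, it suffices to produce $\phi\in\cW(u)$ with $j_{\eta,\ell}(\phi)\neq0$. Via \eqref{eq adapted exact} and the degree-zero Frobenius reciprocity of \Cref{lm ext frobenius}, $j_{\eta,\ell}$ on $\cV_x^0$ corresponds exactly to $\ell$, suitably transported by $\ad(u)$, using $\delta_{H\cap u^{-1}Pu}$ and $\delta_{P,x}$. So $\ell\neq0$ gives $j_{\eta,\ell}\neq0$. Concretely, one takes a section $F_{v,U}$ with $\ell(v)\neq0$ and a small $U$, lifted to $\cV_x^0$ by surjectivity in \eqref{eq adapted exact}.

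The main obstacle is bookkeeping rather than ideas. One must check that the modulus characters in the Frobenius isomorphism match $\delta_{P,x}\eta^u$, so that $j_{\eta,\ell}$ is well defined and $H$-equivariant. One must also check that the $\Ext$ computations commute with base change to $F_x$. The genuinely hard input, the nonvanishing of the $p_{x,j}$, is already provided by \Cref{lm root separator} and \Cref{cor kill}.
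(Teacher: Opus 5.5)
Your proposal is correct and follows essentially the same route as the paper. You get bijectivity of restriction to $\cW(u)$ from the $\Hom$ and $\Ext^1$ vanishing in \eqref{eq boundary Ext vanishing} along the adapted filtration, the denominator bound from \Cref{cor habile} combined with \Cref{cor kill} at $W=F_x/A_x$, and nonvanishing from Frobenius reciprocity together with the surjectivity in \eqref{eq adapted exact}.
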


\begin{proof}
	We use the filtration of \Cref{lm adapted filtration}.
	Every successive quotient after $\cV_x^0$ corresponds to an
	orbit in $\partial\cO_x$. The cases $k=0,1$ of
	\eqref{eq boundary Ext vanishing} and the long exact sequences
	of $\Ext$ therefore show that restriction induces an isomorphism
	\[
	\Hom_{\mathcal H_{F_x}[H]}
	\big(I_{P,x}(\sigma)\otimes_{A_x}F_x,F_x\otimes\eta\big)
	\xrightarrow{\ \sim\ }
	\Hom_{\mathcal H_{F_x}[H]}
	\big(\cW(u),F_x\otimes\eta\big),
	\]
	where we use \eqref{eq support space equals filtration}.
	Thus $j_{\eta,\ell}$ extends uniquely to the required period. 
	We now prove \eqref{eq direct denominator}. We observe that the period is
	$A_x$-valued on $\cV_x^0$, where it coincides with $j_{\eta,\ell}$.
	For every $j>a+1$, \Cref{cor kill}, applied with $k=0$ and
	$W=F_x/A_x$, gives
	\[
	p_{x,j}\,
	\Hom_{\mathcal H_{A_x}[H]}
	\left(\cV_j/\cV_{j-1},(F_x/A_x)\otimes\eta\right)=0.
	\]
	The asserted denominator bound of \cref{eq direct denominator} is therefore a consequence of \Cref{cor habile}
	applied to the filtration
	\[
	\cV_x^0=\cV_{a+1}\subseteq\cV_{a+2}\subseteq\cdots
	\subseteq\cV_N=I_{P,x}(\sigma).
	\]

	Finally, if $\ell\neq0$, Frobenius reciprocity
	(\Cref{lm ext frobenius}) and the surjectivity in
	\eqref{eq adapted exact} give $j_{\eta,\ell}\neq0$,
	so its extension is nonzero.
\end{proof}

\subsection{Compatibility with transitivity of parabolic induction}\label{ssec transitivity}

We now show that an intertwining period for an induced representation
can be computed in two stages: first inside an intermediate Levi subgroup,
then in $G$. When $R=\C$ this is \cite[Proposition~3.7]{LuMcomp}, and in the algebraic setting the familiar reader will probably observe that the correct setting for such a property would rather be that of intertwining periods in families. For a field extension $K/R$, put $\sigma_K:=\sigma\otimes_R K$
and $\cX_{x,K}:=\cX_x\times_R K$. For $T$ a $K$-split torus we set \[
A_T:=\mathcal O_K(T),\qquad F_T:=\operatorname{Frac}(A_T).
\] We need the following extended version of local intertwining periods to formulate the compatibility property.

\begin{lemma}\label{lm coefficient change}	Let $K/R$ be a field extension and let $\nu:T\to\cX_{x,K}$ be a homomorphism of split
	$K$-tori. Let $p_{x,j}$ be the polynomials defined in
	\eqref{eq boundary polynomial}.
	We keep the notation $\Psi_x$ and $p_{x,j}$ after applying
	the homomorphism $A_x\to A_T$ induced by $\nu$, and assume that all $p_{x,j}\neq0$ in $A_T$. 
	Put
	\[
	I_T:=I_P^G(\Psi_x\otimes_K\sigma_K).
	\]
	
	For every
	\[
	\ell\in\Hom_{\mathcal H_{A_T}[M_x]}
	\bigl(\Psi_x\otimes_K\sigma_K,
	A_T\otimes\delta_{P,x}\eta^u\bigr),
	\]
	there is a unique $H$-equivariant $F_T$-linear form
	\[
	J_P^G(u,\eta,\ell,\sigma):
	I_T\otimes_{A_T}F_T\longrightarrow F_T\otimes\eta
	\]
	such that
	\[
	J_P^G(u,\eta,\ell,\sigma)(\phi)=
	\int_{(H\cap u^{-1}Pu)\backslash H}
	\eta(h)^{-1}\ell\bigl(\phi(uh)\bigr)\,dh
	\]
	whenever $h\mapsto\phi(uh)$ has compact support modulo
	$H\cap u^{-1}Pu$. Moreover,
	\[
	\left(\prod_jp_{x,j}\right)
	J_P^G(u,\eta,\ell,\sigma)(I_T)\subseteq A_T.
	\]
\end{lemma}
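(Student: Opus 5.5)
The plan is to repeat the proof of \Cref{thm LIP} verbatim, with $A_x$ replaced by $A_T$ and $F_x$ by $F_T$. The only input that must be rechecked is the annihilation statement of \Cref{cor kill} over $A_T$. First, everything in \Cref{sec induced} was written for an arbitrary commutative $\Z[1/p]$-algebra of coefficients. So the filtration $\cV_i$ of $I_T$ by support, \Cref{prop geometric}, and \Cref{lm adapted filtration} hold for $I_T$, with $\Psi_x$ viewed as $A_T$-valued through $\nu$. In particular we get the exact sequence \eqref{eq adapted exact} over $A_T$. We also get $\cV_x^0\otimes_{A_T}F_T$ equal to the space of sections $\phi$ with $h\mapsto\phi(uh)$ compactly supported modulo $H\cap u^{-1}Pu$. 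On this space the displayed integral is an $H$-equivariant form, $A_T$-valued on $\cV_x^0$.

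Next, for each orbit $\cO_j\subseteq\partial\cO_x$, I would rerun the proof of \Cref{cor kill} with $A_T$-coefficients. Combining \Cref{lm ext frobenius} with the exactness of $N_j$-coinvariants gives
\[
\Ext^k_{\mathcal H_{A_T}[H]}(\cV_j/\cV_{j-1},W\otimes\eta)\simeq\Ext^k_{\mathcal H_{A_T}[(L_j)_{x_j}]}(\Psi_x\otimes_K r_{L_j,M}(\sigma)_K,W\otimes\delta_{Q_j,x_j}\eta^{u_j}).
\]
A composition series of $r_{L_j,M}(\sigma)$ over $R$ stays a filtration after $\otimes_RK$, with $z_{x,j}$ acting on the graded pieces by the scalars $\Psi_x(z_{x,j})\omega_i(z_{x,j})$, now read in $A_T$. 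Hence \Cref{lm classique} shows that the image of $p_{x,j}$ in $A_T$ annihilates these $\Ext$ groups for every $A_T$-module $W$. This is the point where the hypothesis $p_{x,j}\neq0$ in $A_T$ replaces \Cref{lm BD}, because the factors of $p_{x,j}$ could otherwise die under $\nu$. Since $A_T$ is a domain, each factor is nonzero, hence invertible in $F_T$. Taking $W=F_T$, the $\Ext^k$ of the boundary strata with values in $F_T\otimes\eta$ then vanish.

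Then the conclusion follows exactly as in \Cref{thm LIP}. The long exact sequences in degrees $0,1$ show that restriction
\[
\Hom_{\mathcal H_{F_T}[H]}(I_T\otimes_{A_T}F_T,F_T\otimes\eta)\to\Hom_{\mathcal H_{F_T}[H]}(\cV_x^0\otimes_{A_T}F_T,F_T\otimes\eta)
\]
is an isomorphism, which gives existence and uniqueness. For the denominator bound, apply \Cref{cor habile} to $\cV_x^0\subseteq\cV_{a+2}\subseteq\cdots\subseteq I_T$, using the annihilation with $W=F_T/A_T$.

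The main obstacle is only bookkeeping, namely that extension of scalars preserves the relevant structures. Concretely, $I_T=I_{P,x}(\sigma)\otimes_{A_x}A_T$, which holds because induction commutes with this base change; the Jacquet module of $\sigma_K$ is $r_{L_j,M}(\sigma)\otimes_RK$; and the stratification sequences stay exact over $A_T$. I would check these by direct inspection of the definitions and not by any new argument.
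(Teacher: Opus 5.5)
Your proposal is correct and follows the paper's own proof. The paper likewise notes that Jacquet modules commute with scalar extension, so \Cref{cor kill} holds over $A_T$, with the hypothesis $p_{x,j}\neq0$ in $A_T$ replacing \Cref{lm BD}; it then reruns the proof of \Cref{thm LIP} with $A_T$ and $F_T$ in place of $A_x$ and $F_x$.

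One small step deserves a sentence. Taking $W=F_T$ gives vanishing of $\Ext$ over $\mathcal H_{A_T}[H]$, whereas the restriction isomorphism needs vanishing over $\mathcal H_{F_T}[H]$ for the base-changed strata. You can get this either by flat base change along $A_T\to F_T$, or by applying \Cref{lm classique} directly with coefficients $F_T$, as the paper does.
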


\begin{proof}
Jacquet modules commute with scalar extension hence the statement of  \Cref{cor kill} extends with $A_T$ in place of $A_x$ (and $F_T$ in place of $F_x$). Since the $p_{x,j}$ are invertible in $F_T$,
the proof of \Cref{thm LIP} gives the result.
\end{proof}

\begin{lemma}\label{lm induced central characters}
	Let $L$ be a Levi subgroup containing $M$, let $P_L$ be a
	parabolic subgroup of $L$ with Levi subgroup $M$, and let
	$\chi:M\to K^\times$ be an unramified character. Put
	\[
	\tau:=I_{P_L}^L(\chi\otimes\sigma_K).
	\]
	For every parabolic subgroup $Q_N$ of $L$ with Levi subgroup
	$N$, the Jacquet module $r_{N,L}(\tau)$ has a finite filtration
	by $N$-subrepresentations on whose successive quotients $A_N$
	acts through $K$-valued characters.
	
	Consequently, \Cref{lm coefficient change} also holds with
	$L$ in place of $M$ and $\tau$ in place of $\sigma_K$, using
	the characters of these filtrations in
	\eqref{eq boundary polynomial}.
\end{lemma}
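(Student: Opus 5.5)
The plan is to compute $r_{N,L}(\tau)$ with the Bernstein--Zelevinsky geometric lemma inside $L$. Filter $\tau=I_{P_L}^L(\chi\otimes\sigma_K)$ by supports on the double cosets $P_L w Q_N$, ordered as in \cite[Section 5.1]{BZ}. The geometric lemma then gives a finite filtration of $r_{N,L}(\tau)$ by $N$-subrepresentations. Its successive quotients are of the form
\[
 I_{N\cap w^{-1}P_Lw}^{N}\Bigl({}^{w^{-1}}\bigl(r_{M\cap wQ_Nw^{-1},M}(\chi\otimes\sigma_K)\bigr)\Bigr),
\]
with $w$ running over a finite set of representatives in $W$ chosen so that $M\cap wNw^{-1}$ is a standard Levi subgroup of $M$. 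This step is formal: the geometric lemma holds with coefficients in any field of characteristic different from $p$ \cite[II.2.19]{V}, so it applies over $K$.

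Next, on each quotient the split component $A_N$ is central in $N$. Its conjugate $wA_Nw^{-1}$ lies in the centre of the Levi $M_w:=M\cap wNw^{-1}$, since $A_N$ centralizes $N\supseteq w^{-1}M_ww$. Now $\chi$ is unramified, hence a character of $M$, and $r_{M_w,M}(\chi\otimes\sigma_K)=\chi|_{M_w}\otimes r_{M_w,M}(\sigma_K)$. The Jacquet module $r_{M_w,M}(\sigma_K)=r_{M_w,M}(\sigma)\otimes_RK$ has finite length by \cite[II.5.13]{V}. Refining by a composition series of $r_{M_w,M}(\sigma)$ over the algebraically closed field $R$ gives irreducible subquotients with central characters by \cite[II.2.8]{V}. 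These characters are $R$-valued, hence $K$-valued after base change.

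Parabolic induction is exact, so inducing this refined filtration gives a filtration of each geometric-lemma quotient. On each piece, $a\in A_N$ acts by the scalar $\chi(waw^{-1})\,\omega(waw^{-1})$ times the relevant modulus factor. This holds because $A_N$ is central in $N$, and an element acting by a scalar on the inducing representation of $N\cap w^{-1}P_Lw$ acts by that scalar on the induced space. Concatenating these filtrations proves the first claim. The only care needed is that the refinement of $r_{M_w,M}(\sigma_K)$ remains $M_w$-stable, which it does.

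For the consequence, I will rerun the proofs of \Cref{cor kill}, \Cref{thm LIP} and \Cref{lm coefficient change} with $(L,\tau)$ in place of $(M,\sigma_K)$. The only place where finite length and central characters of irreducible subquotients were used is the application of \Cref{lm classique} in \Cref{cor kill}. That lemma needs only a finite filtration on whose quotients the chosen central element $z_{x,j}\in A_{L_j}$ acts by scalars, and this is exactly what the first part provides (applied with $N=L_j$). The elements $z_{x,j}$ come from \Cref{lm BD}, and the geometric lemma of \Cref{prop geometric} is unchanged, so the same polynomials with these characters annihilate the boundary $\Ext$ groups.

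The main obstacle is the nonvanishing of the new $p_{x,j}$ in $A_T$. I will treat it as a hypothesis, exactly as \Cref{lm coefficient change} does, so that the argument goes through verbatim.
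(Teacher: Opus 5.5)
Your proposal is correct and follows essentially the same route as the paper. The paper likewise applies the geometric lemma inside $L$, refines each quotient by a composition series of the Jacquet module of $\sigma$ over $R$, and lets $A_N$ act through $\chi$ times the conjugated central character; it then obtains the consequence by rerunning \Cref{cor kill}, keeping the nonvanishing of the $p_{x,j}$ as the hypothesis of \Cref{lm coefficient change}. The one step to tighten is $wA_Nw^{-1}\subseteq M$, not merely that $wA_Nw^{-1}$ centralizes $M_w$: this follows from $A_N\subseteq A_0$ together with $w$ normalizing $A_0$ for the standard choice of representatives.
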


\begin{proof}
	The geometric lemma \cite[II.2.18]{V}, refined using composition
	series of the Jacquet modules of $\sigma$, gives successive
	quotients of the form
	\[
	I_{R_w}^N\bigl({}^w(\chi|_{S_w}\otimes\pi_K)\bigr).
	\]
	Here $w$ runs through representatives of $Q_N\backslash L/P_L$
	chosen as in the geometric lemma,
	\[
	S_w=M\cap w^{-1}Nw,\qquad R_w=N\cap wP_Lw^{-1},
	\]
	and $\pi$ is an irreducible subquotient of $r_{S_w,M}(\sigma)$,
	taken with respect to $M\cap w^{-1}Q_Nw$.
	The support filtration and the exactness of unipotent
	coinvariants remain valid over $K$, since $p$ is invertible
	in $K$.
	
	Let $\omega_\pi$ be the central character of $\pi$.
	Since $R_w$ has Levi subgroup $wS_ww^{-1}$ and
	$A_N\subseteq wA_{S_w}w^{-1}$, the action of $A_N$ on this
	quotient is
	\[
	a\longmapsto
	\chi(w^{-1}aw)\omega_\pi(w^{-1}aw)\in K^\times.
	\]
	These finite filtrations give the required central
	annihilators, so the preceding lemma applies as stated.
\end{proof}

Suppose that $P=MV\subseteq Q=LU$ with $M\subseteq L$, put
$P_L=P\cap L$, and write $u=u_1u_2$ with $u_1\in L$.
Assume that $x=x_u$ is $M$-admissible and $x_2=x_{u_2}$ is
$L$-admissible. Set
\[
H_2=L\cap u_2Hu_2^{-1},\qquad
\theta_2=\theta_{u_2}|_L,\qquad
y=u_1\theta_2(u_1)^{-1}.
\]
Then $(L,H_2,\theta_2)$ is a symmetric pair by
\Cref{lm induced pair}. Since $\theta_y=\theta_u|_L$, the point $y$
is $M$-admissible in this pair, and
\[
M_y:=M\cap u_1H_2u_1^{-1}=M\cap uHu^{-1}=M_x.
\]

Let $\cX^L(M)$ be the maximal subtorus of $\cX(M)$ consisting of
characters trivial on $A_L$, and let $\cX_y^L(M)$ be its maximal
$\theta_y$-anti-invariant subtorus. Multiplication of characters gives
an isogeny
\begin{equation}\label{eq torus transitivity}
	\mu:\cX_{x_2}(L)\times\cX_y^L(M)\longrightarrow\cX_x(M),
	\qquad(\chi_2,\chi_1)\longmapsto(\chi_2|_M)\chi_1.
\end{equation}
Indeed, on rational cocharacter spaces, the restriction map from
$\cX(L)$ and the inclusion of $\cX^L(M)$ give a direct sum
decomposition of $X_*(\cX(M))\otimes_\Z\Q$.
This decomposition is $\theta_u$-stable. On the first summand,
$\theta_u$ acts as
$\theta_{u_2}$, since $u_1\in L$ acts trivially on unramified
characters of $L$. Taking the $(-1)$-eigenspaces gives
\eqref{eq torus transitivity}.

Write
\[
\begin{gathered}
	A_1:=\mathcal O_R(\cX_y^L(M)),\qquad
	A_2:=\mathcal O_R(\cX_{x_2}(L)),\\
	\widetilde A:=A_1\otimes_R A_2,\qquad
	\widetilde F:=\operatorname{Frac}(\widetilde A),
\end{gathered}
\]
and denote the corresponding universal characters by
\[
\Psi_1:M\longrightarrow A_1^\times,
\qquad
\Psi_2:L\longrightarrow A_2^\times.
\] The isogeny $\mu$ gives injections
$A_x\hookrightarrow\widetilde A$ and $F_x\hookrightarrow\widetilde F$.
By \Cref{lm coefficient change}, the scalar extension of
$J_P^G(u,\eta,\ell,\sigma)$ is the period of the pulled-back data.
We continue to write $\ell$ for its pullback to $\widetilde A$.

Put
\begin{equation}\label{eq eta transitivity}
	\eta_2=(\delta_{Q,x_2}\eta^{u_2})|_{H_2}.
\end{equation}
This character is trivial on unipotent elements. On $M_x=M_y$ one has
\begin{equation}\label{eq modular transitivity}
	\delta_{P,x}\eta^u
	=\delta_{P_L,y}\,(\eta_2)^{u_1}.
\end{equation}
To see this, take determinants in the decomposition
$\operatorname{Lie}(V)=\operatorname{Lie}(V\cap L)
\oplus\operatorname{Lie}(U)$ and its fixed-point counterpart.
Conjugation by $u_1$ identifies $U^{\theta_{u_2}}$ with
$U^{\theta_u}$, giving
\[
\delta_{P,x}(m)=\delta_{P_L,y}(m)
\delta_{Q,x_2}(u_1^{-1}mu_1),\qquad m\in M_x.
\]
Inserting $\eta$ gives \eqref{eq modular transitivity}.
Moreover, $\Psi_2$ is trivial on $H_2$ and on its conjugates in $L$.
Thus $\ell$, after pullback to $\widetilde A$, is a valid datum for
the inner period with parameter $\Psi_1$.

We first check that the polynomials in
\eqref{eq boundary polynomial}, formed inside $L$, remain nonzero
when the parameter is restricted to $\cX_y^L(M)$.
For each double coset
\[
P_Lv_jH_2\subseteq
\overline{P_Lu_1H_2}\smallsetminus P_Lu_1H_2,
\]
let $z_{y,j}$ and $p_{y,j}$ be as in \Cref{lm BD} and
\eqref{eq boundary polynomial}, for the pair $(L,H_2,\theta_2)$
and the character $\eta_2$.
Thus $p_{y,j}$ is a nonzero regular function on
$\cX_y(M)=\cX_x(M)$.
Since $z_{y,j}\in v_jH_2v_j^{-1}$, every
$\chi_2\in\cX_{x_2}(L)$ satisfies $\chi_2(z_{y,j})=1$.
The formula for $p_{y,j}$ therefore gives
\[
p_{y,j}\bigl((\chi_2|_M)\chi_1\bigr)=p_{y,j}(\chi_1).
\]
The injectivity of $A_x\to\widetilde A$ induced by
\eqref{eq torus transitivity} now shows that the restriction of
$p_{y,j}$ to $\cX_y^L(M)$ is nonzero.
We keep the notation $p_{y,j}$ for this restriction and put
\[
d_1:=\prod_jp_{y,j}\in A_1\smallsetminus\{0\}.
\]

Apply \Cref{lm coefficient change} inside $L$, with data
$(P_L,y,\eta_2,\ell)$, $K=R$, and the map
\[
\cX_{x_2}(L)\times\cX_y^L(M)\longrightarrow\cX_y(M),
\qquad
(\chi_2,\chi_1)\longmapsto\chi_1.
\]
Here $A_T=\widetilde A$, so this gives the period
\[
\mathcal J_1=J_{P_L}^{L,H_2}(u_1,\eta_2,\ell,\sigma)
\]
over $\widetilde F$; the third superscript specifies the
symmetric subgroup.
Writing $\tau_{A_1}:=I_{P_L}^L(\Psi_1\otimes\sigma)$, we also obtain
\[
d_1\mathcal J_1
\bigl(\widetilde A\otimes_{A_1}\tau_{A_1}\bigr)
\subseteq\widetilde A.
\]

To use $\mathcal J_1$ as the datum for the period induced from $Q$,
put
\[
K_1:=\operatorname{Frac}(A_1),\qquad
\tau:=K_1\otimes_{A_1}\tau_{A_1},\qquad
B:=K_1\otimes_R A_2.
\]
Since $d_1$ is invertible in $B$, the preceding inclusion gives
\[
\mathcal J_1\in
\Hom_{\mathcal H_B[H_2]}
\bigl(B\otimes_{K_1}\tau,B\otimes\eta_2\bigr).
\]

By \Cref{lm induced central characters}, the Jacquet modules of
$\tau$ have finite filtrations on whose successive quotients the
split centers act through $K_1$-valued characters.
For each double coset contained in
$\overline{Qu_2H}\smallsetminus Qu_2H$, the element $z_{x_2,j}$
of \Cref{lm BD} satisfies
\[
\Psi_2(z_{x_2,j})\notin K_1.
\]
Consequently, every factor in \eqref{eq boundary polynomial},
formed for $\tau$ with parameter $\Psi_2$, is nonzero in $B$:
it has the form $a-b\Psi_2(z_{x_2,j})$ with
$a,b\in K_1^\times$.
Thus
\[
J_Q^G(u_2,\eta,\mathcal J_1,\tau)
\in
\Hom_{\mathcal H_{\widetilde F}[H]}
\bigl(
I_Q^G(\Psi_2\otimes_{K_1}\tau)\otimes_B\widetilde F,
\widetilde F\otimes\eta
\bigr).
\]

Over $\widetilde F$, transitivity of normalized parabolic induction
gives
\begin{equation}\label{eq Gamma transitivity}
	\Gamma:I_P^G((\Psi_2|_M)\Psi_1\otimes\sigma)
	\xrightarrow{\ \sim\ }I_Q^G(\Psi_2\otimes\tau),
\end{equation}
with
\[
(\Gamma\phi)(g)(l)
=\delta_Q(l)^{-1/2}\Psi_2(l)^{-1}\phi(lg).
\]
The following generalizes \cite[Proposition~3.7]{LuMcomp}.

\begin{theorem}\label{thm transitivity}
	With the preceding notation and compatible quotient measures,
	\begin{equation}\label{eq period transitivity}
		J_P^G(u_1u_2,\eta,\ell,\sigma)
		=c(u_1)\,J_Q^G(u_2,\eta,\mathcal J_1,\tau)\circ\Gamma
	\end{equation}
	over $\widetilde F$, where
	\[
	c(u_1)=\delta_Q(u_1)^{1/2}\Psi_2(u_1).
	\]
	In particular $c(u_1)=1$ if $u_1$ belongs to a compact subgroup of $L$.
\end{theorem}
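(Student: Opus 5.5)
The plan is to use the uniqueness part of \Cref{thm LIP}, in the extended form of \Cref{lm coefficient change} over $\widetilde F$. Both sides of \eqref{eq period transitivity} are $H$-equivariant $\widetilde F$-linear forms on $I_P^G((\Psi_2|_M)\Psi_1\otimes\sigma)\otimes\widetilde F$: the left side by pullback along $\mu$, the right side because $\Gamma$ is $G$-equivariant. So it suffices to show they agree on the space of sections $\phi$ such that $h\mapsto\phi(uh)$ has compact support modulo $H\cap u^{-1}Pu$. On that space the left side is the integral $j_{\eta,\ell}$. The proof therefore reduces to showing that the right side, evaluated on such $\phi$, equals $c(u_1)$ times the same integral.

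First we check that $\Gamma$ sends this support space into the corresponding support space for $(Q,u_2)$. Since $u=u_1u_2$ with $u_1\in L\subseteq Q$, we have $Qu_2H=QuH\supseteq PuH$. A section supported in $G\smallsetminus\partial(PuH)$ restricts on $Qu_2H$ to a function whose $L$-component is supported in the good part of the inner orbit. Then $h\mapsto\Gamma\phi(u_2h)$ has compact support modulo $H\cap u_2^{-1}Qu_2$. For each fixed $h$, the inner function $l\mapsto\phi(lu_2h)$, restricted to $u_1H_2$, is compactly supported modulo $H_2\cap u_1^{-1}P_Lu_1$. The key point is that $\overline{PuH}\cap Qu_2H$ is controlled by the closure of $P_Lu_1H_2$ in $L$ times $u_2H$. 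I expect this to need a short topological argument using the fibration $Qu_2H\to Q\backslash Qu_2H\simeq (H\cap u_2^{-1}Qu_2)\backslash H$ with fibre $L/\dots$. Hence on these sections both the inner period $\mathcal J_1$ and the outer period are given by their defining integrals.

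Next comes the computation. The outer period is
\[
\int_{(H\cap u_2^{-1}Qu_2)\backslash H}\eta(h)^{-1}\mathcal J_1\bigl(\Gamma\phi(u_2h)\bigr)\,dh .
\]
The inner one is
\[
\mathcal J_1(f)=\int_{(H_2\cap u_1^{-1}P_Lu_1)\backslash H_2}\eta_2(h_2)^{-1}\ell\bigl(f(u_1h_2)\bigr)\,dh_2 .
\]
We substitute $f(l)=\delta_Q(l)^{-1/2}\Psi_2(l)^{-1}\phi(lu_2h)$ with $l=u_1h_2$ and write $h_2=u_2h'u_2^{-1}$ with $h'\in H\cap u_2^{-1}Lu_2$. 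The factor $\delta_Q(u_1h_2)^{-1/2}\Psi_2(u_1h_2)^{-1}$ splits as $\delta_Q(u_1)^{-1/2}\Psi_2(u_1)^{-1}$ times $\delta_Q(h_2)^{-1/2}$, because $\Psi_2$ is trivial on $H_2$. The factor $\eta_2(h_2)^{-1}=\delta_{Q,x_2}(h_2)^{-1}\eta(h')^{-1}$ combines with $\delta_Q(h_2)^{-1/2}$ to give $\delta_{Q^{\theta_{x_2}}}(h_2)^{-1}$. This is exactly the modulus factor needed to fold the two quotient integrals into one over $(H\cap u^{-1}Pu)\backslash H$. That folding is the standard integration in stages with respect to $H\cap u^{-1}Pu\subseteq H\cap u_2^{-1}Qu_2\subseteq H$, using that $H\cap u_2^{-1}Qu_2$ is the semidirect product of $H\cap u_2^{-1}Lu_2$ and a unipotent part, as in \Cref{lm stab} applied to $Q$. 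The compatibility of measures is built into the hypothesis. After folding, the constant $c(u_1)^{-1}$ is left in front, which is the claimed identity. If $u_1$ lies in a compact subgroup, then $\delta_Q$ and the unramified $\Psi_2$ are trivial on it, so $c(u_1)=1$.

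The main obstacle is the support step: showing that $\Gamma$ maps the support space for $u$ into the one for $u_2$, with the inner sections landing in the support space for $u_1$. This is needed so that both periods are honestly given by integrals. If it proves delicate, the fallback is to verify the identity only on the smaller subspace of sections supported in $PuH$ itself, that is, $\cInd$ from the open-in-closure stratum. Uniqueness still applies there, because the proof of \Cref{thm LIP} shows restriction to $\cV_x^0\otimes F_x$ is injective, and $\cV_x^0$ is generated modulo $\cV_x^-$ by such sections. However, forms vanishing on $\cV_x^-$ must also be handled, and for that the $\Ext^1$ vanishing argument from \Cref{thm LIP} would have to be reused for the intermediate strata.
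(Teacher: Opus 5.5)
\textbf{There is a genuine gap: the support inclusion your argument rests on is false.} The step you flag as the main obstacle fails in general: $\Gamma(\cW(u))\not\subseteq\cW(u_2)$. A section $\phi\in\cW(u)$ only has to vanish on $\partial(PuH)$. It is arbitrary on the open set $G\smallsetminus\overline{PuH}$, and $\partial(Qu_2H)$ need not be contained in $\overline{PuH}$.

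Here is a counterexample. Take the group case $(\GL_3\times\GL_3,\Delta\GL_3)$ with $P=B\times B$, and let $Q$ be the product of the standard maximal parabolics of types $(2,1)$ and $(1,2)$. Then the double cosets $QgH$ correspond to (line $\subseteq$ plane) relative positions, and the double cosets $PgH$ correspond to Bruhat cells. The $L$-admissible $Q$-double coset is the open one. It consists of the cells of $w_0$ and of one length-two element $w_1$. Its boundary is the closed one, which contains the cell of the other length-two element $w_1'$. Take $PuH$ to be the cell of $w_1$ (the closed inner orbit). Since $w_1'\not\leq w_1$, a $\phi\in\cW(u)$ supported near the cell of $w_1'$ has $\Gamma\phi\notin\cW(u_2)$. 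On such $\phi$ the right-hand side is not given by its integral.

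Your inner claim is correct: $\partial(PuH)\cap Qu_2H$ does correspond to $\partial(P_Lu_1H_2)$. The outer one is what fails.

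The fallback does not repair this. To invoke uniqueness of $J_P^G(u,\eta,\ell,\sigma)$ you must show that the right-hand side kills $\cV_x^-$. That includes sections meeting $\partial(Qu_2H)\smallsetminus\overline{PuH}$, where the right-hand side is not an integral. The boundary vanishing of \Cref{thm LIP} concerns only orbits in $\partial(PuH)$ and says nothing about these strata.

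\textbf{The missing idea is to apply uniqueness twice, in the other direction, so that only the true inclusion is used.}
\begin{enumerate}
\item Put $T=J_P^G(u,\eta,\ell,\sigma)\circ\Gamma^{-1}$ over $\widetilde F$. Since $PuH\subseteq Qu_2H$, sections supported off $\overline{Qu_2H}$ pull back into $\cW(u)$ and are killed by the support formula. So $T$ vanishes on $\cV_{x_2}^-$, for the filtration attached to $Q$.
\item By \Cref{lm adapted filtration,lm ext frobenius}, the restriction of $T$ to $\cV_{x_2}^0$ corresponds to a form $\lambda\in\Hom_{\mathcal H_{\widetilde F}[H_2]}(\widetilde F\otimes_{K_1}\tau,\widetilde F\otimes\eta_2)$.
\item Test $\lambda$ only on $f$ in the inner support space. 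The Frobenius section attached to $f$, lifted to $\cV_{x_2}^0$, vanishes on $\partial(Qu_2H)$ and on $\partial(PuH)\cap Qu_2H$. Since $\partial(PuH)\subseteq\overline{Qu_2H}$, its preimage under $\Gamma$ lies in $\cW(u)$. Your integration in stages then gives $\lambda(f)=c(u_1)j_{\eta_2,\ell}(f)$.
\item Inner uniqueness gives $\lambda=c(u_1)\mathcal J_1$. Hence $T$ and $c(u_1)J_Q^G(u_2,\eta,\mathcal J_1,\tau)$ agree on $\cV_{x_2}^0\otimes\widetilde F=\cW(u_2)$.
\item Outer uniqueness over $\widetilde F$ finishes the proof. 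It is available because $\Psi_2(z_{x_2,j})\notin K_1$.
\end{enumerate}
Your modulus and character bookkeeping carries over unchanged to this argument.
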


\begin{proof}
	Work over $\widetilde F$ and put
	$T=J_P^G(u,\eta,\ell,\sigma)\circ\Gamma^{-1}$.
	Since $PuH\subseteq Qu_2H$, the support formula shows that
	$T$ vanishes on $\cV_{x_2}^-$ for the filtration associated with $Q$.
	By \Cref{lm adapted filtration,lm ext frobenius}, its restriction
	to $\cV_{x_2}^0$ therefore corresponds to a form
	\[
	\lambda\in\Hom_{\mathcal H_{\widetilde F}[H_2]}
	\bigl(\widetilde F\otimes_{K_1}\tau,
	\widetilde F\otimes\eta_2\bigr).
	\]
	
	Let $f\in\widetilde F\otimes_{K_1}\tau$ have compactly supported
	restriction to $P_Lu_1H_2$ modulo $P_L$.
	Take the section on $Qu_2H$ attached to $f$ in the Frobenius
	construction and lift it to $\cV_{x_2}^0$.
	Its inverse image under $\Gamma$ belongs to $\cW(u)$.
	Integration in stages along
	\[
	H\cap u^{-1}Pu\subseteq H\cap u_2^{-1}Qu_2\subseteq H,
	\]
	as in \cite[proof of Proposition~3.7]{LuMcomp}, gives
	\[
	\lambda(f)=c(u_1)j_{\eta_2,\ell}(f).
	\]
	Here we use \eqref{eq eta transitivity}, $\Psi_2|_{H_2}=1$, and
	\[
	(\Gamma^{-1}F)(u_1u_2h)=c(u_1)F(u_2h)(u_1).
	\]
	
	Uniqueness for the inner period gives $\lambda=c(u_1)\mathcal J_1$;
	uniqueness for the outer period then proves
	\eqref{eq period transitivity}.
\end{proof}

\subsection{Standard intertwining operators}\label{ssec intertwining}

Let $P=MV$ and $P'=M'V'$ be parabolic subgroups whose Levi components, and let $n\in G$ satisfy $nMn^{-1}=M'$. Let $\sigma$ be a nonzero finite-length smooth
$R$-representation of $M$, and put
\[
\mathcal A=\mathcal O_R(\cX(M))=R[\Lambda],\qquad
\mathcal F=\operatorname{Frac}(\mathcal A). 
\] A subscript $\mathcal F$ denotes extension of scalars from
$\mathcal A$ to $\mathcal F$. 
Let \[\Psi:M\to\mathcal A^\times, \quad \Psi(m)=[m]\in \Lambda \subseteq R[\Lambda]^\times\] be the universal unramified
character. The result below can be deduced from \cite[Section~7]{Dat} and \cite{MossTrias}. We deduce it as a corollary of \Cref{thm LIP}, as it is well-known that intertwining periods in the group case are intertwining operators in disguise; see for example \cite[Section 4]{LM}.  

\begin{lemma}\label{lm scalar-intertwining-denominator}
	There is a unique $G$-equivariant $\mathcal F$-linear map
	\[
	M(n,\sigma):
	I_P^G(\Psi\otimes\sigma)_{\mathcal  F}
	\longrightarrow
	I_{P'}^G\bigl({}^n(\Psi\otimes\sigma) \bigr)_{\mathcal  F}
	\]
	satisfying
	\begin{equation}\label{eq conv}
		\bigl(M(n,\sigma)\phi\bigr)(g)=
		\int_{V'\cap nVn^{-1}\backslash V'}
		\phi(n^{-1}vg)\,d v
	\end{equation}
	whenever
	\[
	V'\cap nVn^{-1}\backslash V'
	\longrightarrow\mathcal F\otimes_R V_\sigma,\qquad
	v\longmapsto\phi(n^{-1}vg)
	\]
	has compact support.
	Moreover, there exists $d\in\mathcal A\setminus\{0\}$ such that
	\begin{equation}\label{eq car CMop}
	d\,M(n,\sigma)I_P^G(\Psi\otimes\sigma)
	\subseteq I_{P'}^G\bigl({}^n(\Psi\otimes\sigma) \bigr).
	\end{equation}
\end{lemma}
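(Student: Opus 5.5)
We plan to deduce this from \Cref{thm LIP} applied to the group case. Put $\mathbf G':=\bG\times\bG$ with involution $\theta(g_1,g_2)=(g_2,g_1)$, so that $G'^\theta=\Delta G$; since the diagonal is connected, $H=\Delta G$ is the only allowed choice, and we take $\eta=1$. Take the parabolic subgroup $\mathbf P:=P'\times P$ of $G'$, with Levi subgroup $\mathbf M=M'\times M$, and the representation $\boldsymbol\sigma:={}^n\widetilde\sigma'\boxtimes\sigma$, where we will need a suitable dual (contragredient) of $\sigma$ on the $M'$ factor. Choose $u=(n,1)$, so that $x_u=(n,n^{-1})$ and $\theta_x(m_1,m_2)=(nm_2n^{-1},n^{-1}m_1n)$; this stabilizes $\mathbf M$ because $nMn^{-1}=M'$, so $x$ is $\mathbf M$-admissible. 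The torus $\cX_x$ of $\theta_x$-anti-invariant unramified characters of $\mathbf M$ consists of the pairs $({}^n\chi^{-1},\chi)$, and it is identified with $\cX(M)$; thus $A_x=\mathcal A$ and $\Psi_x=({}^n\Psi^{-1},\Psi)$. Moreover $\mathbf M_x=\{(nmn^{-1},m)\}\simeq M$, and $\delta_{\mathbf P,x}$ is trivial on it: indeed $\mathbf P^{\theta_x}\simeq P\cap n^{-1}P'n$, and its modulus cancels against $\delta_{P'}^{1/2}\delta_P^{1/2}$ restricted to $M$. This is the standard computation of \cite[Section 4]{LM}.

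The intertwining operator itself should not be viewed directly as a linear form. Instead, we pair it with the dual induced representation. The natural pairing $\ell$ between ${}^n(\Psi^{-1}\otimes\widetilde\sigma)$ and $\Psi\otimes\sigma$ is an $\mathbf M_x$-invariant $\mathcal A$-valued form. \Cref{thm LIP} then produces the form $J:=J_{\mathbf P}^{G'}(u,1,\ell,\boldsymbol\sigma)$ on $I_{P'}^G({}^n(\Psi^{-1}\otimes\widetilde\sigma))_{\mathcal F}\otimes I_P^G(\Psi\otimes\sigma)_{\mathcal F}$. This form is $\Delta G$-invariant, is determined by its support formula on $\cW(u)$, and has a denominator $d\in\mathcal A\setminus\{0\}$. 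Unwinding the support formula on tensors $\phi'\otimes\phi$ gives
\[
J(\phi'\otimes\phi)=\int_{P'\bs G}\Big\langle \phi'(g),\int_{V'\cap nVn^{-1}\bs V'}\phi(n^{-1}vg)\,dv\Big\rangle\,dg .
\]
Here $\cW(u)$ contains those tensors for which $\phi$ satisfies the compact-support condition of the statement. Contragredient duality $I_{P'}^G(\tau)^\vee\simeq I_{P'}^G(\widetilde\tau)$ with respect to the pairing $\int_{P'\bs G}$ then converts $J$ into a $G$-map $M(n,\sigma)$ into the smooth dual of $I_{P'}^G({}^n(\Psi^{-1}\otimes\widetilde\sigma))_{\mathcal F}$, satisfying \eqref{eq conv}.

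The main obstacle is to land in $I_{P'}^G({}^n(\Psi\otimes\sigma))_{\mathcal F}$ itself, rather than in its full dual, and to get uniqueness. To avoid dualities over the non-field $\mathcal A$, I would try first a different route: apply the theorem with $\boldsymbol\sigma={}^n\widetilde\sigma\boxtimes\sigma$ over the field $R$, and then define $M(n,\sigma)\phi$ by evaluation. Concretely, $(M(n,\sigma)\phi)(g)$ should be the vector of ${}^n\sigma\otimes\mathcal F$ whose pairing with $\widetilde v$ is $J(\phi'_{\widetilde v,g}\otimes\phi)$, where the $\phi'$ are sections supported near $P'g$. This is the point where admissibility is needed: $\sigma$ has finite length, hence is admissible, so $\sigma\simeq\widetilde{\widetilde\sigma}$. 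Smoothness in $g$ is then automatic, and the vector lies in $\sigma^{K_M}\otimes\mathcal F$ for a fixed compact open $K_M$.

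Uniqueness will follow from the uniqueness in \Cref{thm LIP}. Any $G$-map satisfying \eqref{eq conv} gives, by pairing, an invariant form satisfying the support characterization on $\cW(u)$, and sections of the form $\phi'\otimes\phi$ with $\phi$ well supported span enough of $\cW(u)$ to determine such a form. Finally, \eqref{eq car CMop} follows from \eqref{eq direct denominator}: the same $d$ makes $J$ integral on integral sections, and pairing against the integral test sections $\phi'$ with $\mathcal A$-coefficients shows that $d\,M(n,\sigma)\phi$ has values in $\mathcal A\otimes V_\sigma$.
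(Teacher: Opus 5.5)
Your proposal is correct and follows the paper's proof. Both realize $M(n,\sigma)$ through the group-case period for $(G\times G,\Delta G)$, with $\ell$ the natural pairing and $\delta_{\widetilde P,x}=1$, and then transfer existence, uniqueness and the denominator $d$ from \Cref{thm LIP} via the pairing $\langle\ ,\ \rangle_{P'}$, which is perfect by admissibility. Your choice of $P'\times P$ with $u=(n,1)$ is just the mirror image of the paper's $P\times P'$ with $u=(1,n)$, and your pointwise definition using test sections is an explicit version of the paper's perfect-pairing step, including its integral form over $\mathcal A$.
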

\begin{proof}
 We denote by $\sigma^\vee$ the smooth contragredient of $\sigma$. Consider the symmetric pair
	\[
	\widetilde G=G\times G,\qquad
	\widetilde H=\Delta G,\qquad
	\theta(g_1,g_2)=(g_2,g_1),
	\]
	with parabolic subgroup $\widetilde P=P\times P'$,
	Levi subgroup $\widetilde M=M\times M'$, and
	\[
	u=(1,n),\qquad x=x_u=(n^{-1},n).
	\]
	Then $x$ is $\widetilde M$-admissible and
	\[
	\widetilde M_x
	=\{(m,nmn^{-1}):m\in M\}.
	\]
	Take
	\[
	\widetilde\sigma=\sigma\boxtimes{}^n\sigma^\vee,
	\qquad
	\ell_\Delta(v\otimes v^\vee)=v^\vee(v).
	\]
	 We have $\delta_{\widetilde P,x}=1$. 
	Moreover 
	\[
	\cX_{\widetilde M,x}
	=
	\{(\xi,{}^n\xi^{-1}):\xi\in\cX(M)\},
	\]
	and its algebra of regular functions identifies with $\mathcal A$. Since $\widetilde\sigma$ has finite length,
	\Cref{thm LIP} gives a unique $\Delta G$-invariant form
	\[
	\mathcal J:I_P^G(\Psi\otimes\sigma)_{\mathcal F}\otimes_{\mathcal F} I_{P'}^G({}^n(\Psi^{-1}\otimes\sigma^\vee))_{\mathcal F}
	\longrightarrow\mathcal F,
	\]
	which is given on well-supported sections by 
	\[
	\mathcal J(f\otimes f^\vee)
	=
	\int_{(nPn^{-1}\cap P')\backslash G}
	\langle f(n^{-1}g),f^\vee(g)\rangle\,dg.
	\]
	The theorem also gives $d\in\mathcal A\setminus\{0\}$ such that
	\[
	d\,\mathcal J(I_P^G(\Psi\otimes\sigma)) \otimes_{\mathcal A} I_{P'}^G({}^n(\Psi^{-1}\otimes\sigma^\vee))\subseteq\mathcal A.
	\]
	
	By admissibility, the pairing
	\[
	\langle u,v^\vee\rangle_{P'}
	=\int_{P'\backslash G}\langle u(g),v^\vee(g)\rangle\,dg
	\]
	is perfect on
	\[
	I_{P'}^G({}^n(\Psi\otimes\sigma))_{\mathcal F}
	\times
	I_{P'}^G({}^n(\Psi^{-1}\otimes\sigma^\vee))_{\mathcal F}.
	\] It therefore defines a $G$-equivariant $\mathcal F$-linear map
	\[
	M(n,\sigma):I_P^G(\Psi\otimes\sigma)_{\mathcal F} \longrightarrow  I_{P'}^G({}^n(\Psi\otimes\sigma))_{\mathcal F}
	\]
	by
	\begin{equation}\label{eq group period intertwiner}
		\mathcal J(f\otimes f^\vee)
		=
		\langle M(n,\sigma)f,f^\vee\rangle_{P'}
		=
		\int_{P'\backslash G}
		\langle(M(n,\sigma)f)(g),f^\vee(g)\rangle\,dg.
	\end{equation}
	
	\Cref{eq conv} follows. Uniqueness in the statement follows from that in \Cref{thm LIP}, by the same perfect pairing argument. The pairing being also perfect over $\mathcal A$ by extension of sclaras, this proves \cref{eq car CMop} as well.
\end{proof}

As proven in \cite{Dat} and \cite{MossTrias}, these operators
satisfy the expected standard properties, such as reduced
multiplicativity (see \cref{cor reduced multiplicativity} below)
and compatibility with transitivity of parabolic induction.
The latter is \cite[Proposition~7.8(ii)]{Dat} and
\cite[Lemma~3.17(i)]{MossTrias}, and we recover it as a
corollary of \cref{thm transitivity}.

\begin{corollary}\label{cor intertwining induction}
	Suppose that $P$ and $P'$ are contained in a parabolic subgroup
	$Q=LU$, with $M,M'\subseteq L$, and that $n\in L$.
	Let $\Gamma_P$ and $\Gamma_{P'}$ be the normalized
	transitivity isomorphisms, given by
	\[
	(\Gamma_Af)(g)(l)=\delta_Q(l)^{-1/2}f(lg),
	\qquad A=P,P'.
	\]
	With compatible quotient measures, one has
	\[
	\Gamma_{P'}\circ M^G(n,\sigma)
	=
	\delta_Q(n)^{-1/2}
	I_Q^G\bigl(M^L(n,\sigma)\bigr)\circ\Gamma_P,
	\]
	where 
	\[
	M^L(n,\sigma):
	I_{P\cap L}^L(\Psi\otimes\sigma)_{\mathcal F}
	\longrightarrow
	I_{P'\cap L}^L({}^n(\Psi\otimes\sigma))_{\mathcal F}.
	\]
	In particular, the scalar factor is $1$ if $n$ belongs
	to a compact subgroup of $L$.
\end{corollary}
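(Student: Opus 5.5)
We derive this from \Cref{thm transitivity} applied in the group case, exactly as \Cref{lm scalar-intertwining-denominator} was deduced from \Cref{thm LIP}. Work with the doubled pair $\widetilde G=G\times G$, $\widetilde H=\Delta G$, $\theta(g_1,g_2)=(g_2,g_1)$. Take $\widetilde P=P\times P'\subseteq\widetilde Q=Q\times Q$, with Levi subgroups $\widetilde M=M\times M'\subseteq\widetilde L=L\times L$. Put $\widetilde\sigma=\sigma\boxtimes{}^n\sigma^\vee$ and let $\ell_\Delta$ be the canonical pairing. Choose $u=(1,n)$ and factor it as $u=u_1u_2$ with $u_1=(1,n)\in\widetilde L$ and $u_2=1$. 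Then $x_2=x_{u_2}=1$ is $\widetilde L$-admissible for the swap involution. The inner pair $(\widetilde L,\widetilde L\cap\widetilde H,\theta)$ is the group case for $L$, and $y=x_{u_1}$ is the point defining $M^L(n,\sigma)$.

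The torus $\cX_{x_2}(\widetilde L)$ consists of pairs $(\chi,\chi^{-1})$ of unramified characters of $L$. The inner torus $\cX_y^{\widetilde L}(\widetilde M)$ is the part of $\cX(M)$ trivial on $A_L$. The isogeny $\mu$ therefore corresponds to the decomposition $\cX(M)\approx\cX(L)|_M\cdot\cX^L(M)$. All identities in the statement are identities of $\mathcal F$-linear maps. Pulling back along $\mu$ is injective, so it suffices to prove them over $\widetilde F$.

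The first step is to identify $\mathcal J_1$. It is the $\Delta L$-invariant pairing on $I_{P\cap L}^L\otimes I_{P'\cap L}^L$. By the construction in the proof of \Cref{lm scalar-intertwining-denominator}, carried out inside $L$, it equals $\langle M^L(n,\sigma)\,\cdot,\cdot\rangle_{P'\cap L}$. Hence the datum $(\tau,\mathcal J_1)$ for the outer period is $\tau=\tau_P\boxtimes\tau_{P'}^\vee$, where $\tau_P=I_{P\cap L}^L(\Psi\otimes\sigma)$ and $\tau_{P'}^\vee$ is the contragredient-type induced from $P'\cap L$. The pairing $\mathcal J_1$ factors through $M^L(n,\sigma)\otimes\mathrm{id}$.

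The second step uses $u_2=1$. The outer period $J_{\widetilde Q}^{\widetilde G}(1,\eta,\mathcal J_1,\tau)$ is an open-orbit $\Delta G$-invariant form on $I_Q^G(\tau_P)\otimes I_Q^G(\tau_{P'}^\vee)$. Here $Q\cdot 1\cdot\Delta G$ is open and closed in $\widetilde G$, since $\widetilde Q\backslash\widetilde G/\Delta G$ has the diagonal orbit of $Q\backslash G\times Q\backslash G$ closed. The cleanest route is to observe that the form $(F,F^\vee)\mapsto\int_{Q\backslash G}\mathcal J_1(F(g),F^\vee(g))\,dg$ is $\Delta G$-invariant. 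This form agrees with the defining integral on $\cW(1)$, so by uniqueness in \Cref{lm coefficient change} it equals the outer period. Because $\mathcal J_1$ factors through $M^L$, this form equals $\langle I_Q^G(M^L(n,\sigma))F,F^\vee\rangle_{Q}$.

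The third step applies \Cref{thm transitivity} with $c(u_1)=\delta_{\widetilde Q}(1,n)^{1/2}\Psi_2(1,n)$. The universal character $\Psi_2$ on $\{(\chi,\chi^{-1})\}$ contributes an unramified character of $L$ evaluated at $n$. On the right-hand side, this factor has to be absorbed by the definition of $\Gamma_P,\Gamma_{P'}$, since those omit the $\Psi_2$ twist appearing in $\Gamma$. What remains is $\delta_Q(n)^{1/2}$. One then rewrites $\langle\Gamma\,\cdot,\Gamma\,\cdot\rangle_Q$ as $\langle\cdot,\cdot\rangle_{P'}$ via integration in stages, using compatible measures. Combined with \eqref{eq group period intertwiner} for $G$, this gives
\[\langle M^G(n,\sigma)f,f^\vee\rangle_{P'}=c\,\langle I_Q^G(M^L(n,\sigma))\Gamma_Pf,\Gamma_{P'}f^\vee\rangle_Q.\]
Perfectness of the pairing then yields the corollary.

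The main obstacle is the bookkeeping of the scalar: tracking the $\Psi_2$ and $\delta_Q$ factors in $\Gamma$ against $\Gamma_P,\Gamma_{P'}$, and checking the sign of the exponent. The combination of the $\delta_{\widetilde Q}$ factors with the normalization of the contragredient side should produce exactly $\delta_Q(n)^{-1/2}$. The final remark holds because $\delta_Q$ is trivial on compact subgroups.
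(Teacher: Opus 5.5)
Your route is the paper's: realize $M^G(n,\sigma)$ and $M^L(n,\sigma)$ as periods for $(G\times G,\Delta G)$ and $(L\times L,\Delta L)$ via \eqref{eq group period intertwiner}, apply \Cref{thm transitivity} to $P\times P'\subseteq Q\times Q$ with $u_1=(1,n)$ and $u_2=1$, and conclude by perfectness of the pairing. The gap is in the one quantitative assertion of the corollary, namely the scalar.

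Your own bookkeeping ends at $\delta_Q(n)^{1/2}$, and the mechanism you invoke to turn it into $\delta_Q(n)^{-1/2}$ cannot work. The factors $\delta_Q(l)^{-1/2}$ in $\Gamma_P$ and $\Gamma_{P'}$, on both the induced and the contragredient side, do not involve $n$. They are entirely absorbed in $\langle\Gamma_{P'}F_1,\Gamma_{P'}F_2\rangle_Q=\langle F_1,F_2\rangle_{P'}$. The only $n$-dependent $\delta_{\widetilde Q}$-contribution is $\delta_{\widetilde Q}(1,n)^{1/2}=\delta_Q(n)^{1/2}$ in $c(u_1)$.

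The $\Psi_2$-part of $c(u_1)$ does cancel, but you should state why. The form $\mathcal J_1$ is built at the parameter $\Psi_1$, not at $\Psi$. Write $\Psi_L$ for the universal unramified character of $L$, so that $\Psi_2(l_1,l_2)=\Psi_L(l_1)\Psi_L(l_2)^{-1}$ and $\Psi_2(u_1)=\Psi_L(n)^{-1}$. Then $(\Psi_L^{-1}\phi)(n^{-1}vl)=\Psi_L(n)\Psi_L(l)^{-1}\phi(n^{-1}vl)$ produces a compensating factor $\Psi_L(n)$ when you compare $M^L(n,\sigma)$ at $\Psi_1$ with $M^L(n,\sigma)$ at $\Psi=(\Psi_L|_M)\Psi_1$.

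The missing factor $\delta_Q(n)^{-1}$ comes from the measures. In \Cref{thm transitivity}, the quotient measure on $\Delta(P\cap n^{-1}P'n)\backslash\Delta G$ is compatible with integration in stages through $\Delta Q$. Hence $U\subseteq P\cap n^{-1}P'n$ carries the Haar measure $du$ of $U\subseteq Q$. Rewriting this period as $\int_{(nPn^{-1}\cap P')\backslash G}\langle f(n^{-1}g),f^\vee(g)\rangle\,dg$ transports $du$ by $\operatorname{Ad}(n)$, and $\int_U\varphi(nun^{-1})\,du=\delta_Q(n)^{-1}\int_U\varphi(u)\,du$. On the other hand, compatibility of the measures defining $M^G(n,\sigma)$ and $M^L(n,\sigma)$ in \eqref{eq conv} means that $U$ carries the same $du$ in $V'$ and in $V'\cap nVn^{-1}$. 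So the stage-compatible period equals $\delta_Q(n)\langle M^G(n,\sigma)f,f^\vee\rangle_{P'}$, and your $\delta_Q(n)^{1/2}$ becomes $\delta_Q(n)^{-1/2}$.

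A quicker way to fix the scalar avoids the doubled group altogether. Since $U\subseteq V'\cap nVn^{-1}$, the integral in \eqref{eq conv} is over $(V'\cap L\cap nVn^{-1})\backslash(V'\cap L)$. For $v\in V'\cap L$ one has $(\Gamma_P\phi)(g)(n^{-1}v)=\delta_Q(n)^{1/2}\phi(n^{-1}vg)$. Hence $\delta_Q(n)^{-1/2}\,\Gamma_{P'}^{-1}\circ I_Q^G(M^L(n,\sigma))\circ\Gamma_P$ satisfies the characterization of $M^G(n,\sigma)$ in \Cref{lm scalar-intertwining-denominator}, and equals it by uniqueness.

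Finally, the outer orbit $\widetilde Q\,\Delta G$ is closed, not open, because $Q\times Q$ is $\theta$-stable. This is harmless and in fact makes your second step immediate. Since $\Delta Q\backslash\Delta G$ is compact, every section is well supported, and the outer period is literally $\int_{Q\backslash G}\mathcal J_1(F(g)\otimes F^\vee(g))\,dg$.
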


\begin{proof}
	Apply \Cref{thm transitivity} to
	$P\times P'\subseteq Q\times Q$ in the symmetric pair
	$(G\times G,\Delta G)$, with admissible point $(n^{-1},n)$
	and inducing representation $\sigma\boxtimes{}^n\sigma^\vee$.
	Using \cref{eq group period intertwiner} and the normalized
	transitivity isomorphisms, this gives equality of the pairings
	of the two sides with every section of
	$I_{P'}^G({}^n(\Psi^{-1}\otimes\sigma^\vee))_{\mathcal F}$.
	Perfectness of the pairing proves the assertion.
\end{proof}

\subsection{Simple functional equations}\label{ssec FE}

Fix, as in \Cref{rk standard}, a $\theta$-stable maximal $F$-split
torus $A_0$ contained in a minimal parabolic subgroup $P_0$. All parabolic subgroups in this subsection are standard with respect
to the preceding choice of $(A_0,P_0)$. 

\begin{definition}
Following \cite{LuMcomp}, we say that a \textit{vertex} is a pair $(M,x)$ with $M$ a standard
Levi subgroup of $G$ and $x=x_u$ an $M$-admissible element of $G\cdot e \subseteq X$.  
\end{definition}

Let $P=MV$ and $\alpha\in\Delta_P$.
Let $Q_\alpha=L_\alpha U_\alpha$ be the standard parabolic
containing $P$ corresponding to $\{\alpha\}\subseteq\Delta_P$.
The elementary symmetry associated with $(M,\alpha)$ is
\[
s_\alpha:=w_0^{L_\alpha}w_0^M\in W_{L_\alpha},
\]
where $w_0^{L_\alpha}$ and $w_0^M$ are the longest elements
of $W_{L_\alpha}$ and $W_M$ for the positive systems
induced by $P_0$. Choose a representative
$\dot s_\alpha\in N_{L_\alpha}(A_0)$ of $s_\alpha$ and take
$n\in\dot s_\alpha M$. Put \[ M_1:=nMn^{-1}, \quad P_\alpha:=P\cap L_\alpha,\] and let
$\overline{P_\alpha}$ be the opposite of $P_\alpha$ in $L_\alpha$
with respect to $M$. Then
\[
P_1:=n\overline{P_\alpha}n^{-1}U_\alpha
\]
is a standard parabolic subgroup of $G$ with Levi
component $M_1$ and we write $P_1=M_1V_1$, i.e. $V_1$ is the unipotent radical of $P_1$.  

\begin{definition} Let $(M,x)$ be a vertex. If
$\theta_x(\alpha)<0$ and $\theta_x(\alpha)\neq-\alpha$,
we set $x_1:=n\cdot x$ and write
\[
(M,x)\stackrel n\searrow(M_1,x_1).
\]
\end{definition}

This is an edge of the Lapid--Rogawski graph in the form developed in
\cite[Section~4.3]{MOYjfa}. We observe that conjugation by $n$ induces canonical isomorphisms
\[
\iota_n:\Lambda_x\simeq\Lambda_{x_1},\qquad
\iota_n:A_x\simeq A_{x_1},\qquad \iota_n:F_x\simeq F_{x_1}.
\]

\begin{lemma}\label{lm edge data}
Let $(M,x)\stackrel n\searrow(M_1,x_1)$ be an edge, choose $u$ with $x=x_u$,
and put $u_1=nu$. Conjugation by $n$  maps $M_x$ onto $(M_1)_{x_1}$ and
\begin{equation}\label{eq delta edge}
 \delta_{P_1,x_1}\circ\ad(n)=\delta_{P,x},\qquad
 \eta^{u_1}\circ\ad(n)=\eta^u\quad\text{on }M_x.
\end{equation}
Consequently, if $
\ell \in \operatorname{Hom}_{\mathcal H_{A_x}[M_x]}
\bigl(
\Psi_x \otimes \sigma,\,
A_x \otimes \delta_{P,x}\eta^u
\bigr),$ then the $A_{x_1}$-linear form defined by
\[
\ell_1(a\otimes v)
:=a\,\iota_n\bigl(\ell(1\otimes v)\bigr),
\qquad a\in A_{x_1},\quad v\in V_\sigma,
\]
belongs to
$
\Hom_{\mathcal H_{A_{x_1}}[(M_1)_{x_1}]}
\bigl(\Psi_{x_1}\otimes{}^n\sigma,\,
A_{x_1}\otimes\delta_{P_1,x_1}\eta^{u_1}\bigr).
$
\end{lemma}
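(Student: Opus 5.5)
We have to prove \Cref{lm edge data}. Since $x_1=n\cdot x$ and $u_1=nu$, we have $x_{u_1}=n x_u\theta(n)^{-1}=x_1$. Hence $u_1Hu_1^{-1}=n(uHu^{-1})n^{-1}$ and $\theta_{x_1}=\ad(n)\circ\theta_x\circ\ad(n)^{-1}$ by \eqref{eq twists}. Intersecting with $M_1=nMn^{-1}$ gives $\ad(n)(M_x)=(M_1)_{x_1}$ at once. The identity for $\eta$ is then tautological: for $m\in M_x$ one has $\eta^{u_1}(nmn^{-1})=\eta(u^{-1}n^{-1}nmn^{-1}nu)=\eta^u(m)$. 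The canonical maps $\iota_n$ come from $\ad(n):M\to M_1$, which intertwines $\theta_x$ and $\theta_{x_1}$ on $\Lambda$. Consequently $\Psi_{x_1}(nmn^{-1})=\iota_n(\Psi_x(m))$.

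The real content is the modulus identity $\delta_{P_1,x_1}\circ\ad(n)=\delta_{P,x}$ on $M_x$. Transporting by $\ad(n)^{-1}$, it amounts to comparing, on $M^{\theta_x}$, the characters attached to $P$ and to $P_1':=n^{-1}P_1n=\overline{P_\alpha}\,n^{-1}U_\alpha n$. Here $n\in L_\alpha$ normalizes $U_\alpha$, so $P_1'=\overline{P_\alpha}U_\alpha$. On $M^{\theta_x}$ I will write each modulus character as a product of absolute determinants of $\ad(m)$ on root spaces. Decompose $\mathrm{Lie}(V)=\mathrm{Lie}(V\cap L_\alpha)\oplus\mathrm{Lie}(U_\alpha)$ and $\mathrm{Lie}(V_1')=\mathrm{Lie}(\overline{V\cap L_\alpha})\oplus\mathrm{Lie}(U_\alpha)$. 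The $U_\alpha$ contributions to $\delta_{P^{\theta_x}}\delta_P^{-1/2}$ and to its $P_1'$ analogue coincide. It therefore suffices to show that the contributions of $\mathfrak n_\alpha:=\mathrm{Lie}(V\cap L_\alpha)$ and of its opposite $\overline{\mathfrak n}_\alpha$ agree. These are the roots of $A_M$ in $L_\alpha$ that are positive multiples of $\alpha$, respectively negative multiples.

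The key input is the edge condition: $\theta_x(\alpha)<0$ and $\theta_x(\alpha)\neq-\alpha$. From it I will show that $\theta_x$ maps $\mathfrak n_\alpha$ into $\mathrm{Lie}(V)$ outside $L_\alpha$. The $\theta_x$-fixed part of $\mathfrak n_\alpha\oplus\theta_x\mathfrak n_\alpha$ is the graph of $\theta_x$, which is isomorphic to $\mathfrak n_\alpha$ as an $M^{\theta_x}$-module. The same holds for $\overline{\mathfrak n}_\alpha$. For $m\in M^{\theta_x}$, set $\Delta(m):=|\det\ad(m)|_{\mathfrak n_\alpha}|_F$, so that $|\det\ad(m)|_{\overline{\mathfrak n}_\alpha}|_F=\Delta(m)^{-1}$.

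The precise bookkeeping is the main obstacle: one must check which root spaces of $V$ versus $V_1'$ and of their $\theta_x$-images overlap. A plausible route is $\delta_P(m)=\delta_P(\theta_x m)$, together with the observation that $\theta_x$ sends the $\alpha$-block of $V$ into the $\theta_x(\alpha)$-block of $\overline V$. Then $\mathfrak n_\alpha\oplus\theta_x\mathfrak n_\alpha$ meets $\mathrm{Lie}(V)$ in exactly $\mathfrak n_\alpha$, while $\overline{\mathfrak n}_\alpha\oplus\theta_x\overline{\mathfrak n}_\alpha$ meets $\mathrm{Lie}(V_1')$ in exactly $\theta_x\overline{\mathfrak n}_\alpha$.

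Comparing the fixed-point contributions to $\delta_{P^{\theta_x}}$ with the half-sum contributions to $\delta_P^{-1/2}$ then yields an equality of factors of the form $\Delta^{\pm1}\Delta^{\mp1}$, giving the first identity in \eqref{eq delta edge}. This is the computation of \cite[Section~4.3]{MOYjfa}, and I would follow it there, checking that it did not use unimodularity.

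Once \eqref{eq delta edge} holds, the statement about $\ell_1$ is formal. The map $\ell_1$ is $A_{x_1}$-linear by construction. Let $m_1=nmn^{-1}\in(M_1)_{x_1}$ and $v\in V_\sigma$. Using $({}^n\sigma)(m_1)=\sigma(m)$, $\Psi_{x_1}(m_1)=\iota_n\Psi_x(m)$, and the $M_x$-equivariance of $\ell$, we get
\[
\ell_1\bigl(m_1\cdot(a\otimes v)\bigr)=a\,\iota_n\Psi_x(m)\,\iota_n\bigl(\ell(1\otimes\sigma(m)v)\bigr)=a\,\iota_n\Psi_x(m)\,\iota_n\Psi_x(m)^{-1}\,\delta_{P,x}\eta^u(m)\,\iota_n\ell(1\otimes v).
\]
Here we used $\ell(\Psi_x(m)\otimes\sigma(m)v)=\delta_{P,x}\eta^u(m)\ell(1\otimes v)$. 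By \eqref{eq delta edge}, $\delta_{P,x}\eta^u(m)=\delta_{P_1,x_1}\eta^{u_1}(m_1)$, so the right-hand side equals $\delta_{P_1,x_1}\eta^{u_1}(m_1)\,\ell_1(a\otimes v)$. This is the required equivariance.
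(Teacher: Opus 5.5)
Your treatment of the stabilizers ($\ad(n)(M_x)=(M_1)_{x_1}$), of $\eta$, of $\iota_n$ and of the equivariance of $\ell_1$ is correct and matches the paper. For the modulus identity the paper does not compute anything: it quotes \cite[Corollary~6.5]{OJNT} and transports it to non-standard position via \Cref{rk standard}. You propose a direct root-space computation instead. That is a legitimate route, but you do not carry it out, and the bookkeeping you sketch is wrong. So the only substantive step of the lemma has a gap.

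Here is what goes wrong in your sketch.
\begin{itemize}
\item $\theta_x(\alpha)$ is a negative root not proportional to $\alpha$. Hence $\theta_x$ sends $\mathfrak n_\alpha$ into $\overline{\mathfrak u}_\alpha$, the Lie algebra of the radical opposite to $U_\alpha$, not into $\mathrm{Lie}(V)$. So $\mathfrak n_\alpha$ contributes nothing to $\mathfrak v^{\theta_x}$.
\item Dually, $\theta_x\overline{\mathfrak n}_\alpha\subseteq\mathfrak u_\alpha$. So $\overline{\mathfrak n}_\alpha\oplus\theta_x\overline{\mathfrak n}_\alpha$ lies entirely in $\mathrm{Lie}(V_1')$, not only its part $\theta_x\overline{\mathfrak n}_\alpha$ as you claim. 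Its $\theta_x$-fixed graph therefore lies in $(\mathfrak v_1')^{\theta_x}$.
\item For the same reason, the $U_\alpha$-contributions to the fixed-point modulus characters do not coincide as you assert. The space $\mathfrak g_{-k\theta_x\alpha}\subseteq\mathfrak u_\alpha$ is unpaired in $\mathfrak v$, but paired with $\mathfrak g_{-k\alpha}$ in $\mathfrak v_1'$.
\end{itemize}
Followed literally, your description gives no graph contribution on either side. You are then left with an uncancelled factor $\Delta$ coming from $\delta_{P_1'}^{-1/2}=\delta_P^{-1/2}\Delta$.

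The correct count goes as follows. Since $\theta_x(M)=M$, one has $P^{\theta_x}=M^{\theta_x}\ltimes V^{\theta_x}$ and $(P_1')^{\theta_x}=M^{\theta_x}\ltimes(V_1')^{\theta_x}$. Hence $\delta_{P^{\theta_x}}(m)=|\det\operatorname{Ad}(m)|_{\mathfrak v^{\theta_x}}|_F$ for $m\in M^{\theta_x}$. The space $\mathfrak v^{\theta_x}$ is the sum, over $\theta_x$-orbits of roots $\beta$ with $\beta$ and $\theta_x\beta$ both in $R(A_M,P)$, of the fixed parts of $\mathfrak g_\beta+\mathfrak g_{\theta_x\beta}$. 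When $\theta_x\beta\neq\beta$, such a fixed part is a graph isomorphic to $\mathfrak g_\beta$.

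Passing from $P$ to $P_1'=\overline{P_\alpha}U_\alpha$ only affects the orbits $\{k\alpha,k\theta_x\alpha\}$ and $\{-k\alpha,-k\theta_x\alpha\}$, for $k\geq1$. The first orbit is never entirely positive. The second is entirely positive for $P_1'$ but not for $P$. Every other root $\beta$ of $U_\alpha$ has $\theta_x\beta$ not proportional to $\alpha$, so it behaves identically for both parabolics.

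This gives $\delta_{(P_1')^{\theta_x}}=\delta_{P^{\theta_x}}\Delta^{-1}$ on $M^{\theta_x}$. On the other hand $\delta_{P_1'}^{-1/2}=\delta_P^{-1/2}\Delta$, because unimodularity of $L_\alpha$ and $M$ gives $|\det\operatorname{Ad}(m)|_{\overline{\mathfrak n}_\alpha}|_F=\Delta(m)^{-1}$. The two factors cancel. Restricting to $M_x$ and transporting by $\ad(n)$ then yields \eqref{eq delta edge}.

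With this repair your argument is a self-contained alternative to the citation, and it visibly needs no unimodularity hypothesis. As written, however, it defers the key step to \cite[Section~4.3]{MOYjfa} and describes that step incorrectly.
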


\begin{proof}
By \eqref{eq twists},
$\theta_{x_1}=\ad(n)\circ\theta_x\circ\ad(n)^{-1}$, which gives the lattice
and stabilizer assertions.  The identity for $\eta$ is immediate, and that
for the modulus characters is \cite[Corollary~6.5]{OJNT}, transported by
\Cref{rk standard}; \Cref{lm stab} permits the passage from full fixed-point
groups to the stabilizers used here.
\end{proof}

Following the proof of \cite[Theorem~5.3]{MOYjfa}, we recall
why, for each edge $(M,x)\stackrel n\searrow(M_1,x_1)$,
restriction in the unramified parameter $\Psi$ from
$\cX(M)$ to the anti-invariant subtorus $\cX_x$
gives a well-defined rational family of standard
intertwining operators. So let $P$, $P_1$, $\alpha\in \Delta_P$, etc. be as above, and let $(M,x)\stackrel n\searrow(M_1,x_1)$ be an edge. Let
$\sigma$ be a finite-length smooth $R$-representation of $M$. Let
\[
\operatorname{res}^{L_\alpha}_M:
\cX(L_\alpha)\longrightarrow\cX(M),
\qquad \nu\longmapsto\nu|_M,
\]
and put
\[
T:=\cX(M)/
\operatorname{im}(\operatorname{res}^{L_\alpha}_M).
\]
We regard $\mathcal O_R(T)$ as a subalgebra of $\mathcal A$
by pullback along the quotient map from $\cX(M)$ to $T$.
With $\mathcal A$ and $\Psi$ as in \Cref{ssec intertwining},
let $M^{L_\alpha}(1,\sigma)$ denote the standard intertwining
operator defined in \Cref{ssec intertwining} for the ambient group $L_\alpha$,
$n=1$, and the parabolics $P_\alpha$ and
$\overline{P_\alpha}$. Applying \Cref{ssec intertwining} with ambient group
$L_\alpha$, we may choose a nonzero denominator
$d\in\mathcal A$ such that this intertwiner is defined over $\mathcal A[1/d]$. 

\begin{lemma}\label{lm d reg on T}
The denominator $d\in\mathcal  A-\{0\}$ of 	$M^{L_\alpha}(1,\sigma)$ can actually be chosen in $
\mathcal O_R(T)
=\mathcal A^{\operatorname{im}(\operatorname{res}^{L_\alpha}_M)}$. 
\end{lemma}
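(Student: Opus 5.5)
The plan is to exploit the fact that twisting by unramified characters of $L_\alpha$ is harmless for the intertwining operator $M^{L_\alpha}(1,\sigma)$. The subgroup $\Xi:=\operatorname{im}(\operatorname{res}^{L_\alpha}_M)\subseteq\cX(M)$ acts on $\mathcal A=R[\Lambda]$ by translation of the parameter. For $\nu\in\cX(L_\alpha)$, multiplication by $\nu(g)$ on $L_\alpha$ identifies
\[
I_{P_\alpha}^{L_\alpha}(\nu|_M\Psi\otimes\sigma)\simeq \nu\otimes I_{P_\alpha}^{L_\alpha}(\Psi\otimes\sigma),
\]
and similarly for $\overline{P_\alpha}$. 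Since $\nu$ is trivial on all unipotent elements, the integral formula \eqref{eq conv} is compatible with this identification on well-supported sections. By the uniqueness statement in \Cref{lm scalar-intertwining-denominator}, the operator $M^{L_\alpha}(1,\sigma)$ is therefore equivariant for the translation action: if $t_\nu$ denotes the automorphism of $\mathcal A$ (and of $\mathcal F$) induced by translation by $\nu|_M$, then $t_\nu$ transports $M^{L_\alpha}(1,\sigma)$ to itself.

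Next, instead of an arbitrary denominator, consider the \emph{ideal of denominators}
\[
\mathfrak d:=\{d\in\mathcal A:\ d\,M^{L_\alpha}(1,\sigma)\,I_{P_\alpha}^{L_\alpha}(\Psi\otimes\sigma)\subseteq I_{\overline{P_\alpha}}^{L_\alpha}(\Psi\otimes\sigma)\}.
\]
It is nonzero by \eqref{eq car CMop}, and by the equivariance above it is stable under every $t_\nu$. To make the action algebraic, one should work over the torus $\cX(L_\alpha)_{\bar R}$ and over the extended coefficients of \Cref{lm coefficient change}. Equivalently, the universal version is obtained by pulling back along the multiplication map $\cX(L_\alpha)\times\cX(M)\to\cX(M)$ and applying the uniqueness of \Cref{lm coefficient change}. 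This gives that the coaction $\mathcal A\to \mathcal O_R(\cX(L_\alpha))\otimes\mathcal A$ maps $\mathfrak d$ into $\mathcal O_R(\cX(L_\alpha))\otimes\mathfrak d$. Hence $\mathfrak d$ is a graded ideal for the grading of $\mathcal A=R[\Lambda]$ by the character group of $\Xi$, namely by $\Lambda/\Lambda^{\Xi\perp}$, where $\Lambda^{\Xi\perp}$ is the sublattice of elements killed by all $\nu|_M$.

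Finally, take a nonzero homogeneous element $d'\in\mathfrak d$; it exists because every nonzero element of a graded ideal has nonzero homogeneous components lying in the ideal. Such an element has the form $d'=\lambda\, d$ with $\lambda\in\Lambda$ a monomial, hence a unit, and $d$ of degree zero. Degree zero means $d\in\mathcal A^{\Xi}=\mathcal O_R(T)$, and $d=\lambda^{-1}d'\in\mathfrak d$. This is the required denominator.

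The main obstacle is the second step: making the translation-invariance of $\mathfrak d$ rigorous as a coaction statement rather than just for $R$-points. The point is that $R$-points of $\Xi$ need not be Zariski dense enough to detect gradedness when $\Xi$ is disconnected or finite index issues arise. I would handle this via \Cref{lm coefficient change} with $T=\cX(L_\alpha)\times\cX(M)$ and $\nu$ the multiplication map, comparing the two periods obtained by pullback through uniqueness. If this becomes cumbersome, the fallback is to use $R$-points. Since $R$ is algebraically closed, the $R$-points of the connected torus $\Xi$ (the restriction image, reduced) are dense. An ideal stable under translation by a dense set of points of a torus is graded, by the standard Vandermonde argument on finitely many homogeneous components.
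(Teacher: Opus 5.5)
Your proposal is correct and follows the paper's proof essentially step for step. Both arguments use the ideal $\mathfrak D$ of denominators and its stability under translation by $\operatorname{im}(\operatorname{res}^{L_\alpha}_M)$, which comes from compatibility with unramified twists of $L_\alpha$ (trivial on unipotents) together with the uniqueness in \Cref{lm scalar-intertwining-denominator}; then one extracts a nonzero weight component of an element of $\mathfrak D$ and divides it by a monomial of the same weight. Your density/Vandermonde justification makes precise the step the paper phrases as ``the $S$-action is diagonalizable''; it is valid because $R$ is algebraically closed and the image of $\cX(L_\alpha)$ is a torus.
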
	
\begin{proof}
		Put $J=M^{L_\alpha}(1,\sigma)$, and denote its source and
		target over $\mathcal A$ by $\mathcal I$ and $\mathcal I'$.
		The ideal
		\[
		\mathfrak D
		:=\{a\in\mathcal A:\ aJ(\mathcal I)\subseteq\mathcal I'\}
		\]
		contains $d$ and is therefore nonzero. Compatibility of $J$
		with unramified twists of $L_\alpha$ implies that $\mathfrak D$
		is stable under translations by the subtorus $S:=\operatorname{im}(\operatorname{res}^{L_\alpha}_M).$ 
		Indeed, such twists are trivial on the unipotent radicals,
		so this compatibility follows from \eqref{eq conv} and uniqueness. Since the $S$-action is diagonalizable, every component $d_\lambda$ of $d$ with respect to the weight $\lambda$ of $S$ belongs to $\mathfrak D$, as seen by considering the action of $S$ on $\mathcal A/\mathfrak D$. Choose a nonzero
		component $d_\lambda$ and a monomial $u$ occurring in it.
		Then $u$ is invertible and has the same $S$-weight as
		$d_\lambda$, whence
		\[
		0\neq u^{-1}d_\lambda
		\in\mathfrak D\cap\mathcal A^S
		=\mathfrak D\cap\mathcal O_R(T).
		\]
		Thus $u^{-1}d_\lambda$ is a denominator with the required property.
\end{proof}

Until the end of this paragraph, we select $d\in \mathcal O_R(T)-\{0\}$ thanks to the above lemma. For such a $d$ we now have 
\[M^{L_\alpha}(1,\sigma):\quad
	I_{P_\alpha}^{L_\alpha}(\Psi\otimes\sigma)
	\otimes_{\mathcal A}\mathcal A[1/d]\\
	\longrightarrow
	I_{\overline{P_\alpha}}^{L_\alpha}(\Psi\otimes\sigma)
	\otimes_{\mathcal A}\mathcal A[1/d].\]

Let
\[
\rho_x:\mathcal A\longrightarrow A_x,
\qquad \rho_x(\Psi(m))=\Psi_x(m).
\]
The composite
$\cX_x\hookrightarrow\cX(M)\twoheadrightarrow T$
induces the character-lattice map
\[
\rho_x^*:X^*(T)\hookrightarrow\Lambda\twoheadrightarrow\Lambda_x,
\qquad \Lambda=M/M^1.
\]
Under $\Lambda\otimes_\Z\R\simeq\fa_M$, the real span
of $X^*(T)$ in $\fa_M$ is the line
$\fa_M\cap\fa_{L_\alpha}^{\perp}$, spanned by the vector
dual to $\alpha$ for our invariant inner product fixed in \Cref{sec gen vanish}.
Since $\theta_x(\alpha)<0$, this vector is not fixed
by $\theta_x$, so its projection to $\fa_M^{-\theta_x}$
is nonzero. The map $\rho_x^*:X^*(T)\to \Lambda_x$
is therefore injective. Hence $\rho_x$ is injective
on $\mathcal O_R(T)$, and $\rho_x(d)\neq0$. Thus $\rho_x$ extends to $\mathcal A[1/d]\to F_x$,
and base change gives the following well-defined restricted intertwining operator 
\[
	M^{L_\alpha}(1,\sigma):\quad
	 I_{P_\alpha}^{L_\alpha}(\Psi_x\otimes\sigma)
	\otimes_{A_x}F_x\\
    \longrightarrow
	I_{\overline{P_\alpha}}^{L_\alpha}(\Psi_x\otimes\sigma)
	\otimes_{A_x}F_x.
\]

By \Cref{cor intertwining induction}, induction along
$Q_\alpha$, conjugation by $n$, and coefficient transport
through $\iota_n\otimes\mathrm{id}_{V_\sigma}$ give the
restricted standard intertwining operator
\[
M(n,\sigma):
I_{P,x}(\sigma)\otimes_{A_x}F_x
\longrightarrow
I_{P_1,x_1}({}^n\sigma)\otimes_{A_{x_1}}F_{x_1}.
\]
Here we use
\[
\Psi_{x_1}(nmn^{-1})=\iota_n(\Psi_x(m))
\qquad(m\in M).
\]
This operator is $\iota_n$-semilinear and is characterized,
as in \eqref{eq conv}, by
\[
(M(n,\sigma)\phi)(g)
=
(\iota_n\otimes\mathrm{id}_{V_\sigma})
\left(
\int_{(V_1\cap nVn^{-1})\backslash V_1}
\phi(n^{-1}vg)\,d\dot v
\right)
\]
whenever the integrand has compact support.
 
\begin{proposition}\label{prop FE}
With compatible Haar measures and notations as in \Cref{lm edge data}, for every edge $(M,x)\stackrel n\searrow(M_1,x_1)$,
\begin{equation}\label{eq simple FE}
 \iota_n\circ J_P^G(u,\eta,\ell,\sigma)=
 J_{P_1}^G(u_1,\eta,\ell_1,{}^n\sigma)\circ M(n,\sigma).
\end{equation}
\end{proposition}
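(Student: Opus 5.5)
The plan is to deduce \eqref{eq simple FE} from the uniqueness in \Cref{thm LIP}, applied to the vertex $(M,x)$. The composite $\iota_n^{-1}\circ J_{P_1}^G(u_1,\eta,\ell_1,{}^n\sigma)\circ M(n,\sigma)$ is an $F_x$-linear, $(H,\eta)$-equivariant form on $I_{P,x}(\sigma)\otimes_{A_x}F_x$. Here $M(n,\sigma)$ is $\iota_n$-semilinear and $G$-equivariant, and the period is $F_{x_1}$-linear and $(H,\eta)$-equivariant. So it suffices to show that this composite agrees with $j_{\eta,\ell}$ on $\cW(u)$. We know $\cW(u)=\cV_x^0\otimes F_x$ by \Cref{lm adapted filtration}, and every $(H,\eta)$-form on $\cV_x^0\otimes F_x$ vanishing on $\cV_x^-\otimes F_x$ factors through the compactly induced quotient in \eqref{eq adapted exact}. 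It is therefore enough to check the identity on a family of sections whose images generate this quotient, for instance the sections $\phi$ supported in $P u U$ for small compact open $U\subseteq H$, as in the proof of \Cref{lm ext frobenius}.

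The key geometric step is to show that for such small-support $\phi$, the section $M(n,\sigma)\phi$ lies in $\cW(u_1)$, and that $M(n,\sigma)\phi$ is given by the convergent (compactly supported) integral \eqref{eq conv}. I will reduce to $L_\alpha$ using \Cref{cor intertwining induction} and \Cref{thm transitivity}. Writing $u=u_1'u_2$ with $x_{u_2}$ being $L_\alpha$-admissible is not generally possible, so instead I will work directly. The edge condition $\theta_x(\alpha)<0$, $\theta_x(\alpha)\neq-\alpha$ means, following \cite[Section~4.3]{MOYjfa} and \cite{LR}, that $\theta_x$ maps the root subgroup of $V\cap L_\alpha$ (call it $N_\alpha=V_1$-complement piece, i.e. $V_1\cap nVn^{-1}\backslash V_1\simeq n(\overline{V_\alpha})n^{-1}$) into $\overline{V}$ outside $L_\alpha$. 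From this one gets the standard factorisation of quotient measures:
\[
(H\cap u^{-1}Pu)\backslash H \;\leftrightarrow\; \bigl((H\cap u_1^{-1}P_1u_1)\backslash H\bigr)\times n\overline{V_\alpha} n^{-1},
\]
up to an open dense subset, with $P u H$ and $P_1 u_1 H$ related by $u_1=nu$. This is the analogue of \cite[Lemma~4.3/Prop.~5.x]{MOYjfa}, and I would import the group-theoretic identity $H\cap u^{-1}Pu = H\cap u_1^{-1}P_1u_1$ on the relevant piece, together with $(M_1)_{x_1}=nM_xn^{-1}$ from \Cref{lm edge data}. With small support, the double integral $\int_{H}\int_{V_1}\eta(h)^{-1}\ell_1(\phi(n^{-1}vu_1h))\,dv\,dh$ has compactly supported integrand. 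Fubini is then purely algebraic, and unfolding gives $\iota_n(j_{\eta,\ell}(\phi))$, using \eqref{eq delta edge} to match the characters $\delta_{P,x}\eta^u$ and $\delta_{P_1,x_1}\eta^{u_1}$.

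I expect the main obstacle to be the support claim: that $M(n,\sigma)\phi$ vanishes on $\partial\cO_{x_1}$, i.e. lies in $\cW(u_1)$, and not merely that it has well-behaved integrals. I will first check that $P_1 u_1 H\supseteq$ the image under $V_1 n$ of $PuH$ intersected appropriately, by showing that $n^{-1}V_1 u_1 h\in PuU$ forces $u_1h\in P_1u_1H$. This holds because $n^{-1}V_1n\subseteq P\,\overline{V_\alpha}$ and $\overline{V_\alpha}$ normalises nothing outside the orbit, using $\theta_x(\overline{V_\alpha})\subseteq V$. If this direct support argument fails, the fallback is to work with an alternative test family. The idea is to use the denominator statements and density: since both sides are rational in the parameter, it suffices to check the equality for $\phi$ with support in the open cell $P n^{-1} V_1 u_1 U$. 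For these, $M(n,\sigma)\phi$ is explicitly computable and supported in $P_1u_1 U'$. Then uniqueness in \Cref{thm LIP} concludes.
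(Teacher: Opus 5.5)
Your overall strategy is the right one and matches the paper's. You check the support characterization for $\iota_n^{-1}\circ J_{P_1}^G(u_1,\eta,\ell_1,{}^n\sigma)\circ M(n,\sigma)$, using integration in stages over $H_0\backslash H\to H_1\backslash H$ with $H_0=H\cap u^{-1}Pu$ and $H_1=H\cap u_1^{-1}P_1u_1$, and then invoke uniqueness in \Cref{thm LIP}. But the step that carries the proof is not supplied correctly.

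The identity you propose to import, $H\cap u^{-1}Pu=H\cap u_1^{-1}P_1u_1$, is false: it would make the fibre of your own factorisation trivial. What is needed is \cite[Lemma~6.3]{OJNT}, namely
\[u_1H_0u_1^{-1}=(M_1)_{x_1}\ltimes nV_xn^{-1}\subseteq u_1H_1u_1^{-1}=(M_1)_{x_1}\ltimes(V_1)_{x_1},\]
together with the exact sequence $1\to nV_xn^{-1}\to(V_1)_{x_1}\to L_\alpha\cap V_1\to1$ of \cite[Lemma~6.4(2)]{OJNT} and the rank-one decomposition $V_1=(V_1\cap nVn^{-1})(L_\alpha\cap V_1)$. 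These give two facts:
\begin{itemize}
\item $(V_1)_{x_1}$ surjects onto $(V_1\cap nVn^{-1})\backslash V_1$;
\item for $v\in(V_1)_{x_1}$ one has $n^{-1}vu_1h=uh_vh$, with $h_v=u_1^{-1}vu_1\in H_1$ unipotent, so $\eta(h_v)=1$.
\end{itemize}
Hence the $V_1$-integral defining $(M(n,\sigma)\phi)(u_1h)$ samples $\phi$ only on $\cO_x$, along the closed subset $(H_0\backslash H_1)h$ of $H_0\backslash H$. This is what yields compactness of the integrand, $M(n,\sigma)\phi\in\cW(u_1)$, and the unfolding to $\iota_n(j_{\eta,\ell}(\phi))$.

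Your support argument points the wrong way. The claim that ``$n^{-1}V_1u_1h\in PuU$ forces $u_1h\in P_1u_1H$'' is vacuous, since $u_1h\in P_1u_1H$ for every $h\in H$. The needed fact is the containment $n^{-1}V_1u_1H\subseteq PuH$, with the above identification of fibres.

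Your reduction to sections supported in $PuU$ also has a hole. A form on $\cV_x^0\otimes_{A_x}F_x=\cW(u)$ factors through the compactly induced quotient in \eqref{eq adapted exact} only if it vanishes on $\cV_x^-\otimes_{A_x}F_x$. You never show that the composite does, and this is not automatic: strata in $G\smallsetminus\overline{\cO_x}$, open orbits for instance, can support nonzero $(H,\eta)$-equivariant forms. It does follow from the geometric fact above, because if $\phi$ vanishes on $\cO_x$ the integrand at $u_1h$ is identically zero. So restricting to small supports buys nothing; the paper simply runs the argument for every $\phi\in\cW(u)$ at once.

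The fallback does not repair this. The claim that $M(n,\sigma)\phi$ is supported in some $P_1u_1U'$ is unjustified, since rank-one intertwining operators do not preserve small supports in general. And rationality in the parameter does not by itself reduce uniqueness to a test family of sections.
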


\begin{proof}
Put
\[
 H_0=H\cap u^{-1}Pu,\qquad
 H_1=H\cap u_1^{-1}P_1u_1,\qquad
 Z_1=nV_xn^{-1}=nV^{\theta_x}n^{-1}.
\]
The second equality in the definition of $Z_1$ holds because every
unipotent element of $G^{\theta_x}$ belongs to
$(\bG^{\theta_x})^\circ(F)\subseteq uHu^{-1}$ by
\Cref{fact pi0,eq twists}.  By
\cite[Lemma~6.3]{OJNT} and \Cref{lm edge data},
\[
 u_1H_0u_1^{-1}=(M_1)_{x_1}\ltimes Z_1,
 \qquad
 u_1H_1u_1^{-1}=(M_1)_{x_1}\ltimes(V_1)_{x_1}.
\]
In particular $H_0\subseteq H_1$, and
\begin{equation}\label{eq beta edge}
 \beta:Z_1\backslash(V_1)_{x_1}\xrightarrow{\ \sim\ }H_0\backslash H_1,
 \qquad Z_1v\longmapsto H_0u_1^{-1}vu_1,
\end{equation}
is a homeomorphism.

Let $\operatorname{pr}_{L_\alpha}:Q_\alpha\to L_\alpha$ be the projection along
$U_\alpha$.  The rank-one decomposition
\[
 V_1=(V_1\cap nVn^{-1})(L_\alpha\cap V_1)
\]
induces a homeomorphism
\[
 (V_1\cap nVn^{-1})\backslash V_1
 \xrightarrow{\ \operatorname{pr}_{L_\alpha}\ }L_\alpha\cap V_1.
\]
On the other hand, \cite[Lemma~6.4(2)]{OJNT} gives the exact sequence
\[
 1\longrightarrow Z_1\longrightarrow(V_1)_{x_1}
 \xrightarrow{\ \operatorname{pr}_{L_\alpha}\ }L_\alpha\cap V_1
 \longrightarrow1.
\]
Consequently inclusion $(V_1)_{x_1}\hookrightarrow V_1$ induces a
homeomorphism
\begin{equation}\label{eq q edge}
 q:Z_1\backslash(V_1)_{x_1}\xrightarrow{\ \sim\ }
 (V_1\cap nVn^{-1})\backslash V_1.
\end{equation}

Let $\phi\in\cW(u)$, and denote by $C$ the support of
$h\mapsto\phi(uh)$ in $H_0\backslash H$.  For
$v\in(V_1)_{x_1}$ put $h_v=u_1^{-1}vu_1$.  Then
\begin{equation}\label{eq edge evaluation}
 n^{-1}vu_1h=uh_vh.
\end{equation}
For fixed $h\in H$, the support
on $(V_1\cap nVn^{-1})\backslash V_1$ of
\[
 v\longmapsto\phi(n^{-1}vu_1h)
\]
is carried by
\[
 (V_1\cap nVn^{-1})\backslash V_1
 \xrightarrow{\ q^{-1}\ }Z_1\backslash(V_1)_{x_1}
 \xrightarrow{\ \beta\ }H_0\backslash H_1
 \xrightarrow{\ \cdot h\ }(H_0\backslash H_1)h
\]
into $C\cap(H_0\backslash H_1)h$.  The latter is compact: it is the
intersection of $C$ with the closed subset $(H_0\backslash H_1)h$ of
$H_0\backslash H$.  Since the support in the source is closed, it is
therefore compact. Thus the integration formula \eqref{eq conv}, after restriction and coefficient
transport through $\iota_n$, applies at $g=u_1h$
for every $h\in H$. It follows that the support of
$h\mapsto(M(n,\sigma)\phi)(u_1h)$ in $H_1\backslash H$ is contained in the
image of $C$; in particular, $M(n,\sigma)\phi\in\cW(u_1)$.  Choose the
quotient measures in \eqref{eq beta edge} and \eqref{eq q edge} compatibly
with integration in stages.  Each $h_v$ is unipotent, and hence
$\eta(h_v)=1$.  By \eqref{eq edge evaluation}, the definition of
$\ell_1$ and the modular identity \eqref{eq delta edge},
\begin{equation}\label{eq supported intertwining identity}
 \begin{aligned}
 &\eta(h)^{-1}\ell_1\bigl((M(n,\sigma)\phi)(u_1h)\bigr)\\
 &\qquad=\int_{H_0\backslash H_1}
   \eta(h'h)^{-1}\iota_n(\ell\bigl(\phi(uh'h)\bigr))\,dh'.
 \end{aligned}
\end{equation}
Consequently, integration in stages gives
\[
 \begin{aligned}
 j_{\eta,\ell_1}(M(n,\sigma)\phi)
 &=\int_{H_1\backslash H}\int_{H_0\backslash H_1}
   \eta(h'h)^{-1}\iota_n(\ell\bigl(\phi(uh'h)\bigr))\,dh'\,dh\\
 &=\iota_n(j_{\eta,\ell}(\phi)).
 \end{aligned}
\]

Thus the $F_x$-linear form
\[
\iota_n^{-1}\circ
J_{P_1}^G(u_1,\eta,\ell_1,{}^n\sigma)\circ M(n,\sigma)
\]
satisfies \eqref{eq support characterization}.
The uniqueness in \Cref{thm LIP} proves \eqref{eq simple FE}.
\end{proof}

As a corollary, we obtain that standard intertwining operators are multiplicative along reduced decompositions of Weyl elements. This also follows from \cite[Proposition~7.8(i)]{Dat} and
\cite[Lemma~3.16]{MossTrias}. 

\begin{corollary}\label{cor reduced multiplicativity}
	Let $P^{(0)}=M^{(0)}V^{(0)}$ be a standard parabolic subgroup of $G$. 
	For $1\leq i\leq k$, choose $\alpha_i\in\Delta_{P^{(i-1)}}$,
	and let $L_iU_i$ be the standard parabolic subgroup containing
	$P^{(i-1)}$ associated with $\alpha_i$.
	Choose $n_i\in N_{L_i}(A_0)$ representing
	$w_i:=w_0^{L_i}w_0^{M^{(i-1)}},$
	and define
	\[
	M^{(i)}:=n_iM^{(i-1)}n_i^{-1},\qquad
	P^{(i)}:=n_i\overline{P^{(i-1)}\cap L_i}\,n_i^{-1}U_i,
	\]
	where the bar denotes the opposite parabolic subgroup in $L_i$
	with respect to $M^{(i-1)}$.
	Assume that the expression $w_k\cdots w_1$ is reduced. Let $\sigma$ be a finite-length smooth $R$-representation of
	$M^{(0)}$, and put
	\[
	r_0=1,\qquad r_i=n_i\cdots n_1,\qquad n=r_k,\qquad
	\sigma^{(i)}={}^{r_i}\sigma.
	\]
	Set
	\[
	\mathcal A=\mathcal O_R(\cX(M^{(0)})),\qquad
	\mathcal F=\operatorname{Frac}(\mathcal A),
	\]
	and let $\Psi:M^{(0)}\to\mathcal A^\times$ be the universal
	unramified character. For each $i$, put
	\[
	\Psi_i:M^{(i)}\to\mathcal A^\times,\qquad
	\Psi_i(m)=\Psi(r_i^{-1}mr_i).
	\]
	Then, with compatible quotient measures, one has
	\[
	M(n_k,\sigma^{(k-1)})\circ\cdots\circ M(n_1,\sigma)
	=M(n,\sigma)
	\]
	as $\mathcal F$-linear maps from
	$I_{P^{(0)}}^G(\Psi\otimes\sigma)_{\mathcal F}$ to
	$I_{P^{(k)}}^G(\Psi_k\otimes\sigma^{(k)})_{\mathcal F}$.
\end{corollary}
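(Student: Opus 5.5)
The plan is to deduce reduced multiplicativity from the simple functional equation (\Cref{prop FE}) in the group case, combined with the uniqueness statement of \Cref{thm LIP}. This mirrors the proof of \Cref{lm scalar-intertwining-denominator}. Consider the symmetric pair $(\widetilde G,\Delta G)$ with $\widetilde G=G\times G$ and the swap involution. For each $i$, take the vertex $(\widetilde M^{(i)},y_i)$ with
\[
\widetilde M^{(i)}=M^{(i)}\times M^{(0)},\qquad \widetilde u_i=(r_i,1),\qquad y_i=x_{\widetilde u_i}.
\]
The inducing data are $\sigma^{(i)}\boxtimes\sigma^\vee$, and the $\ell$'s are transported from the natural pairing $\ell_\Delta$.

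The point of this choice is that the $(-1)$-eigen torus $\cX_{y_i}$ is again identified with $\cX(M^{(0)})$, so every coefficient ring is $\mathcal A$. With these identifications, the period
\[
\mathcal J_i:=J^{\widetilde G}(\widetilde u_i,1,\ell_i,\sigma^{(i)}\boxtimes\sigma^\vee)
\]
is the pairing $f\otimes f^\vee\mapsto\langle M(r_i,\sigma)f,f^\vee\rangle$ in the sense of \eqref{eq group period intertwiner}, with the first factor now the varying one. In particular $\mathcal J_0$ is the invariant pairing $\langle f,f^\vee\rangle_{P^{(0)}}$, and $\mathcal J_k$ corresponds to $M(n,\sigma)$.

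Next I check that $(\widetilde M^{(i-1)},y_{i-1})\stackrel{(n_i,1)}{\searrow}(\widetilde M^{(i)},y_i)$ is an edge of the Lapid--Rogawski graph for the simple root $(\alpha_i,0)$ of $\widetilde P^{(i-1)}=P^{(i-1)}\times P^{(0)}$. The involution $\theta_{y_{i-1}}$ sends $(\alpha_i,0)$ to $(0,r_{i-1}^{-1}\alpha_i)$, which is $\neq-(\alpha_i,0)$. The remaining condition $\theta_{y_{i-1}}(\alpha_i,0)<0$ amounts to $r_{i-1}^{-1}\alpha_i<0$ for the positive system attached to $P_0$ in the second factor. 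This is exactly where reducedness of $w_k\cdots w_1$ enters: $w_i$ lengthens $w_{i-1}\cdots w_1$ precisely when $(w_{i-1}\cdots w_1)^{-1}$ sends the root $\alpha_i$ of $A_{M^{(i-1)}}$ to a negative root. I expect this combinatorial verification, in the relative (non-simple-reflection) setting of elementary symmetries $w_0^{L}w_0^{M}$, to be the main obstacle.

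I would handle it using the standard length additivity $\ell(w_iw)=\ell(w_i)+\ell(w)$ for $w$ minimal in $wW_{M^{(0)}}$, together with the description of the inversion set of $w_i$ as the roots of $A_{M^{(i-1)}}$ in $L_i\cap V^{(i-1)}$. If the sign convention comes out reversed, I would instead put the varying factor second (using $\widetilde u_i=(1,r_i)$) so that the positivity condition is reversed accordingly.

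Granting the edges, \Cref{prop FE} gives $\mathcal J_{i-1}=\mathcal J_i\circ \widetilde M_i$. Here $\widetilde M_i$ is the restricted intertwining operator attached to $(n_i,1)$. By \Cref{cor intertwining induction} and its definition, it is $M(n_i,\sigma^{(i-1)})\otimes\mathrm{Id}$ on $I(\ldots)\otimes I_{P^{(0)}}(\ldots)$, because the second factor of $\widetilde G$ is untouched and the restriction to $\cX_{y_{i-1}}$ amounts to the identification with $\mathcal A$; one checks that $\ell_i$ is the transport of $\ell_{i-1}$ as in \Cref{lm edge data}.

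Iterating these identities from $i=1$ to $k$ yields
\[
\langle f,f^\vee\rangle_{P^{(0)}}=\mathcal J_k\bigl(M(n_k,\sigma^{(k-1)})\cdots M(n_1,\sigma)f\otimes f^\vee\bigr).
\]
Directly, \eqref{eq group period intertwiner} for $n=r_k$ also gives
\[
\langle f,f^\vee\rangle_{P^{(0)}}=\mathcal J_k\bigl(M(n,\sigma)f\otimes f^\vee\bigr),
\]
equivalently by uniqueness in \Cref{thm LIP} for the vertex $y_k$ versus $y_0$. Since the pairings involved are perfect over $\mathcal F$, comparing the two identities for all $f^\vee$ gives the equality of operators, with measures chosen compatibly as in \Cref{prop FE}.
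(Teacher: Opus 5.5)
There is a genuine gap: the edges you iterate along do not exist, because you froze the wrong end of the path. With $\widetilde P^{(i)}=P^{(i)}\times P^{(0)}$ and $y_i=(r_i,r_i^{-1})$ you correctly get $\theta_{y_{i-1}}(\alpha_i,0)=(0,r_{i-1}^{-1}\alpha_i)$. However, the length criterion you quote is backwards. The roots inverted by $w_i$ are the positive multiples of $\alpha_i$, so $\ell_{M^{(0)}}(w_ir_{i-1})>\ell_{M^{(0)}}(r_{i-1})$ holds if and only if $r_{i-1}^{-1}\alpha_i>0$. Reducedness therefore makes this root \emph{positive} for every $i$. You can see this at $i=1$ without any combinatorics: $y_0=e$ and $\theta_e(\alpha_1,0)=(0,\alpha_1)>0$. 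Indeed the vertex $(M^{(0)}\times M^{(0)},e)$, which is the closed orbit, has no outgoing edges at all. Your fallback does not help. With $\widetilde u_i=(1,r_i)$ and $\widetilde M^{(i)}=M^{(0)}\times M^{(i)}$ one gets $\theta_{y_{i-1}}(0,\alpha_i)=(r_{i-1}^{-1}\alpha_i,0)$, which has the same sign. What decides the sign is which end of the path is frozen, not which slot varies. The paper freezes the target: it takes $\widetilde P_i=P^{(i)}\times P^{(k)}$ and $\widetilde x_i=(r_in^{-1},nr_i^{-1})$. Then $\theta_{\widetilde x_{i-1}}(\alpha_i,0)=(0,(w_k\cdots w_i)\alpha_i)$, which is negative by reducedness, since $\alpha_i$ is an inversion of $w_k\cdots w_i$. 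The edges $(n_i,1)$ then run from $\widetilde x_0=(n^{-1},n)$ down to $\widetilde x_k=e$.

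Your endpoint identifications are also off. Since $(r_i,1)\in(1,r_i^{-1})\Delta G$, \eqref{eq group period intertwiner} gives $\mathcal J_i(F\otimes f^\vee)=\langle M(r_i^{-1},\sigma^{(i)})F,f^\vee\rangle_{P^{(0)}}$ for $F$ in the first factor $I_{P^{(i)}}$; no $M(r_i,\sigma)$ appears. Consequently your ``direct'' identity $\langle f,f^\vee\rangle_{P^{(0)}}=\mathcal J_k(M(n,\sigma)f\otimes f^\vee)$ asserts $M(n^{-1},\sigma^{(k)})\circ M(n,\sigma)=\mathrm{Id}$. This is false in general: the composite is a nonconstant $j$-function, cf.\ the proof of \Cref{prop dat}. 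Moreover, uniqueness in \Cref{thm LIP} characterizes the period at a single vertex, so it cannot compare $y_k$ with $y_0$. In the paper's normalization both ends are what you want. At $\widetilde u_0=(1,n)$ the period is $\langle M(n,\sigma)f,f^\vee\rangle_{P^{(k)}}$, and at $e$ it is the invariant pairing $\langle F,f^\vee\rangle_{P^{(k)}}$. Iterating \Cref{prop FE} along the $k$ edges gives $\langle M(n,\sigma)f,f^\vee\rangle_{P^{(k)}}=\langle M(n_k,\sigma^{(k-1)})\cdots M(n_1,\sigma)f,f^\vee\rangle_{P^{(k)}}$, and perfectness of the pairing concludes. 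Your frozen-source setup could only be rescued by running the path the other way, $y_k\searrow\cdots\searrow y_0=e$ via $(n_i^{-1},1)$. That yields multiplicativity for the reversed reduced word, $M(n^{-1},\sigma^{(k)})=M(n_1^{-1},\sigma^{(1)})\circ\cdots\circ M(n_k^{-1},\sigma^{(k)})$, and you would then need to re-index; that is not the argument you wrote.
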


\begin{proof}
The proof is completely parallel to that of
\cref{lm scalar-intertwining-denominator}.
In the symmetric pair $(G\times G,\Delta G)$, put
\[
\widetilde P_i=P^{(i)}\times P^{(k)},\qquad
\widetilde M_i=M^{(i)}\times M^{(k)},\qquad
\widetilde x_i=(r_i n^{-1},nr_i^{-1}).
\]
The reduced expression gives the directed path
\[
(\widetilde M_0,\widetilde x_0)
\stackrel{(n_1,1)}{\searrow}
(\widetilde M_1,\widetilde x_1)
\stackrel{(n_2,1)}{\searrow}\cdots
\stackrel{(n_k,1)}{\searrow}
(\widetilde M_k,e).
\]
Indeed, reducedness gives
\[
-(\alpha_i,0)\neq \theta_{\widetilde x_{i-1}}(\alpha_i,0)
=(0,(w_k\cdots w_i)\alpha_i)<0.
\]
Iterating \Cref{prop FE} along this path and using
\cref{eq group period intertwiner} proves the assertion.
\end{proof}

For later use we record that a directed path is automatically reduced.  If
\[
 (M^{(0)},x^{(0)})\stackrel{n_1}\searrow\cdots
 \stackrel{n_k}\searrow(M^{(k)},x^{(k)}),
\]
put $n=n_k\cdots n_1$, $\sigma^{(i)}={}^{n_i\cdots n_1}\sigma$, and
transport $u$ and $\ell$ successively as above.

\begin{proposition}\label{prop mult}
	The associated expression in elementary symmetries is reduced, hence 
	\[
	M(n_k,\sigma^{(k-1)})\circ\cdots\circ M(n_1,\sigma)
	=M(n,\sigma).
	\]
	Consequently, with
	$\iota_n=\iota_{n_k}\circ\cdots\circ\iota_{n_1}$,
	\begin{equation}\label{eq path comparison}
		\begin{aligned}
			&\iota_n\circ
			J_{P^{(0)}}^G(u^{(0)},\eta,\ell^{(0)},\sigma^{(0)})\\
			&\qquad=
			J_{P^{(k)}}^G(u^{(k)},\eta,\ell^{(k)},\sigma^{(k)})
			\circ M(n,\sigma).
		\end{aligned}
	\end{equation}
\end{proposition}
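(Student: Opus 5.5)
The plan is to prove reducedness first, then deduce multiplicativity from \Cref{cor reduced multiplicativity}, and finally obtain \eqref{eq path comparison} by iterating \Cref{prop FE}.

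\emph{Reducedness.} Write $w_i:=s_{\alpha_i}$ for the elementary symmetry attached to the $i$-th edge $(M^{(i-1)},x^{(i-1)})\stackrel{n_i}\searrow(M^{(i)},x^{(i)})$. I would argue by induction on $k$ using the length function $\ell$ on $W$. The standard length formula for elementary symmetries (as in the Lapid--Rogawski setting, \cite[Section~4.3]{MOYjfa}) says that $\ell(w\,w_{i})=\ell(w)+\ell(w_{i})$ exactly when $w$ sends every root of $A_0$ in $\operatorname{Lie}(L_{\alpha_i}\cap V^{(i-1)})$ to a positive root. Set $w:=w_k\cdots w_{i+1}$. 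The edge condition $\theta_{x^{(i-1)}}(\alpha_i)<0$ with $\theta_{x^{(i-1)}}(\alpha_i)\neq-\alpha_i$, combined with $\theta_{x^{(i)}}=\ad(n_i)\circ\theta_{x^{(i-1)}}\circ\ad(n_i)^{-1}$, should show that the roots $w_i\beta$ of the edge step keep a fixed sign along the rest of the path.

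The key input I would use is the descent property of directed paths in the Lapid--Rogawski graph: along a directed edge the number of positive roots in $V$ sent to negative roots by $\theta_x$ strictly drops by the length of the step. This is \cite[Lemma~4.3/Proposition~4.4]{MOYjfa}, or can be proved directly as in \cite{LR}, via $\theta_{x^{(i)}}(w_i\beta)=w_i\theta_{x^{(i-1)}}(\beta)$. I would define the invariant
\[
N(M,x):=\#\{\beta>0 \text{ in } V:\ \theta_x\beta<0\}.
\]
The goal is to check that $N(M^{(i-1)},x^{(i-1)})-N(M^{(i)},x^{(i)})=\ell(w_i)$, and that $N$ also bounds $\ell(w_k\cdots w_1)$ from the other side. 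Since $\ell(w_k\cdots w_1)\le\sum_i\ell(w_i)$ always holds, the two inequalities force equality, which is reducedness. I expect this root-counting step to be the main obstacle. I would first try to reduce it to the computation inside $L_{\alpha_i}$, where $\theta_x(\alpha)<0$ and $\theta_x(\alpha)\neq-\alpha$ forces all roots of $L_{\alpha_i}\cap V$ to be flipped to negative roots outside $L_{\alpha_i}$.

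\emph{Conclusion.} Given reducedness, the multiplicativity
\[
M(n_k,\sigma^{(k-1)})\circ\cdots\circ M(n_1,\sigma)=M(n,\sigma)
\]
follows from \Cref{cor reduced multiplicativity} applied over $\mathcal A$. It then passes to the restricted operators over $F_x$, because each restricted operator is the base change along $\rho_x$ of the unrestricted one with denominators in $\mathcal O_R(T)$ (\Cref{lm d reg on T}), and base change along $\rho_x$ respects composition. Finally, \eqref{eq path comparison} follows by applying \Cref{prop FE} to each edge in turn and composing, using that the $\iota_{n_i}$ compose to $\iota_n$ and that the transported data $u^{(i)},\ell^{(i)}$ are exactly those of \Cref{lm edge data}.
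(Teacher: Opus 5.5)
Your route is the paper's in substance. The paper sandwiches the Lapid--Rogawski vertex weight $d_i$ between its exact drop along directed edges and the most an element of given length $\ell_M$ (counted in elementary symmetries) can change it, citing \cite[Lemma~3.2.1]{LR}. It then concludes by \Cref{cor reduced multiplicativity} plus base change and by iterating \Cref{prop FE}, exactly as in your last paragraph. Using $A_0$-roots and the Coxeter length instead is legitimate, since additivity $\ell(w_k\cdots w_1)=\sum_i\ell(w_i)$ gives the condition $(w_k\cdots w_i)\alpha_i<0$ that \Cref{cor reduced multiplicativity} actually uses. However, the counting step you leave open is misnormalized, and as stated it is false.

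Let $S_x$ be the set of positive roots $\beta$ of $A_0$ in $V$ with $\theta_x\beta<0$, so that $N(M,x)=\#S_x$. This set is stable under $\beta\mapsto-\theta_x\beta$. Along an edge attached to $\alpha=\alpha_i$, two sets leave the count: the roots $\Phi_\alpha$ of $A_0$ in $L_\alpha\cap V$, and the roots $-\theta_x\Phi_\alpha$. These are disjoint exactly because $\theta_x(\alpha)\neq-\alpha$, and nothing enters. So $N$ drops by $2\ell(w_i)$, not $\ell(w_i)$.

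For example, take $\GL_3$, $M$ the diagonal torus, $\theta_x$ acting on roots as the transposition $(1\,3)$, and $\alpha=e_1-e_2$. Then $N$ goes from $3$ to $1$, while $\ell(s_\alpha)=1$. Hence the reverse bound $N(M,x)-N(wMw^{-1},n\cdot x)\leq\ell(w)$ that you aim for already fails for a single edge.

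The correct statement is $N(M,x)-N(wMw^{-1},n\cdot x)\leq2\ell(w)$, and it has a short proof. The quantity $N(wMw^{-1},n\cdot x)$ counts the roots $\beta$ of $A_0$ outside $M$ with $w\beta>0$ and $w\theta_x\beta<0$. So an element of $S_x$ not counted there satisfies $w\beta<0$ or $w(-\theta_x\beta)<0$. By the involution, there are at most $2\#\{\beta\in S_x: w\beta<0\}\leq2\ell(w)$ such elements.

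With both factors of $2$ restored, you get $2\sum_i\ell(w_i)=N_0-N_k\leq2\ell(w_k\cdots w_1)\leq2\sum_i\ell(w_i)$, and your argument closes. This is the $A_0$-root version of the ``drops by exactly $2$, changes by at most $2$'' statement the paper takes from \cite{LR}.

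Incidentally, your length-additivity criterion is stated on the wrong side. One has $\ell(ww_i)=\ell(w)+\ell(w_i)$ if and only if $w$ maps the roots of $L_{\alpha_i}\cap V^{(i)}$, not $V^{(i-1)}$, to positive roots. You do not end up needing it.
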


\begin{proof}
Write $w^{(i)}=s_{\alpha_i}\cdots s_{\alpha_1}$ and let $d_i$ be the weight
of the $i$-th vertex in the sense of \cite[Section~3.3]{LR}.  Here, with $M=M^{(0)}$, $\ell_M(w)$ denotes the minimum
number of elementary symmetries in a decomposition of $w$
starting at $M$. By \cite[Lemma~3.2.1]{LR}, an elementary
symmetry changes the vertex weight by at most $2$, while
each directed edge decreases it by exactly $2$.
Thus $d_i=d_0-2i$, and
\[
2i=d_0-d_i\leq 2\ell_M(w^{(i)})\leq 2i.
\] 
Hence $\ell_M(w^{(i)})=i$ for every prefix.  Multiplicativity along this reduced expression follows from
\Cref{cor reduced multiplicativity} by base change, since
the denominators of the elementary operators remain nonzero
after restriction. Hence \eqref{eq path comparison} follows
by iterating \Cref{prop FE}.
\end{proof}

\subsection{Intertwining periods via extremal vertices and intertwining operators}
\label{ssec extremal}

In this section we express general  intertwining periods in terms of those attached to extremal vertices, by transporting them along a path thanks to a standard intertwining operator. The following extends the argument in
\cite[proof of Theorem~5.3]{MOYjfa} to our setting.

\begin{proposition}\label{prop dat}
	For every edge $(M,x)\stackrel n\searrow(M_1,x_1)$ and every
	finite-length smooth $R$-representation $\sigma$ of $M$,
	the operator $M(n,\sigma)$ is an isomorphism.
\end{proposition}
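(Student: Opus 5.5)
We reduce to the rank-one operator and invert it by the reversed intertwining operator. By construction, $M(n,\sigma)$ is obtained from the restricted rank-one operator $M^{L_\alpha}(1,\sigma)$ by three steps: normalized induction along $Q_\alpha$ (through \Cref{cor intertwining induction}), conjugation by $n$, and the $\iota_n$-semilinear coefficient transport. The last two steps are clearly bijective, and $I_{Q_\alpha}^G$ is an exact functor over $F_x$. It therefore suffices to prove that
\[
M^{L_\alpha}(1,\sigma):I_{P_\alpha}^{L_\alpha}(\Psi_x\otimes\sigma)\otimes_{A_x}F_x\longrightarrow I_{\overline{P_\alpha}}^{L_\alpha}(\Psi_x\otimes\sigma)\otimes_{A_x}F_x
\]
is an isomorphism.

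We work first over the full torus $\cX(M)$, with coefficients $\mathcal A$ and $\mathcal F$. Consider the reverse operator $M^{L_\alpha}(1,\sigma)':I_{\overline{P_\alpha}}^{L_\alpha}(\Psi\otimes\sigma)_{\mathcal F}\to I_{P_\alpha}^{L_\alpha}(\Psi\otimes\sigma)_{\mathcal F}$ given by \Cref{lm scalar-intertwining-denominator} with the roles of $P_\alpha$ and $\overline{P_\alpha}$ swapped. We claim that the composite $M'\circ M$ is a scalar $j(\sigma)\in\mathcal F$, and that $j(\sigma)$ is nonzero and lies in $\operatorname{Frac}(\mathcal O_R(T))$.

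The scalar claim comes from computing $\operatorname{End}_{\mathcal H_{\mathcal F}[L_\alpha]}(I_{P_\alpha}^{L_\alpha}(\Psi\otimes\sigma)_{\mathcal F})$ by Frobenius reciprocity and the geometric lemma. Over $\mathcal F$, the generic vanishing argument of \Cref{lm classique} applies with the central element $z\in A_{M}$ separating $\Psi$ from ${}^{s}\Psi$, where $s$ is the nontrivial element of $P_\alpha\backslash L_\alpha/P_\alpha$ relative to $M$ (or just the element $w_0^{L_\alpha}w_0^M$). This kills the contribution of the nonidentity cell. The identity cell contributes $\operatorname{End}_M(\Psi\otimes\sigma)_{\mathcal F}$, which is the commutant of $\sigma$ extended to $\mathcal F$. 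Hence $M'\circ M$ lies in $\operatorname{End}_M(\sigma)\otimes\mathcal F$, a finite-dimensional algebra over $\mathcal F$.

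For nonvanishing, it is cleaner to show that $M'M$ is injective rather than scalar. Composition with $M$ on the generic cell is computed by \eqref{eq conv} on sections supported in the open cell $P_\alpha\overline{P_\alpha}$. Its image in $\operatorname{End}_M(\sigma)_{\mathcal F}$ is $\operatorname{Id}$ plus terms that are ``small'' in the direction of $\alpha$. Concretely, its leading coefficient in the monomial expansion along $X^*(T)$ is invertible. An element of a finite-dimensional $\mathcal F$-algebra with this property is invertible, and then $M$ is injective and $M'$ surjective. By symmetry, $MM'$ is invertible as well, so $M$ is an isomorphism over $\mathcal F$.

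To descend to $\cX_x$, we argue as in \Cref{lm d reg on T}. The $S$-equivariance makes $\det(M'M)$, taken on finite-dimensional $K$-type pieces, a unit multiple of an element of $\mathcal O_R(T)$. Since $\rho_x$ is injective on $\mathcal O_R(T)$ (shown before \Cref{prop FE} using $\theta_x(\alpha)<0$), its image in $F_x$ is nonzero. Also, the restricted operators $M$ and $M'$ are defined over $F_x$ because their denominators lie in $\mathcal O_R(T)$. The identity $M'M=$ invertible therefore specializes, and the restricted $M^{L_\alpha}(1,\sigma)$ is invertible.

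The main obstacle is the nonvanishing of the determinant of $M'M$, equivalently that generic rank-one intertwiners are isomorphisms for finite-length (not necessarily irreducible) $\sigma$ in arbitrary characteristic $R$. If the leading-term argument is hard to make rigorous, the fallback is the following. Reduce to irreducible $\sigma$ by exactness, using a composition series and the five lemma, since the intertwiners are functorial in $\sigma$. For irreducible $\sigma$, invoke that $I_{P_\alpha}^{L_\alpha}(\chi\sigma)$ is irreducible for $\chi$ in a nonempty open subset of $\cX(M)/S$, so that the nonzero map $M$ (nonzero by \Cref{thm LIP}) is an isomorphism generically over $T$.
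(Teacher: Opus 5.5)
Your primary route has a gap, but your fallback, together with your descent paragraph, is essentially the paper's proof and is correct. You should drop the primary argument and present the fallback as the proof.

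\textbf{The primary route.} The claim that $M'\circ M$ is a scalar fails for reducible finite-length $\sigma$: for $\sigma=\tau_1\oplus\tau_2$ the composite is $j_{\tau_1}\oplus j_{\tau_2}$. You partly correct this yourself. More seriously, the step carrying all the weight is only asserted, never proved. That step says the image of $M'M$ in $\operatorname{End}_M(\sigma)\otimes\mathcal F$ is ``$\operatorname{Id}$ plus small terms'', with invertible leading coefficient along $X^*(T)$. This is exactly the nonvanishing of Dat's function $j_\tau$. You give no mechanism for it: there is no analytic limit over a general $R$, and even over $\C$ the leading term is a power of $q$ times the identity only after a genuine computation. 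A smaller point: $P_\alpha\backslash L_\alpha/P_\alpha$ can have more than two cells (e.g.\ $\GL_2\times\GL_2\subset\GL_4$), so ``the nontrivial element $s$'' should read ``every nonidentity cell''. The generic vanishing does hold for each such cell, but the primary route does not need it once abandoned.

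\textbf{The fallback.} This is the paper's argument:
\begin{enumerate}
\item Reduce to irreducible subquotients $\tau$. The paper cites \cite[Proposition~3.13]{MossTrias}; you can instead get naturality in $\sigma$ from the uniqueness in \Cref{lm scalar-intertwining-denominator}.
\item Use generic irreducibility \cite{DHKM} and nonvanishing to see that both rank-one operators are invertible over $\mathcal F$, so $M'M=j_\tau\neq0$. Here you also need generic irreducibility of the target $I_{\overline{P_\alpha}}^{L_\alpha}(\chi\tau)$, and nonvanishing of the specializations of $dM$ off a proper closed set.
\item Show $j_\tau\in\operatorname{Frac}(\mathcal O_R(T))$ and use injectivity of $\mathcal O_R(T)\to A_x$.
\end{enumerate}

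\textbf{The one real difference.} The paper quotes \cite[Corollary~6.6]{MossTrias} for the $T$-rationality of $j_\tau$. You derive it from $S$-invariance of the matrix of $M'M$ on $K$-fixed vectors, using $\cX(M)\simeq S\times T$ so that $\mathcal F^S=\operatorname{Frac}(\mathcal O_R(T))$. This is a valid, self-contained substitute.

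\textbf{Order of steps.} Carry out the reduction to irreducibles on the restricted operators over $F_x$, with a common denominator in $\mathcal O_R(T)$. The induced modules are $\mathcal A$-free, so the filtrations survive base change, and the five lemma then concludes.
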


\begin{proof}
	Fix an edge and use the notation $P_\alpha$, $L_\alpha$ and $T$
	of \Cref{ssec FE}.
	Standard intertwining operators commute with inclusions
	and quotients by \cite[Proposition~3.13]{MossTrias}.
	By exactness of parabolic induction, a composition series
	of $\sigma$ induces filtrations of the source and target
	of $M(n,\sigma)$, and the maps on successive quotients
	are the operators $M(n,\tau)$ for the irreducible
	subquotients $\tau$ of $\sigma$.
	It therefore suffices to prove their invertibility.
	
	For such a $\tau$, generic irreducibility
	\cite[Corollary~1.5(3)(a)]{DHKM} and the nonvanishing of
	Dat's operators imply that the two rank-one operators
	over $\cX(M)$ are generically invertible.
	Their composites are multiplication by the nonzero
	rational function $j_\tau$ of
	\cite[Section~7.6]{Dat}, and
	\[
	j_\tau\in\operatorname{Frac}\bigl(\mathcal O_R(T)\bigr)^\times
	\]
	by \cite[Corollary~6.6]{MossTrias}. Since $\mathcal O_R(T)\to A_x$ is injective, $j_\tau$
	remains nonzero after restriction to \(\mathcal X_x\).
	Over $F_x$, each rank-one operator therefore has inverse
	$j_\tau^{-1}$ times the reverse operator.
	Compatibility with parabolic induction and conjugation by $n$
	now gives the invertibility of $M(n,\tau)$, as required.
\end{proof}

Fix $P=MV$, an $M$-admissible $x=x_u$, and data $\eta,\ell,\sigma$ as in
\Cref{thm LIP}.  Choose a minimal vertex $(M_1,x_1)$ and a maximal vertex
$(M_2,x_2)$, together with directed paths
\[
 (M,x)\rightsquigarrow(M_1,x_1),\qquad
 (M_2,x_2)\rightsquigarrow(M,x).
\]
Such paths exist by \cite[Corollary~6.7]{OJNT} and
\cite[Proposition~4.8]{MOYjfa}, respectively.  Let $P_i=M_iV_i$ be the
standard parabolic with Levi subgroup $M_i$, and let $n_i$ be the product
of the representatives along the $i$-th path, with the order of
multiplication in \Cref{prop mult}.  Set
\[
 u_1=n_1u,\qquad \sigma_1={}^{n_1}\sigma,\qquad
 u_2=n_2^{-1}u,\qquad \sigma_2={}^{n_2^{-1}}\sigma.
\]
In particular $x_i=x_{u_i}$. As in \Cref{lm edge data}, let $\ell_i$
be the $A_{x_i}$-linear forms determined, for $v\in V_\sigma$, by
\[
 \ell_1(1\otimes v)=\iota_{n_1}\bigl(\ell(1\otimes v)\bigr),\qquad
 \ell_2(1\otimes v)=\iota_{n_2}^{-1}\bigl(\ell(1\otimes v)\bigr).
\]

The periods at these two vertices can be computed by transitivity.
At the minimal vertex, \cite[Corollary~1]{MO} gives a
$\theta_{x_1}$-stable parabolic $Q_1=L_1U_1$ containing $P_1$ for which
$P_1\cap L_1$ is $\theta_{x_1}$-split.  Apply \Cref{thm transitivity} to
$P_1\subseteq Q_1$ and $u_1=e\cdot u_1$: first form the open period at
the identity for the pair $(L_1,L_1\cap u_1Hu_1^{-1})$, then the period
for $Q_1$ in $G$.  This gives
$J_{P_1}^G(u_1,\eta,\ell_1,\sigma_1)$.

At the maximal vertex, \cite[Lemma~4.7]{MOYjfa} gives a
$\theta_{x_2}$-split parabolic $Q_2=L_2U_2$ containing $P_2$ for which
$P_2\cap L_2$ is $\theta_{x_2}$-stable.  This time,
\Cref{thm transitivity}, applied to $P_2\subseteq Q_2$ and
$u_2=e\cdot u_2$, first forms the closed period at the identity for
$(L_2,L_2\cap u_2Hu_2^{-1})$, then the open period for $Q_2$ in $G$.
It gives $J_{P_2}^G(u_2,\eta,\ell_2,\sigma_2)$. Applying
\eqref{eq path comparison}, and in view of \Cref{prop dat}, we obtain the following.

\begin{proposition}\label{prop extremal periods}
With notation as above and 
	compatible Haar measures, one has
	\begin{equation}\label{eq from minimum}
		\iota_{n_1}\circ J_P^G(u,\eta,\ell,\sigma)=
		J_{P_1}^G(u_1,\eta,\ell_1,\sigma_1)\circ M(n_1,\sigma),
	\end{equation}
	and
	\begin{equation}\label{eq from maximum}
		\iota_{n_2}^{-1}\circ J_P^G(u,\eta,\ell,\sigma)=
		J_{P_2}^G(u_2,\eta,\ell_2,\sigma_2)\circ M(n_2,\sigma_2)^{-1}.
	\end{equation}
\end{proposition}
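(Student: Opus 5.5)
Both identities follow from \eqref{eq path comparison} of \Cref{prop mult}, applied to the two directed paths fixed above, together with the invertibility from \Cref{prop dat}.

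\emph{The identity \eqref{eq from minimum}.} Apply \Cref{prop mult} to the path $(M,x)\rightsquigarrow(M_1,x_1)$, taking $(M^{(0)},x^{(0)})=(M,x)$. The transported data at the end of the path are, by construction and \Cref{lm edge data}, exactly $u^{(k)}=n_1u=u_1$, $\sigma^{(k)}={}^{n_1}\sigma=\sigma_1$, $\ell^{(k)}=\ell_1$ and $\iota_n=\iota_{n_1}$. The composite of the elementary operators along the path is $M(n_1,\sigma)$ by the first assertion of \Cref{prop mult}. Hence \eqref{eq path comparison} reads literally as \eqref{eq from minimum}. The only point to check is that the successive transports of $\ell$ compose to the single transport by $\iota_{n_1}$. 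This holds because the $\iota$'s compose, $\iota_n=\iota_{n_k}\circ\cdots\circ\iota_{n_1}$, and each $\ell_i$ is determined by its values on $1\otimes v$.

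\emph{The identity \eqref{eq from maximum}.} Apply \Cref{prop mult} to the path $(M_2,x_2)\rightsquigarrow(M,x)$, now starting from the data $(u_2,\ell_2,\sigma_2)$ at $(M_2,x_2)$. I must check that transporting these data along the path returns $(u,\ell,\sigma)$. First, $n_2u_2=u$. Second, ${}^{n_2}\sigma_2=\sigma$. Third, transporting $\ell_2$ by $\iota_{n_2}$ gives $\iota_{n_2}\iota_{n_2}^{-1}\ell=\ell$. With these, \eqref{eq path comparison} gives
\[
\iota_{n_2}\circ J_{P_2}^G(u_2,\eta,\ell_2,\sigma_2)=J_P^G(u,\eta,\ell,\sigma)\circ M(n_2,\sigma_2).
\]
The operator $M(n_2,\sigma_2)$ is the composite of elementary operators attached to edges, each of which is invertible by \Cref{prop dat}; hence it is invertible over $F_{x_2}$. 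Composing on the right with $M(n_2,\sigma_2)^{-1}$ and on the left with $\iota_{n_2}^{-1}$ gives \eqref{eq from maximum}.

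\emph{Expected obstacle.} The one step needing care is bookkeeping, not new mathematics. One must ensure that the Haar measures chosen compatibly along each edge, as in \Cref{prop FE}, assemble into the single normalization implicit in $M(n_i,\cdot)$ and in the periods at the extremal vertices. One must also check that the $\iota$-semilinearity conventions match, so that the inverse $M(n_2,\sigma_2)^{-1}$ is $\iota_{n_2}^{-1}$-semilinear and both sides of \eqref{eq from maximum} are $F_x$-linear. Finally, the identification of the periods at the extremal vertices with the transitivity constructions described just before the statement is not needed for the identities themselves. It only describes how the right-hand sides are computed, and it is justified by \Cref{thm transitivity} together with uniqueness in \Cref{thm LIP}.
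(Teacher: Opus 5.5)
Your proposal is correct and matches the paper's argument. Both identities come from applying \eqref{eq path comparison} of \Cref{prop mult} to the two directed paths, the second read from $(M_2,x_2)$ with data $(u_2,\ell_2,\sigma_2)$ and then inverted using \Cref{prop dat}; the bookkeeping you flag (transport of $u$, $\ell$, $\sigma$ and composition of the $\iota$'s) is exactly what the paper leaves implicit.
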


\subsection{Explicit bounds for singularities of non-archimedean intertwining periods}
\label{ssec denominators}

We give explicit denominators for non-archimedean intertwining periods.
Such bounds are useful in the study of distinguished representations,
as illustrated by \cite{MOY}. In \Cref{ssec galois speh}, 
we compute them for Galois periods of Speh representations on inner forms of $\GL_n$, with expectation to generalize the results in \cite{MOY} to the residual spectrum. 

We use the notation of \Cref{ssec extremal}.
The three bounds come from the geometric filtrations at $(M,x)$,
at the minimal vertex $(M_1,x_1)$, and at the maximal vertex $(M_2,x_2)$. For the orbits $\cO_j\subseteq\partial\cO_x$, let $\mathcal I_0$ be the
set of indices $j$ for which
\begin{equation}\label{eq nonzero geometric Hom}
	\Hom_{\mathcal H_{A_x}[(L_j)_{x_j}]}
	\bigl(\Psi_x\otimes r_{L_j,M}(\sigma),
	(F_x/A_x)\otimes\delta_{Q_j,x_j}\eta^{u_j}\bigr)\neq\{0\}.
\end{equation}
These are the Hom spaces in \Cref{prop geometric} with $W=F_x/A_x$.
For $i=1,2$, define $\mathcal I_i$ in the same way for the data
$(P_i,M_i,x_i,u_i,\sigma_i)$, and denote by
$p_{x_i,j}\in A_{x_i}$ the polynomials of
\eqref{eq boundary polynomial} for these data.

As explained in Section~\ref{ssec intertwining}, there exist nonzero denominators 
$b_1,b_2\in A_x$ denoted by $d$ there, such that
\begin{equation}\label{eq extremal operator denominators}
	\begin{aligned}
		\iota_{n_1}(b_1)M(n_1,\sigma)\bigl(I_{P,x}(\sigma)\bigr)
		&\subseteq I_{P_1,x_1}(\sigma_1),\\
		\iota_{n_2}^{-1}(b_2)M(n_2,\sigma_2)^{-1}
		\bigl(I_{P,x}(\sigma)\bigr)
		&\subseteq I_{P_2,x_2}(\sigma_2).
	\end{aligned}
\end{equation}

\begin{proposition}\label{prop explicit bounds}
	The indexing sets $(\mathcal{I}_i)_{i=0,1,2}$ being defined as above, put
	\begin{equation}\label{eq three denominator products} 
		\begin{aligned}
			d_0&=\prod_{j\in\mathcal I_0}p_{x,j},\\
			d_1&=b_1\,\iota_{n_1}^{-1}
			\left(\prod_{j\in\mathcal I_1}p_{x_1,j}\right),\\
			d_2&=b_2\,\iota_{n_2}
			\left(\prod_{j\in\mathcal I_2}p_{x_2,j}\right),
		\end{aligned}
	\end{equation}
	where an empty product is $1$. Then $d_0,d_1,d_2$ are nonzero elements
	of $A_x$, and, for $i=0,1,2$,
	\begin{equation}\label{eq three singularity bounds}
		d_iJ_P^G(u,\eta,\ell,\sigma)\bigl(I_{P,x}(\sigma)\bigr)
		\subseteq A_x.
	\end{equation}
	The bounds hold for any choices of $z_{x,j}$ and $z_{x_i,j}$
	($i=1,2$) satisfying \Cref{lm BD}.
\end{proposition}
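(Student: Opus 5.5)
We treat the three bounds separately. In each case the source is either the geometric filtration at one vertex or such a filtration transported along a path by a standard intertwining operator.

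\emph{The bound $d_0$.} This sharpens \eqref{eq direct denominator}. We run the proof of \Cref{thm LIP} again on the filtration
\[
\cV_x^0=\cV_{a+1}\subseteq\cdots\subseteq\cV_N=I_{P,x}(\sigma).
\]
The period is $A_x$-valued on $\cV_x^0$, where it equals $j_{\eta,\ell}$. Apply \Cref{cor habile} with the following choice of annihilators:
\begin{itemize}
\item for $j\in\mathcal I_0$, take $x_j=p_{x,j}$, which is allowed by \Cref{cor kill} with $k=0$ and $W=F_x/A_x$;
\item for $j\notin\mathcal I_0$, take $x_j=1$.
\end{itemize}
The second choice is legitimate because, by \Cref{prop geometric} with $W=F_x/A_x$, the relevant space $\Hom_{\mathcal H_{A_x}[H]}(\cV_j/\cV_{j-1},(F_x/A_x)\otimes\eta)$ is the Hom space \eqref{eq nonzero geometric Hom}, and this vanishes precisely when $j\notin\mathcal I_0$. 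Nonvanishing of $d_0$ follows from \Cref{cor kill}, since $A_x$ is a domain. Any $z_{x,j}$ satisfying \Cref{lm BD} gives a valid annihilator, which is why the bound is independent of these choices.

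\emph{The bounds $d_1$ and $d_2$.} First apply the $d_0$ argument to the data $(P_i,M_i,x_i,u_i,\sigma_i,\ell_i)$ in place of $(P,M,x,u,\sigma,\ell)$. This gives
\[
\Bigl(\prod_{j\in\mathcal I_i}p_{x_i,j}\Bigr)\,J_{P_i}^G(u_i,\eta,\ell_i,\sigma_i)\bigl(I_{P_i,x_i}(\sigma_i)\bigr)\subseteq A_{x_i}.
\]
\begin{itemize}
\item For $i=1$, take $\phi\in I_{P,x}(\sigma)$. By \eqref{eq extremal operator denominators}, $\iota_{n_1}(b_1)M(n_1,\sigma)\phi$ lies in $I_{P_1,x_1}(\sigma_1)$. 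Now use \eqref{eq from minimum}, the $\iota_{n_1}$-semilinearity, and $F_{x_1}$-linearity of $J_{P_1}^G$. This gives $\iota_{n_1}\bigl(d_1J_P^G(\phi)\bigr)\in A_{x_1}$. Since $\iota_{n_1}$ maps $A_x$ isomorphically onto $A_{x_1}$, we get $d_1J_P^G(\phi)\in A_x$.
\item For $i=2$ the argument is identical, using \eqref{eq from maximum}, the second inclusion in \eqref{eq extremal operator denominators}, and $\iota_{n_2}^{-1}$.
\end{itemize}
In both cases $d_i\neq0$, because $b_i\neq0$ and each $p_{x_i,j}\neq0$ by \Cref{cor kill} applied at the vertex $(M_i,x_i)$.

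\emph{Expected obstacle.} The delicate point is bookkeeping rather than substance. One must check that the operators and periods appearing in \Cref{prop extremal periods} are semilinear in exactly the way that makes the denominators transport through $\iota_{n_i}^{\pm1}$. One must also check that $b_1,b_2$ really exist in $A_x$, the second one for the inverse operator. For the inverse, I would combine \Cref{prop dat} with the factorization of $M(n_2,\sigma_2)$ along the path (\Cref{prop mult}). Each rank-one inverse equals $j_\tau^{-1}$ times the reverse operator. So a denominator for the inverse is a product of the following, all transported into $A_x$ via $\rho$ and $\iota$:
\begin{itemize}
\item the denominators of the reverse operators, chosen in $\mathcal O_R(T)$ by \Cref{lm d reg on T};
\item numerators of the $j_\tau$, which lie in $\operatorname{Frac}(\mathcal O_R(T))$ and remain nonzero after restriction.
\end{itemize}
Here one also has to pass from irreducible subquotients $\tau$ to $\sigma$ itself via the composition-series filtration. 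I would do this by multiplying the denominators along the filtration, using \Cref{prop habile} as above.
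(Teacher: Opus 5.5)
Your argument is correct and is essentially the paper's proof. As there, you apply \Cref{cor habile} to the filtration starting at $\cV_x^0$, with annihilator $p_{x,j}$ for $j\in\mathcal I_0$ and $1$ otherwise, and obtain the same bound at the two extremal vertices. You then transport it through \eqref{eq from minimum}--\eqref{eq from maximum} and \eqref{eq extremal operator denominators}, applying $\iota_{n_1}^{-1}$ and $\iota_{n_2}$.

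Your last paragraph is not needed, since the paper takes the denominators $b_1,b_2$ as given in the setup before the statement. If you did want to justify $b_2$ by d\'evissage along a composition series, be aware that the denominators of the graded inverses do not by themselves control the inverse, because the extension data matter. That step would also need a denominator for the forward operator $M(n_2,\sigma_2)$, not just \Cref{prop habile}.
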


\begin{proof}
	We apply \Cref{cor habile} to the adapted filtration starting at
	$\cV_x^0$. By \Cref{prop geometric,cor kill}, the Hom space for the
	$j$-th quotient is annihilated by $p_{x,j}$ when $j\in\mathcal I_0$
	and is zero otherwise, in which case we take the annihilator $1$.
	The support formula makes the intertwining period $A_x$-valued on $\cV_x^0$,
	which proves \eqref{eq three singularity bounds} for $i=0$. Applying the same argument at the two extremal vertices gives
	\[
	\left(\prod_{j\in\mathcal I_i}p_{x_i,j}\right)
	J_{P_i}^G(u_i,\eta,\ell_i,\sigma_i)
	\bigl(I_{P_i,x_i}(\sigma_i)\bigr)\subseteq A_{x_i},
	\qquad i=1,2.
	\]
	Combining these inclusions with
	\eqref{eq extremal operator denominators} and
	\eqref{eq from minimum}--\eqref{eq from maximum}, and applying
	$\iota_{n_1}^{-1}$ for $i=1$ and $\iota_{n_2}$ for $i=2$, proves the rest of the statement.
\end{proof}

\subsection{Intertwining periods and reduction modulo \texorpdfstring{$p'\neq p$}{p' unequal to p}}
\label{ssec reduction}

Assume that $R$ has characteristic $p'>0$, with $p'\neq p$.
Let $W(R)$ be its ring of Witt vectors, let $K$ be an algebraic closure
of $\operatorname{Frac}(W(R))$, and let $\mathfrak o$ be the integral
closure of $W(R)$ in $K$. The valuation of $W(R)$ extends uniquely to
$K$, with valuation ring $\mathfrak o$ and residue field $R$.
Write $r_{p'}:\mathfrak o\to R$ for reduction.
Choose $q^{1/2}\in\mathfrak o^\times$ reducing to the square root fixed
over $R$. We use quotient measures induced by Haar measures with values
in $\Z[1/p]$, normalized on compact open pro-$p$ subgroups, and their
reductions over $R$.

Fix $P=MV$ and an $M$-admissible $x=x_u$.
Let $\sigma$ be an integral finite-length smooth $K$-representation
of $M$, and fix an $M$-stable $\mathfrak o$-lattice $\sigma_e$ in the
sense of \cite[I.9.1]{V}. Put
\[
\bar\sigma:=\sigma_e\otimes_{\mathfrak o}R.
\]
This representation has finite length, by the argument of
\cite[II.5.11(a)]{V}.
Let $\eta:H\to\mathfrak o^\times$ be a smooth character trivial on
unipotent elements, and assume that
\[
\ell\in\Hom_{\mathcal H_K[M_x]}
\bigl(\sigma,K\otimes\delta_{P,x}\eta^u\bigr),
\qquad \ell(\sigma_e)\subseteq\mathfrak o.
\]
Denote their reductions by $\bar\eta:H\to R^\times$ and $\bar\ell:\overline{\sigma}\to R$.

Put
\[
A_{x,e}:=\mathfrak o[\Lambda_x],\qquad
A_{x,K}:=K[\Lambda_x],\qquad
F_{x,K}:=\operatorname{Frac}(A_{x,K}),
\]
and $A_x=R[\Lambda_x]$ and $F_x=\operatorname{Frac}(A_x)$.
Coefficientwise reduction defines
\[
B_x:=\left\{\frac{a}{b}:a,b\in A_{x,e},\ \bar b\neq0\right\}
\subseteq F_{x,K},
\qquad
r_{p'}:B_x\longrightarrow F_x,\quad
\frac{a}{b}\longmapsto\frac{\bar a}{\bar b}.
\]
We use the same notation $\Psi_x$ over each coefficient ring
and set
\[
I_e:=I_P^G(\Psi_x\otimes_{\mathfrak o}\sigma_e).
\]
Parabolic induction commutes with scalar extension, giving
\[
I_e \otimes_{\mathfrak o}K \simeq I_{P,x}(\sigma),\qquad
I_e\otimes_{\mathfrak o}R\simeq I_{P,x}(\bar\sigma).
\]
We write $\bar\phi$ for the reduction of $\phi\in I_e$.

\begin{proposition}\label{prop reduction LIP}
The polynomials $p_{x,j}$ of \eqref{eq boundary polynomial}, formed
from the central characters of the Jacquet modules of $\sigma$ over
$K$, belong to $A_{x,e}$ and have nonzero reductions. Put
\[
d_x:=\prod_{\cO_j\subseteq\partial\cO_x}p_{x,j}.
\]
Then
\begin{equation}\label{eq integral period denominator}
d_xJ_P^G(u,\eta,\ell,\sigma)(I_e)\subseteq A_{x,e}.
\end{equation}
In particular, for every $\phi\in I_e$, the value
$J_P^G(u,\eta,\ell,\sigma)(\phi)$ belongs to $B_x$, and
\begin{equation}\label{eq reduction LIP}
r_{p'}\bigl(J_P^G(u,\eta,\ell,\sigma)(\phi)\bigr)
=J_P^G(u,\bar\eta,\bar\ell,\bar\sigma)(\bar\phi).
\end{equation}
\end{proposition}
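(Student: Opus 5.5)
The plan is to redo the proof of \Cref{thm LIP} over the integral domain $A_{x,e}=\mathfrak o[\Lambda_x]$, whose fraction field is $F_{x,K}$. Integrality then follows from \Cref{cor habile}, and the reduction formula from the uniqueness in \Cref{thm LIP} over $R$.

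\emph{Integrality of the $p_{x,j}$.} By \cite[II.4.14]{V}, the Jacquet module $r_{L_j,M}(\sigma)$ is integral with lattice $r_{L_j,M}(\sigma_e)$. Hence the central characters $\omega_i$ of its irreducible subquotients take values in $\mathfrak o^\times$ on $z_{x,j}$. The quantities $\delta_{Q_j,x_j}(z_{x,j})$ and $\eta(u_j^{-1}z_{x,j}u_j)$ are also units of $\mathfrak o$: the former is a power of $q^{1/2}$, the latter because $\eta$ is $\mathfrak o^\times$-valued. So each factor of $p_{x,j}$ has the form $a-b\,\Psi_x(z_{x,j})$ with $a,b\in\mathfrak o^\times$. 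By \Cref{lm BD}, $\Psi_x(z_{x,j})$ and $1$ are distinct elements of the canonical basis of $R[\Lambda_x]$, so the reduction $\bar a-\bar b\,\Psi_x(z_{x,j})$ of each factor is nonzero. Since $A_x$ is a domain, $\bar d_x\neq0$.

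\emph{The bound \eqref{eq integral period denominator}.} I would filter $r_{L_j,M}(\sigma_e)$ by its intersections with a $K$-composition series of $r_{L_j,M}(\sigma)$. The graded pieces are $\mathfrak o$-torsion free, and $z_{x,j}$ acts on the $i$-th one by $\omega_i(z_{x,j})$. \Cref{prop geometric} and \Cref{lm ext frobenius} hold over any $\Z[1/p]$-algebra, so the proof of \Cref{cor kill}, using \Cref{lm classique} with $A=A_{x,e}$, shows that $p_{x,j}$ annihilates
\[
\Hom_{\mathcal H_{A_{x,e}}[H]}\bigl(\cV_j(e)/\cV_{j-1}(e),(F_{x,K}/A_{x,e})\otimes\eta\bigr)
\]
for the support filtration $\cV_\bullet(e)$ of $I_e$. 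On $\cV_x^0(e)$ the period coincides with $j_{\eta,\ell}$, which is $A_{x,e}$-valued because $\ell(\sigma_e)\subseteq\mathfrak o$ and the measures are $\Z[1/p]$-valued. \Cref{cor habile} then gives \eqref{eq integral period denominator}. In particular every value $J_P^G(u,\eta,\ell,\sigma)(\phi)$ with $\phi\in I_e$ lies in $d_x^{-1}A_{x,e}\subseteq B_x$.

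\emph{The reduction formula.} Define $\Lambda(\phi):=r_{p'}(J_P^G(u,\eta,\ell,\sigma)(\phi))$ for $\phi\in I_e$. This map is $A_{x,e}$-linear and $(H,\bar\eta)$-equivariant. It vanishes on $\mathfrak m I_e$, where $\mathfrak m$ is the maximal ideal of $\mathfrak o$, because it takes values in $\bar d_x^{-1}A_x$. It therefore descends to $I_e\otimes_{\mathfrak o}R\simeq I_{P,x}(\bar\sigma)$, and I extend it $F_x$-linearly. Exactness of the support filtrations under $\otimes_{\mathfrak o}R$ (restriction and extension by zero commute with scalar extension) gives $\cV_x^0(e)\otimes_{\mathfrak o}R=\cV_x^0(\bar\sigma)$. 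Since $r_{p'}$ commutes with the algebraic integral of compactly supported sections, $\Lambda$ agrees with $j_{\bar\eta,\bar\ell}$ on this space, hence on $\cW(u)$ for $\bar\sigma$ by \eqref{eq support space equals filtration}. The uniqueness in \Cref{thm LIP} for the finite-length $R$-representation $\bar\sigma$ then yields \eqref{eq reduction LIP}. I expect the main obstacle to be the careful check that the support filtration of $I_e$ and the lattice filtration of the Jacquet modules have torsion-free graded pieces compatible with reduction; the remaining steps are formal.
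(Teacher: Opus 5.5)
Your proposal is correct and follows the paper's proof essentially step by step. You write each factor as $a-b\,\Psi_x(z_{x,j})$ with $a,b\in\mathfrak o^\times$ using stable lattices in the Jacquet modules, obtain the integral bound from \Cref{lm classique} on the lattice filtrations together with \Cref{cor habile}, and get the reduction identity from the uniqueness in \Cref{thm LIP} over $R$. The only cosmetic difference is the lifting of well-supported sections: you use compatibility of the support filtration with $\otimes_{\mathfrak o}R$, whereas the paper lifts explicitly by cutting off a lift $\phi_0$ with the indicator $\mathbf 1_C$ of the support of $\bar\phi$ in $P\backslash G$.
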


\begin{proof}
Jacquet functors preserve stable lattices
\cite[II.4.14(d)]{V}. Intersect a composition series of
$r_{L_j,M}(\sigma)$ with $r_{L_j,M}(\sigma_e)$.
The successive quotients are stable lattices in the corresponding
irreducible subquotients, whose central characters therefore take
values in $\mathfrak o^\times$; see \cite[I.9.3 and II.4.11]{V}.
Consequently every factor of $p_{x,j}$ has the form
$a-b\Psi_x(z_{x,j})$ with $a,b\in\mathfrak o^\times$.
By \Cref{lm BD}, the image of $z_{x,j}$ in $\Lambda_x$ is nonzero,
so each of these factors has nonzero reduction. Thus
$p_{x,j}\in A_{x,e}$ and $\bar p_{x,j}\neq0$.

Apply \Cref{lm classique} to these lattice filtrations over $A_{x,e}$.
Together with \Cref{prop geometric}, this gives the annihilation of
$\Hom$ required in \Cref{cor habile} for the adapted filtration of
$I_e$ and the target $(F_{x,K}/A_{x,e})\otimes\eta$.
On the initial term $\cV_x^0\cap I_e$, the period is
$A_{x,e}$-valued by the support formula, since $\ell(\sigma_e)\subseteq\mathfrak o$ and our quotient
integrals preserve integral coefficients.
\Cref{cor habile} therefore gives \eqref{eq integral period denominator}.

Since $\bar d_x\neq0$, the period is $B_x$-valued on $I_e$. Reduction induces an $H$-equivariant $A_x$-linear map
\[
\overline J:I_{P,x}(\bar\sigma)\longrightarrow F_x\otimes\bar\eta,
\qquad
\overline J(\bar\phi)
=r_{p'}\bigl(J_P^G(u,\eta,\ell,\sigma)(\phi)\bigr)
\quad(\phi\in I_e).
\]
We extend it $F_x$-linearly.

Let $\bar\phi\in I_{P,x}(\bar\sigma)$ be such that
$h\mapsto\bar\phi(uh)$ has compact support modulo
$H\cap u^{-1}Pu$. Since parabolic induction commutes with reduction,
choose a lift $\phi_0\in I_e$ of $\bar\phi$.
The support $C$ of $\bar\phi$ in $P\backslash G$ is compact and open.
Define
\[
\phi(g):=\mathbf1_C(Pg)\phi_0(g).
\]
Then $\phi\in I_e$ still reduces to $\bar\phi$ and has exactly the
same support modulo $P$. In particular, $\phi\in\cW(u)$.
The support formula therefore gives
\[
\begin{aligned}
	\overline J(\bar\phi)
	&=r_{p'}\bigl(J_P^G(u,\eta,\ell,\sigma)(\phi)\bigr)\\
	&=r_{p'}\bigl(j_{\eta,\ell}(\phi)\bigr)
	=j_{\bar\eta,\bar\ell}(\bar\phi).
\end{aligned}
\]

Finally, every section over $F_x$ satisfying the same support
condition becomes a section of $I_{P,x}(\bar\sigma)$ after
multiplication by a nonzero element of $A_x$, without changing
its support. By $F_x$-linearity, $\overline J$ therefore satisfies
the support characterization of
$J_P^G(u,\bar\eta,\bar\ell,\bar\sigma)$.
The uniqueness in \Cref{thm LIP} proves \eqref{eq reduction LIP}.

\end{proof}

The next corollary may be useful in the study of modular
distinction.

For $i=1,2$, let
\[
(P_i=M_iV_i,u_i,x_i=x_{u_i},\sigma_i,\sigma_{i,e},\ell_i)
\]
be integral data satisfying the hypotheses of
\Cref{prop reduction LIP}, with the same character $\eta$.
Assume that
\[
(M_1,x_1)\stackrel n\searrow(M_2,x_2)
\]
is an edge, and that
\[
\sigma_2={}^n\sigma_1,\qquad
\sigma_{2,e}={}^n\sigma_{1,e}.
\]

Put
\[
\begin{gathered}
	\Lambda_1:=M_1/M_1^1,\qquad
	\mathcal A_e:=\mathfrak o[\Lambda_1],\\
	\mathcal A_K:=K[\Lambda_1],\qquad
	\mathcal F_K:=\operatorname{Frac}(\mathcal A_K),
\end{gathered}
\]
and let $\chi_1:M_1\to\mathcal A_e^\times$ be the universal
unramified character. The corresponding universal induced lattices are
\[
\begin{aligned}
	\mathcal I_{1,e}
	&:=I_{P_1}^G(\chi_1\otimes_{\mathfrak o}\sigma_{1,e}),\\
	\mathcal I_{2,e}
	&:=I_{P_2}^G\bigl({}^n
	(\chi_1\otimes_{\mathfrak o}\sigma_{1,e})\bigr).
\end{aligned}
\]
We first consider the standard intertwining operator 
\[
	M(n,\sigma_1): I_{P_1}^G(\chi_1\otimes_K\sigma_1)
	\otimes_{\mathcal A_K}\mathcal F_K\\
 \longrightarrow
	I_{P_2}^G\bigl({}^n(\chi_1\otimes_K\sigma_1)\bigr)
	\otimes_{\mathcal A_K}\mathcal F_K.
\]

We recall that the isomorphism
\[
\iota_n:\Lambda_{x_1}\simeq \Lambda_{x_2},
\qquad [m]\longmapsto[nmn^{-1}],
\]
induces
\[	\iota_n :A_{x_1,e}\simeq A_{x_2,e},\quad 
	\iota_n :B_{x_1}\simeq B_{x_2},\quad 
	\iota_n :F_{x_1,K}\simeq F_{x_2,K}.\]
On parameters, this corresponds to $\chi\mapsto{}^n\chi$,
where ${}^n\chi(m_2)=\chi(n^{-1}m_2n)$.
We denote the corresponding isomorphisms over $R$ by the same symbol again:
\[
 \iota_n:A_{x_1}\xrightarrow{\sim}A_{x_2},
\qquad
 \iota_n:F_{x_1}\xrightarrow{\sim}F_{x_2}, 
\]
hence 
\[
r_{p'}\circ\iota_n=\iota_n\circ r_{p'}
\qquad\text{on }B_{x_1}.
\]

Let 
\[
\rho_1:\mathcal A_e\longrightarrow A_{x_1,e},
\qquad
\rho_1(\chi_1(m))=\Psi_{x_1}(m)
\]
be the restriction map on parameter rings, and put
\[
I_{i,e}:=
I_{P_i}^G(\Psi_{x_i}\otimes_{\mathfrak o}\sigma_{i,e}),
\qquad i=1,2.
\]
As explained in \Cref{ssec FE}, the restriction of the universal
operator to $I_{P_1,x_1}(\sigma_1)\otimes_{A_{x_1,K}}F_{x_1,K} $ is
well defined and gives
\[
	M(n,\sigma_1):
	 I_{P_1,x_1}(\sigma_1)\otimes_{A_{x_1,K}}F_{x_1,K} 
	 \longrightarrow
	I_{P_2,x_2}(\sigma_2)\otimes_{A_{x_2,K}}F_{x_2,K}.
\] 

We impose two additional requirements. First, this
operator restricts to 
\begin{equation}\label{eq 1 de plus}
	M(n,\sigma_1): 
 I_{1,e}\otimes_{A_{x_1,e}}B_{x_1} 
 \longrightarrow
	I_{2,e}\otimes_{A_{x_2,e}}B_{x_2}.
\end{equation}
This map is $\iota_n$-semilinear:
\[
M(n,\sigma_1)(a\phi)
=\iota_n(a)M(n,\sigma_1)(\phi)
\]
for $a\in B_{x_1}$ and $\phi$ in its source.
Second, its reduction modulo $p'$ is the standard
intertwining operator for the reduced data, formed with
the reduced quotient measure:
\[
	\overline{M(n,\sigma_1)}=M(n,\bar\sigma_1):\quad
	I_{P_1,x_1}(\bar\sigma_1)\otimes_{A_{x_1}}F_{x_1}\\
\longrightarrow
	I_{P_2,x_2}(\bar\sigma_2)\otimes_{A_{x_2}}F_{x_2}.
\]
The reduced map is again $\iota_n$-semilinear.

For later applications, a sufficient condition for these
two requirements is the existence of $D\in\mathcal A_e$ such that

\begin{enumerate}
	\item the image $d:=\rho_1(D)$ satisfies that its reduction modulo $p'$ is nonzero: $\bar d\neq0$,
	\item the operator $M(n,\sigma_1)$ restricts to
	\[M(n,\sigma_1):\quad
		 \mathcal I_{1,e}
		\otimes_{\mathcal A_e}\mathcal A_e[D^{-1}]\\
	 \longrightarrow
		\mathcal I_{2,e}
		\otimes_{\mathcal A_e}\mathcal A_e[D^{-1}].\]
\end{enumerate}

Indeed, $d$ and $\iota_n(d)$ are then invertible in
$B_{x_1}$ and $B_{x_2}$, respectively. Base change of this
universal integral model therefore gives the map in \Cref{eq 1 de plus} above.

To check the second requirement, put
\[
\mathcal A:=R[\Lambda_1],\qquad
\overline{\mathcal I}_i:=
\mathcal I_{i,e}\otimes_{\mathfrak o}R,
\qquad i=1,2,
\]
and let $\bar\rho_1:\mathcal A\to A_{x_1}$ be the reduction
of $\rho_1$. Since
$\bar\rho_1(\bar D)=\bar d\neq0$, we also have
$\bar D\neq0$, and restriction and reduction extend to
the relevant localizations, giving a commutative square
\[
\begin{tikzcd}
	\mathcal A_e[D^{-1}]
	\arrow[r,"\rho_1"]
	\arrow[d,"r_{p'}"']
	&
	A_{x_1,e}[d^{-1}]
	\arrow[d,"r_{p'}"]
	\\
	\mathcal A[\bar D^{-1}]
	\arrow[r,"\bar\rho_1"']
	&
	A_{x_1}[\bar d^{-1}].
\end{tikzcd}
\]
By \cite[Proposition~3.13(ii)]{MossTrias}, reduction of the
universal integral operator is the standard intertwining
operator formed with the reduced quotient measure:
\[
	\overline{M(n,\sigma_1)}=M(n,\bar\sigma_1):\quad
	\overline{\mathcal I}_1
	\otimes_{\mathcal A}\mathcal A[\bar D^{-1}]\\
	\longrightarrow
	\overline{\mathcal I}_2
	\otimes_{\mathcal A}\mathcal A[\bar D^{-1}].
\]
The second requirement now follows from the above
commutative diagram.

For the next corollary, write
\[
\begin{aligned}
	J_i&:=J_{P_i}^G(u_i,\eta,\ell_i,\sigma_i):I_{i,e}\otimes_{A_{x_i,e}}F_{x_i,K}
	\longrightarrow F_{x_i,K},\\
	\bar J_i&:=J_{P_i}^G
	(u_i,\bar\eta,\bar\ell_i,\bar\sigma_i):I_{P_i,x_i}(\bar\sigma_i)\otimes_{A_{x_i}}F_{x_i}
	\longrightarrow F_{x_i},
\end{aligned}
\qquad i=1,2.
\]

\begin{corollary}
	In the preceding setting, suppose that the periods over $K$
	satisfy a functional equation
	\[
	J_2\circ M(n,\sigma_1)=\iota_n\circ(cJ_1),
	\qquad c\in F_{x_1,K}.
	\]
	If $\bar\ell_1\neq0$, then $c\in B_{x_1}$, and
	\[
	\bar J_2\circ M(n,\bar\sigma_1)
	=
	\iota_n\circ\bigl(r_{p'}(c)\bar J_1\bigr).
	\]
	The same conclusion holds without assuming $\bar\ell_1\neq0$
	if $c\in B_{x_1}$ is already known.
\end{corollary}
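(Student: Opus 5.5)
The plan is to reduce everything to \Cref{prop reduction LIP} applied at both vertices, together with the integrality requirement \eqref{eq 1 de plus} and its reduction compatibility.

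\emph{Step 1: integrality of $c$ when $\bar\ell_1\neq0$.} By \Cref{thm LIP} the reduced period $\bar J_1$ is nonzero. I will choose $\bar\phi\in I_{P_1,x_1}(\bar\sigma_1)$ supported in $P_1u_1H$ with $\bar J_1(\bar\phi)=j_{\bar\eta,\bar\ell_1}(\bar\phi)\neq0$. Next, lift it as in the proof of \Cref{prop reduction LIP} to some $\phi\in I_{1,e}\cap\cW(u_1)$. Then $J_1(\phi)\in B_{x_1}$ with $r_{p'}(J_1(\phi))\neq0$, so $J_1(\phi)$ is a unit of the local ring $B_{x_1}$; indeed $B_{x_1}$ is the localization of $A_{x_1,e}$ at the prime kernel of reduction.

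On the other side, $M(n,\sigma_1)\phi$ lies in $I_{2,e}\otimes B_{x_2}$ by \eqref{eq 1 de plus}. Clearing the denominator with some $b\in A_{x_2,e}$, $\bar b\neq0$, and using \eqref{eq integral period denominator} for $J_2$, I get $J_2(M(n,\sigma_1)\phi)\in B_{x_2}$; here one needs that $\bar d_{x_2}\neq0$ and $B_{x_2}$-linearity. Hence
\[
\iota_n(c)=J_2(M(n,\sigma_1)\phi)\,\iota_n(J_1(\phi))^{-1}\in B_{x_2},
\]
so $c\in B_{x_1}$.

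\emph{Step 2: reducing the functional equation, with $c\in B_{x_1}$ known.} For arbitrary $\phi\in I_{1,e}$, both sides of the equation take values in $B_{x_2}$, by the same argument. I will apply $r_{p'}$ and use three facts:
\begin{itemize}
\item $r_{p'}\circ\iota_n=\iota_n\circ r_{p'}$;
\item \eqref{eq reduction LIP} for $J_1$;
\item \eqref{eq reduction LIP} for $J_2$, extended $B_{x_2}$-semilinearly to $I_{2,e}\otimes B_{x_2}$.
\end{itemize}
Writing $M(n,\sigma_1)\phi=b^{-1}\psi$ with $\psi\in I_{2,e}$, I get $r_{p'}(J_2(b^{-1}\psi))=\bar b^{-1}\bar J_2(\bar\psi)$. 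By the second requirement, $\bar b^{-1}\bar\psi=M(n,\bar\sigma_1)\bar\phi$. This yields
\[
\bar J_2\bigl(M(n,\bar\sigma_1)\bar\phi\bigr)=\iota_n\bigl(r_{p'}(c)\bar J_1(\bar\phi)\bigr)
\]
for all $\bar\phi$ that are reductions of elements of $I_{1,e}$, that is, for all of $I_{P_1,x_1}(\bar\sigma_1)$. The identity then extends to the $F_{x_1}$-span by semilinearity of both sides.

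\emph{Main obstacle.} The delicate point is justifying that the reduction of $J_2$ is compatible on $I_{2,e}\otimes B_{x_2}$ rather than only on $I_{2,e}$, and that this interacts well with the semilinear, localized intertwining operator. The local-ring (unit) argument in Step~1 is the key trick.
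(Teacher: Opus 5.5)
Your proposal is correct and follows essentially the same route as the paper. You pick $\phi\in I_{1,e}$ with $\bar J_1(\bar\phi)\neq0$, so that $J_1(\phi)$ is a unit of the local ring $B_{x_1}$, and read off $c\in B_{x_1}$ from the functional equation using \eqref{eq 1 de plus} and the $B_{x_2}$-valuedness of $J_2$; you then reduce the functional equation via \eqref{eq reduction LIP}, $r_{p'}\circ\iota_n=\iota_n\circ r_{p'}$ and the second requirement. The only difference is cosmetic: you do not need $\bar\phi$ to be well-supported on $P_1u_1H$ and lifted as in the proof of \Cref{prop reduction LIP}. Since \eqref{eq reduction LIP} holds on all of $I_{1,e}$, any lift of any $\bar\phi$ with $\bar J_1(\bar\phi)\neq0$ works, exactly as in the paper.
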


\begin{proof}
	By \Cref{prop reduction LIP}, each $J_i$ is $B_{x_i}$-valued
	on $I_{i,e}$ and reduces to $\bar J_i$.
	We extend these forms $B_{x_i}$-linearly to
	$B_{x_i}\otimes_{A_{x_i,e}}I_{i,e}$. If $\bar\ell_1\neq0$, then $\bar J_1\neq0$ by \Cref{thm LIP}.
	Choose $\phi\in I_{1,e}$ such that
	$\bar J_1(\bar\phi)\neq0$.
	Then $J_1(\phi)$ has nonzero reduction and hence is a unit
	of $B_{x_1}$.
	The functional equation gives
	\[
	c=
	\frac{\iota_n^{-1}
		\bigl(J_2(M(n,\sigma_1)\phi)\bigr)}
	{J_1(\phi)}
	\in B_{x_1}.
	\]
	Applying $r_{p'}$ to the functional equation on $I_{1,e}$,
	using its compatibility with $\iota_n$, and then extending
	scalars to $F_{x_1}$ proves the asserted identity.
\end{proof}

\section{Application to distinction and examples}\label{sec application}

\subsection{Application to distinction}
Throughout this section, $F$ is the non-archimedean local field of \Cref{sec pairs}, $R$ is an algebraically closed field of characteristic different from $p$, and $\eta:H\to R^\times$ is a smooth character which is trivial on unipotent elements. If $K$ is a closed subgroup of $G$ and $\xi:K\to R^\times$ is a smooth character, a smooth $R$-representation $\pi$ of $G$ is said to be \emph{$(K,\xi)$-distinguished} if
\[\Hom_{\mathcal H_R[K]}(\pi,R\otimes\xi)\neq\{0\}.\]
The following theorem and its corollary are the algebraic, $\eta$-twisted analogues of \cite[Theorem 5.4 and Corollary 5.5]{MOYjfa}.

\begin{theorem}\label{thm distinction}
Let $\sigma$ be a finite-length smooth $R$-representation of $M$. Assume that there exists $u\in G$ such that $x:=x_u$ is $M$-admissible and
\[\Hom_{\mathcal H_R[M_x]}\big(\sigma,R\otimes\delta_{P,x}\eta^u\big)\neq\{0\}.\]
Then, for every $\chi\in\cX_x(R)$,
\[\Hom_{\mathcal H_R[H]}\big(I_P^G(\chi\otimes\sigma),R\otimes\eta\big)\neq\{0\}.\]
\end{theorem}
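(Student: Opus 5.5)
The plan is to specialize the universal period of \Cref{thm LIP} at $\chi$, after removing its pole by a leading-term argument in a one-parameter family through $\chi$. Fix a nonzero $\ell_0\in\Hom_{\mathcal H_R[M_x]}(\sigma,R\otimes\delta_{P,x}\eta^u)$. Since $\Psi_x$ is trivial on $M_x$ (elements of $M_x$ are $\theta_x$-fixed and $\Psi_x$ is $\theta_x$-anti-invariant with values in the torsion-free $\Lambda_x$; more precisely $[m]=[\theta_x(m)]=-[m]$ in $\Lambda_x$), the $A_x$-linear extension $\ell(a\otimes v):=a\,\ell_0(v)$ is a valid datum. \Cref{thm LIP} then gives a nonzero period $J:=J_P^G(u,\eta,\ell,\sigma)$ together with a nonzero $d\in A_x$ such that $dJ(I_{P,x}(\sigma))\subseteq A_x$. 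If $d(\chi)\neq0$, one simply composes $J$ with evaluation at $\chi$, using $I_{P,x}(\sigma)\otimes_{A_x,\mathrm{ev}_\chi}R\simeq I_P^G(\chi\otimes\sigma)$. This gives an $(H,\eta)$-equivariant form, and it is nonzero because $J$ restricted to $\cV_x^0$ is the integral $j_{\eta,\ell}$, whose specialization at $\chi$ is the nonzero integral $j_{\eta,\ell_0}$ on the image of $\cV_x^0$ in $I_P^G(\chi\otimes\sigma)$ (by \Cref{lm ext frobenius} and the surjectivity in \eqref{eq adapted exact}).

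For general $\chi$, I would restrict to a curve. I would choose a cocharacter $\nu:\mathbb G_m\to\cX_x$ such that $t\mapsto \chi\nu(t)$ is a line through $\chi$ on which $d$ does not vanish identically. This is possible since $d\neq0$ and the translates of one-parameter subgroups through $\chi$ are Zariski dense. Then \Cref{lm coefficient change}, applied to the map $T=\mathbb G_m\to\cX_{x}$, $t\mapsto\chi\nu(t)$, provided all $p_{x,j}$ stay nonzero on it (I would choose $\nu$ to avoid the zero loci of all $p_{x,j}$, which is possible as their product is nonzero), yields a period $J_T$ on $I_T=I_P^G(\chi\nu\otimes\sigma)$ over $F_T=R(t)$. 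It satisfies $(\prod p_{x,j})J_T(I_T)\subseteq R[t^{\pm1}]$ and agrees with the integral on well-supported sections.

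Now localize at $t=1$, which is a discrete valuation ring $\mathcal O$ with uniformizer $t-1$. Let $k\geq0$ be minimal with $(t-1)^kJ_T(I_T)\subseteq\mathcal O$; this exists by the denominator bound. The form $\lambda:=\bigl((t-1)^kJ_T\bigr)\bmod (t-1)$ factors through $I_T/(t-1)I_T\simeq I_P^G(\chi\otimes\sigma)$ and is $(H,\eta)$-equivariant. If $k>0$, then $\lambda$ is nonzero by minimality of $k$. If $k=0$, then $\lambda$ restricted to the image of $\cV_x^0$ equals $j_{\eta,\ell_0}\neq0$. In either case $\lambda$ is a nonzero element of the Hom space in the statement. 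The main obstacle is to justify the leading-coefficient step cleanly: one needs that the specialization at $t=1$ is genuinely the representation $I_P^G(\chi\otimes\sigma)$, i.e.\ that $I_T$ is $R[t^{\pm1}]$-free so that reduction mod $(t-1)$ commutes with induction. This holds since $I_T\simeq R[t^{\pm1}]\otimes_R I_P^G(\chi\otimes\sigma)$ as vector spaces via restriction to a maximal compact subgroup $K_0$ with $G=PK_0$, and $\Psi$ is trivial on $P\cap K_0$.
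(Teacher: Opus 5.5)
Your proof is correct. It rests on the same idea as the paper's: take the leading term at $\chi$ of the universal period produced by \Cref{thm LIP}. The implementation differs.

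The paper stays on $\cX_x$. It clears denominators to get a nonzero $A_x$-valued form $\widetilde J$. It then uses the Krull intersection theorem in the Noetherian domain $A_x$ to find $r$ such that the image of $\widetilde J$ lies in $\mathfrak m_\chi^r$ but not in $\mathfrak m_\chi^{r+1}$. Finally it composes the induced map $I_P^G(\chi\otimes\sigma)\to(\mathfrak m_\chi^r/\mathfrak m_\chi^{r+1})\otimes\eta$ with a suitable linear form on this finite-dimensional space.

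You instead first restrict to a line $t\mapsto\chi\nu(t)$, so as to work over a discrete valuation ring. This costs you two things.
\begin{itemize}
\item A choice of $\nu$. Take $\nu$ nonvanishing on the finitely many nonzero differences of exponents occurring in $\prod_j p_{x,j}$; then the pulled-back Laurent polynomial has distinct exponents and nonzero coefficients, hence is nonzero.
\item An appeal to \Cref{lm coefficient change}, which is stated for homomorphisms of tori rather than translates. Either twist $\sigma$ by $\chi$ first, which is harmless since $\chi|_{M_x}=1$ and the boundary polynomials transform accordingly, or note that its proof only uses the ring map $A_x\to R[t^{\pm1}]$.
\end{itemize}

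In return, the DVR gives a canonical $R$-valued leading coefficient with no auxiliary linear form. Your case $k=0$, treated via the support formula, also shows a slightly finer fact: when the period is regular along the line, its value at $\chi$ is already nonzero.

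If you only want distinction, you can let $k$ range over $\Z$ and take it minimal, which is possible since $J_T\neq0$. This removes your case split, and it is exactly the one-variable version of the paper's Krull argument.
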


\begin{proof}
	Choose
	\[
	0\neq\ell_0\in
	\Hom_{\mathcal H_R[M_x]}
	\bigl(\sigma,R\otimes\delta_{P,x}\eta^u\bigr).
	\]
	The universal character $\Psi_x$ is trivial on $M_x$: the image of
	any $m\in M_x$ in $\Lambda_x$ is fixed by $\theta_x$, which acts
	by $-1$, and $\Lambda_x$ is torsion-free.
	Extending $\ell_0$ to $A_x$ and applying \Cref{thm LIP}, we obtain,
	after clearing denominators, a nonzero form
	\[
	\widetilde J\in
	\Hom_{\mathcal H_{A_x}[H]}
	\bigl(I_{P,x}(\sigma),A_x\otimes\eta\bigr).
	\]
	
	Fix $\chi\in\cX_x(R)$ and put
	$\mathfrak m_\chi=\ker(\operatorname{ev}_\chi:A_x\to R)$.
	Since $A_x$ is a Noetherian domain, the Krull intersection theorem gives
	\[
	\bigcap_{r\geq0}\mathfrak m_\chi^r=\{0\}.
	\]
	There is therefore an integer $r\geq0$ such that the image of
	$\widetilde J$ is contained in $\mathfrak m_\chi^r\otimes_R\eta$
	but not in $\mathfrak m_\chi^{r+1}\otimes_R\eta$.
	By $A_x$-linearity, $\widetilde J$ induces a nonzero
	$H$-equivariant map
	\[
	I_{P,x}(\sigma)/\mathfrak m_\chi I_{P,x}(\sigma)
	\longrightarrow
	(\mathfrak m_\chi^r/\mathfrak m_\chi^{r+1})\otimes_R\eta.
	\]
	Specialization at $\chi$ commutes with normalized parabolic induction,
	so the source identifies canonically with $I_P^G(\chi\otimes\sigma)$.
	Composing with an $R$-linear form on
	$\mathfrak m_\chi^r/\mathfrak m_\chi^{r+1}$ which does not vanish
	on the image gives the required nonzero $(H,\eta)$-equivariant form.
\end{proof}

\begin{corollary}\label{cor cuspidal distinction}
Let $\sigma$ be a finite-length cuspidal smooth $R$-representation of $M$. Then $I_P^G(\sigma)$ is $(H,\eta)$-distinguished if and only if there exists $u\in G$ such that $x_u$ is $M$-admissible and $\sigma$ is $(M_{x_u},\delta_{P,x_u}\eta^u)$-distinguished.
\end{corollary}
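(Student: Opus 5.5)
The "if" direction is \Cref{thm distinction} specialized at the trivial character $\chi=1\in\cX_x(R)$: if $\sigma$ is $(M_{x_u},\delta_{P,x_u}\eta^u)$-distinguished for some $M$-admissible $x_u$, then $I_P^G(\sigma)=I_P^G(1\otimes\sigma)$ is $(H,\eta)$-distinguished. The substance is the converse. The plan is to run the geometric lemma of \Cref{ssec geometric} at the specialized parameter $\chi=1$ rather than over $A_x$. Cuspidality will kill every non-admissible orbit, and Frobenius reciprocity will then identify the contribution of each admissible orbit with the space of $\ell$'s in the statement.

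Concretely, I would take the filtration $\cV_0\subseteq\cdots\subseteq\cV_N$ of $I_P^G(\sigma)$ by supports on the open sets $Y_i$, now with coefficients in $R$. All of \Cref{lm ext frobenius} and \Cref{prop geometric} apply with $A=R$ and trivial unramified character, which is the degree-zero case. This gives
\[\Hom_{\mathcal H_R[H]}(\cV_i/\cV_{i-1},R\otimes\eta)\simeq\Hom_{\mathcal H_R[(L_i)_{x_i}]}\bigl(r_{L_i,M}(\sigma),R\otimes\delta_{Q_i,x_i}\eta^{u_i}\bigr).\]
A nonzero element $\lambda\in\Hom_{\mathcal H_R[H]}(I_P^G(\sigma),R\otimes\eta)$ is then handled as follows. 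Let $i$ be minimal with $\lambda|_{\cV_i}\neq0$. Then $\lambda|_{\cV_i}$ factors through $\cV_i/\cV_{i-1}$ and gives a nonzero element of the $i$-th Hom space. Left exactness of $\Hom$ along the filtration is all that is used here; no $\Ext^1$ is needed.

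Now I use cuspidality. Since $\sigma$ is cuspidal of finite length, every proper Jacquet module $r_{L_i,M}(\sigma)$ with $L_i\subsetneq M$ vanishes. The Hom space is therefore zero unless $L_i=M$. By the observation at the end of \Cref{ssec geometric}, this is exactly the condition that $x_i=x_{u_i}$ be $M$-admissible. In that case $Q_i=P$ and $r_{M,M}(\sigma)=\sigma$, and $\delta_{Q_i,x_i}$ restricted to $(L_i)_{x_i}=M_{x_i}$ is $\delta_{P,x_i}$ by definition. So the surviving space is $\Hom_{\mathcal H_R[M_{x_i}]}(\sigma,R\otimes\delta_{P,x_i}\eta^{u_i})\neq0$, and $u=u_i$ works.

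The main point needing care, though not a real obstacle, is that "cuspidal" for finite-length representations means that all proper Jacquet modules vanish, so the vanishing holds uniformly for all $i$. One should also check that the representatives $u_i$ from \cite{OJNT} give the correct $M_{x_i}=M\cap u_iHu_i^{-1}$. That was arranged in \Cref{lm stab}, and by \Cref{rk orbit} the conclusion does not depend on the choice of admissible representative.
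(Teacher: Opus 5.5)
Your proposal is correct and follows the paper's proof essentially verbatim. The ``if'' direction is \Cref{thm distinction} at $\chi=1$. For the converse, you take the first stratum of the orbit filtration of $I_P^G(\sigma)$ on which the nonzero form survives and compute its contribution via \Cref{prop geometric} with trivial unramified character. Cuspidality then forces $L_i=M$, that is, $x_i$ is $M$-admissible and $Q_i=P$.
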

\begin{proof}
The ``if'' implication is \Cref{thm distinction} with $\chi=1$. Conversely, let
\[0\neq L\in\Hom_{\mathcal H_R[H]}\big(I_P^G(\sigma),R\otimes\eta\big).\]
Apply the filtration by the double cosets $Pu_iH$ of \Cref{ssec geometric} to the ordinary induced representation $I_P^G(\sigma)$, and choose the smallest $i$ such that the restriction of $L$ to $\cV_i$ is nonzero. It induces a nonzero form on $\cV_i/\cV_{i-1}$. The calculation of \Cref{prop geometric}, with the trivial unramified character, gives
\[\Hom_{\mathcal H_R[(L_i)_{x_i}]}\big(r_{L_i,M}(\sigma),R\otimes\delta_{Q_i,x_i}\eta^{u_i}\big)\neq\{0\}.\]
By cuspidality, $r_{L_i,M}(\sigma)=0$ unless $L_i=M$. Hence $L_i=M$; equivalently, $x_i=x_{u_i}$ is $M$-admissible, and then $Q_i=P$. The displayed nonvanishing is therefore precisely
\[\Hom_{\mathcal H_R[M_{x_i}]}\big(\sigma,R\otimes\delta_{P,x_i}\eta^{u_i}\big)\neq\{0\},\]
which proves the converse.
\end{proof}

\subsection{Explicit bounds for the Galois model}
\label{ssec galois speh}

We work over $R=\C$ and assume that $F$ has characteristic zero.
We apply the denominator argument of \Cref{thm LIP},
using semisimplicity of the Jacquet modules to sharpen
the contribution of each boundary orbit. Let $E/F$ be a quadratic extension and let $D$ be a central division
$F$-algebra of odd index $d$. Then $\Delta=D\otimes_F E$ is a
division algebra. For $r\geq1$, put
\[
G_r=\GL_r(\Delta),\qquad H_r=\GL_r(D),\qquad
\nu=|\operatorname{Nrd}_{\Delta/E}|_E.
\]
We denote by $\theta$ the involution induced by the nontrivial
element of $\operatorname{Gal}(E/F)$, and write
$\tau^*=(\tau^\theta)^\vee$ and $\tau[s]=\tau\nu^s$.
Let $\eta_{E/F}$ be the quadratic character of $F^\times$
attached to $E/F$, and put
$\eta_r=\eta_{E/F}\circ\operatorname{Nrd}_{D/F}$ on $H_r$.

Fix an irreducible unitary cuspidal representation $\rho$ of $G_t$.
Write $s_\rho$ for its positive reducibility exponent and put
$\nu_\rho=\nu^{s_\rho}$. In the notation of
\cite[Section~4.1]{MatJFA}, $s_\rho=l_\rho$ divides $d$;
in particular, $s_\rho$ is odd. For $k,l\geq1$, let
\[
\delta=\operatorname{St}_l(\rho),\qquad
\pi=\operatorname{Speh}(k,\delta),\qquad n=tkl.
\]
Thus $\delta$ is the irreducible quotient of \[
\nu_\rho^{(1-l)/2} \rho\times
\nu_\rho^{(3-l)/2}\rho\times\cdots\times
\nu_\rho^{(l-1)/2}\rho,
\] and $\pi$ is the irreducible quotient of
\[
\nu_\rho^{(k-1)/2}\delta\times
\nu_\rho^{(k-3)/2}\delta\times\cdots\times
\nu_\rho^{-(k-1)/2}\delta,
\]
where $\times$ denotes normalized parabolic induction, as in
\cite[Section~10.6]{MatJFA}.

Set $G=G_{2n}$, $H=H_{2n}$ and let $P=MU$ be the standard
parabolic of type $(n,n)$. Choose $\xi\in E^\times$ with
$\theta(\xi)=-\xi$ and put
\[
u_0=\begin{pmatrix}I_n&-\xi I_n\\ I_n&\xi I_n\end{pmatrix},
\qquad x_0=u_0\theta(u_0)^{-1}
=\begin{pmatrix}0&I_n\\ I_n&0\end{pmatrix}.
\]
The double coset $Pu_0H$ is open and
\[
M_{x_0}=\{\operatorname{diag}(g,\theta(g)):g\in G_n\}.
\]
The natural pairing of $\pi^*\boxtimes\pi$ gives an
$M_{x_0}$-invariant form $\ell$. We write $J_\pi(s)$ for
the restriction of $J_P^G(u_0,1,\ell,\pi^*\boxtimes\pi)$
to the family
\[
I(s)=\pi^*[s]\times\pi[-s].
\]

Let $q_E$ be the residue cardinality of $E$ and set
\[
T=q_E^{-s},\ B=\C[T,T^{-1}],\  U=T-1, 
A=B_{(U)},\ K=\C(T),\ Q_A=K/A.
\]
If $v_E$ is the normalized valuation of $E$, define
\[
\Psi_T(g_1,g_2)
=T^{v_E(\operatorname{Nrd}_{\Delta/E}(g_1))
	-v_E(\operatorname{Nrd}_{\Delta/E}(g_2))},
\qquad
\mathscr I_A=I_P^G\bigl(\Psi_T\otimes_\C
(\pi^*\boxtimes\pi)\bigr).
\]
Let
$
J_\pi:\mathscr I_A\otimes_A K\longrightarrow K
$
be the $H$-invariant $K$-linear form obtained by transporting
$J_P^G(u_0,1,\ell,\pi^*\boxtimes\pi)$ along the isomorphism
$A_{x_0}\simeq B$ sending the universal character
$\Psi_{x_0}$ to $\Psi_T$.
For $f\in\mathscr I_A\otimes_A K$, let \[f_s\in I(s)=I_P^G\bigl(\pi^*[s]\boxtimes\pi[-s]\bigr)\]
be its specialization at $T=q_E^{-s}$, defined whenever
$f$ is regular at that point, and in general for $g\in G$ we think of $f_s$ as a meromorphic section. 
Then
\[
J_\pi(s)(f_s)
=\left.J_\pi(f)\right|_{T=q_E^{-s}}
\]
as
an identity of meromorphic functions of $s$.  The order of the pole of $J_\pi(s)$
at zero is the smallest integer $r\geq0$ such that
$U^rJ_\pi(\mathscr I_A)\subseteq A$.

We first record the Jacquet-module calculation used in the proof.
For a multisegment $\mathfrak m$ on the $\rho$-line, we write $L(\mathfrak m)$ for the corresponding irreducible representation in the Langlands classification (see \cite{Zel}). For $(a,b)$ a partition of $n$, we use
$r_{(a,b)}$ for the normalized Jacquet functor along the standard subgroup of $G_n$ of type $(a,b)$. The following is a consequence of \cite[Theorem 2.1]{KL}, valid in our setting, see \cite[Section 10]{MatJFA}.  

\begin{proposition}\label{propproposition galois speh cuts}
Put
\[
b_i=\frac{k+l}{2}-i,\qquad a_i=b_i-l+1,
\qquad 1\leq i\leq k.
\]
For every integer $0\leq a\leq n$, we have
\begin{equation}\label{eq galois cut summand}
	r_{(a,n-a)}(\pi)\simeq
	\bigoplus_{\substack{
			(c_1,\ldots,c_k)\in\mathbf Z^k\\
			0\leq c_1\leq\cdots\leq c_k\leq l\\
			t(c_1+\cdots+c_k)=a}}
	L\bigl(([b_i-c_i+1,b_i]_\rho)_{i=1}^k\bigr)
	\boxtimes
	L\bigl(([a_i,b_i-c_i]_\rho)_{i=1}^k\bigr).
\end{equation}
\end{proposition}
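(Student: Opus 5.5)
The plan is to derive \eqref{eq galois cut summand} from the Kret--Lapid description of Jacquet modules of ladder representations \cite[Theorem 2.1]{KL}, transported to inner forms through \cite[Section 10]{MatJFA}. First I would rewrite $\pi=\operatorname{Speh}(k,\delta)$ in Zelevinsky/Langlands form. Since $\delta=\operatorname{St}_l(\rho)=L([(1-l)/2,(l-1)/2]_\rho)$, the Speh representation is $\pi=L(\mathfrak m)$ with
\[
\mathfrak m=\bigl([a_i,b_i]_\rho\bigr)_{i=1}^k,\qquad b_i=\tfrac{k+l}{2}-i,\quad a_i=b_i-l+1 .
\]
The $i$-th segment is the $\nu_\rho^{(k+1)/2-i}$-twist of the segment of $\delta$, and both endpoint sequences are strictly decreasing with step $1$. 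This is a ladder multisegment, and $\pi$ is the corresponding ladder representation. Over an inner form I would invoke the Jacquet--Langlands-compatible combinatorics recalled in \cite[Section 10]{MatJFA}, with $\nu_\rho$ in place of $\nu$. I would check that the multisegment calculus on the $\rho$-line is identical to the split case.

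Next I would apply \cite[Theorem 2.1]{KL}. In the semisimplified Jacquet module of a ladder $L(([a_i,b_i])_i)$, the constituents are $L(([b_i-c_i+1,b_i])_i)\boxtimes L(([a_i,b_i-c_i])_i)$. Here $0\le c_i\le b_i-a_i+1=l$, and each of the two resulting multisegments must again be a ladder, possibly with empty segments. Each constituent occurs with multiplicity one. Because $b_i$ and $a_i$ both decrease by exactly $1$ at each step, the ladder condition on the two pieces becomes $b_{i+1}-c_{i+1}+1\le b_i-c_i+1$ and the analogous inequality on the left endpoints. Both reduce to $c_i\le c_{i+1}$, which gives the index set $0\le c_1\le\dots\le c_k\le l$. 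Each segment carries $\rho$'s of $G_t$, so the degree condition $t\sum c_i=a$ selects the Levi of type $(a,n-a)$. The cut at zero length is allowed, and an empty segment contributes nothing.

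The final step, which I expect to be the main obstacle, is to upgrade the semisimplification to a direct sum. The constituents appear with multiplicity one, so it suffices to show that no two of them lie in the same Bernstein block of $G_a\times G_{n-a}$, or to produce another splitting argument. I would try cuspidal supports first. The cuspidal support of the first factor is the multiset $\bigcup_i\{\nu_\rho^{b_i-c_i+1},\dots,\nu_\rho^{b_i}\}$. I would argue that this multiset determines the nondecreasing tuple $(c_i)$: among the $\rho\nu_\rho^{\,e}$, the largest exponent is $b_1$, and it appears in at most the first segment. A downward induction then recovers $c_1,c_2,\dots$ successively, using that consecutive $b_i$ differ by $1$ and that the $c_i$ are monotone. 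If this peeling argument has gaps, the fallback is the one already stated in \cite[Section 10]{MatJFA}: the Jacquet module of a Speh representation is semisimple because $\pi$ is unitarizable up to twist and its Jacquet modules are completely reducible by the central character of the split component.

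Distinct cuspidal supports place the constituents in different Bernstein components. The Jacquet module then decomposes along blocks into a direct sum of pieces. Each piece has a single irreducible constituent, occurring once, so it is irreducible, and this yields \eqref{eq galois cut summand}.
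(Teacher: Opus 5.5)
Your route is the paper's: the statement is imported from \cite[Theorem~2.1]{KL}, transported to inner forms via \cite[Section~10]{MatJFA}. In the paper the direct-sum structure (multiplicity one and semisimplicity) is taken as part of that import. Your identification of $\pi$ with the ladder $L(([a_i,b_i]_\rho)_{i=1}^k)$ is right, and so is your index set, provided you use the Kret--Lapid condition that the cut points strictly decrease, $b_{i+1}-c_{i+1}<b_i-c_i$. The non-strict inequality you wrote only gives $c_i\le c_{i+1}+1$.

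Your own splitting argument, however, is wrongly justified.
\begin{itemize}
\item For fixed $a$, all constituents lie in the \emph{same} Bernstein block of $G_a\times G_{n-a}$. Their cuspidal supports consist of $a/t$, resp.\ $(n-a)/t$, unramified twists $\rho\nu_\rho^{e}$ of $\rho$. So ``no two lie in the same block'' fails, and distinct cuspidal supports do not put constituents in different Bernstein components.
\item The correct tool is the decomposition of a finite-length representation into generalized eigenspaces of the Bernstein center. These are indexed by infinitesimal characters, i.e.\ cuspidal supports. With pairwise distinct supports and multiplicity one, each summand is then irreducible.
\item The distinctness you need is true, and your peeling can be made precise. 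Put $L_i=b_i-c_i+1$; this strictly decreases in $i$, empty segments included (where $L_i=b_i+1$). Since $L_i>e$ forces $b_i\ge e$, the multiplicity of $\rho\nu_\rho^{e}$ in the first factor equals $\#\{i:b_i\ge e\}-\#\{i:L_i>e\}$. This recovers the set $\{L_i\}$, hence $(c_i)$.
\end{itemize}

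Drop the fallback argument.
\begin{itemize}
\item Unitarity does not force semisimple Jacquet modules. For unitary $\chi$, $\chi\times\chi$ is irreducible for $\GL_2(F)$. Yet its Jacquet module is a non-split self-extension of $\chi\otimes\chi$: Frobenius reciprocity shows $\Hom_T$ of it into $\chi\otimes\chi$ is one-dimensional.
\item Central characters do not separate the constituents. For $k=l=4$, the tuples $(0,0,3,3)$ and $(0,1,1,4)$ give $L([-1,1]_\rho,[-2,0]_\rho)$ and $L([2,2]_\rho,[1,1]_\rho,[-3,0]_\rho)$. Both have exponent sum $-3$, hence the same central character, and so do the complementary second factors.
\end{itemize}
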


In the above statement we used the following conventions: empty segments are omitted, $L(\varnothing)$ denotes
	the trivial representation of $G_0$, and an empty direct sum is understood to be zero, so
	$r_{(a,n-a)}(\pi)=0$ whenever $t$ does not divide $a$. We refer to the proof of \cite[Theorem 10.4]{MatJFA} for the argument underlying the proof of the following corollary.

\begin{corollary}\label{cor jacquet conjdual}
With notations as in Proposition \ref{propproposition galois speh cuts}, suppose now that $a>0$ and that a summand
$\alpha\boxtimes\beta$ in \eqref{eq galois cut summand}
satisfies $\alpha^*\simeq\alpha$.
Then $\rho^*\simeq\rho$, and its indexing tuple is
\[
(c_1,\ldots,c_k)
=
(\underbrace{0,\ldots,0}_{p},
\underbrace{l-p,\ldots,l-p}_{k-p})
\]
for some $0\leq p<\min(k,l)$. Consequently,
\begin{equation}\label{eq galois smaller rectangles}
	\alpha\simeq\pi_p
	:=\operatorname{Speh}
	(k-p,\operatorname{St}_{l-p}(\rho)),
	\qquad
	a=a_p:=t(k-p)(l-p).
\end{equation}	
\end{corollary}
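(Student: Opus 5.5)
The plan is to compare multisegments directly. Let $\alpha=L(\mathfrak m_\alpha)$ with $\mathfrak m_\alpha=([b_i-c_i+1,b_i]_\rho)_{i=1}^k$, empty segments omitted. I first compute $\alpha^*$ in the Langlands classification. The functor $\tau\mapsto\tau^\theta$ sends $L([x,y]_\rho)$ to $L([x,y]_{\rho^\theta})$, and contragredience sends it to $L([-y,-x]_{\rho^\vee})$. Both are compatible with the Zelevinsky/Langlands classification: they are exact, commute with products up to reordering, and preserve the notion of multisegment. Hence
\[
\alpha^*\simeq L\bigl(([c_i-b_i,\,-b_i+c_i-\ldots]_{\rho^*})\bigr)=L\bigl(([-b_i,\,c_i-1-b_i]_{\rho^*})_{i}\bigr),
\]
where I use the convention $[x,y]$ ending at $y$, suitably shifted. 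Since $a>0$, some $c_i>0$, so $\mathfrak m_\alpha$ is nonempty. The isomorphism $\alpha^*\simeq\alpha$ then forces the cuspidal supports to agree. The supports lie on the lines $\rho\nu_\rho^{\Z/2}$ and $\rho^*\nu_\rho^{\Z/2}$, and these can only meet if $\rho^*\simeq\rho\nu_\rho^{m}$ for some $m$. Since both $\rho$ and $\rho^*$ are unitary, this gives $m=0$ and $\rho^*\simeq\rho$. This is the first claim.

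Next I compare multisets of segments. $\mathfrak m_\alpha$ consists of segments $[b_i-c_i+1,b_i]$ with the ends $b_i$ strictly decreasing in steps of $1$, and with $c_i$ nondecreasing. Equality of the multisegments $\mathfrak m_\alpha=\mathfrak m_\alpha^*$ (after negation) means that the multiset $\{[b_i-c_i+1,b_i]\}$ is stable under $[x,y]\mapsto[-y,-x]$. The nonempty segments are those with $i>p$, where $p$ is the number of $c_i$ equal to $0$. Their right ends are $b_{p+1}>\dots>b_k$, which are distinct and consecutive. Their left ends are $b_i-c_i+1$, which are strictly decreasing because $c_i$ is nondecreasing while $b_i$ drops by one.

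The involution reverses the order. So it must pair segment $i$ with segment $p+1+k-i$, and I will require $-(b_i-c_i+1)=b_{p+1+k-i}$. Writing this out with $b_i=\frac{k+l}{2}-i$ gives
\[
c_i=l-p+ (i-(p+1+k-i))+(k-\ldots)
\]
which should simplify to $c_i+c_{p+1+k-i}=2(l-p)$. Combined with monotonicity, the strict decrease of the left ends, and the constraint that the right-end set $\{b_{p+1},\dots,b_k\}$ equals minus the left-end set, this should force all $c_i=l-p$ for $i>p$. The bound $c_i\le l$ and $c_i\ge1$ then give $0\le p<\min(k,l)$.

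Finally, substituting these $c_i$ gives $\mathfrak m_\alpha=([b_i-l+p+1,b_i]_\rho)_{i>p}$. This is $k-p$ segments of length $l-p$, with ends decreasing by one and centered at $0$, so it is the Speh multisegment of $\operatorname{Speh}(k-p,\operatorname{St}_{l-p}(\rho))$. This yields \eqref{eq galois smaller rectangles}, with $a=t\sum c_i=t(k-p)(l-p)$.

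I expect the main obstacle to be the combinatorial step forcing all nonzero $c_i$ to be equal. Symmetry alone gives only the pairing identity, so I will exploit the fact that both the right ends and the left ends form arithmetic progressions of step $1$. The right ends do so by definition. The left ends, once symmetric to the right ends, do as well, which makes $c_i$ constant. I would check this carefully, mirroring the argument of \cite[Theorem 10.4]{MatJFA}.
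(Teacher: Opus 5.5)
Your proposal is correct and is the direct multisegment comparison that the paper leaves to the proof of \cite[Theorem~10.4]{MatJFA}. Comparing cuspidal supports and using unitarity gives $\rho^*\simeq\rho$. Then $\mathfrak m_\alpha$ must be stable under $[x,y]\mapsto[-y,-x]$, and strict monotonicity of both ends forces the pairing $i\leftrightarrow p+1+k-i$.

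The step you flag as the main obstacle is immediate. Since $i+(p+1+k-i)=p+1+k$, the relation $-(b_i-c_i+1)=b_{p+1+k-i}$ already gives $c_i=l-p$. Your identity $c_i+c_{p+1+k-i}=2(l-p)$ is only the symmetrized form of this relation, and together with monotonicity alone it would not force the $c_i$ to be equal. Once $c_i=l-p$, you get $p<k$ from $a>0$ and $p<l$ from $c_i\geq1$.
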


We recall that by 
\cite[Corollary~10.1]{MatJFA}, every irreducible
$H_r$-distinguished representation of $G_r$ is conjugate self-dual.
Suppose that $\rho^*\simeq\rho$. By
\cite[Proposition~5.2(2)]{MatJFA}, there is a unique
$\epsilon\in\{0,1\}$ such that
\[
\operatorname{Hom}_{H_t}(\rho,\eta_t^\epsilon)\ne0.
\]
Furthermore, \cite[Proposition~10.12, Proposition~10.13 and Theorem~10.3]{MatJFA}
give, for every $h,j\geq1$,
\begin{equation}\label{eq galois speh distinction}
	\operatorname{Speh}(h,\operatorname{St}_j(\rho))
	\text{ is }H_{thj}\text{-distinguished}
	\quad\Longleftrightarrow\quad
	j-1\equiv\epsilon\pmod2.
\end{equation}

We can now state the result that we had in mind.

\begin{theorem}\label{prop galois speh bound}
Let $m= \min(k,l)$. If $\rho^*\not\simeq\rho$, then $J_\pi(s)$ is holomorphic
	at $s=0$. If $\rho^*\simeq\rho$, then
	\begin{equation}\label{eq galois local denominator}
		U^{\lceil m/2\rceil}J_\pi(\mathscr I_A)\subseteq A, \ \mathrm{i.e.} \ \operatorname{ord}^{\mathrm{pole}}_{s=0}J_\pi(s)
		\leq\left\lceil\frac{m}{2}\right\rceil.
	\end{equation}
\end{theorem}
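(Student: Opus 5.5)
We follow the denominator argument of \Cref{thm LIP}, but with the base ring localized at $U=T-1$. Consider the adapted filtration $\cV_{x_0}^0\subseteq\cdots\subseteq\cV_N$ of $\mathscr I_A$ attached to the open orbit $Pu_0H$. On $\cV_{x_0}^0$ the form $J_\pi$ is $A$-valued, by the support formula. By \Cref{cor habile}, with $A$ in place of $A_x$ and $Q_A=K/A$, it then suffices to find, for each boundary orbit $\cO_j$, an exponent $e_j\ge0$ with
\[
U^{e_j}\Hom_{\mathcal H_A[H]}(\cV_j/\cV_{j-1},Q_A)=0,
\]
and to show that $\sum_j e_j\le\lceil m/2\rceil$. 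We use \Cref{prop geometric} to rewrite each Hom space as a Hom space for $(L_j)_{x_j}$ out of $\Psi_T\otimes r_{L_j,M}(\pi^*\boxtimes\pi)$. We may also work over $A$ rather than $B$: every factor of $p_{x,j}$ that does not vanish at $T=1$ is a unit in $A$, so only factors vanishing at $T=1$ contribute to $e_j$.

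\textbf{Step 1 (orbit analysis).} Describe the $P\backslash G/H$ orbits for the Galois pair of type $(n,n)$, following \cite{OJNT}. The boundary orbits $\cO_j$ are indexed by involutions, and each has a Levi $L_j$ refining $M$ of type $(a,n-a;\,n-a,a)$ or similar. The stabilizer $(L_j)_{x_j}$ contains a Galois piece $\GL_a(D)$ inside one block, and a twisted diagonal part pairing $\GL_{n-a}$-blocks across the two factors.

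\textbf{Step 2 (Jacquet modules).} Write $r_{L_j,M}(\pi^*\boxtimes\pi)$ as a tensor product of two copies of \eqref{eq galois cut summand}; this module is semisimple. On each summand the element $z_{x,j}$ of \Cref{lm BD} acts by a scalar $c\,T^{\pm a'}$. The corresponding factor of $p_{x,j}$ vanishes at $T=1$ only when the central exponents match, and the summand contributes a nonzero Hom only when the Galois part $\alpha$ is $H_a$-distinguished, so that $\alpha^*\simeq\alpha$. Since the module is semisimple, each surviving summand contributes a single power of $U$ (the Hom space is a direct sum, so no products of factors appear). By \Cref{cor jacquet conjdual}, survival forces $\rho^*\simeq\rho$. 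If $\rho^*\not\simeq\rho$, every $e_j$ is $0$, and $J_\pi$ is holomorphic at $s=0$. If $\rho^*\simeq\rho$, survival forces $\alpha\simeq\pi_p$ with $a=a_p$ and $0\le p<m$, and by \eqref{eq galois speh distinction} $\pi_p$ is distinguished only when $l-p-1\equiv\epsilon\pmod 2$. A distinguished $\pi$ may be assumed after twisting; otherwise the same count applies with the other parity. This leaves at most $\lceil m/2\rceil$ values of $p$.

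\textbf{Step 3 (counting orbits and exponents).} Check that each admissible $a_p$ corresponds to exactly one boundary orbit, or at least that for each fixed $a$ only one orbit can carry a nonzero Hom. Also check that the exponent contributed is exactly $1$ rather than a multiplicity. The justification is that the summand $\pi_p\boxtimes\beta$ occurs with multiplicity one in \eqref{eq galois cut summand}, and that $U$ kills each semisimple isotypic piece. Summing over $p$ then gives $U^{\lceil m/2\rceil}J_\pi(\mathscr I_A)\subseteq A$.

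\textbf{Main obstacle.} The hard step is Step 3, together with the matching of central exponents in Step 2. It requires identifying precisely which boundary orbits and Jacquet summands give characters whose factor in \eqref{eq boundary polynomial} vanishes at $T=1$. It also requires ruling out contributions from the twisted diagonal part of the stabilizer coming from summands other than $\pi_p$. I would try the argument of \cite[Theorem 10.4]{MatJFA} first. There, the diagonal factor forces $\beta$-type components to pair with their $\theta$-duals, and the central character of $z_{x,j}$ is compared with $\delta_{Q_j,x_j}$ via the exponents $b_i-c_i$.
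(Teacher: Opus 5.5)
Your proposal is correct and is essentially the paper's proof: the orbit filtration with \Cref{cor habile} over $A=B_{(U)}$, semisimplicity of $\sigma_a=r_{(a,b)}(\pi^*)\boxtimes r_{(b,a)}(\pi)$ giving a single factor $U$ per boundary orbit, reduction to $H_a$-distinction of the first Jacquet factor, then \Cref{cor jacquet conjdual} and the parity count from \eqref{eq galois speh distinction}. The ``obstacles'' you list are not needed. You never have to determine which summands $\upsilon$ satisfy $\omega_\upsilon(z)=1$, since $U$ kills $\Hom(\Psi_T\otimes\upsilon,Q_A)$ in either case. Moreover, because $Q_A[U]\simeq\C$ (with $\delta_{Q_a,x_a}=1$ by \cite[Lemma~2.5.1]{LR}), every such form factors through an honest $(L_a)_{x_a}$-invariant form on $\upsilon$. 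That one-line step, which you assert without justification, is the only thing you should spell out, and it makes the multiplicity-one claim and the analysis of the twisted diagonal factor irrelevant.
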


\begin{proof}
	By \cite[Proposition~5.3]{MatJFA}, the double cosets in
	$P\backslash G/H$ are indexed by the symmetric matrices
	\[
	\begin{pmatrix}a&n-a\\ n-a&a\end{pmatrix},
	\qquad 0\leq a\leq n.
	\]
	The index $a=0$ corresponds to the open orbit $Pu_0H$.
	For $a>0$, put $b=n-a$. We choose the orbit representative $u_a$ so that
	\[
	x_a=u_a\theta(u_a)^{-1}
	=
	\begin{pmatrix}
		I_a&0&0&0\\
		0&0&I_b&0\\
		0&I_b&0&0\\
		0&0&0&I_a
	\end{pmatrix}.
	\] Set 
	\[
	L_a=G_a\times G_b\times G_b\times G_a,
	\qquad
	(L_a)_{x_a}
	=\{(h_1,g,\theta(g),h_2):h_1,h_2\in H_a,\ g\in G_b\},
	\]
	where factors of rank zero are omitted. Let $\mathscr V_a$ be the subquotient of the orbit filtration
	of $\mathscr I_A|_H$ associated with $Pu_aH$.
	Since the Galois modulus character $\delta_{Q_a,x_a}$ is
	trivial (\cite[lemma 2.5.1]{LR}), \Cref{prop geometric} gives
	\begin{equation}\label{eq galois orbit Hom}
		\mathcal E_a:=
		\operatorname{Hom}_{\mathcal H_A[H]}(\mathscr V_a,Q_A)
		\simeq
		\operatorname{Hom}_{\mathcal H_A[(L_a)_{x_a}]}
		\bigl(\Psi_T\otimes\sigma_a,Q_A\bigr),
	\end{equation}
	where
	\begin{equation}\label{eq galois orbit jacquet}
		\sigma_a=r_{(a,b)}(\pi^*)\boxtimes r_{(b,a)}(\pi).
	\end{equation}
    Choose a uniformizer $\varpi_F$ of $F$ and set
	\[
	z_a=(\varpi_F I_a,I_b,I_b,I_a)
	\in Z(L_a)\cap(L_a)_{x_a}.
	\]
	Writing $e=e(E/F)$, we have
	\begin{equation}\label{eq galois central monomial}
		\Psi_T(z_a)=T^{ead}.
	\end{equation}

	By \Cref{propproposition galois speh cuts}, applied also with
	$\rho^*$ in place of $\rho$, the representation $\sigma_a$
	is a finite direct sum of irreducible representations of $L_a$.
	Let $\upsilon$ be such a summand, with central character
	$\omega_\upsilon$, and put
	\[
	\mathcal E_{a,\upsilon}:=
	\operatorname{Hom}_{\mathcal H_A[(L_a)_{x_a}]}
	\bigl(\Psi_T\otimes\upsilon,Q_A\bigr).
	\]
	The element $z_a$ acts on the source by
	$\omega_\upsilon(z_a)T^{ead}$ and acts trivially on the target.
	Consequently,
	\[
	\bigl(1-\omega_\upsilon(z_a)T^{ead}\bigr)
	\mathcal E_{a,\upsilon}=0.
	\]
	If $\omega_\upsilon(z_a)\ne1$ then $1-\omega_\upsilon(z_a)T^{ead}\in A^\times$ 
	so $\mathcal E_{a,\upsilon}=\{0\}$, whereas if not $1-\omega_\upsilon(z_a)T^{ead}\in UA^\times$ hence $U\mathcal E_a=0$, so the equality 
	\[U\mathcal  E_{a,\upsilon}=0\] holds for all irreducible constituents $\upsilon$ of $\sigma_a$, hence \[U\mathcal  E_a=0.\] But since $Q_A[U]\simeq\C$,
	every element of $\mathcal E_a$ factors through an
	$(L_a)_{x_a}$-invariant linear form on
	$\sigma_a=(\Psi_T\otimes\sigma_a)/(U)$.
	Hence
	\[
	\operatorname{Hom}_{(L_a)_{x_a}}(\sigma_a,\C)=0
	\quad\Longrightarrow\quad
	\mathcal E_a=0.
	\]
	
	Set
	\[
	\mathcal S=
	\{a\in\{1,\ldots,n\}:
	\operatorname{Hom}_{(L_a)_{x_a}}(\sigma_a,\C)\ne0\}.
	\]
	The period $J_\pi$ is $A$-valued on the initial subspace of sections
	supported on the open orbit. We therefore apply
	\Cref{cor habile} to the orbit filtration, using the annihilator
	$U$ when $a\in\mathcal S$ and $1$ otherwise. This gives
	\begin{equation}\label{eq galois orbit count bound}
		U^{\#\mathcal S}J_\pi(\mathscr I_A)\subseteq A.
	\end{equation}
	This is the denominator argument in the proof of
	\Cref{thm LIP}, with the annihilators just computed. It remains to bound $\#\mathcal S$.
	If $a\in\mathcal S$, some summand
	$\alpha\boxtimes\beta$ of $r_{(a,b)}(\pi^*)$ has
	$\operatorname{Hom}_{H_a}(\alpha,\C)\ne0$: indeed, the first
	$H_a$-factor of $(L_a)_{x_a}$ acts only on $\alpha$.
	The distinction result recalled before the theorem gives
	$\alpha^*\simeq\alpha$.
	Applying Corollary \ref{cor jacquet conjdual} shows that
	$\rho^*\simeq\rho$. Hence, if $\rho^*\not\simeq\rho$,
	the set $\mathcal S$ is empty, and
	\eqref{eq galois orbit count bound} proves holomorphy at zero. Suppose now that $\rho^*\simeq\rho$. Then $\pi^*\simeq\pi$,
	and the same corollary shows that every $a\in\mathcal S$
	is of the form
	\[
	a=a_p=t(k-p)(l-p),\qquad 0\leq p<m,
	\]
	with the corresponding first factor
	$\alpha\simeq\pi_p=\operatorname{Speh}
	(k-p,\operatorname{St}_{l-p}(\rho))$ distinguished.
	The ranks $a_p$ are strictly decreasing as $p$ increases.
	By \eqref{eq galois speh distinction}, applied with
	$h=k-p$ and $j=l-p$, distinction of $\pi_p$ is equivalent to
	\[
	p\equiv l-1-\epsilon\pmod2.
	\]
	Only one parity for $p$ can therefore occur, and
	\[
	\#\mathcal S
	\leq\#\{p\in\Z:0\leq p<m,\quad
	p\equiv l-1-\epsilon\pmod2\}
	\leq\lceil m/2\rceil.
	\]
	Together with \eqref{eq galois orbit count bound}, this proves
	\eqref{eq galois local denominator}.
\end{proof}

\section{Analytic complements}\label{sec analytic}

In this section, $R=\C$, and $F$ is as before. We extend the convergence result of \cite[Theorem 5.3]{MOYjfa} to the general setting of this paper, where $(\bG^\theta)^\circ(F)\subseteq H\subseteq \bG^\theta(F)$, and much more importantly, we remove the unimodularity assumption and allow the same character twist $\eta$ in the intertwining period, that is already present in the algebraic setting. The first key result is to extend Lagier's bounds for $H$-invariant relative matrix coefficients to the $H$-equivariant setting. We do this in Section \ref{sec Lagier} below. The second main input is to bound the character $\eta$ on $H$ which is done in Section \ref{sec estimate}, and the key idea there is to express its absolute value on $(\bG^\theta)^\circ(F)$ using algebraic characters, and realize these as eigencharacters in algebraic representations of $\bG$. 

Let $\mathbb G'$ be a connected reductive group defined over $F$. We retain the split components $A_{G'}$ and the space  $\fa_{G'}$ defined in Section \ref{sec gen vanish}. We write
\[\fa_{G'}^*:=\operatorname{Hom}_{\R}(\fa_{G'},\R), \quad 
  \fa_{G',\C}^*:=\fa_{G'}^*\otimes_\R\C 
\]
The pairing between a space and its dual is denoted by
$\langle\ ,\ \rangle$. For an involution $\tau$ preserving ${G'}$, we use $(\fa_{G'}^*)^{\pm\tau}$ and   $(\fa_{{G'},\C}^*)^{\pm\tau}$ for the eigenspaces of $\tau$ in $\fa_{G'}^*$ and $\fa_{{G'},\C}^*$ respectively. If 
$X^*(\mathbb{G'})_F$ denotes the group of algebraic characters of $\mathbb{G'}$ defined over $F$, restriction
to $A_{G'}$ identifies $X^*(\mathbb{G'})_F\otimes_\Z\R$ with $\fa_{G'}^*$.

For $M$ a Levi subgroup of $G$, let $\bT_M$ be a maximal $F$-split torus of $\bM$, hence of $\bG$, with roots $R(T_M,M)$ in $\operatorname{Lie}(M)$ and $R(T_M,G)$ in $\operatorname{Lie}(G)$. We denote by $\langle\ , \ \rangle_M$ a $W(M,T_M)$-invariant inner product on $\fa_{T_M}$, and denote by $\mathrm{pr}_{\fa_M}:\fa_{T_M}\to \fa_{M}$ the orthogonal projection with respect to this inner product. We now recall the construction of the coroot $\alpha^\vee\in \fa_M$ corresponding to a root $\alpha\in R(A_M,G)$. Take $\alpha_0 \in  R(T_M,G)$ restricting to $\alpha$, and let $\alpha_0^\vee$ be the corresponding coroot. Then $\alpha^\vee$ is the unique element of $\R\cdot \mathrm{pr}_{\fa_M}(\alpha_0^\vee)$ such that $\langle \alpha , \alpha^\vee \rangle=2$. Normalized as above, the coroot $\alpha^\vee$ is independent of the choice of the lift $\alpha_0$ of $\alpha$. 

\begin{remark}
The assertion in \cite[Section~2.2]{MOYjfa} that the projected coroot is
independent of the lift requires a correction: its positive ray is
independent of the lift, but the vector itself need not be. Hence this issue does not affect any result of \cite{MOY}. For example,
take the standard Siegel Levi $M\simeq\mathrm{GL}_2(F)$ in
$G=\mathrm{Sp}_4(F)$. In the usual cocharacter coordinates,
$\fa_{T_M}=\R^2$ and $\fa_M=\{(x,x):x\in\R\}$, with projection
\[
 \mathrm{pr}_{\fa_M}(x,y)=\bigl((x+y)/2,(x+y)/2\bigr).
\]
Writing $e_1,e_2$ for the dual coordinate functions, the roots
$\beta=2e_1$ and $\gamma=e_1+e_2$ have the same restriction $\alpha$
to $\fa_M$, namely $\alpha(x,x)=2x$. However,
\[
 \begin{aligned}
  \beta^\vee&=(1,0),&
  \mathrm{pr}_{\fa_M}(\beta^\vee)&=(1/2,1/2),\\
  \gamma^\vee&=(1,1),&
  \mathrm{pr}_{\fa_M}(\gamma^\vee)&=(1,1).
 \end{aligned}
\]
The discrepancy comes from the different lengths of $\beta$ and $\gamma$:
restriction identifies a long root and a short root, while their coroots
have different scaling factors. The normalization above removes this
ambiguity.
\end{remark}

For a parabolic subgroup $P=MV$ of $G$, let $R(A_M,P)$ be the roots of $A_M$ in
$\operatorname{Lie}(V)$. We call $\nu\in\fa_M^*$ $P$-dominant, respectively strictly
$P$-dominant, if $\langle\nu,\alpha^\vee\rangle\geq0$, respectively $>0$,
for every $\alpha\in R(A_M,P)$.

For nonnegative functions, we write $f\preccurlyeq g$ if $f\leq Cg$ for
some constant independent of the variables on the indicated domain.
The notation $\preccurlyeq_v$ allows the constant $C$ to depend on the variable $v$. When the subscript is a subset $S$,
$\preccurlyeq_{S}$ means that the bound is uniform under an explicitly stated
restriction involving $S$; the constant $C$ may
depend on $S$.
We use algebraic norms in the sense of
\cite[Sections~18.1--18.2]{Kottwitz05}. We will tacitly use some of their properties proven there, after recalling them below.  Such norms take values
in $[1,\infty)$. For an affine $F$-variety $X$, their equivalence class
is represented by
\[
 \|x\|_X=\max\{1,|f_1(x)|_F,\ldots,|f_r(x)|_F\},
 \qquad x\in X(F),
\]
where $f_1,\ldots,f_r$ generate $F[X]$ as an $F$-algebra. Here two
norms are equivalent if each is bounded by a constant times a positive
power of the other. For a morphism $\phi:X\to Y$ of affine $F$-varieties, there
exists $d>0$ such that
$
\|\phi(\cdot )\|_Y\preccurlyeq\| \cdot \|_X^d.$
If moreover $\phi$ is a closed immersion, then the pullback
$\|\phi(\cdot)\|_Y$ is equivalent to $\|\cdot\|_X$, i.e. there exist $a,b>0$ such that
\[
\|\cdot \|_X^a\preccurlyeq\|\phi(\cdot )\|_Y
\preccurlyeq\|\cdot \|_X^b.
\]

\subsection{Lagier's estimate}\label{sec Lagier}

In \cite{Lagierthesis}, Lagier obtains a bound for $G^\theta$-invariant generalized coefficients, which is then used by Blanc and Delorme to prove convergence of the open intertwining period. Here we extend Lagier's result to our setting.  We first record Delorme's  constant-term formula (\cite{delconstantterm}) with a character
twist. Let $P=MV$ be $\theta$-split and put
$P^-=\theta(P)=MV^-$. Thus $M=P\cap\theta(P)$, and
\[
 H\cap P=H\cap M,\qquad \delta_P|_{H\cap M}=1.
\]
For $c>0$, put
\[
 A_M^-(c):=\{a\in A_M:|\alpha(a)|_F\leq c
                    \text{ for every }\alpha\in R(A_M,P)\},
 \qquad A_M^-:=A_M^-(1).
\]
For a compact open subgroup $K$, we write $V_\pi^K$ for the fixed vectors of
$\pi$ and set  $e_K:=\operatorname{vol}(K)^{-1}\mathbf1_K$. The action of $e_K$ on linear forms is given by 
\[
 (e_K\ell)(v)=\operatorname{vol}(K)^{-1}
                 \int_K\ell(\pi(k^{-1})v)\,dk.
\]

\begin{lemma}\label{lemma:invariance}
Let $K\subset G$ be compact open. There is a compact open subgroup
$K'\subseteq K$ such that, for every smooth representation $\pi$, every
$\ell\in\Hom_H(\pi,\eta)$, and every $v\in V_\pi^K$,
\[
 \ell(\pi(a)v)=(e_{K'}\ell)(\pi(a)v),\qquad a\in A_M^-.
\]
\end{lemma}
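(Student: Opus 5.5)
This is the standard "Casselman/Lagier invariance" trick: pass from $K$-fixed vectors to $K'$-averaged linear forms along the negative chamber, using the Iwahori-type factorization of $K$ with respect to $P$ and $P^-$. I do not expect $\eta$ to cause trouble, since the only part of $K'$ that gets moved into $H$ is unipotent, and $\eta$ is trivial there.

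\textbf{Step 1: choose $K'$.} Shrinking $K$ is harmless, so I will fix a compact open subgroup $K'\subseteq K$ with the following properties.
\begin{itemize}
\item It has an Iwahori factorization $K'=K'_{V^-}K'_MK'_V$ with respect to $P^-=MV^-$ and $P=MV$, where $K'_\ast:=K'\cap\ast$.
\item It satisfies $\theta(K')=K'$ and $\eta|_{K'\cap H}=1$.
\end{itemize}
Such groups form a basis of neighbourhoods of $1$: start from a $\theta$-stable Iwahori-factorizable basis adapted to $(P,\theta(P))$, then shrink so that $\eta$ is trivial on $K'\cap H$.

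\textbf{Step 2: reduce to one factor.} Write $e_{K'}=e_{K'_{V^-}}*e_{K'_M}*e_{K'_V}$.
\begin{itemize}
\item For $a\in A_M^-$, the conjugate $a^{-1}K'_Va$ is contained in $K'_V$, because $|\alpha(a)|_F\le1$ for $\alpha\in R(A_M,P)$. Hence $\pi(k)\pi(a)v=\pi(a)v$ for $k\in K'_V$, since $v$ is $K$-fixed.
\item For $k\in K'_M$, conjugation by $a^{-1}$ preserves $K'_M$, so $K'_M$ also fixes $\pi(a)v$.
\end{itemize}
It therefore suffices to show that
\[
\int_{K'_{V^-}}\ell(\pi(k^{-1})w)\,dk=\operatorname{vol}(K'_{V^-})\,\ell(w),\qquad w=\pi(a)v .
\]
On the $V^-$ side, $a$ expands rather than contracts, so this is the genuinely nontrivial point.

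\textbf{Step 3: transfer $V^-$ to $V$ through $\theta$.} For $n\in K'_{V^-}$, put $n':=\theta(n)\in K'_V$, which uses $\theta(V^-)=V$ and $\theta(K')=K'$.
\begin{itemize}
\item The element $h:=n'^{-1}n$ lies in $G$, and we have $\theta(n'^{-1}n)=n^{-1}n'$.
\item This $h$ is not $\theta$-fixed, so I will not use it directly. Instead I use the standard symmetrization: the map $V^-\times V\ni(n,n')\mapsto nn'$ is an isomorphism near $1$ onto an open subset.
\item By the fact that $PH$ is open and $\mathfrak g=\mathfrak p+\mathfrak h$ (valid because $P$ is $\theta$-split), every $n\in K'_{V^-}$ can be written as $n=h\,m\,n_1$ with $h\in K''\cap H$, $m\in K'_M$ and $n_1\in K'_V$.
\item After shrinking $K'$, these pieces can be taken in a neighbourhood where $\eta=1$, and with $h$ in $(\bG^\theta)^\circ(F)\subseteq H$. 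The latter holds because a small neighbourhood of $1$ in $G^\theta$ lies in the open subgroup $(\bG^\theta)^\circ(F)$ by Fact \ref{fact pi0}.
\end{itemize}
Given this, I compute
\[
\ell(\pi(n^{-1})w)=\ell(\pi(n_1^{-1}m^{-1}h^{-1})w)=\eta(h)^{-1}\ell(\pi(n_1^{-1}m^{-1})w)=\ell(w),
\]
because $m$ and $n_1$ fix $w$ by Step 2. Integrating over $K'_{V^-}$ gives the required identity.

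The main obstacle is the decomposition $K'_{V^-}\subseteq (K''\cap H)K'_MK'_V$ in Step 3, which must hold with a uniform $K''$ on which $\eta$ is trivial. I would derive it from the $p$-adic inverse function theorem applied to the map
\[
H\times M\times V\to G,\qquad (h,m,n)\mapsto hmn,
\]
which is submersive at $1$ because $\mathfrak g=\mathfrak h+\mathfrak m+\mathfrak v$. This gives an open neighbourhood $\Omega$ of $1$ in $G$ with $\Omega\subseteq (K''\cap H)K'_MK'_V$. It then remains to choose $K'$ so small that $K'_{V^-}\subseteq\Omega$ while $K'_M$ and $K'_V$ stay fixed. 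To do this, I will choose $K'_{V^-}$ independently first, then shrink it, keeping the Iwahori factorization by taking $K'=K'_{V^-}K_MK_V$ for a suitable filtration subgroup. This requires a mild compatibility of filtration subgroups, such as Moy–Prasad groups with respect to a $\theta$-stable apartment.
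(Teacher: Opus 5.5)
Your argument has the contraction direction backwards. Since $\pi(k)\pi(a)v=\pi(a)\pi(a^{-1}ka)v$, what matters is $\operatorname{Ad}(a^{-1})$. For $a\in A_M^-$ it multiplies the $\alpha$-root space of $V$ by $\alpha(a)^{-1}$, with $|\alpha(a)^{-1}|_F\geq 1$, so it \emph{expands} $V$ and contracts $V^-$.

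\textbf{Step~2 fails.} Its first bullet is false: one has $a^{-1}(K\cap V)a\supseteq K\cap V$, not $\subseteq$. For example, in $\mathrm{GL}_2$ with $a=\mathrm{diag}(\varpi,1)$, conjugating $\begin{pmatrix}1&x\\0&1\end{pmatrix}$ by $a^{-1}$ replaces $x$ by $\varpi^{-1}x$. The subgroup that fixes $\pi(a)v$ for free is $(K\cap V^-)(K\cap M)$, because $a^{-1}(K\cap V^-)a\subseteq K\cap V^-$ and $a$ is central in $M$. So the nontrivial direction is $V$, not $V^-$. The last equality in your Step~3 computation uses that $n_1\in K'_V$ fixes $w=\pi(a)v$, and this is false in general.

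\textbf{Step~3 also puts $H$ on the wrong side.} The relation $\ell\circ\pi(h)=\eta(h)\ell$ only lets you remove an element of $H$ that acts \emph{last} on the vector, i.e.\ one standing leftmost in $\ell(\pi(\cdot)w)$. With $n=hmn_1$ you get $\ell(\pi(n_1^{-1}m^{-1}h^{-1})w)$, where $h^{-1}$ acts first. So the identity $\ell(\pi(n_1^{-1}m^{-1}h^{-1})w)=\eta(h)^{-1}\ell(\pi(n_1^{-1}m^{-1})w)$ is unjustified.

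\textbf{The paper's argument avoids both problems.} Since $P^-H=\theta(PH)$ is open, one can choose $K'$ inside $(K\cap V^-)(K\cap M)(K\cap H\cap\ker\eta)$. For $k'=v^-mh$ one has $k'^{-1}=h^{-1}m^{-1}(v^-)^{-1}$. Since $a^{-1}m^{-1}(v^-)^{-1}a\in K$, this gives $\ell(\pi(k'^{-1}a)v)=\eta(h)^{-1}\ell\bigl(\pi(a)\pi(a^{-1}m^{-1}(v^-)^{-1}a)v\bigr)=\ell(\pi(a)v)$. Averaging over $K'$ concludes. The pieces $v^-$ and $m$ lie in $K$, which fixes $v$, rather than in $K'$. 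Hence $K'$ needs no $\theta$-stability or Iwahori factorization, no reduction to a single factor is needed, and the compatibility of filtration subgroups you flag at the end never arises.
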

\begin{proof}
Shrink $K$ so that it has Iwahori factorization with respect to $(P,P^-)$ in the sense of \cite[Section 2.1 p.936]{delconstantterm}; in particular \[a^{-1}(K\cap V^-)a\subseteq K\cap V^-, \quad a(K\cap V) a^{-1}\subseteq K\cap V\, \quad a\in A_M^-.\] Since $P^-H$ is open, the set
\[
 (K\cap V^-)(K\cap M)(K\cap H\cap\ker\eta)
\]
contains a neighborhood of $1$. Choose $K'$ inside this set. If
$k'=v^-mh$ is such a factorization, then
$a^{-1}m^{-1}(v^-)^{-1}a\in K$ for $a\in A_M^-$, and hence
\[
 \ell(\pi(k'^{-1}a)v)
 =\eta(h)^{-1}\ell\bigl(\pi(a)
           \pi(a^{-1}m^{-1}(v^-)^{-1}a)v\bigr)
 =\ell(\pi(a)v).
\]
Averaging over $K'$ proves the assertion.
\end{proof}

Write $r_{M,G}(\pi)=\delta_P^{-1/2}\pi_V$ for the normalized Jacquet
module along $P$, and $\bar v \in r_{M,G}(V_\pi)$ for the image of $v\in V_\pi$.  

\begin{proposition}\label{prop:constantterm}
Let $\pi$ be a smooth representation of $G$ and let $\ell\in\Hom_H(\pi,\eta)$.
There is a unique linear form $r_{P^-}(\ell)$
on $r_{M,G}(\pi)$ such that, for every compact open subgroup $K$ of $G$, there exists
$0<c<1$ for which
\[
 \ell(\pi(a)v)=\delta_P(a)^{1/2}
 r_{P^-}(\ell)\bigl(r_{M,G}(\pi)(a)\bar v\bigr),
 \qquad v\in V_\pi^K,\quad a\in A_M^-(c).
\]
Moreover,
\[
 r_{P^-}(\ell)\in\Hom_{H\cap M}(r_{M,G}(\pi),\eta).
\]
\end{proposition}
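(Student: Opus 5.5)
We follow Delorme's argument for the constant term \cite{delconstantterm}, adapting it to the twist by $\eta$. The two tools are \Cref{lemma:invariance} and Jacquet's lemma on the behaviour of $\pi(a)$ on $V_\pi^K$ for $a$ deep in $A_M^-$.

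\textbf{Step 1 (construction on $K$-fixed vectors).} Fix $K$ with an Iwahori factorization relative to $(P,P^-)$, and let $K'$ be the subgroup given by \Cref{lemma:invariance}. The form $\lambda:=e_{K'}\ell$ is smooth, i.e. a vector of the contragredient fixed by $K'$. For $a\in A_M^-$ and $v\in V_\pi^K$ we therefore have
\[
 \ell(\pi(a)v)=\lambda(\pi(a)v)=\lambda\bigl(e_{K'}\pi(a)v\bigr).
\]
By Jacquet's lemma (Casselman's canonical pairing), there is $0<c<1$ such that, for $a\in A_M^-(c)$:
\begin{itemize}
\item the map $v\mapsto \bar v$ restricted to $e_{K'}\pi(a)V_\pi^K$ factors through $r_{M,G}(\pi)^{K'\cap M}$;
\item the kernel of $V_\pi(K'\cap V)$ is killed by $e_{K'}\pi(a')$ for suitable deeper $a'$.
\end{itemize}
Concretely, $\pi(a)$ acting on $e_{K}V_\pi$ stabilizes into an operator compatible with $r_{M,G}(\pi)(a)$.

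Delorme's formula is then obtained as follows. For $\bar w\in r_{M,G}(\pi)$ of level $K\cap M$, choose a lift $w\in V_\pi^K$ and $a_0\in A_M^-(c)$ with $\bar w=r_{M,G}(\pi)(a_0)\bar v$. Set
\[
 r_{P^-}(\ell)(\bar w):=\delta_P(a_0)^{-1/2}\,\ell(\pi(a_0)v).
\]

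\textbf{Step 2 (well-definedness).} This is the main obstacle. We must show that $\ell(\pi(a)v)$ vanishes when $v\in V_\pi(V)\cap V_\pi^K$ and $a$ is deep enough, and that the definition is independent of the choice of $a_0$. For the vanishing: such a $v$ satisfies $e_{K\cap V}v=0$, so $e_{a(K\cap V)a^{-1}}\pi(a)v=0$. Using the $H$-equivariance of $\ell$ together with \Cref{lemma:invariance}, we rewrite $\ell(\pi(a)v)$ as $\ell$ applied to an average over a subgroup of $K'$. Since $K'\subseteq (K\cap V^-)(K\cap M)(K\cap H\cap\ker\eta)$, the character $\eta$ is trivial on the relevant $H$-part. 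The only new point compared to Delorme is therefore that the twist by $\eta$ disappears on $K'$.

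For independence of $a_0$: for $a,a'\in A_M^-(c)$ we have $\pi(aa')=\pi(a)\pi(a')$, and $r_{M,G}(\pi)(a)$ acts invertibly on the finite-dimensional level-$(K\cap M)$ part, at least after the stabilization given by Jacquet's lemma. Compatibility between different $K$ holds because the stabilized value is the same when $K$ is shrunk. Uniqueness is then clear, since every element of $r_{M,G}(\pi)$ is of the form $r_{M,G}(\pi)(a)\bar v$ with $v$ fixed by some $K$.

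\textbf{Step 3 (equivariance).} Let $m\in H\cap M$ and $v$ be given. Choose $K$ small, normalized appropriately by $m$, and $a$ deep. Since $A_M$ is central in $M$, we have
\[
 \ell(\pi(a)\pi(m)v)=\ell(\pi(m)\pi(a)v)=\eta(m)\,\ell(\pi(a)v),
\]
because $m\in H$. Moreover $\delta_P(m)=1$ on $H\cap M$, and $\overline{\pi(m)v}=r_{M,G}(\pi)(m)\bar v$ up to the normalization $\delta_P^{1/2}(m)=1$. Comparing both sides through the defining formula yields
\[
 r_{P^-}(\ell)\in\Hom_{H\cap M}(r_{M,G}(\pi),\eta).
\]
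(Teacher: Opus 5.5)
Your route differs from the paper's: you build $r_{P^-}(\ell)$ directly from its defining formula, instead of going through Casselman's canonical pairing. The gap is in Step~2, which carries the whole content of the proposition. Your claim that $v\in V_\pi(V)\cap V_\pi^K$ satisfies $e_{K\cap V}v=0$ is false: such a $v$ is fixed by $K\cap V$, so $e_{K\cap V}v=v$, and your argument would force $v=0$.

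What is true is that $e_{N_0}v=0$ for some compact open subgroup $N_0\subseteq V$ depending on $v$, in general much larger than $K\cap V$. Then $e_{aN_0a^{-1}}\pi(a)v=0$, and once $aN_0a^{-1}\subseteq K'$, \Cref{lemma:invariance} gives $\ell(\pi(a)v)=(e_{K'}\ell)(\pi(a)v)=0$. But now the required depth of $a$ depends on $v$. Your definition (one $a_0$ for all lifts of all $\bar w$ of level $K\cap M$) and the statement itself (one $c$ for all $v\in V_\pi^K$) both need this vanishing uniformly on $V_\pi(V)\cap V_\pi^K$.

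For admissible $\pi$, uniformity follows from $\dim V_\pi^K<\infty$, an argument you never make. For an arbitrary smooth $\pi$, as in the statement, this uniformity is a nontrivial input, ultimately Bernstein's stabilization theorem. The same holds for the surjectivity of $v\mapsto\bar v$ from $V_\pi^K$ onto the $(K\cap M)$-fixed vectors of $r_{M,G}(\pi)$, which you need to choose lifts in Step~1. These are exactly what the paper imports from Delorme's generalized Jacquet lemma and asymptotic formula. Your appeal to finite-dimensionality of the level-$(K\cap M)$ part of $r_{M,G}(\pi)$ does not supply them. That space need not be finite-dimensional, and $r_{M,G}(\pi)(a)$ is invertible on it anyway, since $a$ is central in $M$.

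Independence of $a_0$ is not proved either. If $v'\in V_\pi^K$ lifts $r_{M,G}(\pi)(a_0a_1)^{-1}\bar w$, then $\pi(a_1)v'$ is not $K$-fixed, so it is not an allowed lift in the definition with $a_0$. You must pass to $e_K\pi(a_1)v'=e_{K\cap V}\pi(a_1)v'\in V_\pi^K$, whose image is $\delta_P(a_1)^{1/2}r_{M,G}(\pi)(a_0)^{-1}\bar w$. You then need three ingredients: invariance of $e_{K'}\ell$ under $a_0(K\cap V)a_0^{-1}\subseteq K'$, \Cref{lemma:invariance} at both $a_0$ and $a_0a_1$, and the uniform vanishing above.

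The paper avoids all of this. Since \Cref{lemma:invariance} holds on all of $A_M^-$, the function $a\mapsto\ell(\pi(a)v)$ is a matrix coefficient of the smooth pair $(e_{K'}\ell,v)$. So $r_{P^-}(\ell)$ is defined on $(K\cap M)$-fixed vectors by Casselman's canonical pairing with the image of $e_{K'}\ell$ in the Jacquet module of $\pi^\vee$ along $P^-$, and the uniformity is inherited from the asymptotic formula for matrix coefficients.

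Your Step~3 is the paper's uniqueness argument, with one correction. A noncompact $m\in H\cap M$ cannot be assumed to normalize $K$; instead, compare the formulas for $K$ and $mKm^{-1}$.
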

\begin{proof}
We follow \cite[proof of Theorem~3.4]{delconstantterm}.
Put $(\rho,V_P)=r_{M,G}(\pi)$ and let $j_P:V_\pi\to V_P$ be the
quotient map. For $K$ with Iwahori factorization relative to $(P,P^-)$,
put $K_M=K\cap M$. The generalized Jacquet lemma gives
\[
 j_P(V_\pi^K)=V_P^{K_M};
\]
see \cite[(3.7)]{delconstantterm}. Since $a\in A_M$ is central in $M$,
$\rho(a)$ maps $V_P^{K_M}$ bijectively onto itself. Thus the formula in
the statement, for one sufficiently contracting $a$, determines a linear
form on $V_P^{K_M}$ uniquely. Choose $K'\subseteq K$ as in \Cref{lemma:invariance}. Then
$e_{K'}\ell\in\pi^\vee$. Let $j_{P^-}$ be the quotient map from
$\pi^\vee$ to its normalized Jacquet module along $P^-$, and define
\[
 \lambda_K(w)=\big\langle j_{P^-}(e_{K'}\ell),w\big\rangle_P,
 \qquad w\in V_P^{K_M},
\]
using the canonical Jacquet pairing $\big\langle \ ,\ \big\rangle_P$ (\cite[Section 2.2 p.936]{delconstantterm}). The asymptotic formula for smooth matrix coefficients  proved in 
\cite[(2.4) and Lemma~2.1]{delconstantterm}, together with
\Cref{lemma:invariance}, gives the existence of $c_K>0$ such that 
\[
 \ell(\pi(a)v)=\delta_P(a)^{1/2}
 \lambda_K\bigl(\rho(a)j_P(v)\bigr),
 \qquad v\in V_\pi^K,\quad a\in A_M^-(c_K).
\]
If $K_1\subseteq K$ also has Iwahori factorization, uniqueness gives
$\lambda_{K_1}|_{V_P^{K_M}}=\lambda_K$. These spaces exhaust $V_P$ as
$K$ shrinks, so the compatible linear forms above define $\lambda=r_{P^-}(\ell)$ on $V_P$.
Shrinking an arbitrary compact open subgroup to one with Iwahori
factorization proves the stated formula for every $K$. Finally, let $h\in H\cap M$. Since $ha=ah$ for any $a\in A_M$, $\delta_P(h)=1$ and
$\pi(h)V_\pi^K=V_\pi^{hKh^{-1}}$, the form
$\eta(h)^{-1}\lambda\circ\rho(h)$ satisfies the same formula as
$\lambda$. Uniqueness gives $\lambda\circ\rho(h)=\eta(h)\lambda$.
\end{proof}

Let $\{P_i,\ i\in\mathcal I\}$ be a set of representatives of the $H$-conjugacy classes of minimal $\theta$-split parabolic subgroups of $G$, it is finite by \cite{HW}. We put $M_i=P_i\cap \theta(P_i)$, denote by $A_i$ the maximal $\theta$-split subtorus of $A_{M_i}$, and set $A_i^-:=A_i\cap A_{M_i}^-$, where $A_{M_i}^-$ is defined with respect to $P_i$. By \cite{Polardecomp1} (see also \cite{Polardecomp2} when the residual characteristic of $F$ is not $2$), the group $G$ has the polar decomposition
\begin{equation}\label{eq analytic polar}
 G=\bigcup_{i\in\mathcal I}HA_i^-\Omega_G.
\end{equation}
Precisely \cite{Polardecomp1} give the polar decomposition for $G^\theta$, but it holds for $H$ since this latter is of finite index inside $G^\theta$.
Fix further an algebraic norm $\|\ \|$ on $G$ and write
\[
 \|Hg\|_{H\backslash G}:=\|\theta(g)^{-1}g\|.
\]
For a $\theta$-stable Levi subgroup $M$, we similarly use
\[
 \|(H\cap M)m\|_{(H\cap M)\backslash M}
 :=\|\theta(m)^{-1}m\|.
\]

\begin{proposition}\label{prop:lagierestimate}
Let $\pi$ be a smooth representation of finite length of $G$, and let
$\ell\in\Hom_H(\pi,\eta)$. There exists $r\geq0$ such that
\[
 |\ell(\pi(ak)v)|\preccurlyeq_v\|Ha\|_{H\backslash G}^{r},
 \qquad a\in A_i^-,\quad k\in\Omega_G,\quad i\in\mathcal I.
\]
\end{proposition}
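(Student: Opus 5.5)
We argue by induction on the semisimple $\theta$-split rank, following Lagier's strategy and using \Cref{prop:constantterm} as the main tool.

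\textbf{Reduction to one vector and one index.} Since $\mathcal I$ is finite and $\Omega_G$ is compact, we may fix $i$. We may also fix a compact open subgroup $K$ such that $v$ and all the vectors $\pi(k)v$, $k\in\Omega_G$, lie in a fixed finite-dimensional space $V_\pi^K$. It then suffices to prove
\[
|\ell(\pi(a)w)|\preccurlyeq_w \|Ha\|_{H\backslash G}^r
\qquad\text{for } a\in A_i^-,\ w\in V_\pi^K .
\]
For $a\in A_i$ we have $\theta(a)=a^{-1}$, so $\|Ha\|_{H\backslash G}=\|a^2\|$. This quantity is equivalent to $\max_\alpha |\alpha(a)|_F^{-1}$, the maximum over $\alpha\in R(A_i,P_i)$. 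Hence the target is a polynomial bound in the quantities $|\alpha(a)|_F^{-1}$.

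\textbf{Decomposition of the negative chamber.} Fix the constant $c$ of \Cref{prop:constantterm}, chosen uniformly for $K$ and for the finitely many $\theta$-split parabolics $P\supseteq P_i$ standard with respect to the simple roots of $A_i$ in $P_i$. We cover $A_i^-$ by finitely many pieces indexed by subsets $S$ of these simple roots. The piece attached to $S$ consists of those $a$ with $|\alpha(a)|_F\le c$ for $\alpha\notin S$ and $c<|\alpha(a)|_F\le1$ for $\alpha\in S$. On each piece we write $a=a_1a_2$, where $a_2$ is deep in the negative chamber of the split component of the $\theta$-split parabolic $P_S=M_SV_S$ attached to $S$, and $a_1\in A_i\cap M_S$. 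Up to enlarging $c$ by a bounded factor, $a_1$ ranges modulo a compact set over the negative chamber of $A_i$ in $M_S$. For $S$ equal to all simple roots, $a$ lies in a compact set modulo $A_G\cap A_i$. There $\ell$ transforms under the central part through $\eta$, whose absolute value is a unitary-type character of polynomial growth. This handles that piece directly.

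\textbf{Inductive step.} For $S$ proper, \Cref{prop:constantterm} applied to $P_S$ gives
\[
\ell(\pi(a)w)=\delta_{P_S}(a_2a_1)^{1/2}\,r_{P_S^-}(\ell)\bigl(r_{M_S,G}(\pi)(a_1a_2)\bar w\bigr).
\]
This requires the contraction condition on $a$ relative to $P_S$, which holds on this piece after passing from $c$ to a slightly smaller constant. Here $r_{P_S^-}(\ell)$ lies in $\Hom_{H\cap M_S}(r_{M_S,G}(\pi),\eta)$, and $r_{M_S,G}(\pi)$ has finite length by \cite{V}. The pair $(M_S,H\cap M_S)$ is a symmetric pair of the type considered here by \Cref{lm induced pair}, with strictly smaller $\theta$-split rank modulo centre. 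Moreover $\bar w$ ranges in the finite-dimensional space of $K_M$-fixed vectors.

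Two factors remain to control. The first is the action of the central element $a_2\in A_{M_S}$ on $r_{M_S,G}(\pi)^{K_M}$. This space is finite-dimensional and $A_{M_S}$ acts with finitely many generalized eigencharacters, so the action grows at most like a character times a polynomial in $\|a_2\|$. The second is $\delta_{P_S}^{1/2}(a)$, which is bounded by a power of $\max|\alpha(a)|^{-1}$. The induction hypothesis in $M_S$ bounds the remaining coefficient polynomially in $\|a_1^2\|$. Multiplying these bounds gives the required bound $\|Ha\|^{r}$.

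\textbf{Main obstacle.} The delicate point is the choice of the constant $c$ uniformly across the induction. Lower-rank pieces invoke \Cref{prop:constantterm} again inside $M_S$, with the compact open subgroup $K_M$ and with the induced partition. One must check that finitely many nested applications suffice and that the exponents of the generalized eigencharacters on Jacquet modules stay finite. Both follow from finite length and finiteness of the set of standard $\theta$-split parabolics. I would first fix all the $K$'s and $c$'s by a downward recursion on $S$, and only then run the estimate. The twist by $\eta$ causes no additional trouble, since $\eta$ enters only through the equivariance proved in \Cref{prop:constantterm}.
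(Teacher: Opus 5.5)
\textbf{There is a genuine gap in your inductive step.} The constant-term identity you display there does not follow from \Cref{prop:constantterm}. That proposition, applied to $P_S$, only gives the formula for $a\in A_{M_S}^-(c)$, i.e.\ for elements of the split centre of $M_S$. Your $a=a_1a_2$ contains the factor $a_1\in A_i\cap M_S$, which is not central in $M_S$. The proof of the proposition really uses centrality: \Cref{lemma:invariance} for $P_S$ needs $a$ to commute with $K\cap M_S$, and Delorme's asymptotics are for $a\in A_{M_S}^-$. A contraction condition on $V_S$ alone is therefore not enough.

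Suppose instead you apply the proposition to $a_2$ and the vector $\pi(a_1)w$. That vector has level $a_1Ka_1^{-1}$, and the constant $c$ depends on the level. If $a_1$ runs over the negative chamber of $A_i$ in $M_S$, as you assert, no uniform $c$ exists. With your literal pieces ($c<|\alpha(a)|_F\le 1$ for $\alpha\in S$), $a_1$ runs over a compact set depending on $c$, so the constant you need depends on $c$ itself. This circularity is exactly what your last paragraph leaves open, and a downward recursion on $S$ does not remove it. Your description is also inconsistent: on your pieces $a_1$ is bounded, so the induction hypothesis would never actually be used.

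There are two further slips:
\begin{enumerate}
\item $A_G\cap A_i$ is $\theta$-split, so it meets $H$ only in a finite group, and $\ell$ does not transform under it through $\eta$. You would need instead the local finiteness of the $A_G$-action on the finite-length $\pi$.
\item $\|a^2\|$ is not equivalent to $\max_\alpha|\alpha(a)|_F^{-1}$ when $A_G\cap A_i$ is noncompact.
\end{enumerate}

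\textbf{The missing idea, which makes the whole induction unnecessary,} is to apply \Cref{lemma:invariance} directly to the minimal $\theta$-split parabolic $P_i$. Since $A_i^-\subseteq A_{M_i}^-$, it gives $K'\subseteq K$ with $\ell(\pi(a)v)=(e_{K'}\ell)(\pi(a)v)$ for all $a\in A_i^-$, and $e_{K'}\ell$ is a smooth vector of $\pi^\vee$. So on the whole chamber, $\ell(\pi(a)v)$ is an ordinary matrix coefficient of the finite-length representation $\pi$. By \cite[Corollary~I.4.4]{waldplancheral} it is bounded by $\|a\|^N$, with $N$ depending only on $\pi$. Since $\theta(a)^{-1}a=a^2$ on $A_i$, comparing norms on split tori gives $\|a\|\preccurlyeq\|a^2\|^{d_i}=\|Ha\|_{H\backslash G}^{d_i}$. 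Finiteness of $\{\pi(k)v:k\in\Omega_G\}$ and of $\mathcal I$ then gives the required uniformity. This is the paper's argument. It is also the natural way to justify your formula for non-central $a_1$: pass to the smooth vector $e_{K'}\ell$ first. Once you have done that, the constant-term induction is redundant.
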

\begin{proof}
By \cite[Corollary~I.4.4]{waldplancheral}, there exists $N\geq0$,
depending only on $\pi$, such that for every $v\in V_\pi$ and
$v^\vee\in\pi^\vee$,
\[
 \big|v^\vee(\pi(g)v)\big|
 \preccurlyeq_{v,v^\vee} \|g\|^N,\qquad g\in G.
\]
Fix $i\in\mathcal I$, $v\in  V_\pi$, and choose a compact open subgroup $K$ fixing $v$. Since $A_i^-\subseteq A_{M_i}^-$, \Cref{lemma:invariance}
provides $K_i'\subseteq K$ such that
\[
 \ell(\pi(a)v)=(e_{K_i'}\ell)(\pi(a)v),\qquad a\in A_i^-.
\]
The form $e_{K_i'}\ell$ belongs to $\pi^\vee$, so the preceding bound
applies with the same exponent $N$, independently of $K_i'$ and $v$.

For $a\in A_i$, we have $\theta(a)^{-1}a=a^2$. On a split torus,
the coordinate norm $\max_j\max(|t_j|_F,|t_j|_F^{-1})$ is squared by
$a\mapsto a^2$. Comparison of algebraic norms therefore gives
\[
 \|a\|\preccurlyeq\|a^2\|^{d_i}
 =\|Ha\|_{H\backslash G}^{d_i}
\]
for some $d_i>0$. Hence
$|\ell(\pi(a)v)|\preccurlyeq_v\|Ha\|_{H\backslash G}^{Nd_i}$.
There are finitely many $i$, so we may take $r=N\max_i d_i$.
Finally, smoothness and compactness imply that
$\{\pi(k)v:k\in\Omega_G\}$ is finite. Applying the same bound to
these vectors gives the required uniformity in $k$.
\end{proof}

We shall also use the following consequence of \Cref{prop:lagierestimate}:
there exists $N\geq0$, independent of $v$, such that
\begin{equation}\label{eq global relative growth}
 |\ell(\pi(g)v)|\preccurlyeq_v\|g\|^N,
 \qquad g\in G.
\end{equation}
Indeed, write $g=ha\omega$ in \eqref{eq analytic polar}. The identity
\[
 \theta(g)^{-1}g=\theta(\omega)^{-1}a^2\omega
\]
and compactness of $\Omega_G$ give $\|a^2\|\preccurlyeq\|g\|^b$
for some $b\geq0$. The comparison on split tori used above, together
with $h=g\omega^{-1}a^{-1}$ and regularity of multiplication and
inversion, then gives
\[
 \|a\|+\|h\|\preccurlyeq\|g\|^{b'}
\]
for some $b'\geq0$. Moreover, $|\eta(h)|\preccurlyeq\|h\|^e$ for
some $e\geq0$: on $(\mathbb G^\theta)^\circ(F)$, the absolute value
of a smooth character is a product of real powers of absolute values
of $F$-rational algebraic characters, and this subgroup has finite
index in $H$. Equivariance and \Cref{prop:lagierestimate} now give
\[
 |\ell(\pi(g)v)|
 =|\eta(h)|\,|\ell(\pi(a\omega)v)|
 \preccurlyeq_v\|h\|^e\|a^2\|^r
 \preccurlyeq_v\|g\|^{b'e+br},
\]
which proves \eqref{eq global relative growth}.

\subsection{Convergence in the $\theta$-split case}

Let $P=MV$ be a $\theta$-split parabolic subgroup of $G$, with $M=P\cap\theta(P)$.
Fix an algebraic norm $\|\ \|_M$ on $M$.
Choose a maximal compact subgroup $K$ adapted
to a minimal parabolic contained in $P$, so that $G=PK$.
For $m\in M$, define $H_M(m)\in\fa_M$ by
\[
 \langle\chi,H_M(m)\rangle=\log|\chi(m)|_F,
 \qquad\chi\in X^*(\mathbb M)_F,
\]
and extend it by $H_M(vmk)=H_M(m)$, for $v\in V$ and $k\in K$.
For $\lambda\in\fa_{M,\C}^*$, put
\[
 \chi_\lambda(m)=e^{\langle\lambda,H_M(m)\rangle},\qquad
 f_\lambda(g)=e^{\langle\lambda,H_M(g)\rangle}f(g).
\]
Thus $f_\lambda\in I_P^G(\chi_\lambda\otimes\sigma)$ is the flat section
with $f_\lambda|_K=f|_K$. Let $\rho_P\in\fa_M^*$ be the half-sum, with
multiplicities, of the roots of $A_M$ in $\operatorname{Lie}(V)$; then
$\delta_P(m)=e^{2\langle\rho_P,H_M(m)\rangle}$.

 For any involution $\tau$ preserving $M$ and $c>0$, set
\begin{equation}\label{eq analytic cone}
 \begin{aligned}
 \mathcal D_{M,\C}^{-\tau}(c):=\bigl\{\lambda \in(\fa_{M,\C}^*)^{-\tau}:\;&
 \Re \langle\lambda,\alpha^\vee\rangle>c\\[-2pt]
 &\text{for all }\alpha\in R(A_M,P)\text{ with }\tau(\alpha)<0\bigr\}.
 \end{aligned}
\end{equation}
The parabolic $P$ is understood. In particular when $P$ is $\theta$-split we have 
\[\mathcal D_{M,\C}^{-\theta}(c)= \bigl\{\lambda \in(\fa_{M,\C}^*)^{-\theta}:\; 
\Re \langle\lambda,\alpha^\vee\rangle>c\ 
\text{for all }\alpha\in R(A_M,P)\}.\] The following result generalizes \cite[Theorem 2.16 and Remark 2.17]{BD} and \cite[Lemma 3.2]{MOYjfa}.

\begin{theorem}\label{thm:lipopencase}
\begin{enumerate}
\item Let $\sigma$ be a smooth representation of finite length of $M$,
and let $\ell\in\Hom_{H\cap M}(\sigma,\eta)$. There exists $c>0$ such that
\[
 \int_{(H\cap M)\backslash H}\eta(h)^{-1}\ell(f_\lambda(h))\,dh
\]
converges absolutely for every $f\in I_P^G(\sigma)$ whenever
$\lambda\in\mathcal D_{M,\C}^{-\theta}(c)$.
\item More generally, let $\sigma$ be any smooth representation of $M$.
Suppose that $\zeta:V_\sigma\to\R_{\geq0}$ satisfies
\[
 \zeta(zv)=|z|\zeta(v),\qquad
 \zeta(\sigma(h)v)=|\eta(h)|\zeta(v)
 \quad(z\in\C,\ h\in H\cap M),
\]
and that, for some $r\geq0$ independent of $v$,
\[
 \zeta(\sigma(m)v)\preccurlyeq_v\|m\|_M^r,
 \qquad m\in M.
\]
Then, there exists $c>0$, such that the integral
\[
 \int_{(H\cap M)\backslash H}|\eta(h)|^{-1}
                                \zeta(f_\lambda(h))\,dh
\]
is finite for every $f$ whenever
$\lambda\in\mathcal D_{M,\C}^{-\theta}(c)$.
\end{enumerate}
\end{theorem}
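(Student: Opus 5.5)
The plan is to deduce (1) from (2), and to prove (2) by writing the integrand in Iwasawa coordinates on the open orbit $PH$. The integral is then controlled by a power of a single exponential function $e^{\langle\mu,H_M(h)\rangle}$, which is integrable on $(H\cap M)\backslash H$ once $\mu$ is far enough inside the negative chamber.

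\emph{Reduction of (1) to (2).} Put $\zeta(v):=|\ell(v)|$. Homogeneity is clear, and $\zeta(\sigma(h)v)=|\eta(h)|\zeta(v)$ for $h\in H\cap M$ by equivariance. By Fact~\ref{fact pi0} and Lemma~\ref{lm induced pair}, $(M,H\cap M,\theta|_M)$ is a symmetric pair. So \eqref{eq global relative growth}, applied to this pair and the finite-length representation $\sigma$, gives $\zeta(\sigma(m)v)\preccurlyeq_v\|m\|_M^N$ with $N$ independent of $v$. Thus (2) applies, and absolute convergence in (1) follows, since $|\eta(h)^{-1}\ell(f_\lambda(h))|=|\eta(h)|^{-1}\zeta(f_\lambda(h))$.

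\emph{Proof of (2): bounding the integrand.} Write $h=v(h)m(h)k(h)$ with respect to $G=PK$. Since $M$ is $\theta$-stable and $\lambda\in(\fa_{M,\C}^*)^{-\theta}$, the exponent $\langle\lambda,H_M(\cdot)\rangle$ vanishes on $H\cap M$. The integrand $|\eta(h)|^{-1}\zeta(f_\lambda(h))$ is left $(H\cap M)$-invariant by the hypotheses on $\zeta$ and because $\delta_P|_{H\cap M}=1$. Only finitely many vectors $f(k)$, $k\in K$, occur by smoothness. Hence
\[
|\eta(h)|^{-1}\zeta(f_\lambda(h))\preccurlyeq_f |\eta(h)|^{-1}\,e^{\langle\Re\lambda+\rho_P,H_M(h)\rangle}\,\|m(h)\|_M^{r}.
\]
Both $\|m(h)\|_M$ and $|\eta(h)|^{-1}$ grow at most polynomially in $\|h\|$: the first by regularity of the Iwasawa projection up to compact error, the second as in the argument after \eqref{eq global relative growth}. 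By left invariance we may replace $h$ by a representative of its class in $(H\cap M)\backslash H$. We then need a representative, or a norm on $(H\cap M)\backslash H$, for which these polynomial bounds still hold. A natural candidate is $\|\theta(h)^{-1}h\|$, since $\zeta$ only sees $(H\cap M)$-cosets.

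\emph{Main step.} Everything then reduces to two estimates for a fixed $\nu\in(\fa_M^*)^{-\theta}$ that is strictly $P$-dominant. The first is an \emph{open cell estimate}: there exist $a>0$ and $b>0$ with
\[
\|(H\cap M)h\|^a\preccurlyeq e^{-\langle\nu,H_M(h)\rangle},\qquad h\in H,
\]
so that the polynomial factors are absorbed by shifting $\Re\lambda$ by a multiple of $\nu$. The second is the integrability
\[
\int_{(H\cap M)\backslash H}e^{\langle\mu+\rho_P,H_M(h)\rangle}\,dh<\infty
\]
for $\mu\in(\fa_M^*)^{-\theta}$ with $\langle\mu,\alpha^\vee\rangle>b$ for all $\alpha\in R(A_M,P)$. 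Since $(\fa_M^*)^{-\theta}$ is spanned by the relevant coroot conditions, the set of such shifts is contained in $\mathcal D_{M,\C}^{-\theta}(c)$ for $c$ large, and this gives the result.

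For the first estimate I would use a Brylinski--Delorme type function. Realize $\nu$, after scaling, as the highest weight of an algebraic representation with a $P$-fixed line, and consider the matrix coefficient $h\mapsto \Delta(\theta(h)^{-1}h)$, a determinant-type regular function on $X$. On $PH$ its absolute value equals $e^{-2\langle\nu,H_M(h)\rangle}$ up to bounded factors, and it is nonvanishing exactly on the open orbit. The polynomial bound then comes from comparing this regular function with the norm; this is where positive odd characteristic must be handled algebraically. For the integrability, I would transport to $P\backslash G$ via the open embedding $(H\cap M)\backslash H\hookrightarrow P\backslash G$. There $e^{\langle 2\rho_P,H_M\rangle}dh$ becomes the $G$-invariant measure class on the compact space $P\backslash G$, and the remaining factor $e^{\langle\mu-\rho_P,H_M\rangle}$ is bounded because $\mu-\rho_P$ is $P$-dominant on the negative side.

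I expect the open cell estimate to be the main obstacle. It needs a careful choice of $\Delta$ compatible with $H$ (not only $G^\theta$) and with the disconnectedness of $\bG^\theta$. I would first try the determinant of the restriction of $\operatorname{Ad}(\theta(h)^{-1}h)$ to $\operatorname{Lie}(V)$, composed with projection to $\operatorname{Lie}(V)$ along $\operatorname{Lie}(P^-)$.
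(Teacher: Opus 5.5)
Your reduction of (1) to (2) is the same as the paper's. Your overall plan is also the Blanc--Delorme route the paper follows: transfer to $P\backslash G$ through $(H\cap M)\backslash H\simeq P\backslash PH$, where boundedness of the transported integrand would suffice, and control the Levi part near $\partial(PH)$ by a determinant-type function. But the main step fails as written.

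The factors $\|m(h)\|_M^r$ and $|\eta(h)|^{-1}$ in your pointwise bound are not left $(H\cap M)$-invariant: $m(m_0h)=m_0m(h)$, and $H\cap M$ is noncompact. So you need an invariant of the coset that controls them on well-chosen representatives and is itself exponentially bounded. Your candidate $\|\theta(h)^{-1}h\|$ is identically $\|1\|$, because $H\subseteq G^\theta$. For the same reason $h\mapsto\Delta(\theta(h)^{-1}h)$ is constant on $H$. Your ``open cell estimate'' is therefore vacuous, and nothing absorbs the polynomial factors. (The sign is also reversed: a regular function that is bounded on compacta and vanishes off $PH$ must decay at the boundary, not grow.) Polynomial growth in $\|h\|$ does not help either, since $\|h\|$ is unbounded on each coset.

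The paper supplies four ingredients you are missing.

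\emph{The right invariant.} Write $k=vmh$ with $k$ in a compact set and use
$y(k)=k\theta(k)^{-1}=v\,m\theta(m)^{-1}\theta(v)^{-1}$.
This is right $H$-invariant, and its Levi part $m\theta(m)^{-1}$ depends only on the coset $m(H\cap M)$. The Levi coordinates lie in $\mathcal O(\mathbb G)[D_P^{-1}]$ and $|D_P(y(k))|_F=e^{-4\langle\rho_P,H_M(m)\rangle}$. Together these give $\|m\theta(m)^{-1}\|_M\preccurlyeq e^{4N\langle\rho_P,H_M(m)\rangle}$.

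\emph{Good representatives.} The polar decomposition of $(M,H\cap M,\theta|_M)$ selects $m$ in its coset with $\max(\|m\|_M,\|m^{-1}\|_M)\preccurlyeq\|m\theta(m)^{-1}\|_M^s$. This is the choice of representative you were looking for.

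\emph{Control of the character.} The bound $|\eta(h)|^{\pm1}\preccurlyeq\|m\|_M^d$ on compacta requires bounding $\|v\|_V$ by a power of $\|\theta(m)m^{-1}\|_M$. The paper gets this from the closed immersion $(w,m,v)\mapsto(wmv,m)$ of $\theta(\mathbb V)\times\mathbb M\times\mathbb V$ (Kostant--Rosenlicht).

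\emph{The boundedness claim.} You assert that $e^{\langle\mu-\rho_P,H_M\rangle}$ is bounded on $P\backslash PH$ ``because $\mu-\rho_P$ is dominant''. This is true, but it is a genuine lemma. A function attached to a single fixed $\nu$ cannot give it, since $\mu-\rho_P$ ranges over an open cone. The paper uses the determinant functions $D_{Q_k}$ of all maximal parabolics $Q_k\supseteq P$. These are bounded on compacta and give lower bounds for every $\langle\kappa_k,H^-\rangle$. It then writes $\mu=\sum_kb_k\kappa_k+\chi_0$ with $b_k\geq0$ and $\chi_0$ coming from characters of $G$. Your highest-weight matrix coefficients could play the same role, provided they are evaluated at $y(k)$ and taken for weights spanning the dominant cone.
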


The proof of \Cref{thm:lipopencase} is an easy consequence of \Cref{prop:continuity} below, as we now explain. The proof of \Cref{prop:continuity} is more involved and postponed to \Cref{sec proof of cont}. 

For $v\in V_\sigma$ and $g=v_0mh\in PH$, with $v_0\in V$, define
\begin{equation}\label{eq analytic epsilon vector}
 \epsilon_{\lambda,v}(g):=
 |\eta(h)|^{-1}e^{-\langle\Re\lambda,H_M(m)\rangle}
       \delta_P(m)^{1/2}\zeta(\sigma(m^{-1})v),
\end{equation}
and set $\epsilon_{\lambda,v}(g)=0$ outside $PH$.
This expression is independent
of the decomposition: two choices differ by an element of
$P\cap H=M\cap H$, on which $\delta_P$ and $\chi_\lambda$ are trivial,
and the remaining factors cancel by the equivariance of $\zeta$.

\begin{proposition}\label{prop:continuity}
There exists $c>0$ such that $\epsilon_{\lambda,v}$ is continuous on $G$
for every $v\in V_\sigma$ whenever
$\lambda\in\mathcal D_{M,\C}^{-\theta}(c)$.
\end{proposition}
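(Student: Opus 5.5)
The plan is to prove continuity separately on $PH$ and on its complement. Near a point of $\partial(PH)$ we will dominate $\epsilon_{\lambda,v}$ by a positive power of $|\Delta|$, where $\Delta$ is a regular "determinant" function on $G$ vanishing on the complement of $PH$; this is the approach announced in the introduction.

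\emph{Step 1: the open orbit.} On $PH$, which is open (the orbit is $\theta$-split), the map $V\times M\times H\to PH$, $(v_0,m,h)\mapsto v_0mh$, is a submersion. It therefore admits local continuous sections, and $\epsilon_{\lambda,v}$ is well defined by the independence remark following \eqref{eq analytic epsilon vector}. Each factor is continuous along such a section. For $\zeta(\sigma(m^{-1})v)$ this holds because $v$ is fixed by an open subgroup of $M$ and $\zeta$ is $|\eta|$-equivariant under $H\cap M$. So $\epsilon_{\lambda,v}$ is continuous on $PH$. On the interior of $G\smallsetminus PH$ it vanishes identically. Hence it suffices to show that $\epsilon_{\lambda,v}(g)\to0$ as $g\in PH$ tends to a point $g_0\in\partial(PH)$.

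\emph{Step 2: the determinant function.} I would choose an $F$-rational character $\chi$ of $\bM$ whose image in $\fa_M^*$ lies in $(\fa_M^*)^{-\theta}$ and is strictly $P$-dominant. I would realise a power of it as the highest weight of an algebraic representation $(\rho,E)$ of $\bG$, with a $P$-eigenline $Fe$ and a $\theta(P)$-eigen-covector $e^*$. Then I set
\[
\Delta(g):=\langle e^*,\rho(\theta(g)^{-1}g)e\rangle .
\]
This is a regular function on $G$. On $g=v_0mh$ one computes $|\Delta(g)|=|\chi(m)|_F^{\pm2}$ with a fixed sign, i.e.\ $|\Delta(g)|=e^{\mp2\langle\chi,H_M(m)\rangle}$, using $\theta(h)=h$, $\theta(V)=V^-$ and the anti-invariance of $\chi$. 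In particular $\Delta$ does not vanish on $PH$. Conversely, $\Delta$ vanishes on $\partial(PH)$, because $\theta(g)^{-1}g$ then leaves the big cell $\theta(P)P$ (Bruhat), where $\langle e^*,\rho(\cdot)e\rangle$ is nonzero. Replacing $\chi$ by a product of such characters if needed, I expect to get one $\Delta$ with $PH=\{\Delta\neq0\}$.

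\emph{Step 3: polynomial bounds on the factors.} The open set $PH=\{\Delta\ne0\}$ is the $F$-points of an affine variety with coordinate ring $F[G][\Delta^{-1}]$. By the norm formalism recalled from \cite{Kottwitz05}, its norm is equivalent to $\max(\|g\|,|\Delta(g)|^{-1})$. The map $g\mapsto \theta(m)^{-1}m$ (respectively $g\mapsto \theta(h)^{-1}h$-type invariants) is a morphism out of this variety. Together with the local sections and the argument following \eqref{eq global relative growth}, I expect to find $N>0$ and representatives with
\[
\|m\|_M+\|h\|\preccurlyeq_{\Omega}|\Delta(g)|^{-N}
\]
for $g\in\Omega\cap PH$, $\Omega$ a compact neighbourhood of $g_0$. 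Hence $\zeta(\sigma(m^{-1})v)\preccurlyeq_v|\Delta(g)|^{-Nr}$. Also $|\eta(h)|^{-1}\preccurlyeq|\Delta(g)|^{-Ne}$, using the algebraic bound on $|\eta|$ from \Cref{sec estimate}, and $\delta_P(m)^{1/2}\preccurlyeq|\Delta(g)|^{-N'}$.

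\emph{Step 4: absorbing the growth.} It remains to compare $e^{-\langle\Re\lambda,H_M(m)\rangle}$ with $|\Delta(g)|$. For $\lambda$ in $\mathcal D_{M,\C}^{-\theta}(c)$ with $c$ large, I would write $\Re\lambda=t\chi+\mu$ with $t\gg0$ and $\mu$ still dominant. This decomposition is not a priori available. I would instead use several $\Delta_\chi$, one for each fundamental anti-invariant dominant weight, and bound by the product of their powers. The aim is
\[
e^{-\langle\Re\lambda,H_M(m)\rangle}\preccurlyeq\prod_\chi|\Delta_\chi(g)|^{c\,\kappa}
\]
near $g_0$, together with a bound from the remaining terms using that the $H_M(m)$ stay in a cone near boundary points. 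Taking $c$ larger than the fixed exponents $N(r+e)+N'$ then gives $\epsilon_{\lambda,v}(g)\preccurlyeq|\Delta(g)|^{\kappa'}\to0$. I expect this comparison to be the main obstacle: converting the open-cone condition on $\lambda$ into a single power of a regular function vanishing exactly on $\partial(PH)$, uniformly near every boundary point. I would first try the product of the fundamental determinant functions and check that their common zero set is exactly the boundary.
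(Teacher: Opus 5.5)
Your architecture matches the paper's: a determinant-type regular function of the symmetrized element, polar representatives for $m$, the polynomial control of $|\eta|$ from \Cref{prop:controlHpart}, and absorption of all growth by taking $c$ large. But the decisive step is the one you leave open in Step~4, and you misdiagnose it. The decomposition you call ``not a priori available'' is in fact available. The total exponent $K$, with $|\eta(h)|^{-1}\delta_P(m)^{1/2}\zeta(\sigma(m^{-1})v)\preccurlyeq|\Delta(g)|^{-K}$ near $g_0$, is fixed before $c$ is chosen. So for $c$ large every $\lambda\in\mathcal D_{M,\C}^{-\theta}(c)$ satisfies $\Re\lambda=t\chi+\mu$ with a fixed $t>2K$ and $\mu\in(\fa_M^*)^{-\theta}$ still $P$-dominant. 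Hence $e^{-\langle\Re\lambda,H_M(m)\rangle}=|\Delta(g)|^{t/2}e^{-\langle\mu,H_M(m)\rangle}$. What is genuinely missing is a uniform bound $\langle\mu,H_M(m)\rangle\geq-C_\Omega$ for $g=v_0mh\in\Omega\cap PH$, with $\Omega$ compact and $\mu$ dominant anti-invariant. This is your ``cone'', which you invoke but never prove. The paper proves the equivalent statement in the second half of the proof of \Cref{lm open cell estimates}, using the maximal parabolics $Q_k\supseteq P$:
\begin{itemize}
\item each $g\mapsto D_{Q_k}(g\theta(g)^{-1})$ is regular, hence bounded on $\Omega$;
\item its absolute value is $e^{-2\langle\kappa_k,H^-\rangle}$ with $H^-=\frac12(H_M(m)-\theta H_M(m))$, so $\langle\kappa_k,H^-\rangle\geq C$;
\item writing $\mu=\sum_kb_k\kappa_k+\chi_0$ with $b_k\geq0$ and $\chi_0$ coming from $F$-rational characters of $\bG$, whose contribution is bounded on compacts, gives the bound;
\item combined with $D_P(g_n\theta(g_n)^{-1})\to0$, it yields $e^{-\langle\mu,H_M(m_n)\rangle}\to0$ for strictly dominant $\mu$.
\end{itemize}
So you do not need to show that some product of fundamental determinants has zero set exactly $\partial(PH)$. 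Boundedness of each on compacts, plus vanishing of one at boundary points, is what the argument uses.

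Step~2 also needs repair. For $g=v_0mh$ one has $\theta(g)^{-1}g=h^{-1}\theta(m)^{-1}\theta(v_0)^{-1}v_0mh$, so $h$ conjugates instead of cancelling. Thus your $\Delta$ is not a function of $m$, and $|\Delta(g)|=|\chi(m)|_F^{\pm2}$ fails; likewise $\theta(m)^{-1}m$ in Step~3 is not well defined. You should use $y(g)=g\theta(g)^{-1}=v_0\,m\theta(m)^{-1}\,\theta(v_0)^{-1}\in VMV^-$, with $e$ a $\theta(P)$-eigenvector and $e^*$ a $P$-eigencovector. This is the paper's $D_P\circ y$, and then $|\Delta(g)|=e^{-2\langle\chi,H_M(m)\rangle}$.

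Moreover $\{\Delta\neq0\}$ depends only on $y(g)$, so it is right $G^\theta$-invariant and in general strictly contains $PH$. For example, for $\mathrm{GL}_2(F)\supseteq\mathrm{O}_2(F)$ and the upper triangular Borel, it contains several open orbits. So the identification $PH=\{\Delta\neq0\}$ is false. Your norm bounds survive on the larger set, but the vanishing of $\Delta$ on $\partial(PH)$ then does not follow from Bruhat alone. You need two facts, as in the paper: $P\cdot1$ is closed in $X\cap P\theta(P)$ (the tangent-space argument of \Cref{lm root separator}), and $PH$ is open and closed in $PG^\theta$. Alternatively, use that polar representatives satisfy $\|m\|_M+\|m^{-1}\|_M\preccurlyeq\|m\theta(m)^{-1}\|_M^s$. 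Then convergence of $y(g_n)$ inside the big cell forces $v_n$, $m_n$, and hence $h_n$, to converge along a subsequence, giving $g_0\in PH$.
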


We prove this proposition in Section \ref{sec proof of cont}. It implies the theorem by the argument of  \cite[(2.31)-(2.33), p.258]{BD}. 
\begin{proof}[Proof of \Cref{thm:lipopencase} assuming \Cref{prop:continuity}]
For~(ii), define
\begin{equation}\label{eq auxiliary function}
 \epsilon_{\lambda,f}(g):=\epsilon_{\lambda,f(g)}(g).
\end{equation}
With compatible quotient measures, we have as in \cite[(2.31)-(2.33), p.258]{BD}:
\[
 \int_{(H\cap M)\backslash H}|\eta(h)|^{-1}
                         \zeta(f_\lambda(h))\,dh
 =\int_{(K\cap P)\backslash K}\epsilon_{\lambda,f}(k)\,dk.
\] Since $f|_K$ is locally constant with finitely many values,
\Cref{prop:continuity} makes the right-hand integrand continuous.
The quotient is compact, proving~(2). For~(1), take $\zeta(v)=|\ell(v)|$ and apply
\eqref{eq global relative growth} to $(M,H\cap M,\theta|_M)$
to obtain the hypothesis in~(2).
\end{proof}

\subsection{Bounding the twisting character}\label{sec estimate}

We prove \Cref{prop:continuity} by obtaining appropriate asymptotics for the character factor $|\eta|$ in
\eqref{eq analytic epsilon vector}. We denote by $\mathbb H^\circ$ the neutral component of $\mathbb H$. 

\begin{proposition}\label{prop:controlHpart}
There exists $d\geq0$ such that, for every compact $C\subseteq G$,
\[
 \max\{|\eta(h)|,|\eta(h)|^{-1}\}
 \preccurlyeq_C\|m\|_M^d
\]
for every decomposition $g=vmh\in C$, with $v\in V$, $m\in M$
and $h\in H$. The exponent $d$ is independent of $C$.
\end{proposition}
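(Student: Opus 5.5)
We reduce to $h\in H^\circ:=(\bG^\theta)^\circ(F)$ and express $|\eta|$ through algebraic characters, following the strategy announced at the start of \Cref{sec analytic}. Since $H^\circ$ has finite index in $H$ (\Cref{fact pi0}), fix coset representatives $h_1,\dots,h_s$ and write $h=h^\circ h_j$. Then $g h_j^{-1}=v m h^\circ$ lies in the compact set $Ch_j^{-1}$, and $|\eta(h)|^{\pm1}$ differs from $|\eta(h^\circ)|^{\pm1}$ by a bounded factor. So it suffices to treat $h\in H^\circ$.

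\emph{Step 1: $|\eta|$ on $H^\circ$ via characters.} The group $|\eta|(H^\circ)$ is a homomorphism to $\R_{>0}$ trivial on unipotents and on the compact part. It therefore factors through $H^\circ/(H^\circ)^1$, which is a lattice. Hence there are $F$-rational characters $\chi_1,\dots,\chi_t$ of $\bH^\circ:=(\bG^\theta)^\circ$ and reals $s_i$ with
\[
|\eta(h)|=\prod_i|\chi_i(h)|_F^{s_i},\qquad h\in H^\circ .
\]
It then suffices to prove $|\chi(h)|_F^{\pm1}\preccurlyeq_C\|m\|_M^{d_\chi}$ for each such $\chi$. Replacing $\chi$ by a power, we may also assume $\chi$ extends suitably, as in Step 2.

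\emph{Step 2: realize $\chi$ as an eigencharacter.} Since $\bH^\circ$ is reductive, hence observable in $\bG$, Chevalley's theorem gives the following. There is a finite-dimensional algebraic $F$-representation $(\rho,E)$ of $\bG$ and a nonzero $e\in E(F)$ with $\rho(h)e=\chi(h)e$ for $h\in\bH^\circ$. Applying the same to $\chi^{-1}$ handles the other inequality. Now $\rho(h)=\rho(m^{-1}v^{-1}g)$, so
\[
\chi(h)e=\rho(m)^{-1}\rho(v)^{-1}\rho(g)e .
\]
Taking a norm on $E$, the left side has size $|\chi(h)|_F\|e\|$. On the right, $\rho(g)e$ ranges over a compact set. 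We need to control $\rho(v)^{-1}$ and $\rho(m)^{-1}$ in terms of $m$ alone. Here $v$ is not bounded a priori, and this is the main obstacle.

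\emph{Step 3: removing $v$.} I would decompose $E$ into weight spaces for $A_M$, ordered by the $P$-filtration. Then $\rho(v)^{-1}$ is unipotent and only raises weights with respect to $P$. Choose an $A_M$-weight $\mu$ that is extremal for the projection of $\rho(g)e$, namely a lowest weight among its nonzero components. The component of $\rho(v)^{-1}\rho(g)e$ in the $\mu$-isotypic part of the associated graded equals that of $\rho(g)e$. Applying $\rho(m)^{-1}$, which preserves the grading, and comparing $\mu$-components on both sides gives $|\chi(h)|_F\, e_\mu=\rho(m)^{-1}(\rho(g)e)_\mu$ in the graded piece. This bounds $|\chi(h)|_F$ by $\|\rho(m)^{-1}\|\,\|(\rho(g)e)_{\le\mu}\|$ modulo the lower filtration piece.

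The difficulty is that the extremal $\mu$ depends on $g$ and $e_\mu$ might vanish. To get around this, I would choose $\mu$ so that $e_\mu\neq0$, using the filtration piece through $e$ itself. For the inverse bound, which needs $(\rho(g)e)$ to have a nonzero $\mu$-part bounded below uniformly on $C$, I would instead use the rational function $g\mapsto$ ratio in the $\wedge^{\mathrm{top}}$ or determinant trick of a line $E^{\bH^\circ}_\chi$. That is, replace $E$ by the line-orbit and use the morphism $G\to\mathbb P(E)$: if a lower bound fails on $C$ I would fall back to the algebraic-norm comparison $\|\rho(m)^{\pm1}\|\preccurlyeq\|m\|_M^{d}$. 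Finally, $\|\rho(m)^{\pm1}\|\preccurlyeq\|m\|_M^{d}$ holds by the morphism property of algebraic norms recalled before \Cref{sec Lagier}. Since $\rho$, $e$ and the finitely many $\chi_i,s_i,h_j$ do not depend on $C$, the exponent $d$ is independent of $C$.
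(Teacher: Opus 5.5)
\emph{Step~3 has a gap.} Your reduction to $H^\circ=(\bG^\theta)^\circ(F)$ and Step~1 are fine. Step~2 is also sound, but it is best justified by observability of the reductive subgroup $\mathbb H^\circ$ rather than by Chevalley's line-stabilizer theorem. Observability says every rational $\mathbb H^\circ$-module embeds in a rational $\bG$-module, since $\bG/\mathbb H^\circ$ is affine. The $\chi$-eigenspace in $F[\bG]$ is cut out by $F$-linear conditions, so it has nonzero $F$-points.

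Step~3 carries the whole content of the proposition, and you leave it unresolved. Two of its assertions are also wrong.

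First, the lowest weight does not depend on $g$, and $e_\mu$ cannot vanish. Since $\rho(g)e=\chi(h)\rho(v)\rho(m)e$, where $\rho(m)$ preserves every $A_M$-weight space and $\rho(v)-1$ strictly raises weights (for the order given by the roots of $A_M$ in $\operatorname{Lie}(V)$), the minimal weights of $\rho(g)e$ are exactly those of $e$.

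Second, the inverse bound cannot come from a uniform lower bound on $(\rho(g)e)_\mu$ over $C\cap PH$, because no such bound exists. This component tends to $0$ as $g$ approaches $\partial(PH)$ inside $C$, which is exactly the regime in which the proposition is used in \Cref{prop:continuity}. For example, take $G=\GL_2(F)$, $\theta=\ad\bigl(\begin{smallmatrix}0&1\\1&0\end{smallmatrix}\bigr)$, $P$ upper triangular, $\chi\bigl(\begin{smallmatrix}a&b\\b&a\end{smallmatrix}\bigr)=a+b$ and $e=e_1+e_2$. Then $(\rho(g)e)_\mu=g_{21}+g_{22}$, while $PH=\{g:g_{21}^2\neq g_{22}^2\}$, so this component vanishes on part of the boundary. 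Your fallback $\|\rho(m)^{\pm1}\|\preccurlyeq\|m\|_M^d$ gives nothing without such a lower bound, and the ``determinant trick'' is not an argument.

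\emph{The missing observation and the repair.} The point you need is that $\rho(g)e$ stays in a $\rho(P)$-stable subspace determined by $e$ alone.
\begin{itemize}
\item Write $\nu\preceq\nu'$ if $\nu'-\nu$ is a nonnegative combination of roots of $A_M$ in $\operatorname{Lie}(V)$.
\item Let $S$ be the upward closure of the set of weights of $e$, choose $\mu$ minimal in $S$ (so $e_\mu\neq0$), and put $E_S=\bigoplus_{\nu\in S}E_\nu$.
\item Then $E_S$ and $E_{S\smallsetminus\{\mu\}}$ are $\rho(P)$-stable, $\rho(V)$ acts trivially on $E_S/E_{S\smallsetminus\{\mu\}}\simeq E_\mu$, and $\rho(M)$ preserves $E_\mu$.
\end{itemize}
Since $\rho(g)e=\chi(h)\rho(vm)e\in E_S$, projecting onto $E_\mu$ gives, for every $g=vmh\in C$,
\[
 \chi(h)\,e_\mu=\rho(m^{-1})(\rho(g)e)_\mu,\qquad\text{hence}\qquad |\chi(h)|_F\preccurlyeq_C\|\rho(m^{-1})\|\preccurlyeq\|m\|_M^{d_\chi}.
\]
Here $\mu$, $e_\mu$ and $d_\chi$ do not depend on $g$ or $C$. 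The bound on $|\chi(h)|_F^{-1}$ follows by running the same upper bound with an eigenvector of character $\chi^{-1}$, as you already proposed in Step~2. No lower bound is ever needed.

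\emph{Comparison with the paper.} With this repair your argument is a valid alternative to the paper's proof. The paper first bounds $|\eta(h)|^{\pm1}$ by a power of $\|h\|_H$, and then controls $h$ geometrically. The map $(w,m,v)\mapsto(wmv,m)$ is a closed immersion $\theta(\mathbb V)\times\bM\times\mathbb V\to\bG\times\bM$, by Kostant--Rosenlicht and $\mathbb V\cap\theta(\mathbb V)=\{1\}$. Applying it to $\theta(g)g^{-1}=\theta(v)\theta(m)m^{-1}v^{-1}$, which ranges over a compact set, bounds $\|v\|_V$ and then $\|h\|_H=\|m^{-1}v^{-1}g\|$ polynomially in $\|m\|_M$. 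That route uses $\theta$-splitness and controls $v$ and $h$ themselves. Yours bounds only $|\chi(h)|_F$ and never touches $v$, so it works for any parabolic $P=MV$, but it needs the observability input.
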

\begin{proof} 
	First we prove that  $\max\{|\eta(h)|,|\eta(h)|^{-1}\}\preccurlyeq \|h\|_H^d$ for some $d\geq 0$. Since $H^\circ$ has finite index inside $H$ and translation by
	fixed coset representatives preserves algebraic norms up to
	polynomial equivalence, we may assume  for this step only, that $\mathbb H$ is connected. The majorization now follows from expressing both $|\eta|^{\pm 1}$ as finite products of characters of the form $|\mu_i|_F^{a_i}$, with $a_i\in \R$ and $\mu_i$  algebraic $F$-characters of $H$. Hence it is now enough to prove that $\|h\|_H \preccurlyeq_C\|m\|_M^b$ for some non negative $b$, under the hypothesis of the statement.

	To prove this we consider the morphism
	\[
	j:\theta(\mathbb V)\times\mathbb M\times \mathbb V
	\longrightarrow\mathbb G\times\mathbb M,
	\qquad (w,m,v)\longmapsto(w m v,m).
	\]
	It factors as
	\[
	(w,m,v)
	\longmapsto(w,m,m vm^{-1})
	\longmapsto(wv,m)
	\longmapsto(wv m,m).
	\]
	The first and last maps are isomorphisms. The middle map
	is a closed immersion: by Kostant--Rosenlicht, the orbit
	$\theta(\mathbb V) \mathbb V$ of $1$ under the action
	$(v,w)\cdot z=vzw^{-1}$ is closed, and its scheme-theoretic stabilizer is trivial since
	$\mathbb V\cap\theta(\mathbb V)=\{1\}$.
	Thus $j$ is a closed immersion.
	
	Hence the norms $\max(\|w\|_{\theta(V)},\|m\|_M,\|v\|_V)$ and $\max(\|wmv\|_G,\|m\|_M)$ are equivalent, hence there exists $c>0$ such that 
	\[\max(\|w\|_{\theta(V)},\|m\|_M,\|v\|_V)
	\preccurlyeq \max(\|wmv\|_G,\|m\|_M)^c.\]  From this we deduce that for any  compact $C'\subseteq G$, we have  \[\max(\|w\|_{\theta(V)},\|v\|_V)
	\preccurlyeq_{C'} \|m\|_M^c \] whenever $wmv\in C'$.

Now let $C$ be as in the statement, and put
\[
C'=\{\theta(g)g^{-1}:g\in C\},
\]
which is compact. For $g=vmh\in C$, we have
\[
\theta(g)g^{-1}
=\theta(v)\theta(m)m^{-1}v^{-1}\in C'.
\]
Applying the preceding estimate, and using that inversion on $\mathbb V$ is an isomorphism, we obtain
\[
\|v\|_V
\preccurlyeq_C\|\theta(m)m^{-1}\|_M^{c'}
\preccurlyeq_C\|m\|_M^{c''}
\]
for fixed $c',c''>0$.

Since $\mathbb H$ is closed in $\mathbb G$, the restriction
of $\|\cdot\|_G$ to $H$ is equivalent to $\|\cdot\|_H$.
Moreover, considering the morphism
$(v,m,g)\mapsto m^{-1}v^{-1}g$ gives
\[
\|h\|_H
\preccurlyeq
\max(\|v\|_V,\|m\|_M,\|g\|_G)^a
\preccurlyeq_C\|m\|_M^b
\]
for fixed $a,b>0$, as expected. 
 
\end{proof}

\subsection{Continuity of $\epsilon_{\lambda,v}$}\label{sec proof of cont}

The aim of this section is to prove \Cref{prop:continuity}. We follow \cite[end of Section 2]{BD} closely, but simplify some of their arguments, especially in the proof of \Cref{lm open cell estimates} below. We recall that $\nu\in \fa_{M}^*$ is called \textit{strictly $P$-dominant} if $\langle \nu, \alpha^\vee\rangle>0$ for all $\alpha\in R(A_M,P)$. We denote by $\Delta_P$ the set of simple roots defined as the indecomposable elements of $ R(A_M,P)$.

\begin{lemma}\label{lm open cell estimates}
There exists a $P$-dominant element $\nu_0\in(\fa_M^*)^{-\theta}$
such that, for every compact $C\subseteq G$,
\[
 \|m\theta(m)^{-1}\|_M
 \preccurlyeq_C e^{\langle\nu_0,H_M(m)\rangle}
\]
whenever $m$ occurs in a decomposition $g=vmh\in PH \cap C$, with $v\in V$, $m\in M$
and $h\in H$. Moreover, if $g_n=v_nm_nh_n$ converges to a point
outside $PH$, then
\[
 e^{-\langle\mu,H_M(m_n)\rangle}\longrightarrow0
\]
for every strictly $P$-dominant $\mu\in(\fa_M^*)^{-\theta}$.
\end{lemma}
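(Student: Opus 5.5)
The plan is to replace $g$ by $y:=g\theta(g)^{-1}$ and read everything off the big cell $\mathbb V\mathbb M\theta(\mathbb V)$ of the opposite pair $(P,\theta(P))$. Since $P$ is $\theta$-split, $\theta(P)=P^-$. For $g=vmh\in PH\cap C$ we have
\[
y=v\,m\theta(m)^{-1}\,\theta(v)^{-1}\in V\,M\,\theta(V),
\]
and $y$ stays in the compact set $C'=\{g\theta(g)^{-1}:g\in C\}$. So the $M$-component of $y$ in the big cell decomposition is $m':=m\theta(m)^{-1}$.

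\textbf{Step 1: determinant functions.} I will use the determinant (Gauss-decomposition) functions on the big cell. Choose finitely many $F$-rational algebraic representations $(\rho_i,E_i)$ of $\bG$ realizing a basis of characters $\chi_i\in X^*(\bM)_F$, up to passing to a multiple of a character of $\bG$. Each $\chi_i$ should be the eigencharacter of a line $Fe_i$ fixed by $\theta(\mathbb V)$, paired with a line $Fe_i^*$ of the dual fixed by $\mathbb V$. Put $f_i(y)=\langle e_i^*,\rho_i(y)e_i\rangle$ and $D=\prod_i f_i$. Then $f_i(vm'w)=\chi_i(m')\langle e_i^*,e_i\rangle$, and the standard facts I expect to use are:
\begin{enumerate}
\item the big cell is exactly the nonvanishing locus $\{D\neq0\}$;
\item the coordinates of $m'$ are regular functions on this locus, that is, polynomials in the entries of $y$ divided by a power of $D(y)$.
\end{enumerate}
This is the mechanism suggested by the closed-immersion argument in \Cref{prop:controlHpart}.

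\textbf{Step 2: the first estimate.} By Step 1, for $y\in C'$ in the big cell,
\[
\|m'\|_M\preccurlyeq_C |D(y)|_F^{-N}=\prod_i|\chi_i(m\theta(m)^{-1})|_F^{-N}.
\]
Since $|\chi(m\theta(m)^{-1})|_F=e^{\langle(1-\theta)\chi,H_M(m)\rangle}$, the right-hand side equals $e^{\langle\nu_0,H_M(m)\rangle}$ with $\nu_0:=-N\sum_i(1-\theta)\chi_i\in(\fa_M^*)^{-\theta}$. The $\chi_i$ are chosen so that this is $P$-dominant: because $\theta$ sends $R(A_M,P)$ to $-R(A_M,P)$, the element $-(1-\theta)\chi$ is $P$-dominant when $\chi$ is the appropriate extremal (lowest, for $P$) weight. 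I will fix the sign conventions at that point.

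\textbf{Step 3: approaching the boundary.} Let $g_n\to g\notin PH$. The step I expect to be the main obstacle is to show that $y=g\theta(g)^{-1}$ lies outside the big cell, so that $D(y_n)\to D(y)=0$. If $y$ were in the big cell, write $y=vm'\theta(v)^{-1}$; replacing $g$ by $v^{-1}g$ reduces to $g\theta(g)^{-1}\in M\cap X$. I would then argue as in the proof of \Cref{lm root separator}: the $M$-orbits of $M$-admissible points near $P\cdot1$ are open in the relevant locally closed set, and \Cref{fact open} passes from $G^\theta$ to $H$, so $g\in PH$, a contradiction. A careful check of the twisted-orbit geometry is needed here.

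Granting this, $\prod_i e^{-\langle\nu_i,H_M(m_n)\rangle}\to0$ for $\nu_i:=-(1-\theta)\chi_i$. The first estimate applied to each factor, via $|f_i|\preccurlyeq_C1$, gives $\langle\nu_i,H_M(m_n)\rangle\geq -c$ for every $i$; hence $\sum_i\langle\nu_i,H_M(m_n)\rangle\to+\infty$. Finally, a strictly $P$-dominant $\mu\in(\fa_M^*)^{-\theta}$ can be written as $\varepsilon\sum_i\nu_i+\mu'$ with $\varepsilon>0$ and $\mu'$ a nonnegative combination of the $\nu_i$, using that the $\nu_i$ span the dominant cone of $(\fa_M^*)^{-\theta}$. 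It follows that $\langle\mu,H_M(m_n)\rangle\to+\infty$, which is the claim.
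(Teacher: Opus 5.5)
The one genuine gap is in the last step of Step~3. To write a strictly $P$-dominant $\mu$ as $\varepsilon\sum_i\nu_i+\mu'$, you need the $\nu_i=-(1-\theta)\chi_i$ to generate the whole $P$-dominant cone of $(\fa_M^*)^{-\theta}$. This does not follow from taking the $\chi_i$ to be a basis of antidominant characters, and in general it is false.

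\begin{itemize}
\item The centre breaks it. Every element of $\fa_G^*$ pairs trivially with the coroots, so that cone contains the whole subspace $(\fa_G^*)^{-\theta}$. Take $\GL_2(F)$, $\theta(g)={}^tg^{-1}$, and $P$ the upper triangular Borel subgroup. The cone is the half-plane containing the line $\R\det$. The two linearly independent $\nu_i$ coming from a basis span only a pointed cone, which misses strictly dominant $\mu$ with a large central component of one sign.
\item Even modulo the centre a basis can fall short. In $\GL_3(F)$ with the same $\theta$, the antidominant weights $\chi_1=-(\varpi_1+\varpi_2)$ and $\chi_2=-(\varpi_1+2\varpi_2)$ never reach $\varpi_1+\tfrac1{10}\varpi_2$.
\end{itemize}

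The repair is the paper's choice of functions.
\begin{itemize}
\item Keep $D_P$, of lowest weight $-2\rho_P$, for Steps~1--2 and for the vanishing at the limit. This gives $\nu_0=4N\rho_P$.
\item For each maximal parabolic $Q_k=L_kU_k\supseteq P$, use the lowest weight $-\kappa_k$, where $\langle\kappa_k,H_M(m)\rangle=\log\delta_{Q_k}(m)$. It is realized by the line $\wedge^{\mathrm{top}}\mathfrak u_k^-$ in $\wedge^{\dim\mathfrak u_k}\mathfrak g$. Here $\mathfrak u_k^-$ is the Lie algebra of the unipotent radical of the parabolic opposite to $Q_k$ that contains $P^-$.
\item The resulting matrix coefficient is exactly the paper's $D_{Q_k}$. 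It satisfies $|D_{Q_k}(y(g_n))|_F=e^{-2\langle\kappa_k,H_n^-\rangle}$ with $H_n^-=\frac12(1-\theta)H_M(m_n)$, and boundedness on compacta gives the lower bounds.
\item For the centre, either add $\chi$ and $\chi^{-1}$ for a basis of $X^*(\bG)_F$, or note as the paper does that $\langle\chi_0,H_n^-\rangle$ stays bounded for $\chi_0\in X^*(\bG)_F\otimes\R$, since $y(g_n)$ stays in a compact set.
\end{itemize}
Now write $\mu=\sum_kb_k\kappa_k+\chi_0$ with $b_k>0$. Since $\mu=\frac12(1-\theta)\mu$, you get $\mu=\sum_k\frac{b_k}{2}(1-\theta)\kappa_k+\frac12(1-\theta)\chi_0$, and your bounded-below argument then concludes.

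Otherwise your route is the paper's. You pass to $y=g\theta(g)^{-1}$, read off $m\theta(m)^{-1}$ on the big cell $VMV^-$ through extremal-weight matrix coefficients, show the limit leaves the big cell, and get lower bounds from boundedness on compacta. The step you flagged as the main obstacle is not one. The tangent-space computation in the proof of \Cref{lm root separator} shows that every $P$-orbit in $Y=X\cap PP^-$ is open and closed in $Y$. Equivalently, after your reduction, twisted $M$-orbits in $M\cap X$ are open. So if $D(y(g))\neq0$, then $y(g)\in Y$ is a limit of points of $P\cdot1$ and hence lies in $P\cdot1$. This gives $g\in PG^\theta$, and $PH$ is closed in $PG^\theta$ as in Fact~\ref{fact open}. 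No statement about $M$-admissible points is needed.
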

\begin{proof}
Write $P^-=\theta(P)=MV^-$ and let
$\operatorname{pr}_{\mathfrak v^-}:\mathfrak g\to\mathfrak v^-$
be the projection along $\mathfrak p$. Following \cite[Section~2.2]{ZhiQi}, put
\[
 D_P(g)=\det\bigl(\operatorname{pr}_{\mathfrak v^-}
                  \circ\operatorname{Ad}(g)|_{\mathfrak v^-}\bigr).
\]
This determinant transforms by characters under $P\times P^-$.
At a Weyl representative it is nonzero exactly when the representative
preserves the roots of a maximal $F$-split torus of $M$ in the Lie algebra of $V^-$. Thus the Bruhat decomposition gives
$\{D_P\ne0\}=VMV^-$, and
\[
 D_P(vmv^-)=\det(\operatorname{Ad}(m)|_{\mathfrak v^-});
\]
see \cite[Lemma~2.1]{ZhiQi}. In particular, the coordinate functions
of the Levi projection $VMV^-\to M$ lie in
$\mathcal O_F(\mathbb G)[D_P^{-1}]$.

For $g=vmh$, put $y(g):=g\theta(g)^{-1}$. Its Levi component is
$m\theta(m)^{-1}$, so boundedness on $C$ of the regular numerators composed with $y$ gives
\[
 \|m\theta(m)^{-1}\|_M
 \preccurlyeq_C\max\{1,|D_P(y(g))|_F^{-N}\},
 \qquad g\in C\cap PH,
\]
with $N\geq1$ independent of $C$. Since $\theta(\rho_P)=-\rho_P$,
\begin{equation}\label{eq DP formula}
 |D_P(y(g))|_F=e^{-2\langle\rho_P,H_M(m)-\theta(H_M(m))\rangle}=e^{-4\langle\rho_P,H_M(m)\rangle}.
\end{equation}
The function $|D_P|_F$ is bounded above on $y(C)$, so the maximum
can indeed be replaced by a positive constant times $|D_P(y(g))|_F^{-N}$.
This proves the first assertion with $\nu_0=4N\rho_P$.

Suppose now that $g_n\to g\notin PH$. We claim that
$D_P(y(g_n))\to0$, since otherwise continuity would imply $D_P(y(g))\ne 0$. We assume this in the next few lines for the sake of contradiction. 
The orbit $P\cdot1$ is closed in $X\cap PP^-$ by the tangent-space
argument in the proof of \Cref{lm root separator}. Since
$y(g_n)\in P\cdot1$, this would give $y(g)\in P\cdot1$, hence
$g\in PG^\theta$. But $PH$ is open and closed in $PG^\theta$, as in
Fact~\ref{fact open}, so $g_n\in PH$ would imply $g\in PH$, a contradiction. Now put
\[
H_n^-:=\tfrac12\bigl(H_M(m_n)-\theta H_M(m_n)\bigr).
\] In view of \Cref{eq DP formula}, we deduce that 
\begin{equation}\label{eq bidule tend vers l infini} \langle\rho_P,H_n^-\rangle\to+\infty.\end{equation}

Then let $Q_k$ run through the maximal parabolic subgroups containing $P$,
with Levi components containing $M$, and put
\[\kappa_k\in \left(\frac{\fa_M}{\fa_G}\right)^*,\ \langle \kappa_k,H_M(m)\rangle=\log \delta_{Q_k}(m).\] Hence $\kappa_k$ is a positive multiple of the fundamental weight 
corresponding to the simple root defining $Q_k$ in $\Delta_P$. In particular $\rho_P$ is a positive linear
combination of the $\kappa_k$'s, and from this we deduce, using \Cref{eq bidule tend vers l infini}, that 
\begin{equation}\label{eq john tend ver l'infini}
\max_k \langle\kappa_k,H_n^-\rangle\to+\infty.
\end{equation}
Define the determinant function $D_{Q_k}$ using the opposite parabolic subgroup of $Q_k$ containing $P^-$. Its transformation law gives
\[
 |D_{Q_k}(y(g_n))|_F
 =e^{-\langle\kappa_k,H_M(m_n)-\theta H_M(m_n)\rangle}
 =e^{-2\langle\kappa_k,H_n^-\rangle}.
\]
The left-hand side is bounded, so each
$\langle\kappa_k,H_n^-\rangle$ is bounded below:
\begin{equation}\label{eq truc bounded below}
\exists C\in \R,\ \forall k,n \ \langle\kappa_k,H_n^-\rangle\geq C. 
\end{equation}
Finally every strictly $P$-dominant $\mu$ can be written
\[
 \mu=\sum_k b_k\kappa_k+\chi_0,\qquad b_k>0,
\]
where $\chi_0$ is the restriction to $\fa_M$ of a real linear
combination of $F$-rational algebraic characters of $\mathbb G$.
For each such algebraic character $\chi$,
\[
 2\langle\chi,H_n^-\rangle=\log|\chi(y(g_n))|_F.
\]
Since $y(g_n)$ lies in a compact subset of $G$, the right-hand side
is bounded; hence $\langle\chi_0,H_n^-\rangle$ is bounded as well.
The positivity of $b_k$ together with \Cref{eq john tend ver l'infini} and 
\Cref{eq truc bounded below} therefore imply $\langle\mu,H_n^-\rangle\to+\infty$.
For $\mu\in(\fa_M^*)^{-\theta}$, this is
$\langle\mu,H_M(m_n)\rangle\to+\infty$, as required.
\end{proof}

\begin{proof}[Proof of \Cref{prop:continuity}]
Since $P\times H\to PH$
is open, the function $\epsilon_{\lambda,v}$ is continuous, and actually locally constant, on $PH$. We must check its limit at the complement.

There exists $s>0$ such that every $g\in PH$ admits a decomposition
$g=v_0mh$ satisfying
\begin{equation}\label{eq controlled Levi representatives}
 \max(\|m\|_M,\|m^{-1}\|_M)
 \preccurlyeq\|m\theta(m)^{-1}\|_M^s.
\end{equation}
Indeed, invert a polar decomposition \eqref{eq analytic polar} for
$(M,H\cap M,\theta|_M)$ to choose $m=\omega a$, with $\omega$ in a
fixed compact set and $a$ in one of finitely many $\theta$-split tori.
Then $m\theta(m)^{-1}=\omega a^2\theta(\omega)^{-1}$, and the norm
comparison on split tori used in the proof of \Cref{prop:lagierestimate}
gives \eqref{eq controlled Levi representatives}.

Let $g_n \in PH$ tend to $g\notin PH$, take $C\subseteq G$ containing all $g_n$, and write $g_n=v_nm_nh_n$ as above, as in \Cref{lm open cell estimates}. By
\Cref{prop:controlHpart}, the growth hypothesis on $\zeta$,
and \eqref{eq controlled Levi representatives},
\[
 \begin{aligned}
 |\eta(h_n)|^{-1}\zeta(\sigma(m_n^{-1})v)
 &\preccurlyeq_{v,C}\|m_n\|_M^d\|m_n^{-1}\|_M^r\\
 &\preccurlyeq_{v,C}\|m_n\theta(m_n)^{-1}\|_M^t,
 \end{aligned}
\]
where $t:=s(d+r)$. Using \eqref{eq analytic epsilon vector} and the first assertion of
\Cref{lm open cell estimates}, we obtain
\begin{equation}\label{eq:epsilon-pre-boundary}
 \epsilon_{\lambda,v}(g_n)
 \preccurlyeq_{v,C}
 e^{-\langle\Re\lambda-\rho_P-t\nu_0,H_M(m_n)\rangle}.
\end{equation}
Choose $c>0$ such that
\[
 c>\langle\rho_P+t\nu_0,\alpha^\vee\rangle
 \qquad\text{for every }\alpha\in R(A_M,P).
\]
For $\lambda\in\mathcal D_{M,\C}^{-\theta}(c)$, the element
\[
 \mu:=\Re\lambda-\rho_P-t\nu_0\in(\fa_M^*)^{-\theta}
\]
is strictly $P$-dominant. The second assertion of
\Cref{lm open cell estimates} therefore makes the right-hand side of
\eqref{eq:epsilon-pre-boundary} tend to zero. Since $\epsilon_{\lambda,v}$ is zero outside $PH$, this
proves its continuity on $G$.
\end{proof}

\subsection{Convergence at maximal vertices}

Let $P=MV$ be a parabolic subgroup with a fixed Levi component $M$,
and let $x=x_u$ be $M$-admissible, with $u\in G$.
We keep the symmetric subgroup $H$ and its smooth character $\eta$
trivial on unipotent elements fixed. The datum is
\[
 \ell\in\Hom_{M_x}(\sigma,\delta_{P,x}\eta^u),\qquad
 \eta^u(m)=\eta(u^{-1}mu),\quad m\in M_x.
\]
We write 
$\sigma_\lambda:=\chi_\lambda\otimes\sigma$.
For $\lambda\in(\fa_{M,\C}^*)^{-\theta_x}$ and
$\phi\in I_P^G(\sigma_\lambda)$, put
\begin{equation}\label{eq analytic period H}
 j_P^G(u,\eta,\ell,\sigma_\lambda)(\phi)
 :=\int_{(H\cap u^{-1}Pu)\backslash H}
       \eta(h)^{-1}\ell\bigl(\phi(uh)\bigr)\,dh
\end{equation}
whenever the integral converges absolutely. These are the periods whose convergence we prove below, for $\lambda\in \mathcal D_{M,\C}^{-\theta_x}(c)$ and $c$ large enough.

We say that $(M,x)$ is maximal relative to $P$ if
\[
 \theta_x(\alpha)>0\ \Longrightarrow\ \theta_x(\alpha)=\alpha,
 \qquad\alpha\in\Delta_P.
\]
After conjugating $(P,M)$ into standard position, this agrees with
the notion of a maximal vertex by
\cite[Proposition~4.3 and Definition~4.6]{MOYjfa}.
For such a pair, put
$I=\{\alpha\in\Delta_P:\theta_x(\alpha)=\alpha\}$ and
\[
 \mathbb S=
 \left(\bigcap_{\alpha\in I}
 \ker(\alpha:\mathbb A_M\to\mathbb G_m)\right)^\circ,
 \qquad \mathbb L=Z_{\mathbb G}(\mathbb S).
\]
Let $Q=LU$ be the unique
parabolic subgroup containing $P$ with Levi component $L$.
Then $Q$ is $\theta_x$-split, $P\cap L$ is $\theta_x$-stable, and,
for $c>0$,
\begin{equation}\label{eq maximal vertex structure}
 \begin{gathered}
 P_x=(P\cap L)_x,\qquad
 (\fa_M^*)^{-\theta_x}=(\fa_L^*)^{-\theta_x},\\
 \mathcal D_{M,\C}^{-\theta_x}(c)
 \subseteq\mathcal D_{L,\C}^{-\theta_x}(c).
 \end{gathered}
\end{equation}
See \cite[Proposition~4.2]{MOYjfa} for the first two equalities.
The last inclusion follows from our normalization of relative coroots.

Put
\[
 H_0=H\cap u^{-1}Pu,\qquad
 H_1=H\cap u^{-1}Qu=H\cap u^{-1}Lu.
\]
Conjugation by $u$ identifies $H_0$ with $(P\cap L)_x$ and $H_1$
with $L_x$; in particular, $H_0\backslash H_1$ is compact.

\begin{lemma}\label{lemma:maxvertexestimate}
Let $\sigma$ be a smooth representation of finite length of $M$ and
$\ell\in\Hom_{M_x}(\sigma,\delta_{P,x}\eta^u)$. There exists $r\geq0$
such that, for every $f\in I_{P\cap L}^{L}(\sigma)$,
\[
 \int_{H_0\backslash H_1}
       |\eta(h_1)^{-1}\ell(f(uh_1u^{-1}g))|\,dh_1
 \preccurlyeq_f\|g\|_L^r,
 \qquad g\in L,
\]
where $\|\ \|_L$ is an algebraic norm on $L$.
\end{lemma}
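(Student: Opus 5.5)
The plan is to recognize the integral as a closed period, that is, a compact integral at the identity for the $\theta_x$-stable parabolic $P\cap L$ of $L$. We then apply the global relative growth bound \eqref{eq global relative growth} to the resulting $H$-equivariant form on the induced representation of $L$.

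\textbf{Step 1: transport to $L$ and pass to $\theta_x$.} Conjugation by $u$ identifies $H_0\backslash H_1$ with $(P\cap L)_x\backslash L_x$, which is compact. Indeed, $P\cap L$ is $\theta_x$-stable, so $(P\cap L)\cap\theta_x(P\cap L)=P\cap L$ and the orbit $(P\cap L)L^{\theta_x}$ is closed. Writing $h_1=u^{-1}h'u$ with $h'\in L_x$, the integral becomes
\[\int_{(P\cap L)_x\backslash L_x}|(\eta^{u})(h')^{-1}\ell(f(h'g))|\,dh'.\]
By \Cref{lm induced pair}, $(L,L_x,\theta_x|_L)$ is a symmetric pair, and $\eta^u|_{L_x}$ is a smooth character trivial on unipotent elements. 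By \eqref{eq maximal vertex structure} we have $P_x=(P\cap L)_x$. Moreover, $\delta_{P,x}$ and the corresponding modulus character $\delta_{P\cap L,x}$ computed inside $L$ differ on $M_x$ only by $\delta_P^{-1/2}\delta_{P\cap L}^{1/2}=\delta_Q^{-1/2}|_M$. This factor is a positive character of $M_x$ which extends to $L_x$, as does $\delta_{Q^{\theta_x}}$ restricted to $L_x$ compared to $\delta_{(P\cap L)^{\theta_x}}$ via the Lie algebra of $U^{\theta_x}$. We absorb the discrepancy $\xi$, an unramified positive character of $L_x$, into the character, replacing $\eta^u$ by $\eta':=\eta^u\xi$ on $L_x$. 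Then $\ell$ is a valid closed datum for $(L,L_x)$ with the twist $\eta'$.

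\textbf{Step 2: the closed period is an equivariant form.} For $\phi\in I_{P\cap L}^L(\sigma)$, set
\[\Lambda(\phi)=\int_{(P\cap L)_x\backslash L_x}\eta'(h')^{-1}\ell(\phi(h'))\,dh'.\]
This converges because the domain is compact. Since the inducing and modulus characters match by the previous step, the integrand is well defined on the quotient, so $\Lambda\in\Hom_{L_x}(I_{P\cap L}^L(\sigma),\eta')$. The absolute-value version
\[\Lambda^{+}(\phi)(g)=\int|\eta'(h')^{-1}\ell(\phi(h'g))|\,dh'\]
is what we need to bound. Because $L_x\backslash$-compactness holds and $f$ is smooth, right translation by a small compact open subgroup shows that $g\mapsto\Lambda^+(f)(g)$ is dominated by finitely many quantities of the form $|\Lambda(\pi(g)f')|$, up to a constant. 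Concretely, decompose the compact quotient into finitely many small open pieces on which $\ell(f(h'g))$ is constant in $h'$ for fixed $g$ in a given $K$-coset. Equivalently, and more cleanly, bound directly using the following: $|\ell(f(h'g))|$ is a relative matrix coefficient for the pair $(M,M_x)$ evaluated at the Levi component of $h'g$.

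\textbf{Step 3: polynomial bound.} I would take the direct route. Write $h'g=p k$ with $p=v m\in P\cap L$ and $k$ in a maximal compact subgroup $K_L$, so that $f(h'g)=\delta^{1/2}(m)\sigma(m)f(k)$. Using an Iwasawa decomposition with controlled norms, we get $\|m\|_M\preccurlyeq\|h'g\|_L^{a}\preccurlyeq\|g\|_L^{a'}$, since $h'$ ranges over a compact set modulo $(P\cap L)_x$ and can be chosen in a fixed compact set. Now apply \eqref{eq global relative growth} to the pair $(M,M_x,\theta_x|_M)$ with the character $\delta_{P,x}\eta^u$. This gives $|\ell(\sigma(m)w)|\preccurlyeq_w\|m\|_M^N$ for the finitely many values $w=f(k)$. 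Together with $\delta^{1/2}(m)\preccurlyeq\|m\|_M^{b}$ and the bound on $|\eta'(h')^{-1}|$ over a compact set, this yields the claim with $r=a'(N+b)$.

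\textbf{Main obstacle.} The delicate point is Step 3: choosing representatives $h'$ in a fixed compact subset of $L_x$ (possible since $(P\cap L)_x\backslash L_x$ is compact, so $L_x=(P\cap L)_x\Omega$ for some compact $\Omega$) and then controlling the Levi part $m$ of $h'g$ polynomially in $\|g\|_L$. I would handle this via the closed immersion argument of \Cref{prop:controlHpart}, applied to $P\cap L$ and $K_L$.
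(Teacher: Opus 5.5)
Your Step 3 is essentially the paper's proof: you reduce the compact quotient integral to a supremum over a fixed compact set of representatives, use the Iwasawa decomposition of $L$ together with regularity of multiplication and of the Levi projection to get $\|m\|_M\preccurlyeq\|g\|_L^{d}$, and conclude with \eqref{eq global relative growth} for $(M,M_x,\theta_x|_M)$ and the finitely many values of $f$ on $K_L$. Two minor remarks: the twist $\xi$ in Step 1 is actually trivial, since $Q$ is $\theta_x$-split gives $U^{\theta_x}=\{1\}$ and $\delta_Q|_{L_x}=1$, and this is exactly how the paper checks that the stated integrand is well defined on $H_0\backslash H_1$; and neither the Step 2 detour nor the closed-immersion argument is needed.
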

\begin{proof}
Since $Q$ is $\theta_x$-split, $\delta_Q|_{L_x}=1$. The modulus identity
\eqref{eq modular transitivity} gives
\[
 \delta_{P,x}=\delta_{(P\cap L)_x}\delta_{P\cap L}^{-1/2}
 \quad\text{on }M_x.
\]
Thus the covariance of $f$ and $\ell$ makes the quotient integral
well defined, after transporting measures by $h_1\mapsto uh_1u^{-1}$.
By compactness of $H_0\backslash H_1$, it is bounded by a fixed
constant times
\[
 \sup_{h_1\in C_1}|\ell(f(uh_1u^{-1}g))|
\]
for a suitable fixed compact set $C_1\subset H_1$ mapping onto
$H_0\backslash H_1$;
the factor $|\eta(h_1)|^{-1}$ is bounded on $C_1$.

Choose a maximal compact subgroup $K_L$ giving the Iwasawa
decomposition $L=(P\cap L)K_L$. Write
\[
 uh_1u^{-1}g=vmk',\qquad
 v\in V\cap L,\quad m\in M,\quad k'\in K_L.
\]
Since $vm=uh_1u^{-1}g(k')^{-1}$, regularity of multiplication and
of the Levi projection $P\cap L\to M$ gives
\[
 \|m\|_M\preccurlyeq\|g\|_L^d
\]
for some $d\geq0$, uniformly in $h_1\in C_1$ and $k'\in K_L$.
Apply \eqref{eq global relative growth} to $(M,M_x,\theta_x)$ and
the character $\delta_{P,x}\eta^u$. Since $f|_{K_L}$ takes only
finitely many values, and $\delta_{P\cap L}^{1/2}$ has polynomial
growth on $M$, there exist $N,s\geq0$, independent of $f$, such that
\[
 \begin{aligned}
 |\ell(f(uh_1u^{-1}g))|
 &=\delta_{P\cap L}(m)^{1/2}
       |\ell(\sigma(m)f(k'))|\\
 &\preccurlyeq_f\|m\|_M^{N+s}
 \preccurlyeq_f\|g\|_L^{d(N+s)}.
 \end{aligned}
\]
Taking the supremum over $C_1$ proves the lemma.
\end{proof}

\begin{proposition}\label{prop:maximal-convergence}
Let $(M,x_u)$ be maximal relative to $P$, let $\sigma$ be a smooth
representation of finite length of $M$, and let
$\ell\in\Hom_{M_x}(\sigma,\delta_{P,x}\eta^u)$.
There exists $c>0$ such that the integral
\[
 j_P^G(u,\eta,\ell,\sigma_\lambda)(f_\lambda)
 =\int_{(H\cap u^{-1}Pu)\backslash H}
       \eta(h)^{-1}\ell(f_\lambda(uh))\,dh
\]
converges absolutely for every $f\in I_P^G(\sigma)$ whenever
$\lambda\in\mathcal D_{M,\C}^{-\theta_x}(c)$.
\end{proposition}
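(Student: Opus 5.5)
We plan to prove convergence by integration in stages along $H_0\subseteq H_1\subseteq H$. The outer integral over $H_1\backslash H$ will be treated as an open ($\theta_x$-split) period for $Q$, and \Cref{thm:lipopencase}(2) will do the work there. Transport by $u$ replaces $(G,H,\theta)$ by $(G,uHu^{-1},\theta_x)$ with $u=e$. This is legitimate by \Cref{lm induced pair}, and we assume it below. Then $Q=LU$ is $\theta_x$-split with $L=Q\cap\theta_x(Q)$, and $H_1=H\cap L$.

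\textbf{Step 1: the auxiliary gauge.} By transitivity of induction (as in \eqref{eq Gamma transitivity}), $f_\lambda$ corresponds to a section of $I_Q^G(\tau_\lambda)$ with $\tau=I_{P\cap L}^L(\sigma)$. Moreover $\chi_\lambda$ is a character of $L$, since $\lambda\in(\fa_M^*)^{-\theta_x}=(\fa_L^*)^{-\theta_x}$ by \eqref{eq maximal vertex structure}. On $V_\tau$ we define
\[
\zeta(\varphi):=\int_{H_0\backslash H_1}\bigl|\eta(h_1)^{-1}\ell(\varphi(h_1))\bigr|\,dh_1 .
\]
This is well defined by the modulus identity in the proof of \Cref{lemma:maxvertexestimate}. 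It is clearly homogeneous, and a change of variables gives $\zeta(\tau(h)\varphi)=|\eta(h)|\zeta(\varphi)$ for $h\in H_1=H\cap L$. Applying \Cref{lemma:maxvertexestimate} to $\tau(m)\varphi$ gives the growth hypothesis $\zeta(\tau(m)\varphi)\preccurlyeq_\varphi\|m\|_L^r$, with $r$ independent of $\varphi$.

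\textbf{Step 2: applying the open case.} We now apply \Cref{thm:lipopencase}(2) to the $\theta_x$-split parabolic $Q$, the representation $\tau$ (smooth, of arbitrary length, as allowed there) and this $\zeta$. The twist $\chi_\lambda$ only multiplies $\zeta(\Gamma f_\lambda(h))$ by $|\chi_\lambda|$. We therefore get $c>0$ such that
\[
\int_{H_1\backslash H}|\eta(h)|^{-1}\zeta\bigl((\Gamma f_\lambda)(h)\bigr)\,dh<\infty
\qquad\text{for } \lambda\in\mathcal D_{L,\C}^{-\theta_x}(c).
\]
Unfolding $\zeta$ and integrating in stages with compatible quotient measures, the constants $c(u_1)$ of \Cref{thm transitivity} being trivial here, this double integral equals
\[
\int_{H_0\backslash H}|\eta(h)^{-1}\ell(f_\lambda(h))|\,dh .
\]
This gives absolute convergence, with the same normalizations as in the proof of \Cref{thm transitivity}. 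Finally, the inclusion $\mathcal D_{M,\C}^{-\theta_x}(c)\subseteq\mathcal D_{L,\C}^{-\theta_x}(c)$ of \eqref{eq maximal vertex structure} yields the statement on the required domain.

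\textbf{Main obstacle.} The delicate point is checking that \Cref{thm:lipopencase}(2) really applies to the non-admissible, infinite-type gauge $\zeta$ on $\tau$. Only the pointwise continuity of \Cref{prop:continuity} is needed there, and its proof uses only homogeneity, equivariance and the uniform polynomial growth of $\zeta$; it never uses linearity. We must nevertheless verify two things about the exponent $r$ from \Cref{lemma:maxvertexestimate}. First, it must be uniform in the vector $\tau(m^{-1})\varphi$; it depends only on $\sigma$ and $\ell$, as the lemma states, but the implied constant depends on $f$, which matches the hypothesis $\preccurlyeq_v$. Second, the argument of \Cref{prop:continuity} uses $\zeta(\sigma(m_n^{-1})v)$ with varying $m_n$, which is exactly what this growth bound controls.
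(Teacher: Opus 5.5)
Your proposal is correct and is essentially the paper's proof. It uses the same gauge $\zeta$ on $\tau=I_{P\cap L}^L(\sigma)$ with growth supplied by \Cref{lemma:maxvertexestimate}, integration in stages along $H_0\subseteq H_1\subseteq H$ (using $\delta_Q=\chi_\lambda=1$ on $H_1$), \Cref{thm:lipopencase}(2) for the $\theta_x$-split parabolic $Q$, and the inclusion $\mathcal D_{M,\C}^{-\theta_x}(c)\subseteq\mathcal D_{L,\C}^{-\theta_x}(c)$. The only difference is cosmetic: you transport $H$ to $uHu^{-1}$ at the outset, whereas the paper keeps $u$ and applies the open case to $u^{-1}Qu$. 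In either version, right translation by $u$ does not preserve flat sections; this is harmless because $c$ is independent of $f$ and $I_P^G(\sigma_\lambda)$ is $G$-stable.
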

\begin{proof}
Put $\tau=I_{P\cap L}^{L}(\sigma)$ and, for $\phi\in V_\tau$, set
\[
 \zeta(\phi):=\int_{H_0\backslash H_1}
           |\eta(h_1)^{-1}\ell(\phi(uh_1u^{-1}))|\,dh_1.
\]
The function $\zeta$ is absolutely homogeneous and satisfies
\[
 \zeta\bigl(\tau(uh_1u^{-1})\phi\bigr)
       =|\eta(h_1)|\zeta(\phi),\qquad h_1\in H_1.
\]
\Cref{lemma:maxvertexestimate} gives
\begin{equation}\label{eq one more bound}
 \zeta(\tau(g)\phi)\preccurlyeq_\phi\|g\|_L^r,
 \qquad g\in L,
\end{equation}
with $r$ independent of $\phi$.

In view of \eqref{eq maximal vertex structure}, transitivity of normalized induction gives
\[
 \begin{aligned}
 \Gamma_\lambda &: I_P^G(\sigma_\lambda)
                  \simeq I_Q^G(\chi_\lambda\otimes\tau),\\
 (\Gamma_\lambda\phi)(g)(l)
   &=\delta_Q(l)^{-1/2}\chi_\lambda(l)^{-1}\phi(lg).
 \end{aligned}
\]
Writing $\Gamma=\Gamma_0$, we have
$\Gamma_\lambda f_\lambda=(\Gamma f)_\lambda$, as in
\eqref{eq Gamma transitivity}.
Integration in stages along $H_0\subseteq H_1\subseteq H$, with
compatible quotient measures, therefore gives
\begin{equation}\label{eq maximal absolute stages}
 \begin{aligned}
 &\int_{H_0\backslash H}|\eta(h)^{-1}\ell(f_\lambda(uh))|\,dh\\
 &\qquad=\int_{H_1\backslash H}|\eta(h)|^{-1}
                      \zeta\bigl((\Gamma f)_\lambda(uh)\bigr)\,dh.
 \end{aligned}
\end{equation}
Here $\delta_Q(uh_1u^{-1})=\chi_\lambda(uh_1u^{-1})=1$ for $h_1\in H_1$;
the identity is valid
for nonnegative integrals, before finiteness is known. This is the
integration-in-stages calculation of
\cite[Proposition~3.7]{LuMcomp}; see also
\cite[(5.8)--(5.9)]{MOYjfa}.

Apply \Cref{thm:lipopencase}(ii) to the $\theta$-split parabolic
$u^{-1}Qu$, with Levi $u^{-1}Lu$, representation
$l'\mapsto\tau(ul'u^{-1})$, character $\eta$ on $H$, and section
$g\mapsto(\Gamma f)_\lambda(ug)$.
\Cref{eq one more bound} supplies the growth hypothesis of that
theorem, since conjugation by $u$ preserves polynomial bounds.
Thus the right-hand side of \eqref{eq maximal absolute stages}
is finite for $\lambda\in\mathcal D_{L,\C}^{-\theta_x}(c)$ with $c$
sufficiently large, hence for
$\lambda\in\mathcal D_{M,\C}^{-\theta_x}(c)$ by
\eqref{eq maximal vertex structure}.
\end{proof}

\subsection{Convergence in general}

We deduce convergence for general intertwining periods from convergence
at maximal vertices, following \cite[Section~5.1]{MOYjfa}.

We retain the maximal $F$-split torus $A_0$ and the minimal parabolic
subgroup $P_0$ fixed in \Cref{ssec intertwining}. Put
\[
 W=N_G(A_0)/Z_G(A_0),\qquad
 W_M=N_M(A_0)/Z_M(A_0)
\]
for each standard Levi subgroup $M$, viewing $W_M$ as a subgroup of $W$.
The length function on $W$ is defined by the positive roots determined
by $P_0$.

Let $P=MV$ and $P_1=M_1V_1$ be standard parabolic subgroups.
Choose $w\in W$ carrying $M$ onto $M_1$ and of minimal length in
$wW_M$, choose a lift $\dot w\in N_G(A_0)$, and let $n\in\dot wM$.
Let $\sigma$ be a smooth representation of finite length of $M$, and put
\[
 ({}^n\sigma)(m_1)=\sigma(n^{-1}m_1n),\qquad m_1\in M_1.
\]
For $\lambda\in\fa_{M,\C}^*$, the transported parameter
$n\lambda=w\lambda\in\fa_{M_1,\C}^*$ is characterized by
\[
 \langle w\lambda,H_{M_1}(m_1)\rangle
 =\langle\lambda,H_M(n^{-1}m_1n)\rangle,
 \qquad m_1\in M_1.
\]
In particular,
${}^n(\sigma_\lambda)=\chi_{w\lambda}\otimes{}^n\sigma$.

At regular parameters, the specialization of the operator
$M(n,\sigma)$ of \Cref{ssec intertwining} at $\chi_\lambda$ is a
$G$-intertwining operator
\[
 M(n,\sigma,\lambda):
 I_P^G(\sigma_\lambda)
 \longrightarrow
 I_{P_1}^G(\chi_{w\lambda}\otimes{}^n\sigma).
\]
In its domain of absolute convergence, it is given by
\begin{equation}\label{eq:analytic-intertwiner}
 (M(n,\sigma,\lambda)\phi)(g)
 =\int_{(V_1\cap nVn^{-1})\backslash V_1}
                       \phi(n^{-1}vg)\,dv,
 \qquad \phi\in I_P^G(\sigma_\lambda).
\end{equation}
For $c>0$, put
\[
 \begin{aligned}
 \mathcal D_{M,\C}^w(c):=\bigl\{\nu\in\fa_{M,\C}^*:\;&
 \Re\langle\nu,\beta^\vee\rangle>c\\[-2pt]
 &\text{for all }\beta\in R(A_M,P)
       \text{with }w\beta\in-R(A_{M_1},P_1)\bigr\}.
 \end{aligned}
\]
For $c$ sufficiently large, \eqref{eq:analytic-intertwiner} converges
absolutely whenever $\lambda\in\mathcal D_{M,\C}^w(c)$.
After increasing $c$, the operator is invertible throughout this region;
see \cite[Section~2.6 and Proposition~2.3]{MOYjfa}.

We now specialize to an edge
$(M,x)\stackrel n\searrow(M_1,x_1)$ associated to
$\alpha\in\Delta_P$, as in \Cref{ssec FE}; thus $w=s_\alpha$ and
$n\in\dot s_\alpha M$.
Here $s_\alpha=w_0^{L_\alpha}w_0^M$, where $L_\alpha$ is the
standard Levi defined in \Cref{ssec FE}, and $w_0^{L_\alpha}$ and
$w_0^M$ are the longest elements of $W_{L_\alpha}$ and $W_M$,
respectively.
Choose $u$ with $x=x_u$ and put $u_1=nu$, so $x_1=x_{u_1}$.
By \Cref{lm edge data}, the same linear form
$\ell\in\Hom_{M_x}(\sigma,\delta_{P,x}\eta^u)$ belongs to
\[
 \Hom_{(M_1)_{x_1}}
       ({}^n\sigma,\delta_{P_1,x_1}\eta^{u_1}).
\]
Conjugation by $n$ transports the anti-invariant parameter spaces:
\[
 n(\fa_{M,\C}^*)^{-\theta_x}
 =(\fa_{M_1,\C}^*)^{-\theta_{x_1}}.
\]
For $\lambda\in(\fa_{M,\C}^*)^{-\theta_x}$, the characters
$\chi_\lambda$ and $\chi_{n\lambda}$ are therefore trivial on $M_x$
and $(M_1)_{x_1}$, respectively.

\begin{proposition}\label{prop:analytic fe}
Let $(M,x)\stackrel n\searrow(M_1,x_1)$ be an edge associated to
$\alpha\in\Delta_P$, and put $u_1=nu$ as above.
Let $\sigma$ be a smooth representation of finite length of $M$, and let
$\ell\in\Hom_{M_x}(\sigma,\delta_{P,x}\eta^u)$.
Choose $c>0$ so that \eqref{eq:analytic-intertwiner} converges absolutely
for $\lambda\in\mathcal D_{M,\C}^{s_\alpha}(c)$.
For $\lambda\in(\fa_{M,\C}^*)^{-\theta_x}\cap\mathcal D_{M,\C}^{s_\alpha}(c)$, absolute convergence of the
left-hand side period below implies absolute convergence of the right-hand side 
period, and, with compatible quotient measures,
\begin{equation}\label{eq:analytic-edge-H}
 \begin{aligned}
 &j_P^G(u,\eta,\ell,\sigma_\lambda)(\phi)\\
 &\qquad=j_{P_1}^G(u_1,\eta,\ell,
             \chi_{n\lambda}\otimes{}^n\sigma)
       \bigl(M(n,\sigma,\lambda)\phi\bigr).
 \end{aligned}
\end{equation}
\end{proposition}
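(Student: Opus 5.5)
The plan is to run the supported computation of \Cref{prop FE} again, now with absolute values and Tonelli, so that no algebraic uniqueness is needed. Put $H_0=H\cap u^{-1}Pu$ and $H_1=H\cap u_1^{-1}P_1u_1$. By the argument in the proof of \Cref{prop FE}, \cite[Lemma~6.3]{OJNT} and \Cref{lm edge data}, we have $H_0\subseteq H_1$. We also have the homeomorphisms $\beta$ of \eqref{eq beta edge} and $q$ of \eqref{eq q edge}, which identify $H_0\backslash H_1$ with $(V_1\cap nVn^{-1})\backslash V_1$ through $v\mapsto h_v=u_1^{-1}vu_1$. The identity \eqref{eq edge evaluation}, $n^{-1}vu_1h=uh_vh$, holds pointwise. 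None of this uses the support hypothesis on $\phi$.

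First I would fix $h\in H$ and use \eqref{eq:analytic-intertwiner} at $g=u_1h$. This is legitimate because $\lambda\in\mathcal D_{M,\C}^{s_\alpha}(c)$, so the integral converges absolutely for every $\phi$. Applying $\ell$, which is a linear form on the finite-dimensional span of the values involved (or simply passing it under the absolutely convergent integral), gives
\[
 \eta(h)^{-1}\ell\bigl((M(n,\sigma,\lambda)\phi)(u_1h)\bigr)
 =\int_{H_0\backslash H_1}\eta(h'h)^{-1}\ell\bigl(\phi(uh'h)\bigr)\,dh'.
\]
Here I use $\eta(h_v)=1$, since $h_v$ is unipotent, together with the compatibility of measures under $\beta\circ q^{-1}$. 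The modulus identity \eqref{eq delta edge} ensures that the integrand on $H_0\backslash H_1$ is well defined, that is, left $H_0$-invariant. Next I would bound the absolute value of the left side by the integral of $|\eta(h'h)^{-1}\ell(\phi(uh'h))|$ over $H_0\backslash H_1$.

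Then I would integrate over $H_1\backslash H$. By Tonelli for nonnegative functions and integration in stages along $H_0\subseteq H_1\subseteq H$, we get
\[
 \int_{H_1\backslash H}\bigl|\eta(h)^{-1}\ell\bigl((M\phi)(u_1h)\bigr)\bigr|\,dh
 \le\int_{H_0\backslash H}\bigl|\eta(h)^{-1}\ell(\phi(uh))\bigr|\,dh<\infty .
\]
This gives absolute convergence of the right-hand side of \eqref{eq:analytic-edge-H}. Once the double integral is known to converge absolutely, Fubini applied to the signed version of the same computation yields the equality \eqref{eq:analytic-edge-H}.

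The main obstacle is the measure-theoretic bookkeeping. One has to check that $\beta\circ q^{-1}$ transports the quotient measure on $(V_1\cap nVn^{-1})\backslash V_1$ to the one on $H_0\backslash H_1$, compatibly with integration in stages, and that the integrand on $H_0\backslash H_1$ is genuinely $H_0$-invariant. The second point is where the characters $\delta_{P,x}\eta^u$ and $\delta_{P_1,x_1}\eta^{u_1}$ must match via \eqref{eq delta edge}, and where $\chi_\lambda$ must be trivial on $M_x$, which holds because $\lambda$ is $\theta_x$-anti-invariant. I would take these choices of measures as the meaning of ``compatible quotient measures'', as in \Cref{prop FE}.
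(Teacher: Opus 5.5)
Your overall plan is the paper's. The paper's proof defers to \cite[Proposition~5.1]{MOYjfa} and to the bookkeeping of \Cref{prop FE}. The group-theoretic inputs you use (the maps $\beta$ and $q$, \eqref{eq edge evaluation}, $\eta(h_v)=1$, \eqref{eq delta edge}) and your Tonelli/Fubini bookkeeping are fine. The gap is the identity
\[
\ell\bigl((M(n,\sigma,\lambda)\phi)(g)\bigr)
=\int_{(V_1\cap nVn^{-1})\backslash V_1}\ell\bigl(\phi(n^{-1}vg)\bigr)\,dv,
\qquad g=u_1h .
\]
This is the one step that is not formal, and presumably the input the paper takes from \cite[Lemma~2.1]{MOYjfa}.

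Neither of your justifications for it works.
\begin{itemize}
\item The form $\ell$ is only $M_x$-equivariant, not smooth. Absolute convergence of \eqref{eq:analytic-intertwiner} only controls the coefficients $\langle\phi(n^{-1}vg),\check v\rangle$ with $\check v$ in the contragredient. So ``passing $\ell$ under the integral'' is not available as it stands.
\item The values $\phi(n^{-1}vg)$ do not span a finite-dimensional space in general. Take the rank-one case $L_\alpha=\GL_3(F)$, $M=\GL_2(F)\times\GL_1(F)$. The lower unipotent element with entry $x$ in position $(3,1)$, $|x|_F>1$, has Iwasawa Levi component $\mathrm{diag}(x^{-1},1,x)$. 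So the integrand takes the values $\sigma(\mathrm{diag}(x^{-1},1,x))w$, up to scalars, for a fixed $w$. These span an infinite-dimensional space as soon as the $\GL_2(F)$-factor of $\sigma$ is, say, cuspidal.
\end{itemize}
There is also an ordering problem. You assert the pointwise identity for each $h$ before knowing that $v\mapsto\ell(\phi(n^{-1}vu_1h))$ is absolutely integrable. That integrability only comes out of your Tonelli step, and only for almost every $h$, which is enough.

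The repair is short.
\begin{enumerate}
\item Run Tonelli first, so that $\int_{(V_1\cap nVn^{-1})\backslash V_1}|\ell(\phi(n^{-1}vg))|\,dv<\infty$ for almost every $h$, with $g=u_1h$.
\item For such $g$, choose a compact open subgroup $K_M\subseteq M$ such that $g^{-1}nK_Mn^{-1}g$ fixes $\phi$. For $k\in K_M$, using that $\delta_P^{1/2}\chi_\lambda$ is trivial on $K_M$, you get $\sigma(k)\phi(n^{-1}vg)=\phi\bigl(n^{-1}(nkn^{-1})v(nkn^{-1})^{-1}g\bigr)$. Conjugation by $nkn^{-1}\in M_1$ preserves $V_1$, $V_1\cap nVn^{-1}$ and the quotient measure.
\item By the equivariance of $M(n,\sigma,\lambda)\phi$, the vector $(M(n,\sigma,\lambda)\phi)(g)$ is $\sigma(K_M)$-fixed. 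Hence $\ell\bigl((M(n,\sigma,\lambda)\phi)(g)\bigr)=(e_{K_M}\ell)\bigl((M(n,\sigma,\lambda)\phi)(g)\bigr)$.
\item The form $e_{K_M}\ell$ is $K_M$-invariant, so it lies in the contragredient, and the convergence of \eqref{eq:analytic-intertwiner} applies to it.
\item Unfold the $K_M$-average defining $e_{K_M}\ell$ through the change of variables in step 2. Fubini on $K_M\times(V_1\cap nVn^{-1})\backslash V_1$ is legitimate by the integrability from step 1, and it yields the displayed identity.
\end{enumerate}
Inserted between your Tonelli step and your pointwise bound, this completes your argument.
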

\begin{proof}
The proof is essentially the same as \cite[Proposition~5.1]{MOYjfa}, and as there relies on \cite[Lemma~2.1]{MOYjfa}. The extension to our more general context is straightforward, and we refer to \Cref{prop FE} for the details of this extension.
\end{proof}

\begin{theorem}\label{thm:convergence in general}
Let $P=MV$ be standard, let $(M,x)$ be a vertex with $x=x_u$, and let $\sigma$ be a
smooth representation of finite length of $M$. Let $\eta$ be a smooth
character of $H$, trivial on unipotent elements, and let
\[
 \ell\in\Hom_{M_x}(\sigma,\delta_{P,x}\eta^u).
\]
There exists a nonempty open convex set
$C\subseteq(\fa_{M,\C}^*)^{-\theta_x}$ such that
\[
 j_P^G(u,\eta,\ell,\sigma_\lambda)(f_\lambda)
 =\int_{(H\cap u^{-1}Pu)\backslash H}
       \eta(h)^{-1}\ell(f_\lambda(uh))\,dh
\]
converges absolutely for every $f\in I_P^G(\sigma)$ whenever $\lambda\in C$.
\end{theorem}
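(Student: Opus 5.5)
We argue by induction along a directed path ending at the given vertex, starting from a maximal vertex. By \cite[Proposition~4.8]{MOYjfa} there is a maximal vertex $(M_2,x_2)$ and a directed path
\[
 (M_2,x_2)=(M^{(0)},x^{(0)})\stackrel{n_1}\searrow(M^{(1)},x^{(1)})\stackrel{n_2}\searrow\cdots\stackrel{n_k}\searrow(M^{(k)},x^{(k)})=(M,x).
\]
We prove the statement for every vertex $(M^{(i)},x^{(i)})$, every finite-length $\sigma$ and every datum $\ell$, by induction on $i$. For $i=0$ the vertex is maximal relative to its standard parabolic. \Cref{prop:maximal-convergence} then gives absolute convergence on $\mathcal D_{M_2,\C}^{-\theta_{x_2}}(c)$. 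This set is a nonempty open convex subset of $(\fa_{M_2,\C}^*)^{-\theta_{x_2}}$: it is a product of open half-spaces in the real part, and it is nonempty because $\rho_P$-type elements projected to the $-\theta_{x_2}$ eigenspace are strictly positive on the relevant coroots.

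For the inductive step, consider an edge $(M',x')\stackrel n\searrow(M'',x'')$ with $x''=n\cdot x'$. Suppose convergence is known at $(M',x')$ on a nonempty open convex $C'$, for all finite-length representations and all data. Given $\sigma''$ and $\ell''$ at $(M'',x'')$, put $\sigma'={}^{n^{-1}}\sigma''$. By \Cref{lm edge data}, read backwards, $\ell''$ is also a valid datum at $(M',x')$ with $u'=n^{-1}u''$.

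Take $\lambda\in C'\cap\mathcal D_{M',\C}^{s_\alpha}(c)$, with $c$ large enough that $M(n,\sigma',\lambda)$ converges absolutely and is invertible there, as recalled before \Cref{prop:analytic fe}. We first need this intersection to be a nonempty open convex set. It is clearly open and convex. For nonemptiness we use the freedom in $C'$: at a maximal vertex the domain $\mathcal D^{-\theta_{x_2}}(c)$ is a translated cone, and the edge condition $\theta_{x'}(\alpha)<0$ (with $\theta_{x'}(\alpha)\neq-\alpha$) means that $\alpha^\vee$ has nonzero projection to the $-\theta_{x'}$ eigenspace. The plan is to carry along the induction that $C'$ contains a translated open cone in which $\Re\langle\lambda,\alpha^\vee\rangle$ is unbounded above. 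This makes the intersection nonempty, and a suitable subcone of it again has the required shape for the next edge. Checking that this cone condition propagates is what I expect to be the main technical obstacle; I would follow \cite[Section~5.1]{MOYjfa}, where the cone of parameters positive on all $\beta$ with $\theta_x(\beta)<0$ is used.

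With such $\lambda$, let $f''\in I_{P''}^G(\sigma'')$. Invertibility gives $f''_{n\lambda}=M(n,\sigma',\lambda)\phi$ for some $\phi\in I_{P'}^G(\sigma'_\lambda)$, and $\phi$ is a finite sum of flat sections $f_\lambda$ with coefficients that are only $\lambda$-dependent. This uses that $I_{P'}^G$ is spanned over $K$-restrictions by flat sections, which gives a finite linear combination at fixed $\lambda$. The induction hypothesis gives absolute convergence of the period of $\phi$ at $(M',x')$. \Cref{prop:analytic fe} then transfers absolute convergence to $j_{P''}^G(u'',\eta,\ell'',\cdot)(f''_{n\lambda})$. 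The set $C'':=n(C'\cap\mathcal D^{s_\alpha}_{M',\C}(c))$ is nonempty, open and convex in $(\fa_{M'',\C}^*)^{-\theta_{x''}}$, because $n$ transports anti-invariant spaces. Iterating along the path reaches $(M,x)$ and yields $C$.
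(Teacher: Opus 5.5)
Your proposal follows the same route as the paper. The paper's proof only states that, given \Cref{prop:maximal-convergence} and \Cref{prop:analytic fe}, the proof of \cite[Theorem~5.3]{MOYjfa} applies verbatim, and that proof is exactly your argument: start from convergence at a maximal vertex, then propagate it along a directed path using the analytic functional equation and the invertibility of the intertwining operators.

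The cone bookkeeping you flag closes cleanly if you take $C$ to be the transport of $\mathcal D^{-\theta_{x_2}}_{M_2,\C}(c)$ along the path. Directed paths are reduced (\Cref{prop mult}), so each edge root pulls back to a positive root of $A_{M_2}$. That root pairs nontrivially with anti-invariant parameters, hence does not lie in $L_2$ by \eqref{eq maximal vertex structure}. It therefore lies in the $\theta_{x_2}$-split radical $U_2$, where the real part of the pairing already exceeds $c$.
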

\begin{proof}
In view of \Cref{prop:maximal-convergence} and \Cref{prop:analytic fe}, the proof of \cite[Theorem 5.3]{MOYjfa} applies verbatim. 
\end{proof}

\begin{corollary}\label{cor final}
With the notation of \Cref{thm:convergence in general},
\begin{equation}\label{eq meromorphic cor}
 \lambda\longmapsto
 j_P^G(u,\eta,\ell,\sigma_\lambda)(f_\lambda)
\end{equation}
is holomorphic on $C$ and extends
meromorphically to $(\fa_{M,\C}^*)^{-\theta_x}$. Its continuation is
the specialization at $\chi_\lambda$ of
$J_P^G(u,\eta,\ell,\sigma)$ on the corresponding flat section.
\end{corollary}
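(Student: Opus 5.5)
The plan is to proceed in three steps: holomorphy on $C$, then identification of the analytic integral with the algebraic period on $C$, and finally meromorphic continuation, which is read off from the rationality of $J_P^G(u,\eta,\ell,\sigma)$.

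\emph{Holomorphy on $C$.} Absolute convergence on $C$ is given by \Cref{thm:convergence in general}. To obtain holomorphy, I would use the fact that $f_\lambda(uh)$ depends on $\lambda$ only through $e^{\langle\lambda,H_M(uh)\rangle}$. The integrand is therefore entire in $\lambda$ for fixed $h$. On compact subsets of $C$ it is dominated by an integrable function: take the maximum of the absolute integrands at finitely many real parts spanning a small simplex, and use the convexity of $e^{\langle\Re\lambda,\cdot\rangle}$. Morera's theorem together with dominated convergence then gives holomorphy.

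\emph{Identification on $C$.} This is the main point. Fix $\lambda\in C$ such that $\chi_\lambda$ is not a zero of the denominator $d$ of \Cref{thm LIP}; such $\lambda$ form a dense open subset of $C$. Specializing the $A_x$-valued form $d\,J_P^G(u,\eta,\ell,\sigma)$ at $\chi_\lambda$, as in the proof of \Cref{thm distinction}, and dividing by $d(\chi_\lambda)$, gives an $(H,\eta)$-equivariant form $J_\lambda$ on $I_P^G(\sigma_\lambda)$. On the other side, $j_\lambda:\phi\mapsto j_P^G(u,\eta,\ell,\sigma_\lambda)(\phi)$ is also $(H,\eta)$-equivariant, since its convergence for flat sections implies convergence for all sections, and these two forms agree on the sections supported in $PuH$ modulo the boundary.

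To conclude $J_\lambda=j_\lambda$, the uniqueness in \Cref{thm LIP} is only available over $F_x$, so I would work with the difference. The form $J_\lambda-j_\lambda$ vanishes on $\cV_x^0$ specialized at $\chi_\lambda$. It therefore factors through the quotient by this subspace, which is filtered by the boundary strata. By \Cref{cor kill} with $W=R$ and the trivial module specialized at $\chi_\lambda$, the Hom spaces from these strata are killed by $p_{x,j}(\chi_\lambda)$. This scalar is nonzero for $\lambda$ off a proper analytic subset. Applying \Cref{prop habile} with $W'=0$ then shows that the difference vanishes. Hence the two forms agree on a dense open subset of $C$, and by continuity in $\lambda$ they agree on all of $C$, as meromorphic identities.

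\emph{Meromorphic continuation.} The algebraic side $\lambda\mapsto J_P^G(u,\eta,\ell,\sigma)(f)(\chi_\lambda)$ is a rational function in $q^{-\lambda}$. Indeed, the flat section $f$ corresponds to an element of $I_{P,x}(\sigma)$ under the compact picture identification, and $d\,J$ maps that element into $A_x$. This function is thus meromorphic on $(\fa_{M,\C}^*)^{-\theta_x}$, and it coincides with the integral on $C$, which gives the continuation.

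The expected obstacle is the second step. One must make sure that the specialization of $\cV_x^0$ is exactly the space of sections supported in $PuH$ modulo the boundary, and that the Ext and Hom annihilation statements survive specialization. Specialization commutes with the geometric lemma by exactness, which I would check when writing this step out.
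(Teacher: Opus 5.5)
Your proposal is correct and follows essentially the same route as the paper. You specialize $d\,J_P^G(u,\eta,\ell,\sigma)$ at $\chi_\lambda$ where $d(\chi_\lambda)\neq0$, kill the boundary strata with \Cref{cor kill} so that agreement on $\cV_x^0$ (given by the support formula) forces agreement everywhere, and then extend by density and rationality. The obstacle you flagged about specializing the filtration is bypassed in the paper by precomposing both forms with the surjection $I_{P,x}(\sigma)\to I_P^G(\sigma_\lambda)$ and working in $\Hom_{\mathcal H_{A_x}[H]}(\,\cdot\,,\C_\lambda\otimes\eta)$, where $A_x$ acts on $\C_\lambda$ through evaluation at $\chi_\lambda$; this lets the unspecialized filtration of \Cref{lm adapted filtration} be used directly.
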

\begin{proof}

Holomorphy on $C$ follows from \Cref{thm:convergence in general}.
Put $J:=J_P^G(u,\eta,\ell,\sigma)$ and
\[
d:=\prod_{\cO_j\subseteq\partial\cO_x}p_{x,j}
\in A_x\smallsetminus\{0\}.
\]
By \eqref{eq direct denominator}, $J$ sends $I_{P,x}(\sigma)$ to $A_x[d^{-1}]$. Hence, whenever $d(\chi_\lambda)\neq0$,
evaluation at $\chi_\lambda\in \cX_x$ defines a linear form
\[
J_\lambda\in\Hom_H(I_P^G(\sigma_\lambda),\eta).
\]
Fix such a $\lambda\in C$. Let $\C_\lambda$ denote $\C$,
with trivial $H$-action and with $A_x$-action given by
	$a\cdot z=a(\chi_\lambda)z$.
	Use the filtration of \Cref{lm adapted filtration}, with
	$\cV_x^0=\cV_{a+1}$ and $\cV_N=I_{P,x}(\sigma)$.
	For every $a+2\leq j\leq N$, the quotient
	$\cV_j/\cV_{j-1}$ corresponds to an orbit
	$\cO_j\subseteq\partial\cO_x$.
	Applying \Cref{cor kill} with $W=\C_\lambda$, and using
	$p_{x,j}(\chi_\lambda)\neq0$, gives
	\[
	\Hom_{\mathcal H_{A_x}[H]}
	\bigl(\cV_j/\cV_{j-1},\C_\lambda\otimes\eta\bigr)=0.
	\]
	It follows that the restriction map
	\[\Hom_{\mathcal H_{A_x}[H]}
		\bigl(I_{P,x}(\sigma),\C_\lambda\otimes\eta\bigr) \longrightarrow
		\Hom_{\mathcal H_{A_x}[H]}
		\bigl(\cV_x^0,\C_\lambda\otimes\eta\bigr)
	\]
	is injective.
	
	Precompose $J_\lambda$ and
	$j_P^G(u,\eta,\ell,\sigma_\lambda)$ with the natural surjection
	\[
	I_{P,x}(\sigma)\longrightarrow I_P^G(\sigma_\lambda).
	\]
	The resulting $A_x$-linear, $H$-equivariant forms agree on
	$\cV_x^0$ by the support formula
	\eqref{eq support characterization}.
	The previously proved injectivity therefore shows that
	\begin{equation}\label{eq final}
		J_\lambda=j_P^G(u,\eta,\ell,\sigma_\lambda), \ \lambda\in C, \ d(\chi_\lambda)\neq 0.
	\end{equation}  
	
	Multiply \eqref{eq final} by $d(\chi_\lambda)$ and apply it to a flat section $\phi_\lambda$.
	Both sides are holomorphic on $C$, the algebraic side being the
	pullback of a regular function on $\cX_x$ by
	\eqref{eq direct denominator}.
	Since $\{\lambda\in C:d(\chi_\lambda)\neq0\}$ is dense in $C$,
	the equality extends to all of $C$.
	Thus the algebraic period, pulled back along
	$\lambda\mapsto\chi_\lambda$, gives the claimed meromorphic
	continuation.
\end{proof}

\bibliographystyle{amsalpha}
\bibliography{Algebraic_LIP}

\end{document}